\documentclass[11pt,a4paper]{article}

\usepackage{amsthm,amsmath,multirow,graphicx,makecell,booktabs,url,mathtools,wrapfig,mathrsfs,bm,relsize,color, enumitem, fullpage,float,bbm,xcolor, verbatim, siunitx, subcaption}
\usepackage[normalem]{ulem}
\usepackage{cancel}
\usepackage{algorithm}
\usepackage{algorithmic}
\usepackage{xb_preamble} 
\RequirePackage[numbers]{natbib} 
\usepackage[colorlinks=true,
linkcolor=blue,
urlcolor=blue,
citecolor=blue]{hyperref}

\usepackage[capitalise]{cleveref}
\numberwithin{equation}{section}
\newtheorem{conj}{Conjecture}
\newtheorem{theorem}[conj]{Theorem}
\newtheorem{cor}[conj]{Corollary}
\newtheorem{prop}[conj]{Proposition}
\newtheorem{lemma}[conj]{Lemma}

\newtheorem{fact}{Fact}

\providecommand{\customgenericname}{}
\newcommand{\newcustomtheorem}[2]{%
	\newenvironment{#1}[1] 
	{%
		\renewcommand\customgenericname{#2}%
		\renewcommand\theinnercustomgeneric{##1}%
		\innercustomgeneric
	}
	{\endinnercustomgeneric}
}

\newcustomtheorem{customAss}{Assumption}

\theoremstyle{remark}\newtheorem{remark}{Remark}

\def\T{\top}
\def\i{\infty}   
\def\sw{\Sigma}

\def\sT{\Sigma_T}
\def\whsT{\wh\Sigma_T}

\def\whsw{\wh \Sigma} 
\def\Dtmax{\Delta_{\max}}
\def\Dtmin{\Delta_{\min}}
\def\cNeps{\cN_{\epsilon_1,\epsilon_2,\epsilon_3}}
\def\rd{{\rm d}}
 
\def\KL{{\rm KL}}
\def\rI{\textrm{I}}
\def\rII{\textrm{II}}
\def\rIII{\textrm{III}}

\def\pmin{\pi_{\min}^*}
\def\pmax{\pi_{\max}^*}
\def\pminsq{\pi_{\min}^{*2}}
\def\Dt{\Delta}
\def\bpi{\bm \pi}
\def\bmu{\bm \mu}
\def\bsw{\bm \sw}
\def\bh{\bm h}
\def\bwhsw{\bm \whsw}

\title{Convergence and Optimality of the EM Algorithm Under Multi-Component Gaussian Mixture Models}

\author{
    Xin Bing\thanks{Department of 
  Statistical Sciences, University of Toronto. E-mail: \texttt{xin.bing@utoronto.ca}}
  ~~~~~Dehan Kong\thanks{Department of 
  Statistical Sciences, University of Toronto. E-mail: \texttt{dehan.kong@utoronto.ca}.}
  ~~~~~ Bingqing Li\thanks{Department of 
  Statistical Sciences, University of Toronto. E-mail: \texttt{bbingqing.li@mail.utoronto.ca}}
}

\begin{document}

\maketitle


 \begin{abstract}
 	 Gaussian mixture models (GMMs) are fundamental statistical tools for modeling heterogeneous data. Due to the nonconcavity of the likelihood function, the Expectation-Maximization (EM) algorithm is widely used for parameter estimation of each Gaussian component. Existing analyses of the EM algorithm's convergence to the true parameter focus on either the two-component case or multi-component settings with known mixing probabilities and isotropic covariance matrices.
 	 
 	 In this work, we study the convergence of the EM algorithm for multi-component GMMs in full generality. The population-level EM is shown to converge to the true parameter when the smallest separation among all pairs of Gaussian components exceeds a logarithmic factor of the largest separation and the reciprocal of the minimal mixing probabilities. At the sample level, the EM algorithm is shown to be   minimax rate-optimal, up to a logarithmic factor. We develop two distinct novel analytical approaches, each tailored to a different regime of separation, reflecting two complementary perspectives on the use of EM. As a byproduct of our analysis, we show that the EM algorithm, when used for community detection, also achieves the minimax optimal rate of misclustering error under milder separation conditions than spectral clustering and Lloyd's algorithm, an interesting result in its own right. Our analysis allows the number of components, the minimal mixing probabilities, the separation between Gaussian components and the  dimension to grow with the sample size. Simulation studies corroborate our theoretical findings.
 \end{abstract}
{\em Keywords:} Community detection; Clustering; EM algorithm;
Gaussian mixture models; Minimax optimal estimation.

\section{Introduction}

Finite mixture models are fundamental statistical tools for modeling heterogeneous data \citep{lindsay1995mixture,mclachlan1988mixture} and have found wide-ranging applications in fields such as biology, economics, image processing, and natural language processing, to name just a few. These models assume that the observed data arise from a mixture of several latent sub-populations, each governed by its own distribution \citep{mclachlan2000finite, titterington1985statistical}. Among them, the Gaussian mixture model stands out as one of the most extensively studied and applied, due to its mathematical tractability and empirical success in sub-population inference, clustering, density estimation, and outlier detection \citep{dempster1977maximum, redner1984mixture}.

In this paper, we focus on the $d$-dimensional Gaussian location mixture model with $L \ge 2$ components, hereafter referred to as the $L$-GMM. Specifically, let $X_1,\ldots, X_n$ be i.i.d. samples of a random vector $X\in\RR^d$ that follows, for some {\em latent} label $Y \in [L] := \{1, \ldots, L\}$,
\begin{equation}\label{model}
	X \mid Y = \ell ~ \sim ~  \cN_d\left(\mu_\ell, \sw \right),
\end{equation}  
with $\PP(Y = \ell)=\pi_\ell$ for each $\ell \in [L]$.
The Gaussian components have distinct mean vectors $\mu_1, \ldots, \mu_L \in \RR^d$, and share a common covariance matrix $\sw \in \RR^{d \times d}$, which is assumed to be strictly positive definite. The mixing probabilities $\pi = (\pi_1, \ldots, \pi_L)^\T$ are all positive and sum up equal to one. By collecting the mean vectors into the $d\times L$ matrix $M := (\mu_1, \ldots, \mu_L),$ our main interest lies in estimating the parameter 
$\theta := (\pi, M, \sw)$  based on  $X_1,\ldots, X_n$.

In the long history of Gaussian mixture models, the identifiability of $\theta$, up to label switching, under model \eqref{model} is well-known \cite{teicher1963identifiability,yakowitz1968identifiability}. See also \cite{titterington1985statistical} and \cite{lindsay1995mixture}, as well as the recent \cite{allman2009identifiability} for general finite mixture models.  Regarding estimation, a significant line of work analyzes the maximum likelihood estimation (MLE) for density estimation \citep{ghosal2001entropies} and for estimating the mixing measure,
$
\rho = \sum_{\ell=1}^L \pi_\ell   \delta_{\{\mu_\ell, \sw\}},
$ 
with $\delta$ being the Dirac measure
under the exactly specified (in terms of the number of fitted mixture components) and  over-specified settings \citep{chen1995optimal,ho2016convergence,ho2016strong}. However, since the likelihood function under GMMs is non-concave, computing the MLE is challenging, especially in multivariate settings. To overcome this challenge, three main types of alternative approaches have been developed and studied.

\begin{itemize}
	\item  {\em Method of Moments (MoM)-based approaches.} MoM estimation of $\theta$ has been developed in \cite{LindsayBasak,hsu2013learning, belkin2010polynomial, hardt2015tight, moitra2010settling, kalai2010efficiently}. Although these methods require minimal separation between Gaussian components, they typically yield slow convergence rates and suboptimal sample complexity with respect to both the dimension $d$ and the number of components $L$. Recently, \cite{WuYang2020,doss2023optimal}  studied MoM-based estimation of the mixing measure and showed that it achieves the minimax optimal rate under the Wasserstein distance for bounded $L$. While their rates for estimating the mixing measure could be translated into rates for estimating   $\theta$ with improved sample complexity, the resulting convergence is still exponentially slow in $L$, even in the univariate case, and corresponding results in the multivariate setting remain largely unknown.
	
	\item {\em Clustering-based approaches.} Another substantial body of literature focuses on clustering $X_1,\ldots, X_n$ by predicting their labels $Y_1,\ldots, Y_n$ in model \eqref{model}, a task also known as the {\em community detection} problem   \citep{dasgupta1999learning,sanjeev2001learning,vempala2002spectral,kannan2008spectral,awasthi2012improved,achlioptas2005spectral,lu2016statistical,chen2024achieving}. Once the labels are known, the parameter $\theta$ can be readily estimated from the labeled data. However, as we detail in \cref{sec_theory_clustering}, such methods require strong separation conditions between Gaussian components to achieve (nearly) perfect clustering.
	
	\item {\em The Expectation Maximization (EM) algorithm.}  The EM algorithm is designed in a broader context  to approximate the MLE by iteratively maximizing a lower bound of the likelihood function through alternating between the expectation step (E-step) and the maximization step (M-step) \citep{dempster1977maximum}. Under the $L$-GMM, both steps have closed-form expressions, leading to fast computation. In contrast to its empirical success,  EM is only known to converge to a local stationary point in general, which may be far from the true parameter \citep{wu1983convergence, redner1984mixture}.  Over the past two decades, significant progress has been made in the theoretical understanding of EM's convergence to the true parameter under the $L$-GMM.
\end{itemize}

The general positioning of the EM algorithm lies between the first two approaches: compared to MoM-based methods, EM estimation, when suitably initialized, typically requires more separation  but yields faster and, oftentimes minimax optimal, rates of convergence; whereas compared to clustering-based methods, EM requires weaker separation condition to achieve optimal convergence rates. The primary goal of this work is to conduct a thorough theoretical analysis of the EM algorithm to formally demonstrate its adaptivity under the $L$-GMM in the presence of mild separation conditions. As explained below, existing analyses of EM are largely limited either to the case $L = 2$ or to settings where the mixing probabilities $\pi$ are known and the covariance matrix $\sw$ are isotropic (see the summary in \cref{tab_EM_compare}).  Analyzing EM for mixtures with $L \ge 3$ presents significant challenges, as the likelihood landscape deteriorates with increasing $L$. Furthermore, allowing $\pi$ and $\sw$ to be unknown requires not only estimating these quantities but also adjusting the estimation of the mean vectors, as the E-steps (see \cref{sec_EM}) depend on their estimates as well. This work is the first to establish both the convergence of EM to the true parameter and its minimax optimality under the $L$-GMM with $L \ge 2$ and both $\pi$ and $\sw$ unknown.

\subsection{Related literature}

In this section, we review existing theoretical results on the convergence of the EM algorithm under Gaussian mixture models. The early work by \cite{xu1996convergence} establishes a connection between EM iterates and the gradient of the likelihood function under general GMMs. Later, \cite{dasgupta2007probabilistic} analyzed a two-step EM procedure equipped with a specially designed initialization scheme, under GMMs with $L \ge 2$ components and isotropic covariance matrices. Under a strong separation condition between  Gaussian components, the two-step EM estimator is proved to converge to within a small neighborhood of the true parameter, although the size of this neighborhood may not shrink to zero as the sample size increases.

Recently, there has been renewed progress in understanding the convergence of EM to the true parameter under the $L$-GMM. In \cite{EM2017}, the authors develop a general framework for analyzing EM and apply it to the symmetric, isotropic 2-GMM with $\pi_1 = \pi_2 = 1/2$, $\mu_1 = -\mu_2 = \mu$, and $\sw = \bI_d$. They prove that the EM iterates $\wh \mu^{(t)}$ converge to the true parameter $\mu$ in $\ell_2$-norm at the rate $\|\mu\|_2^3 \sqrt{d/n}$, provided that the initialization $\wh\mu^{(0)}$ satisfies $\|\wh\mu^{(0)} - \mu\|_2 \le \|\mu\|_2/4$, and the separation between Gaussian components, $\|\mu\|_2$, exceeds a sufficiently large constant. Further improvements in this symmetric, isotropic $2$-GMM are made in \cite{Xu_Hsu_Maleki} and \cite{daskalakis2017ten}, where the population level EM is shown to converge to the true parameter as long as the separation $\|\mu\|_2 > 0$ and the initialization $\wh \mu^{(0)}$ is not orthogonal to $\mu$. In other words, the population level likelihood function in this special case has no spurious local maxima, so that any random initialization suffices for the sample level EM algorithm to converge to the ground truth as $n \to \infty$. More recently, still under the setting $\mu_1 = -\mu_2 = \mu$ and $\sw = \bI_d$, it has been shown that the EM iterates with simple random initialization converge to the MLE and achieve adaptive minimax optimality for $\|\mu\|_2 = \cO(1)$, with equal mixing weights $\pi_1 = \pi_2 = 1/2$ \citep{wu2021randomly} and for unequal but known weights \citep{weinberger2022algorithm}. Similar guarantees are also obtained for the case where $L = 2$ is specified but the true model consists of a single Gaussian component \citep{dwivedi2020singularity}. These strong global convergence results, however, rely on the assumptions of known $\pi$ and $\sw = \bI_d$. When these conditions are not met, EM must also estimate both $\pi$ and $\sw$, and even for $L=2$, it remains unclear whether global convergence guarantees are possible. For the 2-GMM with both unknown $\pi$ and unknown $\sw$, \cite{cai2019chime} established convergence of the EM algorithm to the true parameter, provided that a suitable initialization condition is met and the separation between Gaussian components is sufficiently large but remains bounded.

Despite of these works on the $2$-GMM, they can not extend to the  $L$-GMM with $L\ge 3$ from several aspects. First, the landscape of the likelihood function for $L \ge 3$ differs drastically from the two-component case. As shown in \cite{jin2016local}, the population-level likelihood function under the $L$-GMM with $L \ge 3$ can exhibit poor local maxima even in the simplest case of equally weighted mixtures of well-separated, spherical Gaussians. Moreover, the EM algorithm, when initialized randomly from the data, converges with high probability to such bad local maxima. It is therefore crucial to establish, even at the population level, conditions under which EM converges to the true parameter. Specifically, one must characterize: (1) the region in which EM contracts; (2) the type of initialization that ensures the iterates remain within this region; and (3) how the required separation between Gaussian components depends on $L$, the minimal mixing proportion $\pi_{\min} = \min_\ell \pi_{\ell}$ as well as  the dimension $d$. Second, these issues must also be addressed at the sample level.  Third, once we allow the aforementioned quantities to diverge as $n\to \i$, both the minimax optimal rate for estimating $\theta$, and whether EM provably achieves this rate, remain open problems.

For such reasons, much fewer results exist on the convergence of the EM algorithm for $L \ge 3$. When $\pi$ is known  and $\sw = \bI_d$, \citet{yan2017convergence} derive the convergence rate for a gradient-based EM algorithm, while \citet{Zhao2020} analyze the standard EM. Their results require that $\min_{k \ne \ell}\|\mu_k - \mu_\ell\|_2$ satisfy a certain separation condition and that the initialization lies within a neighborhood of the true parameter. This separation condition is subsequently relaxed in \cite{kwon2020algorithm,regev2017learning}, and the rate analyses are refined by \cite{segol2021improved}. However, none of these works apply when both $\pi$ and $\sw$ are unknown. Moreover, the convergence rates established in the aforementioned studies are not minimax-optimal, even under known $\pi$ and $\sw$, and can be improved in several respects. Table \ref{tab_EM_compare} summarizes a technical comparison with existing results for $L \ge 2$, while a full table including the $L=2$ case is deferred to Table \ref{tab_EM_compare_full} in Appendix \ref{app_sec_tab}.

\begin{table}[ht]
	\centering
	\caption{Comparison with existing theoretical results of the EM algorithm and its variant under the $L$-GMM with $L \ge 2$. The rates of convergence are in probability and the notation $\lessapprox$ hides multiplicative logarithmic factors.}
	\label{tab_EM_compare}
	\renewcommand{\arraystretch}{1.5}
	\resizebox{\textwidth}{!}{
		\begin{tabular}{l|c|c|c|c|c}
			\toprule
			& Settings & Separation $\Dtmin \gtrsim $ & Initialization & Convergence Rates & Optimality\\
			\midrule
			\cite{dasgupta2007probabilistic} &  $\sw=\sigma^2\bI_d$     & $\displaystyle  \sqrt{d \log(\pi_{\min}^{-1})} + \log(\pi_{\min}^{-1}) $          &  A particular initialization  & $ d(\wh\pi,\pi)+ d(\wh M, M)  \lesssim  \sqrt{d /(n\pi_{\min})} + \exp(-c\sqrt{d})/ \pi_{\min}   $ & $\times$\\
            \cite{kwon2020algorithm}   &  $\sw=\sigma^2\bI_d$                            & $\log(1/\pi_{\min})$         & $d(\pi^{(0)},\pi) < 1,~ d(M^{(0)}, M) \le c \sqrt{\Dtmin}   $ 
            & A sample-splitting version of the EM & $\checkmark$\\
			\cite{yan2017convergence}   &  $\pi$ known, $\sw=\bI_d$                              & $ (d \wedge L)\log(\Dtmax / \pi_{\min}) $         & $ d(M^{(0)}, M) \le c \sqrt{\Dtmin}$ 
            & $\displaystyle d(\wh M, M) \lessapprox (\pi_{\max} / \pi_{\min})\sqrt{\Dtmax}(\sqrt{d} + L^3\Dtmax)\sqrt{d / n}$ & $\times$\\
			\cite{Zhao2020}   &   $\pi$ known, $\sw=\bI_d$                              & $ (d \wedge L)\log(\Dtmax / \pi_{\min})$         & $ d(M^{(0)}, M) \le c \sqrt{\Dtmin}$ 
            & $\displaystyle d(\wh M, M) \lessapprox (\sqrt{\Dtmax} / \pi_{\min})\sqrt{dL / n}  $ & $\times$\\
            \cite{segol2021improved}   &   $\pi$ known, $\sw=\bI_d$                              & $ \log(1/\pi_{\min}) $         & $ d(M^{(0)}, M) \le c \sqrt{\Dtmin}   $ 
            & $\displaystyle d(\wh M, M) \lessapprox  (1 / \pi_{\min}) \sqrt{dL/ n}  $ & $\times$\\
			\midrule 
			\multirow{2}{*}{This work}                   &  \multirow{2}{*}{Full generality}                            &  \multirow{2}{*}{$\log(\Dtmax /\pi_{\min})$}       &      $d(\pi^{(0)},\pi) < 1,~d(M^{(0)}, M)\le c\sqrt{\Dtmin}$   & $\displaystyle d(\wh\pi,\pi) + d(\wh M, M) \lessapprox \sqrt{d / (n\pi_{\min})} $  & \multirow{2}{*}{$\checkmark$} \\
			& & &  $d(\sw^{(0)}, \sw)\le  c $,   or via preliminary labels  & $\displaystyle d(\whsw, \sw) \lessapprox \sqrt{d / n} $ & \\
			\bottomrule
		\end{tabular}
    } 
\end{table} 

\subsection{Our contributions}

We now summarize our main results which include analyses of the EM algorithm at both the population level and the sample level. For the former, we establish a contraction region for EM while for the latter, our analysis is based on two complementary approaches, reflecting different perspectives on EM: parameter estimation and clustering. The first approach is more suitable when the separation is small to moderate, while the second is particularly effective when the separation is moderate to large.   To distinguish between the true model parameters and their iterative estimates, we denote the true parameter by 
$\theta^* = (\pi^*, M^*, \sw^*)$.

\subsubsection{Convergence of the population level EM algorithm}

We review  the population level  EM algorithm under model \eqref{model} in \cref{sec_EM_popu}, and  establish its convergence  to $\theta^*$  in \cref{thm_EM_popu_unknown} (for unknown $\sw^*$)  and in  \cref{cor_EM_popu_known} (for known $\sw^*$) of \cref{sec_theory_popu}.  Denote by $\Dtmin$ (resp. $\Dtmax$) the smallest (resp. largest) squared Mahalanobis distance among all pairs of Gaussian components. When 
\begin{equation}\label{cond_sep_intro}
    \Dtmin ~ \gtrsim~  \log(\Dtmax / \Dtmin) + \log(1/\pmin),
\end{equation}
the population level EM iterates $\theta^{(t)} = (\pi^{(t)},M^{(t)},\sw^{(t)})$ with $t\ge 0$, once initialized within a neighborhood of $\theta^*$, converges:  for all $t\ge 1$ and some contraction rate $\kappa\in(0,1)$,
\begin{equation}\label{convg_EM_intro}
    d(\pi^{(t)}, \pi^*) + d(M^{(t)},M^*) + d(\sw^{(t)},\sw^*) \le \kappa^t \left\{d(\pi^{(0)}, \pi^*) + d(M^{(0)},M^*)+d(\sw^{(0)},\sw^*)\right\}.
\end{equation}
The distances between the parameters in $\theta^{(t)}$ and $\theta^*$ are given in \cref{def_dist_pi_M,dist_sw}. 
The convergence in \eqref{convg_EM_intro} is at least linear as $t\to \i$ and becomes superlinear once $\Dtmin$ increases, since our results show that $\kappa$ decays exponentially in $\Dtmin$ (see, \eqref{def_kappa}). To the best of our knowledge, this is the first convergence result of the population level EM algorithm under the $L$-GMM with $L \ge 3$, unknown $\pi^*$  and unknown $\sw^*$.  Compared to the required separation when $\sw^* = \bI_d$, we only pay the price of a logarithmic factor in $\Dtmax$ for estimating $\sw^*$.  Regarding the initialization,   \cref{thm_EM_popu_unknown} shows that  \eqref{convg_EM_intro} requires 
\begin{equation}\label{init_intro}
    d(\pi^{(0)},\pi^*) < 1,\quad d(M^{(0)},M^*) \le c_\mu\sqrt{\Dtmin},\quad d(\sw^{(0)},\sw^*) \le c_{\sw}
\end{equation}
for some absolute constants $0\le 2c_\mu + \sqrt{2} c_{\sw} < \sqrt{2}-1$.  The existing initialization requirements on $M^{(0)}$  are similar to ours in (\ref{init_intro}), although derived under known $\pi^*$ and $\sw^* = \bI_d$. 
For known $\sw^*$, we have $\sw^{(t)} = \sw^*$ for all $t\ge 0$ in \eqref{convg_EM_intro}  and $c_{\sw}=0$ in \eqref{init_intro}. Moreover, our \cref{cor_EM_popu_known} states that the contraction rate $\kappa$ for known $\sw^*$ becomes faster by a factor of $\Dtmax$.   For unknown $\sw^*$, we note in \cref{sec_EM_popu} that the EM iterates $\sw^{(t)}$ depend on $\EE_{\theta^*}[XX^\T]$, $\pi^{(t)}$ and $M^{(t)}$, which  suggests a simple way to choose $\sw^{(0)}$. For such a choice, \cref{cor_EM_popu_unknown} of \cref{sec_theory_popu} shows that $d(\sw^{(0)}, \sw^*) \le c_{\sw}$ in \eqref{init_intro} is satisfied, provided that the requirements on $d(\pi^{(0)}, \pi^*)$ and $d(M^{(0)}, M^*)$ in \eqref{init_intro} are tightened by a multiplicative factor of $1/\Dtmax$ on the right-hand side.

To the best of our knowledge, this is the first convergence result of the population level EM algorithm under the $L$-GMM with $L \ge 3$, unknown $\pi^*$, and either known or unknown $\sw^*$. As mentioned earlier, for  known $\pi^*$ and $\sw ^*= \bI_d$,  \cite{yan2017convergence,Zhao2020} analyze the population level  EM and its variant under  a  separation condition that is strictly stronger than ours in \eqref{cond_sep_intro}, often substantially so when $L$ and $d$ are moderate to large. Their initialization requirement on $M^{(0)}$  is similar to ours in (\ref{init_intro}), although derived under the stronger separation condition in addition to  known $\pi^*$ and $\sw^* = \bI_d$.

At the technical level, we prove \eqref{convg_EM_intro} by viewing the  M-step of population level EM  as an operator: for all $t\ge 0$,
$$
	\theta^{(t)} ~ \mapsto~  (\bpi(\theta^{(t)}), \bM(\theta^{(t)}),\bsw(\theta^{(t)})) =: 	(\pi^{(t+1)}, M^{(t+1)},\sw^{(t+1)})  = \theta^{(t+1)}.
$$ 
Throughout this paper, we use $\bpi$, $\bM$, and $\bsw$ to denote operators, in order to distinguish them from their vector or matrix counterparts $\pi$, $M$, and $\sw$.
By further using the self-consistency of EM in \cref{app_sec_self_consistency}, that is, $\theta^* = (\bpi(\theta^*), \bM(\theta^*),\bsw(\theta^*))$, bounding $d(\pi^{(t+1)}, \pi^*)$ boils down to controlling  $d(\bpi(\theta^{(t)}), \bpi(\theta^*))$. This, in turn, amounts to establishing the Lipschitz continuity of the operator $\theta \mapsto \bpi(\theta)$, and similarly for  $\theta \mapsto \bM(\theta)$ and $\theta\mapsto \bsw(\theta)$, which constitutes the main challenge in our analysis. Our proof first re-parametrizes $\theta$ by $\omega(\theta) = (\pi, M, J(\theta))$ with $J(\theta) := \sw^{-1}M$ as such reparametrization  alleviates the difficulty of direct manipulation of $\sw^{-1}$. Then  in \cref{thm_EM_samp_omega} of \cref{app_sec_proof_contraction_samp} we establish the Lipschitz continuity of $\omega(\theta) \mapsto \bpi(\omega(\theta))$ and $\omega(\theta) \mapsto \bM(\omega(\theta))$, by quantifying the partial derivatives of $\bpi(\cdot)$ and $\bM(\cdot)$ with respect to $\pi$, $M$, and $J(\theta)$ uniformly for all $\theta$ satisfying \eqref{init_intro} in lieu of  $\theta^{(0)}$.  Finally, combined with the Lipschitz property of $\theta \mapsto J(\theta)$ established in \cref{lem_lip_J_basic}, the conclusion in \eqref{convg_EM_intro} follows. Allowing $L \ge 3$, unknown $\pi^*$ and $\sw^*$, and avoiding strong separation conditions significantly complicates the analysis. We refer to the end of \cref{sec_theory_popu} for a more detailed technical discussion.

\subsubsection{Convergence of the sample level EM algorithm}

The sample level EM algorithm is reviewed in \cref{sec_EM_samp}. We  establish its rate of convergence in \cref{sec_theory_samp}, and show that it achieves the minimax optimal rate under model \eqref{model} and the separation condition \eqref{cond_sep_intro}. 
Due to the dependence of $\Dtmin$ on $\Dtmax$ and $\pmin$ in \eqref{cond_sep_intro}, and hence implicitly on $L$ and $d$, the analysis becomes significantly more challenging when aiming to derive the optimal rate of convergence with respect to all these quantities. To overcome this challenge, we adopt two complementary analytical approaches in \cref{sec_theory_samp_small,sec_theory_samp_large}, each tailored to different regimes depending on the magnitude of  $\Dtmin$ and $\Dtmax$.\\

\noindent{\bf Optimal rate of convergence under small to moderate separation.}  The first approach can be viewed as the sample-analogue of proving the population level result in \eqref{convg_EM_intro} where we directly prove the convergence  to $\theta^*$, of the sample-level EM iterates $\wh \theta^{(t)} = (\wh \pi^{(t)}, \wh M^{(t)},\whsw^{(t)})$, with $t\ge 0$.  For each parameter $\wh h\in \{\wh\pi, \wh M, \whsw\}$, with its sample level operator $\wh\bh$ and population level operator $\bh$, we decompose 
\begin{equation}\label{decomp_intro}
	d(\wh h^{(t+1)}, h^*) ~ = ~ d(\wh \bh (\wh \theta^{(t)}),  \bh(\theta^*) )  ~ \le ~  d(\wh 
	\bh (\wh \theta^{(t)}),  \wh \bh(\theta^*) ) + d(\wh \bh ( \theta^*),  \bh(\theta^*) ).
\end{equation}
We emphasize that this is a different decomposition from the one used in the literature, such as \cite{EM2017,yan2017convergence,cai2019chime}, and is crucial to obtain minimax optimal rates. We refer to the beginning of \cref{sec_theory_samp_small} for detailed explanations. 

In \cref{prop_EM_samp_contra} of \cref{sec_theory_samp_small}, we control the first term on the right-hand side by establishing the Lipschitz properties of the sample level operators $\theta \mapsto \wh\bpi(\theta)$, $\theta \mapsto \wh\bM(\theta)$, and $\theta \mapsto \bwhsw(\theta)$, which readily yield: with probability $1-\cO(n^{-1})$, for all $\theta$ satisfying \eqref{init_intro} in lieu of $\theta^{(0)}$,
\begin{align*}
	d(\wh\bpi(\theta), \wh\bpi(\theta^*)) + 	d(\wh \bM(\theta), \wh \bM(\theta^*)) + d(\bwhsw(\theta),\bwhsw(\theta^*))   \le \kappa_n  \left\{
	d(\pi,\pi^*) + d(M,M^*) +  d(\sw, \sw^*)
	\right\}.
\end{align*} 
Its proof follows the road-map of proving \eqref{convg_EM_intro}. By the reparametrization $\omega(\theta)$, the key is to derive uniform convergence of certain empirical quantities of the form
$ \partial \wh \bpi(\omega(\theta))$,
$\partial \wh \bM(\omega(\theta))$ and
$\partial \bwhsw(\omega(\theta))$,
where the partial derivatives  are taken with respect to $\pi$, $M$, and $J(\theta)$ separately.  The established uniform convergence rate along with the population level contraction factor $\kappa$ in \eqref{convg_EM_intro} determines   $\kappa_n$. To ensure   $\kappa_n < 1$, 
we need 
\begin{equation}\label{Dtmin_intro}
	 \log{\Dtmax \over \Dtmin}+\log{1\over \pmin} ~ \lesssim ~ \Dtmin ~ \lesssim ~   {\Dtmin \over \Dtmax}{n\pmin \over dL^{3/2}\log^2(n)  }
\end{equation}
It is for this reason, this analysis is only suitable for small to moderate separation.  We pause here to clarify that although our analysis significantly relaxes the upper bound restriction on $\Dtmax$ compared to existing results \cite{yan2017convergence,Zhao2020}, the condition imposed in \eqref{Dtmin_intro} remains inevitable, as it is rooted in directly proving the convergence of $\wh \theta^{(t)}$ to $\theta^*$. Nevertheless, as we explain below, this dependence can be avoided by adopting a different analytical approach.

Regarding the second term $d(\wh \bh(\theta^*), \bh(\theta^*))$ in \eqref{decomp_intro}, we derive in \cref{thm_concent} of \cref{sec_theory_samp_small}  concentration inequalities between the  operators $\wh\bpi$, $\wh\bM$, and $\bwhsw$ and their population-level counterparts, evaluated at $\theta^*$. 
By combining the bounds for both terms in \eqref{decomp_intro}, we establish in \cref{thm_EM_samp} of \cref{sec_theory_samp_small} that the EM iterates $\wh\theta^{(t)}$, initialized with $\wh\theta^{(0)}$ satisfying \eqref{init_intro} in lieu of $\theta^{(0)}$, satisfy: with high probability,  for all $t\ge C\log n$,
\begin{equation}\label{rate_intro}
    d(\wh\pi^{(t)}, \pi^*)  + d(\wh M^{(t)}, M^*) 
    \lesssim  \sqrt{d \log n\over n\pmin},\qquad  
    d(\whsw^{(t)},\sw^*)   
    \lesssim    \sqrt{d\log n\over n}.
\end{equation}
For known $\sw^*$,  \cref{thm_EM_samp_known} states that the  first  bound holds under a weaker condition  on $\Dtmax$. 

In \cref{thm_lower_bound} of \cref{sec_theory_optimality}, we establish new minimax lower bounds for estimating $\theta^*$ under model \eqref{model} with the separation condition \eqref{cond_sep_intro}, thereby confirming that the bounds in \eqref{rate_intro} are minimax optimal up to logarithmic factors.\\

\noindent{\bf Optimal rate of convergence under moderate to large separation.} 
Our second approach to analyzing the EM algorithm is more aligned with the clustering perspective, and is tailored to settings with moderate to large separation. Intuitively, when there is substantial separation between Gaussian components, it is easy to cluster data points according to their respective labels, and then estimate the model parameter  from the clustered data.  However, since EM is fundamentally designed to approximately maximize the likelihood for estimating $\theta^*$, its performance for predicting the labels $Y_1,\ldots, Y_n$ remains unknown, even for the case $L=2$.

The main idea of our proof consists of two steps: in the first step we introduce a surrogate loss related to the community detection problem under model \eqref{model} and establish its convergence; in the second step we translate this convergence into a convergence rate for estimating $\theta^*$. The surrogate loss, $\phi(\theta)$ given by \eqref{def_phi_theta}, characterizes the discrepancy between the posterior probabilities $\PP_{\theta}(Y_i = \ell \mid X_i)$ in the E-step at parameter value $\theta$, and the true labels $1\{Y_i = \ell\}$ for all $i \in [n]$ and $\ell \in [L]$. It also captures the degree of separation between the Gaussian components, thereby reflecting the benefit of increased separation. In \cref{thm_conv_phi,thm_conv_phi_known} of \cref{sec_theory_samp_large}, we establish the convergence of $\phi(\wh \theta^{(t)})$ along the EM iterates $\wh \theta^{(t)}$: with probability at least $1-4n^{-1}-\exp(-c\Dtmin)$ for some absolute constant $c>0$,
 \begin{equation}\label{conv_phi_intro}
 	\phi(\wh\theta^{(t)}) ~  \le~   n\exp(-c\Dtmin) + {1\over 2}	\phi(\wh\theta^{(t-1)}),\qquad\forall ~ t\ge 1
 \end{equation}
for both known and unknown $\sw^*$. In addition to the initialization condition in \eqref{init_intro}, the above result admits an alternative requirement on $\wh \theta^{(0)}$ in terms of $\phi(\wh \theta^{(0)})$, which can be linked to preliminary estimates of the labels. On the other hand, unlike the contraction-based argument in our first approach, the convergence result in \eqref{conv_phi_intro} requires only a lower bound on $\Dtmin$ and, in fact, improves with larger values of $\Dtmin$; see, details in \cref{thm_conv_phi,thm_conv_phi_known}.

For the estimation of $\theta^*$, the convergence in \eqref{conv_phi_intro} further implies that the EM iterates $\wh \theta^{(t)}$ themselves converge to $\wt \theta$, which is the MLE of $\theta^*$ under model \eqref{model} when the true labels are given. This, combined with a simple analysis of $\wt \theta$, leads to \cref{thm_param_phi} in \cref{sec_theory_samp_large}, which states that whenever $\Dtmin \gtrsim \log(n)$, with probability at least $1 - \cO(n^{-1})$, the bounds in \eqref{rate_intro} hold for all $\wh \theta^{(t)}$ with $t \gtrsim \log n$. The required sample complexity is
$n \gtrsim d (L^2 / (\pmin \Dtmin)^2 + L^2 + 1/\pmin)$, up to a logarithmic factor. Deriving such dependence on $d$ is nontrivial and relies on a leave-one-out analysis. We refer the reader to the discussion following \cref{thm_conv_phi} for further explanation of the proof technique.

Finally, by combining the results from both approaches, we have established  that the EM iterates satisfy the bounds in \eqref{rate_intro} after $\cO(\log n)$ iterations, and is also minimax optimal up to logarithmic factors, for all $\Dtmin$ satisfying \eqref{cond_sep_intro}. Although our results are established for $L \ge 3$, it is worth noting that they also improve upon existing results in the case of $L = 2$ with unknown $\pi^*$ and $\sw^*$; see, \cref{cor_EM_2GMM} and its subsequent discussion.

\subsubsection{Optimality of the EM for community detection}

Building on our convergence result in \eqref{conv_phi_intro}, we also establish the convergence rate of the EM algorithm's misclustering error in the context of community detection. Specifically, let $\ell(\wh \theta^{(t)})$ denote the proportion of misclustered labels among $Y_1, \ldots, Y_n$ when using the parameter estimate $\wh \theta^{(t)}$ (see \eqref{def_ell_theta} for details). Then, \cref{thm_clustering} in \cref{sec_theory_clustering} shows that, when $\Dtmin \gtrsim \log (1/\pmin) + \log(\Dtmax \wedge \log n)$ and initialized as above, we have, with probability at least $1 - n^{-1} - \exp(-c\Dtmin)$, for all $ t\ge 2\log n$,
\[
\ell(\wh\theta^{(t)} )  ~ \le~    \exp(-c\Dtmin).
\]
This rate is minimax optimal under model \eqref{model}, up to the constant $c$. To the best of our knowledge, this is the first work to establish the rate optimality of the EM algorithm for community detection under the $L$-GMM, including the case $L=2$.

As mentioned earlier, a substantial body of literature has studied the community detection problem under model \eqref{model}, with spectral clustering and Lloyd's algorithm being the most well-studied methods. 
Compared to both approaches, the EM algorithm is rate-optimal while requiring weaker separation conditions. These differences stem from two key distinctions:
(1)  Spectral clustering and Lloyd's algorithm estimate only the labels and the mean vectors $\mu_\ell$'s of the Gaussian components, without directly estimating the covariance matrix $\sw$; 
(2) They rely on {\em hard assignments}, assigning each data point to a single label.
In contrast, EM performs {\em soft assignments} in its E-steps, assigning probabilities to all labels, which are then used to update the estimates of $\mu_\ell$'s and $\sw$ in the subsequent M-step. Our analysis shows that, while estimating $\sw$ requires a stronger initialization condition, it enables EM to achieve the optimal misclustering rate. Moreover, the soft-assignment nature of EM allows it to attain this optimal rate under milder separation conditions than those required by Lloyd's algorithm. We refer the reader to \cref{sec_theory_clustering} for a detailed discussion.\\

This paper is organized as follows: In \cref{sec_EM} we review both the population level and sample level EM algorithms. Convergence of the population level EM is established in \cref{sec_theory_popu}. We analyze the convergence of the sample level EM in \cref{sec_theory_samp_small} for small to moderate separation, and in \cref{sec_theory_samp_large} for moderate to large separation. Minimax lower bounds for parameter estimation are presented in \cref{sec_theory_optimality}, while the application of our EM analysis to quantifying its performance in community detection is discussed in \cref{sec_theory_clustering}. We give detailed discussion on practical choices of the initialization  in \cref{sec_init} and conduct numerical studies to corroborate our theoretical findings in \cref{sec_sims}. All the proofs are deferred to the Appendix.

\paragraph{Notation.}   For any vector $x \in \RR^p$, denote its $\ell_q$ norm as $\|x\|_q$.  For any $p\times p$ symmetric and invertible matrix $\sw$, we write 
$\|x\|_{\sw}^2 = x^\T \sw^{-1} x$. 
For any matrix $A$, we write $\|A\|_{\op} $ and $\|A\|_F$ as  its operator norm and Frobenius norm. Further write $\|A\|_{2,\infty} = \max_{j} \|A_{j\cdot}\|_2$.
For any symmetric, positive semi-definite matrix $Q \in \mathbb{R}^{p \times p}$, we use $\lambda_1(Q) \geq \lambda_2(Q) \geq \dots \geq \lambda_p(Q)$ to denote its eigenvalues. We write $c \prec Q \prec C$ if $c\le\lambda_p(Q) \le \lambda_1(Q) \le C$. For any two sequences $a_n$ and $b_n$, we write $a_n \lesssim b_n$ if there exists some constant $C$ such that $a_n \leq Cb_n$. The notation $a_n \asymp b_n$ stands for $a_n \lesssim b_n$ and $b_n \lesssim a_n$. For any $a,b\in \RR$, we write $a \wedge b = \min\{a, b\}$ and $a \vee b = \max\{a, b\}$. We use $\bI_d$ to denote the $d \times d$ identity matrix. 
Throughout this paper, we use $c,c', C,C'$ to denote positive and finite absolute constants that, unless otherwise indicated, can vary from line to line.

\section{The EM algorithm under the $L$-GMM}\label{sec_EM}

In this section we review both the population level and the sample level EM algorithms under  model \eqref{model}. Recall that $\theta^*=(\pi^*, M^*,\sw^*)$ denotes the true parameter.

\subsection{The population level EM algorithm}\label{sec_EM_popu}

Under model \eqref{model}, the log-likelihood of $\theta$ at any $X=x\in \RR^d$ is
$$
     \log f(\theta \mid x) \\
    \propto  -\frac{1}{2} \log|\sw| + \log{\left\{\sum_{\ell=1}^L \pi_\ell \exp{\left[ -\frac{1}{2}(x-\mu_\ell)^\T \sw^{-1} (x-\mu_\ell)\right]}\right\}}
$$
which is non-concave in $\theta$. 
The EM algorithm iteratively maximizes a lower bound of the log-likelihood function, which is known as the surrogate 
$Q$-function.

To state its definition, we first recall model \eqref{model}.
When $Y$ were observable, the {\em complete} log-likelihood of $\theta$ at $X= x$ and $Y = y$  is 
$$
     \log f(\theta \mid x,y) 
    \propto - \sum_{\ell=1}^L 1\{y=\ell\}\left\{(x-\mu_\ell )^\T \sw^{-1} (x-\mu_\ell) - 2\log(\pi_\ell) +   \log|\sw| \right\},
$$
which is a concave function in $\theta$. Given  any  parameter $\theta$, the surrogate $Q$-function simply replaces the indicator $ 1\{y=\ell\}$ in the above display with 
\begin{align}\label{def_gamma} 
    \gamma_\ell(x; \theta)   &:=~  \PP_{\theta}(Y=\ell \mid X = x) ~ =~  \frac{\pi_\ell}{ \sum_{k=1}^L \pi_k \exp\left\{(x-{1\over 2}( \mu_\ell+\mu_k))^\T \sw^{-1} (\mu_k-\mu_\ell)\right\}}.
\end{align} 
Concretely, given any parameter value $\theta'$, the {\em population level} $Q$-function  is 
\begin{equation}\label{pop_Q}
   Q(\theta  \mid \theta^\prime)= - \EE_{\theta^*} \left\{\sum_{\ell=1}^{L} \gamma_\ell(X;\theta^\prime)\left[ (X-\mu_\ell)^\T \sw^{-1}(X-\mu_\ell) - 2 \log( \pi_\ell) +  \log|\sw| \right] \right\}.
\end{equation} 
For any $t\ge 0$ with the current parameter $\theta^{(t)}$, the E-step is to evaluate  $\gamma_\ell(x;\theta^{(t)})$ in  \eqref{def_gamma} while the M-step is to maximize $Q(\theta \mid \theta^{(t)})$ in \eqref{pop_Q} with respect to its first argument $\theta$. The latter turns out to have a closed-form solution:  for all $\ell \in [L]$,
\begin{align}\label{iter_pi}
    &\pi_\ell^{(t+1)} := \bpi_\ell(\theta^{(t)})~ =   ~ \EE_{\theta^*}[\gamma_\ell(X; \theta^{(t)})],\\ \label{iter_mu}
    &\mu_\ell^{(t+1)} := \bmu_\ell(\theta^{(t)}) ~  =  ~  \frac{\EE_{\theta^*} [\gamma_\ell(X; \theta^{(t)}) X]}{\EE_{\theta^*}[\gamma_\ell(X; \theta^{(t)})]}  
\end{align}
and, by collecting  $\pi^{(t+1)} = (\pi_1^{(t+1)},\ldots,\pi_L^{(t+1)})^\T$ and $M^{(t+1)} = (\mu_1^{(t+1)},\ldots,\mu_L^{(t+1)})$, 
\begin{align}\label{iter_sw}
     \sw^{(t+1)}  :=   \bsw(\theta^{(t)}) &=   \EE_{\theta^*}\left[\sum_{\ell=1}^L \gamma_\ell(X;\theta^{(t)})\bigl(X-\mu_\ell^{(t+1)}\bigr)\bigl(X-\mu_\ell^{(t+1)}\bigr)^\T \right]\\\label{iter_sw_alter}
     &= \sT^* - M^{(t+1)}\diag(\pi^{(t+1)}) M^{(t+1)\T}.
\end{align}
Here we write $\sT^*:=\EE_{\theta^*}[XX^\T]$ and $\diag(\pi^{(t+1)})$ denotes the diagonal matrix with diagonal elements equal to $\pi^{(t+1)}$.
In \cref{iter_pi,iter_mu}, we use bold fonts $\bpi_\ell(\cdot)$, $\bmu_\ell(\cdot)$, and $\bsw(\cdot)$ to denote the operators used in the population level EM algorithm.
The updated parameter  $\theta^{(t+1)} = (\pi^{(t+1)}, M^{(t+1)}, \sw^{(t+1)})$ is subsequently used to compute the E-step in the next iteration. 
The population level EM algorithm  keeps iterating between the E-step in \eqref{def_gamma} and the M-step in \cref{iter_pi,iter_mu,iter_sw} until convergence.

\subsection{The  sample level EM algorithm}\label{sec_EM_samp}
When we have access to $X_1, X_2, \ldots, X_n$ from \eqref{model},
the sample level EM maximizes the following  {\em sample level} $Q$-function: for any given $\theta'$,
\begin{equation}\label{samp_Q}
   Q_n(\theta  \mid \theta^\prime)= - \EE_n \left\{\sum_{\ell=1}^{L} \gamma_\ell(X;\theta^\prime)\Bigl[(X-\mu_\ell)^\T \sw^{-1}(X-\mu_\ell) - 2  \log( \pi_\ell) + \log|\sw|  \Bigr] \right\}
\end{equation} 
where the expectation $\EE_{\theta^*}$  in \eqref{pop_Q} is replaced by the empirical average $\EE_n$ over $X_1,\ldots, X_n$. Correspondingly,  for any $t\ge 0$ and given its current estimate $\wh\theta^{(t)}$,  the E-step at the $(t+1)$th iteration  computes $\gamma_\ell(X_i; \wh\theta^{(t)})$ as in \eqref{def_gamma}, for each $i\in [n]$, while the M-step maximizes $Q_n(\theta \mid \wh\theta^{(t)})$ in \eqref{samp_Q} over $\theta$, yielding that: for all $\ell \in [L]$,
\begin{align}\label{iter_pi_hat}
    \wh\pi^{(t+1)}_\ell &:= ~ \wh\bpi_\ell(\wh \theta^{(t)}) ~ :=~  \EE_n[\gamma_\ell(X; \wh\theta^{(t)})],\\ \label{iter_mu_hat}
    \wh\mu^{(t+1)}_\ell &:= ~ \wh\bmu_\ell(\wh \theta^{(t)}) ~ :=~  \frac{\EE_n[\gamma_\ell(X; \wh\theta^{(t)})   X]}{\EE_n[\gamma_\ell(X; \wh\theta^{(t)})]}
\end{align}
and, with $\whsT = n^{-1}\sum_{i=1}^n X_iX_i^\T$,
\begin{align}\label{iter_sw_hat}
    \whsw^{(t+1)} :=\bwhsw(\theta^{(t)})&=  \EE_n\left[
        \sum_{\ell = 1}^L \gamma_\ell(X; \wh\theta^{(t)}) (X - \wh\mu^{(t+1)}_\ell)(X - \wh\mu^{(t+1)}_\ell)^\T
    \right]\\\label{iter_sw_hat_alter}
    &= \whsT  - \wh M^{(t+1)} \diag(\wh\pi^{(t+1)}) \wh M^{(t+1)\T}.
\end{align}
Analogous to $\bpi_\ell(\cdot)$, $\bmu_\ell(\cdot)$ and $\bsw(\cdot)$ in \cref{iter_pi,iter_mu,iter_sw},  $\wh \bpi_\ell(\cdot)$, $\wh\bmu_\ell(\cdot)$ and $\bwhsw(\cdot)$ denote their sample analogue. 
The updated parameter $\wh \theta^{(t+1)} = (\wh\pi^{(t+1)}, \wh M^{(t+1)}, \whsw^{(t+1)}$) is subsequently used in the next iteration to compute the E-step in \eqref{def_gamma} with $\theta$ replaced by $\wh\theta^{(t+1)}$.
The sample level EM algorithm keeps operating  between the E-step and M-step given above until convergence.

One practical consideration is that computing $\gamma_\ell(X_i; \wh\theta^{(t)})$ in \eqref{def_gamma} requires to invert $\whsw^{(t)}$ for each iteration $t\ge 0$. Using the expression in \eqref{iter_sw_hat_alter}, the following lemma shows that  $\whsw^{(t)}$, for all $t\ge 0$, are invertible as long as $\whsT$ is so.
Its proof is stated in \cref{app_sec_proof_lem_inv_sw_hat}.

\begin{lemma}\label{lem_inv_sw_hat}
    Provided that $\whsT$ is invertible, the matrices $\whsw^{(t+1)}$ in \eqref{iter_sw_hat} for all $t\ge 0$ are invertible. 
\end{lemma}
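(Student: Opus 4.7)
My plan is to establish positive definiteness of $\whsw^{(t+1)}$ by reading off PSD-ness directly from the representation \eqref{iter_sw_hat}, and then ruling out any nonzero null vector using the identity \eqref{iter_sw_hat_alter} together with the invertibility of $\whsT$. The argument is by contradiction, assuming some $v \in \RR^d \setminus \{0\}$ satisfies $v^\T \whsw^{(t+1)} v = 0$.

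In order, the key steps I would carry out are as follows. (1) Expand using \eqref{iter_sw_hat} to write
\[ v^\T \whsw^{(t+1)} v \;=\; \frac{1}{n}\sum_{i=1}^n\sum_{\ell=1}^L \gamma_\ell(X_i;\wh\theta^{(t)})\,\bigl(v^\T X_i - v^\T \wh\mu_\ell^{(t+1)}\bigr)^2 \;\ge\; 0, \]
which already yields $\whsw^{(t+1)} \succeq 0$. (2) Invoke \eqref{def_gamma} and positivity of each $\wh\pi_\ell^{(t)}$ (preserved across M-steps from a positive-definite initialization) to conclude $\gamma_\ell(X_i;\wh\theta^{(t)})>0$ for every $i,\ell$; the quadratic form above can then vanish only if $v^\T X_i = v^\T \wh\mu_\ell^{(t+1)}$ for all $i,\ell$. (3) Use \eqref{iter_mu_hat}, which writes each $\wh\mu_\ell^{(t+1)}$ as a convex combination of the $X_i$'s, to force a single constant $c\in\RR$ with $v^\T X_i = c$ for every $i$ and $v^\T \wh\mu_\ell^{(t+1)}=c$ for every $\ell$. (4) Substitute into \eqref{iter_sw_hat_alter}, using the identity $\sum_\ell \wh\pi_\ell^{(t+1)}\wh\mu_\ell^{(t+1)} = n^{-1}\sum_i X_i$ (a consequence of $\sum_\ell \gamma_\ell \equiv 1$), to derive $v^\T \whsT v = c^2$. (5) Conclude $v = 0$ from the positive definiteness of $\whsT$.

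I expect Step~(5) to be the main obstacle: invertibility of $\whsT$ gives only $c\ne 0$ rather than $v=0$, and the configuration $v^\T X_i \equiv c \ne 0$ (the sample lying on an affine hyperplane not through the origin) is itself compatible with $\whsT \succ 0$. To close the gap I would work with the finer decomposition
\[ \whsw^{(t+1)} \;=\; \bigl(\whsT - \bar X\,\bar X^\T\bigr) \;-\; \wh M^{(t+1)}\bigl(\diag(\wh\pi^{(t+1)}) - \wh\pi^{(t+1)}(\wh\pi^{(t+1)})^\T\bigr)\wh M^{(t+1)\T}, \]
where $\bar X := n^{-1}\sum_i X_i$, combined with the PSD ordering $\diag(\wh\pi^{(t+1)}) \succeq \wh\pi^{(t+1)}(\wh\pi^{(t+1)})^\T$ (with equality only on the span of the all-ones vector). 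Under the natural nondegeneracy $\whsT - \bar X\,\bar X^\T \succ 0$ -- the usual prerequisite for Gaussian covariance estimation to be well-posed, and a standing hypothesis one may wish to add to the statement -- the second summand cannot exhaust the first, supplying the desired contradiction. The remaining verifications are elementary linear algebra.
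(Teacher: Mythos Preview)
Your diagnosis in Step~(5) is exactly right, and in fact the lemma as stated is false. Take $d=2$, $n=3$, $X_1=(1,0)^\T$, $X_2=(1,1)^\T$, $X_3=(1,-1)^\T$; then $\whsT=\diag(1,2/3)$ is invertible, yet for $v=(1,0)^\T$ every $v^\T X_i=1$, hence every $v^\T\wh\mu_\ell^{(t+1)}=1$ and $v^\T\whsw^{(t+1)}v=0$. Your proposed additional hypothesis $\whsT-\bar X\bar X^\T\succ 0$ is precisely what is needed, and under it the argument closes already from Steps~(1)--(3), without the decomposition you wrote down: $v^\T X_i=c$ for all $i$ forces $v^\T(\whsT-\bar X\bar X^\T)v=n^{-1}\sum_i(v^\T X_i-c)^2=0$, an immediate contradiction. (Your phrase ``the second summand cannot exhaust the first'' is not by itself a proof, since $A\succ 0$ and $B\succeq 0$ do not imply $A-B\succ 0$; you still need Steps~(1)--(3) to pin $v$ to the null space of $A$.)

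The paper takes a different route: it writes $\whsw^{(t+1)}=n^{-1}\bX^\T(\bI_n-H)\bX$ with $H=P_\bX\,\wh\Gamma\,\diag^{-1}(\wh\Gamma^\T 1_n)\,\wh\Gamma^\T P_\bX$ (where $\wh\Gamma\in\RR^{n\times L}$ collects the posterior weights) and invokes an auxiliary lemma asserting $\diag(\wh\Gamma^\T 1_n)-\wh\Gamma^\T P_\bX\wh\Gamma\succ 0$, which would give $\|H\|_\op<1$ and hence $\bI_n-H\succ 0$. But that auxiliary lemma carries the very gap you identified: its proof reduces to the claim $\diag(\wh\Gamma^\T 1_n)-\wh\Gamma^\T\wh\Gamma\succ 0$, which is never true for $L\ge 2$ since each row of $\wh\Gamma$ is a probability vector and therefore $1_L$ always lies in the kernel. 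The full matrix $\diag(\wh\Gamma^\T 1_n)-\wh\Gamma^\T P_\bX\wh\Gamma$ is strictly positive definite iff additionally $1_n$ is not in the column space of $\bX$, which (given $\whsT$ invertible) is equivalent to your hypothesis $\whsT-\bar X\bar X^\T\succ 0$. So both approaches require this strengthening; once it is in place, your quadratic-form argument is the more elementary of the two.
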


We remark that the singularity of $\whsT$ can be easily checked, and is guaranteed almost surely  when $n\ge d$.

\section{Convergence of the population level EM algorithm}\label{sec_theory_popu}

In this section, we analyze the population level EM and establish its convergence to the true parameter.   
Denote the smallest and largest  squared Mahalanobis distance between any two Gaussian components by 
\begin{equation}\label{Delta_max_min} 
        \Dtmin :=  \min_{k\ne \ell}  \left\|\mu_k^*-\mu_\ell^*\right\|_{\sw^*}^2,\qquad  \Dtmax  :=  \max_{k, \ell}  \left\|\mu_k^*-\mu_\ell^*\right\|_{\sw^*}^2. 
\end{equation}
The smallest separation, $\Dtmin$, is known as a key quantity of learning Gaussian location mixtures. Regarding $\Dtmax$, we have $\Dtmax = \Dtmin$ when $L = 2$. For $L \ge 3$, \cite{jin2016local} finds that local search procedures may fail to find the global optimum of the likelihood function when one mixture component is significantly farther from the others. This suggests that $\Dtmax$, or more precisely the ratio $\Dtmax/\Dtmin$, influences the difficulty of learning Gaussian mixtures. Another well-known quantity that affects the learning is 
 the smallest mixing probabilities $\pmin = \min_{\ell\in [L]} \pi_\ell^*.$ Our results below characterize the effect of $\Dtmin$, $\Dtmax /\Dtmin$ and $1/\pmin$ on the convergence of the EM algorithm. Finally, since we can always subtract the marginal mean of $X$, we assume $\EE[X] =\sum_{\ell=1}^L \pi_\ell^* \mu_\ell^* = 0$ throughout our analysis.


The following theorem states the convergence rate of the population level EM iterates, $\theta^{(t)} = (\pi^{(t)}, M^{(t)}, \sw^{(t)})$, given in \cref{iter_pi,iter_mu,iter_sw}. For any  $\theta = (\pi, M, \sw)$ and $\theta' = (\pi', M', \sw')$, we define the distances between their mixing probabilities and mean vectors as 
\begin{align}\label{def_dist_pi_M}
     d(\pi, \pi') = \max_{\ell\in [L]} ~ {1\over \pi_\ell^*}|\pi_\ell - \pi'_\ell|,\qquad 
     d(M, M') =  \max_{\ell\in[L]}\left\|\mu_\ell - \mu_\ell'\right\|_{\sw^*}
\end{align} 
and the distance between their covariance matrices as  
\begin{equation}\label{dist_sw}
d(\sw, \sw') = \|\sw^{*-1/2}(\sw - \sw')\sw^{*-1/2}\|_\op.
\end{equation}

\begin{theorem}\label{thm_EM_popu_unknown}
    Assume there exists some large constant $C>0$ such that
    \begin{equation}\label{cond_min_Delta} 
           \Dtmin \ge C \left(\log{\Dtmax \over \Dtmin}+\log{1\over \pmin}\right).
    \end{equation}
    Further assume the initialization $\theta^{(0)} = (\pi^{(0)}, M^{(0)}, \sw^{(0)})$ satisfies 
    \begin{align}\label{cond_init_unknown}
        &  d(\pi^{(0)}, \pi^*)  \le  {1\over 2},\quad  d(M^{(0)},M^*)     \le  c_\mu \sqrt{\Dtmin},\quad d(\sw^{(0)},\sw^*)     \le   c_\sw
    \end{align}
    for  some absolute positive constants $0\le 2c_{\mu} + \sqrt{2} c_\sw < \sqrt{2}-1$.  There exists  some $\kappa  \in (0,1)$ such that $\theta^{(t)} = (\pi^{(t)}, M^{(t)}, \sw^{(t)})$ for any $t\ge 0$, given by \cref{iter_pi,iter_mu,iter_sw}, satisfy
    \begin{align*} 
       d(\pi^{(t)}, \pi^* ) + d(M^{(t)},  M^*) +  d(\sw^{(t)},\sw^*) ~ &\le  ~  \kappa^t \left\{ d(\pi^{(0)},\pi^*) +    d(M^{(0)},M^*) +  d(\sw^{(0)},\sw^*)\right\}.
    \end{align*}    
\end{theorem}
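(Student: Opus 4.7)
The plan is to view the M-step as a deterministic operator $\Phi(\theta) := (\bpi(\theta), \bM(\theta), \bsw(\theta))$, so that the iterates satisfy $\theta^{(t+1)} = \Phi(\theta^{(t)})$, and to exploit the self-consistency property $\theta^* = \Phi(\theta^*)$, which follows from the fact that the E-step replaces $1\{Y=\ell\}$ by its conditional expectation under $\theta^*$. Proving the theorem then reduces to establishing a single-step Lipschitz inequality
\[
d(\bpi(\theta), \bpi(\theta^*)) + d(\bM(\theta), \bM(\theta^*)) + d(\bsw(\theta), \bsw(\theta^*)) \le \kappa\bigl\{d(\pi,\pi^*) + d(M,M^*) + d(\sw,\sw^*)\bigr\},
\]
with some $\kappa \in (0,1)$, uniformly over $\theta$ satisfying \eqref{cond_init_unknown}. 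A short induction then keeps $\theta^{(t)}$ in the neighborhood for all $t$ (so that the Lipschitz bound applies iteratively) and yields the geometric rate claimed.

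The main technical difficulty is that $\gamma_\ell(x;\theta)$ in \eqref{def_gamma} depends on $\sw$ through $\sw^{-1}$, which makes direct differentiation in $\sw$ unwieldy. To circumvent this, I would reparametrize $\theta$ by $\omega(\theta) = (\pi, M, J(\theta))$ with $J(\theta) := \sw^{-1} M$; under this change of variables $\gamma_\ell$ depends on $\sw$ only through the matrix $J(\theta)$ and the quadratic form $\mu_k^\T J(\theta)_\ell$. On the $\omega$-side, $\bpi(\omega)$ and $\bM(\omega)$ are smooth, and their partial derivatives with respect to each of $\pi$, $M$, $J$ can be expanded into Gaussian-mixture moments of $X$ weighted by $\gamma_\ell(X;\theta)$ or $\gamma_\ell(X;\theta)\gamma_k(X;\theta)$. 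I would bound these partials uniformly on the initialization neighborhood, showing in particular that the off-diagonal ``cross-cluster'' blocks (where $k \ne \ell$) decay exponentially in $\Dtmin$, since $\gamma_\ell(X;\theta)\gamma_k(X;\theta)$ is nonnegligible only on the tail event that $X$ is close to the midpoint between $\mu_k^*$ and $\mu_\ell^*$.

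To pull the bound back from $\omega$ to $\theta$, I would apply a chain-rule argument using the Lipschitz continuity of $\theta \mapsto J(\theta) = \sw^{-1} M$ from \cref{lem_lip_J_basic}. The hypothesis $d(\sw^{(0)},\sw^*) \le c_\sw$ with $2c_\mu + \sqrt{2}c_\sw < \sqrt{2}-1$ ensures that $\sw^{-1}$ stays uniformly bounded in operator norm along the iterates, which gives a quantitative Jacobian bound for $J$ and hence the $\theta$-side Lipschitz inequality for $\bpi$ and $\bM$. For the covariance operator, the closed form \eqref{iter_sw_alter} gives $\bsw(\theta) - \bsw(\theta^*) = M^*\diag(\pi^*)M^{*\T} - \bM(\theta)\diag(\bpi(\theta))\bM(\theta)^\T$, so the error in $\bsw$ is controlled directly by the errors in $\bpi$ and $\bM$ just established, completing the contraction for the $\sw$ block.

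The hard part will be calibrating constants so that $\kappa < 1$ under the weak separation condition \eqref{cond_min_Delta}. This amounts to sharp control of cross-component integrals of the form $\EE_{\theta^*}[\gamma_\ell(X;\theta)\gamma_k(X;\theta)\, g(X)]$ for $k \ne \ell$ with polynomial $g$. By decomposing $X$ according to its true mixture component and splitting the integration at the mid-hyperplane between $\mu_k^*$ and $\mu_\ell^*$, Gaussian tail bounds yield exponential decay in $\Dtmin$ that must defeat the polynomial prefactors in $\Dtmax/\Dtmin$ and $1/\pmin$ appearing from the weights and from $d(\pi,\pi^*)$; this is precisely what forces the scaling $\Dtmin \gtrsim \log(\Dtmax/\Dtmin) + \log(1/\pmin)$ in \eqref{cond_min_Delta}, and what leaves enough slack so that the initialization errors bounded by $c_\mu$ and $c_\sw$ can be tolerated without breaking the contraction.
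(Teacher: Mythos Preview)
Your proposal is correct and follows essentially the same route as the paper: reparametrize via $\omega(\theta)=(\pi,M,J(\theta))$ with $J(\theta)=\sw^{-1}M$, establish Lipschitz bounds for $\bpi(\omega)$ and $\bM(\omega)$ by controlling the partial derivatives in $\pi$, $M$, $J$ (the paper packages the cross-cluster weights as $I_{a\ell}(x,\omega)$ rather than $\gamma_\ell\gamma_k$, but $\pi_\ell\sum_a\pi_a I_{a\ell}=\gamma_\ell(1-\gamma_\ell)$, so the distinction is cosmetic), pull back via \cref{lem_lip_J_basic}, handle $\bsw$ through the closed form \eqref{iter_sw_alter}, and induct. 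The Gaussian-tail mechanism you describe---conditioning on the true label and bounding the event that the projection along $\mu_k^*-\mu_\ell^*$ lands near the midpoint---is exactly the paper's event $\cE_N$ in \eqref{def_event_N}, and the resulting contraction factor $\kappa\asymp(\Dtmax^2/\pmin)\exp(-c\Dtmin)$ is made $<1$ precisely by \eqref{cond_min_Delta}, as you anticipate.
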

\begin{proof}
    The proof is given in \cref{app_sec_proof_thm_EM_population}.
\end{proof}

\cref{thm_EM_popu_unknown} guarantees linear convergence of the population level EM iterates. In fact, our proof reveals that the contraction rate $\kappa$ satisfies
\begin{equation}\label{def_kappa}
     \kappa = {\Dtmax^2 \over \pmin} \exp(-c\Dtmin),
\end{equation}
for some absolute constant $c>0$ depending only on $c_\mu$ and $c_\sw$ in \eqref{cond_init_unknown}. This  implies the convergence rate of the EM algorithm can be superlinear, since $\kappa$ decays exponentially when $\Dtmin \equiv \Dtmin(d) \to \infty$ as $d = d(n) \to \infty$.  

As highlighted in the Introduction, this is the first convergence result of the population level EM iterates for the $L$-GMM with $L\ge 2$ under full generality. In particular, it covers the case when $\sw^*$ is known, in which case  the EM algorithm only updates $\pi^{(t)}$ and $M^{(t)}$ in the M-step as in \cref{iter_pi,iter_mu} while computes the E-step in \eqref{def_gamma} using  $\theta^{(t)} = (\pi^{(t)}, M^{(t)}, \sw^*)$. 
The following corollary states a faster convergence rate of such EM iterates. 
\begin{cor}[Known $\sw^*$]\label{cor_EM_popu_known}
    Grant condition \eqref{cond_min_Delta}. Assume the initialization $\pi^{(0)}$ and $M^{(0)}$ satisfy \eqref{cond_init_unknown} with $c_\sw = 0$. There exists some $\kappa \in (0,1)$ given by \eqref{def_kappa} such that  $\theta^{(t)} = (\pi^{(t)},M^{(t)}, \sw^*)$ for all $t\ge 0$, computed in  \cref{iter_pi,iter_mu}, satisfy
    \begin{align*} 
       d( \pi^{(t)}, \pi^* )  + d(M^{(t)},  M^*)  &~ \le  ~  (\kappa/\Dtmax)^t    \left\{ d(\pi^{(0)},\pi^*) +    d(M^{(0)},M^*)\right\}.
    \end{align*} 
\end{cor}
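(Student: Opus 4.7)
The plan is to specialize the argument behind \cref{thm_EM_popu_unknown} to the case where $\sw$ is held fixed at $\sw^*$ and track how the contraction rate improves. Since the M-step formulas \eqref{iter_pi}--\eqref{iter_mu} do not involve $\sw$, the choice $c_\sw = 0$ in \eqref{cond_init_unknown} forces $\sw^{(t)} = \sw^*$ for every $t \ge 0$, so only the iterates $(\pi^{(t)}, M^{(t)})$ need to be tracked. By the self-consistency of EM (\cref{app_sec_self_consistency}), which gives $\theta^* = (\bpi, \bM, \bsw)(\theta^*)$, the single-step recursion reads
\[
\pi^{(t+1)} - \pi^* = \bpi(\pi^{(t)}, M^{(t)}, \sw^*) - \bpi(\pi^*, M^*, \sw^*),
\]
and analogously for $M^{(t+1)} - M^*$. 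The task thus reduces to a Lipschitz estimate for the restricted map $(\pi, M) \mapsto (\bpi, \bM)(\pi, M, \sw^*)$, followed by an induction.

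Next, I would invoke the reparametrization $\omega(\theta) = (\pi, M, J(\theta))$ with $J(\theta) = \sw^{-1} M$ used in the proof of \cref{thm_EM_popu_unknown}. Since $\sw^*$ is fixed, $J(\theta) = \sw^{*-1} M$ is a linear function of $M$, and the partial-derivative bounds for $\omega \mapsto (\bpi, \bM)(\omega)$ collapse to bounds in the two coordinates $(\pi, M)$ alone: every term that would otherwise multiply $d(\sw, \sw^*)$ simply disappears. Inspecting the contraction rate $\kappa = \Dtmax^2 \exp(-c\Dtmin)/\pmin$ in \eqref{def_kappa}, one of the two $\Dtmax$ factors can be traced to the $\bsw$ update, which is quadratic in $M'$ through $\bsw(\theta) = \sT^* - M' \diag(\pi') M'^\T$, together with the fact that a perturbation in $\sw$ enters the exponentials in \eqref{def_gamma} multiplied by squared Mahalanobis magnitudes of order $\Dtmax$; the remaining factor $\Dtmax/\pmin \cdot \exp(-c\Dtmin)$ already captures the sensitivity of $\bpi$ and $\bM$ to $(\pi, M)$-perturbations at fixed $\sw$. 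Eliminating the $\sw$-direction from both the source and the target of the Jacobian therefore saves exactly one $\Dtmax$ factor and yields the single-step contraction
\[
d(\pi^{(t+1)}, \pi^*) + d(M^{(t+1)}, M^*) \le \frac{\kappa}{\Dtmax} \bigl\{ d(\pi^{(t)}, \pi^*) + d(M^{(t)}, M^*) \bigr\}.
\]

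Finally, since $\kappa/\Dtmax < \kappa < 1$ under \eqref{cond_min_Delta}, the initialization ball defined by \eqref{cond_init_unknown} with $c_\sw = 0$ is invariant under the iteration, so the one-step estimate can be composed $t$ times to give the claimed geometric bound at rate $(\kappa/\Dtmax)^t$. The main obstacle lies in the second paragraph: one must verify, by inspecting the explicit partial derivatives of $\bpi$ and $\bM$ with respect to the $\omega$-coordinates computed in the proof of \cref{thm_EM_popu_unknown}, that the saved factor is precisely $\Dtmax$ rather than $\sqrt{\Dtmax}$ or $\Dtmax^2$. Concretely, this means comparing the off-diagonal Jacobian blocks coupling the $\sw$-direction to the $(\pi, M)$-updates against the diagonal blocks that govern the reduced dynamics once $\sw$ is held constant, and confirming that the former contribute exactly one extra power of $\Dtmax$ to the worst-case Lipschitz constant.
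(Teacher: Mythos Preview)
Your proposal is correct and follows exactly the route the paper intends: the corollary is stated without proof and is meant to be read off from the proof of \cref{thm_EM_popu_unknown} by setting $\sw^{(t)}\equiv\sw^*$, which makes $d(J(\theta),J^*)\le 2\,d(M,M^*)$ via \cref{lem_lip_J_basic} and removes the $\bsw$-update entirely; tracking the constants in the one-step bounds $d(\bpi)\le\kappa_\omega\{d(\pi)+3\sqrt{\Dtmax}\,d(M)\}$ and $d(\bM)\le\kappa_\omega\sqrt{\Dtmax}\{d(\pi)+3\sqrt{\Dtmax}\,d(M)\}$ gives a contraction factor of order $\kappa_\omega\Dtmax=\kappa/\Dtmax$, exactly as you say. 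One small inaccuracy: it is not that ``the M-step formulas \eqref{iter_pi}--\eqref{iter_mu} do not involve $\sw$'' that forces $\sw^{(t)}=\sw^*$ --- the E-step \eqref{def_gamma} certainly depends on $\sw$ --- rather, in the known-$\sw^*$ algorithm one simply omits the update \eqref{iter_sw} by fiat, as the corollary's phrase ``computed in \cref{iter_pi,iter_mu}'' indicates.
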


\begin{remark}[Separation requirement]
    Condition \eqref{cond_min_Delta} states the required separation condition between Gaussian components for the population level EM iterates to converge. It characterizes the effect of both the smallest mixing probability $\pmin$ and the ratio between the largest and smallest separations among Gaussian components, $\Dtmax / \Dtmin$. We emphasize that such dependence is particularly informative for $L \ge 3$. In the case of $L = 2$, we have $\Dtmax = \Dtmin$, and it is standard to assume that $\pi_1, \pi_2 \in (c, 1-c)$ for some $c\in (0,1/2)$, under which condition \eqref{cond_min_Delta} reduces to $\Dtmin \ge C$. This is assumed in \cite{EM2017} for symmetric, isotropic setting, and  in \cite{cai2019chime} when $\pi^*$ and $\sw^*$ are unknown. For $L \ge 3$, even when $\Dtmax \asymp \Dtmin$ and $\pmin \asymp 1/L$, condition \eqref{cond_min_Delta} simplifies to $\Dtmin \ge C \log(L)$, still reflecting the effect of  $L$ on the required separation. Such logarithmic dependence on $L$ is known to be necessary for achieving polynomial sample complexity guarantees in the isotropic setting \citep{regev2017learning}. Our condition in \eqref{cond_min_Delta} further shows that when $\sw^*$ is unknown, the additional price to pay is only a logarithmic factor in $\Dtmax$. We refer to \cref{sec_theory_clustering} for comparison with the separation conditions of  clustering-based algorithms.
\end{remark}
\begin{remark}[Initialization]
    The requirement in \eqref{cond_init_unknown}  on $M^{(0)}$ becomes milder as the separation $\Dtmin$ increases.  This is consistent with existing results \cite{EM2017} for  $L = 2$, and \cite{yan2017convergence,Zhao2020,segol2021improved,kwon2020algorithm} for $L\ge 3$. Compared to the latter, their initialization requirement is comparable to ours, although derived under  known / unknown $\pi^*$ and $\sw^*=\bI_d$.
    Since we allow both $\pi^*$ and $\sw^*$ unknown, our initialization requirement in \eqref{cond_init_unknown} also depends on $\pi^{(0)}$ and $\sw^{(0)}$. The number $1/2$ in $d(\pi^{(0)},\pi)$ can be replaced by any nonnegative value less than   $1$. 
    Later in \cref{sec_theory_samp_large} we introduce an alternative initialization scheme that also enjoys provable guarantees. Detailed discussion on  practical choices of initialization is given in \cref{sec_init}.
\end{remark}

In the following we mention one simple choice of $\sw^{(0)}$ when $\sw^*$ is unknown. Recall that the EM iterates in \cref{iter_pi,iter_mu,iter_sw} update $\sw^{(t)}$ by using $\sT^*$,  $\pi^{(t)}$ and $M^{(t)}$ via \eqref{iter_sw_alter}. This suggests using
\begin{equation}\label{def_init_sw}
\sw^{(0)} = \sT^* - M^{(0)} \diag(\pi^{(0)}) M^{(0)\T}
\end{equation}
as the initialization for $\sw^*$, once $\pi^{(0)}$ and $M^{(0)}$ are available. 
The following corollary states the convergence of the  EM algorithm for such choice.  

\begin{cor}[A special initialization of   $\sw^*$]\label{cor_EM_popu_unknown}
    Grant condition \eqref{cond_min_Delta}. Assume that the initialization $\theta^{(0)} = (\pi^{(0)},M^{(0)}, \sw^{(0)})$, with $\sw^{(0)}$ given in \eqref{def_init_sw}, satisfies 
    \begin{align}\label{cond_init_unknown_special}
        &  d(\pi^{(0)}, \pi^*) ~ \le ~ {c_0\over \Dtmax},\qquad   d(M^{(0)},M^*)  ~  \le  ~  {c_0\over \sqrt{\Dtmax}},
    \end{align}
    for some small, absolute constant $c_0>0$, There exists some $\kappa \in (0,1)$ given by \eqref{def_kappa} such that  $\theta^{(t)} = (\pi^{(t)},M^{(t)}, \sw^{(t)})$ for all $t\ge 0$, computed in  \cref{iter_pi,iter_mu,iter_sw}, satisfy
    \begin{align*} 
       d( \pi^{(t)}, \pi^* )  + d(M^{(t)},  M^*) + d(\sw^{(t)},\sw^*) &~ \le  ~  \kappa^t    \left\{ d(\pi^{(0)},\pi^*) +    d(M^{(0)},M^*)\right\}.
    \end{align*} 
\end{cor}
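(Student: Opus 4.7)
The plan is to reduce \cref{cor_EM_popu_unknown} to \cref{thm_EM_popu_unknown} by showing that the special plug-in choice $\sw^{(0)}$ in \eqref{def_init_sw}, together with the tightened bounds in \eqref{cond_init_unknown_special}, automatically implies the covariance initialization requirement $d(\sw^{(0)},\sw^*)\le c_\sw$ in \eqref{cond_init_unknown}. The other two requirements in \eqref{cond_init_unknown} on $\pi^{(0)}$ and $M^{(0)}$ follow immediately from \eqref{cond_init_unknown_special}: the separation condition \eqref{cond_min_Delta} forces $\Dtmin$ and $\Dtmax$ to exceed a large absolute constant, so $c_0/\Dtmax\le 1/2$ and $c_0/\sqrt{\Dtmax}\le c_\mu\sqrt{\Dtmin}$ whenever $c_0$ is chosen small relative to $c_\mu$. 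The main obstacle is therefore the quadratic-in-parameter covariance bound, which needs to be tracked carefully to see that the factor $1/\Dtmax$ (resp.\ $1/\sqrt{\Dtmax}$) inflation in \eqref{cond_init_unknown_special} compared to \eqref{cond_init_unknown} is exactly enough to offset the large norms of the $\mu_\ell^*$'s.

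For the covariance verification, I would invoke the self-consistency identity $\bsw(\theta^*)=\sw^*$ of \cref{app_sec_self_consistency} combined with \eqref{iter_sw_alter}, which gives $\sw^*=\sT^*-M^*\diag(\pi^*)M^{*\T}$. Subtracting this from \eqref{def_init_sw} and sandwiching by $\sw^{*-1/2}$, I would write, with $\tilde\mu_\ell:=\sw^{*-1/2}\mu_\ell$,
\[
\sw^{*-1/2}(\sw^{(0)}-\sw^*)\sw^{*-1/2}
=\sum_{\ell=1}^L \pi_\ell^*\bigl(\tilde\mu_\ell^*\tilde\mu_\ell^{*\T}-\tilde\mu_\ell^{(0)}\tilde\mu_\ell^{(0)\T}\bigr)
+\sum_{\ell=1}^L (\pi_\ell^*-\pi_\ell^{(0)})\,\tilde\mu_\ell^{(0)}\tilde\mu_\ell^{(0)\T}.
\]
The centering assumption $\sum_\ell \pi_\ell^*\mu_\ell^*=0$ together with Jensen's inequality yields $\|\tilde\mu_\ell^*\|_2\le \sqrt{\Dtmax}$ for every $\ell$, and hence $\|\tilde\mu_\ell^{(0)}\|_2\le \sqrt{\Dtmax}+d(M^{(0)},M^*)$. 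Applying the elementary identity $\|uu^\T-vv^\T\|_\op\le (\|u\|_2+\|v\|_2)\|u-v\|_2$ to each summand of the first sum, and $|\pi_\ell^*-\pi_\ell^{(0)}|\le \pi_\ell^*\,d(\pi^{(0)},\pi^*)$ to the second, and then taking operator norms and invoking \eqref{cond_init_unknown_special}, I obtain
\[
d(\sw^{(0)},\sw^*)\;\le\;2\sqrt{\Dtmax}\,d(M^{(0)},M^*)+\Dtmax\,d(\pi^{(0)},\pi^*)+\mathcal{O}(c_0/\Dtmax)\;\le\;3c_0+o(c_0),
\]
which is at most $c_\sw$ once $c_0$ is chosen sufficiently small.

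With all three initialization conditions of \cref{thm_EM_popu_unknown} verified, that theorem provides
\[
d(\pi^{(t)},\pi^*)+d(M^{(t)},M^*)+d(\sw^{(t)},\sw^*)\;\le\;\kappa^t\bigl\{d(\pi^{(0)},\pi^*)+d(M^{(0)},M^*)+d(\sw^{(0)},\sw^*)\bigr\}.
\]
It remains to absorb $d(\sw^{(0)},\sw^*)$ using the displayed bound, which upper bounds it by $C\Dtmax\{d(\pi^{(0)},\pi^*)+d(M^{(0)},M^*)\}$ for an absolute constant $C$. Setting $\kappa':=\kappa\cdot C\Dtmax=(C\Dtmax^3/\pmin)\exp(-c\Dtmin)$ and using the form of $\kappa$ in \eqref{def_kappa}, the strengthened version of \eqref{cond_min_Delta}, in which the absolute constant $C$ is enlarged to absorb an extra $3\log\Dtmax$, ensures $\kappa'\in(0,1)$ and, since $C\Dtmax\ge 1$, the inequality $\kappa^t\cdot C\Dtmax\le(\kappa')^t$ for every $t\ge 1$. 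Replacing $\kappa$ by $\kappa'$ yields the statement of the corollary.
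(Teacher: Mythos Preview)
Your approach is essentially identical to the paper's: the paper likewise reduces the corollary to \cref{thm_EM_popu_unknown} by showing that the plug-in choice of $\sw^{(0)}$ satisfies $d(\sw^{(0)},\sw^*)\le c_\sw$, invoking the Lipschitz estimate $d(\sw,\sw^*)\le 2\Dtmax\,d(\pi,\pi^*)+3\sqrt{\Dtmax}\,d(M,M^*)$ (packaged there as \eqref{eq_lip_sw}, a special case of \cref{lem_lip_sw}) rather than re-deriving it as you do. The paper leaves the final absorption of $d(\sw^{(0)},\sw^*)$ into the right-hand side implicit, whereas you spell it out by passing to $\kappa'=C\Dtmax\,\kappa$; note that under \eqref{cond_min_Delta} this $\kappa'$ can still be written in the form \eqref{def_kappa} with a slightly smaller exponent $c$, so the corollary's phrasing ``$\kappa$ given by \eqref{def_kappa}'' is recovered (and, as you implicitly observe, the displayed bound should be read for $t\ge 1$).
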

\begin{proof}
   In \eqref{lip_sw} of \cref{app_sec_proof_EM_popu} we prove that for any $\sw = \sT^* - M \diag(\pi) M^\T$ with $d(M,M^*)\le \sqrt{\Dtmax}$,  one has
    \begin{equation}\label{eq_lip_sw}
         d(\sw, \sw^*)~ \le ~   2\Dtmax~  d(\pi,\pi^*)+3\sqrt{\Dtmax}~  d(M,M^*).
    \end{equation} 
    \cref{cor_EM_popu_unknown} follows by invoking \eqref{eq_lip_sw} for $\sw^{(0)}$ and noting that \eqref{cond_init_unknown} holds under \eqref{cond_init_unknown_special}.
\end{proof}

 In order to ensure the choice of $\sw^{(0)}$ in \eqref{def_init_sw} satisfying \eqref{cond_init_unknown}, the requirement on $\pi^{(0)}$ and $M^{(0)}$  gets more restrictive than \eqref{cond_init_unknown} by a multiplicative factor of $1/\Dtmax$.

\paragraph{Proof sketch of \cref{thm_EM_popu_unknown}.}
    The full proof can be found in \cref{app_sec_proof_thm_EM_population} and we only offer its sketch  here. For any $\theta^{(t)}=(\pi^{(t)}, M^{(t)}, \sw^{(t)})$ and any $\ell \in [L]$, with $\pi_\ell^{(t+1)}$ and $\mu_\ell^{(t+1)}$ defined in  \cref{iter_pi,iter_mu},  we need to analyze the differences  
    \begin{align*}
        \pi_\ell^{(t+1)}-\pi^*_\ell &= \bpi_\ell(\theta^{(t)})-\bpi_\ell(\theta^*)= \EE_{\theta^*}[\gamma_\ell(X; \theta^{(t)})] - \EE_{\theta^*}[\gamma_\ell(X; \theta^*)]\\
        \mu_\ell^{(t+1)}-\mu^*_\ell &= \bmu_\ell(\theta^{(t)})-\bmu_\ell(\theta^*)= {\EE_{\theta^*}[\gamma_\ell(X; \theta^{(t)}) X]\over \EE_{\theta^*}[\gamma_\ell(X; \theta^{(t)})]} - {\EE_{\theta^*}[\gamma_\ell(X; \theta^*)X]\over \EE_{\theta^*}[\gamma_\ell(X; \theta^*)]}
    \end{align*} 
    where the first equalities in both lines use  the so-called {\em self-consistency} of the $Q$ function in \eqref{pop_Q}, that is, $\theta^* = \argmax_\theta Q(\theta \mid \theta^*)$ such that $\bpi_\ell(\theta^*) = \pi_\ell^*$ and $\bmu_\ell(\theta^*) = \mu_\ell^*$. See, details in \cref{app_sec_self_consistency}. 
    The first step of our analysis is to re-parametrize the function $\gamma_\ell(x; \theta)$ in \eqref{def_gamma} by using $\omega(\theta) = (\pi, M, J(\theta))$ with $J(\theta) := \sw^{-1}M$ so that
    \[
        \gamma_\ell(x; \theta) = \frac{\pi_\ell}{ \sum_{k=1}^L \pi_k \exp\left\{(x-{1\over 2}( \mu_\ell+\mu_k))^\T J(\theta)(\be_k-\be_\ell)\right\}} = \gamma_\ell(x; \omega(\theta))
    \]
    as well as $\bpi_\ell(\theta) = \bpi_\ell(\omega(\theta))$ and $\bmu_\ell(\theta) = \bmu_\ell(\omega(\theta))$.
    This reparametrization avoids direct manipulation of $\sw^{-1}$ which alleviates the difficulties of subsequent analysis.
    In \cref{thm_EM_population_omega} of \cref{app_sec_proof_thm_EM_population} we establish the following Lipschitz continuity of the map $\omega(\theta) \mapsto \bpi_\ell(\omega(\theta))$ with the Lipschitz constant $\kappa_w \asymp  (1/\pmin) \exp{(-c\Dtmin)}$:
    \begin{align}\label{eq_contraction} 
       {1\over \pi_\ell^*} \left|\bpi_\ell(\omega(\theta))-\bpi_\ell(\omega(\theta^*))\right|
        & ~ \le~ \kappa_w  \sqrt{\Dtmax} \bigl\{d(\pi,\pi^*) +  d(M,M^*) +      d(J(\theta),J(\theta^*))\bigr\}
    \end{align}
    for all $\omega(\theta)$ within a neighborhood of  $\omega(\theta^*)$,  
    in particular, for $\omega(\theta^{(t)})$. We refer to \eqref{def_dist_J} the definition of $d(J(\theta), J(\theta^*))$.
    The same bound in \eqref{eq_contraction}, up to a multiplicative factor $\sqrt{\Dtmax}$, is also shown to hold for $\|\bmu_\ell(\theta) -\bmu_\ell(\theta^*)\|_{\sw^*}$. 
    In \cref{lem_lip_J_basic} of \cref{app_sec_proof_thm_EM_population} we further derive the following Lipschitz property of $\theta\mapsto J(\theta)$ for any $\theta$ satisfying \eqref{cond_init_unknown}:
    \begin{equation}\label{eq_lip_J}
        d(J(\theta), J(\theta^*)) ~ \le ~ 2d(M,M^*) + \sqrt{\Dtmax}~ d(\sw,\sw^*),
    \end{equation}
    which in conjunction with \eqref{eq_contraction} gives  
    $$
        d(\pi^{(t+1)},\pi^*) + d(M^{(t+1)},M^*) \le  \kappa_w \Dtmax \left\{
            d(\pi^{(t)},\pi^*) + 3d(M^{(t)},M^*) +   \sqrt{\Dtmax}~ d(\sw^{(t)},\sw^*)
        \right\}.
    $$
    Combining with \eqref{eq_lip_sw}, \cref{thm_EM_popu_unknown} thus follows.  

   From the technical perspective, establishing the Lipschitz continuity in \eqref{eq_contraction} for both $\bpi_\ell(\omega(\theta)) $ and $\bmu_\ell(\omega(\theta))$ is the most challenging part. It requires quantifying the partial derivatives of $\gamma_\ell(x; \omega(\theta))$ with respect to $\pi$, $M$, and $J(\theta)$ uniformly over a neighborhood of $\omega(\theta^*)$. 
   The proofs of \eqref{eq_contraction} are provided in \cref{app_sec_proof_rate_contract_pi_w_J} and \cref{app_sec_proof_rate_contract_mu_w_J}, along with the supporting technical lemmas in \cref{lem_delta_J_order,lem_I_fN,lem_var_gamma_X} of \cref{app_sec_tech_lemmas_EM_popu}.

\section{Theoretical analysis of the sample level EM algorithm}\label{sec_theory_samp}

We present our theoretical guarantees of the sample level EM algorithm in this section. The analysis is based on two different approaches of analyzing the EM depending on the magnitude of  $\Dtmin$. When $\Dtmin$ is small to moderate, as considered in \cref{sec_theory_samp_small}, we directly characterize the convergence of the EM iterates in \cref{iter_pi_hat,iter_mu_hat,iter_sw_hat}, to the true parameter. On the other hand, when $\Dtmin$ is moderate to large, our analysis in \cref{sec_theory_samp_large} first establishes the convergence of a surrogate loss related to community detection under model \eqref{model}, and then translates this into a convergence between the EM iterates and $\theta^*$.

\subsection{Convergence of the EM algorithm under small to moderate separation}\label{sec_theory_samp_small}

In this section we directly analyze the convergence to $\theta^*$ of the EM iterates, $\wh\theta^{(t)} = (\wh \pi^{(t)}, \wh M^{(t)}, \whsw^{(t)})$, for $t\ge 0$, given by \cref{iter_pi_hat,iter_mu_hat,iter_sw_hat}. For each $h\in \{\pi, M, \sw\}$, we decompose 
\begin{align}\label{decomp_h_diff}
	\wh h^{(t+1)} - h^*  ~ =~  \wh \bh(\wh\theta^{(t)})- \bh(\theta^*)~ =~  
	\left( \wh \bh(\wh\theta^{(t)})- \wh \bh(\theta^*)\right)+\left(\wh \bh(\theta^*) - \bh(\theta^*)\right).
\end{align}
The first equality uses the self-consistency of population level EM. 
We analyze  $ \wh \bh(\wh\theta^{(t)})- \wh \bh(\theta^*)$ in \cref{sec_theory_samp_conv} by  establishing the Lipschitz continuity of the operator $\wh \bh \in \{\wh \bpi, \wh \bM, \bwhsw\}$. The second term $\wh \bh(\theta^*) - \bh(\theta^*)$ is analyzed in \cref{sec_theory_samp_rate} where we derive concentration inequalities between the sample level and population level M-steps evaluated at   $\theta^*$.
 
It is worth clarifying that the decomposition in \eqref{decomp_h_diff} differs from that introduced by \cite{EM2017} and subsequently followed in \cite{cai2019chime,yan2017convergence,Zhao2020,segol2021improved}. Specifically, their approach decomposes
\[
	\wh h^{(t+1)} - h^*  ~ =~   
	\left( \wh \bh(\wh\theta^{(t)})-   \bh(\wh\theta^{(t)})\right)+\left( \bh(\wh\theta^{(t)}) - \bh(\theta^*)\right)
\]
where the second term is relatively easy to handle using the population level contraction property of $\bh$ in \cref{sec_theory_popu}. The first term, on the other hand, characterizes the statistical error, which involves analyzing the empirical process
$
	\sup_{\theta} \|\wh \bh(\theta)-   \bh(\theta)\|.
$
This is a challenging task, especially when aiming for convergence rates with optimal dependence on the parameters $n$, $d$, $L$, $\pmin$, $\Dtmin$, and $\Dtmax$. In contrast, our decomposition in (\ref{decomp_h_diff}) only requires establishing that $\sup_{\theta} \|\wh \bh(\theta)-   \wh \bh(\theta^*)\| < 1$, which is a relatively easier task to accomplish. Moreover, under (\ref{decomp_h_diff}), the statistical error is governed by the pointwise convergence $\wh\bh(\theta^*) - \bh(\theta^*)$, for which the optimal convergence rate is comparatively easier to derive, as demonstrated in the next section.


\subsubsection{Concentration inequalities of the   M-steps at the true parameter}\label{sec_theory_samp_rate}

We derive concentration inequalities between the sample-level M-steps $\wh \bpi_\ell(\theta^*)$, $\wh \bmu_{\ell}(\theta^*)$, and $\bwhsw(\theta^*)$ and their population-level counterparts. This requires to bound
\begin{align*}
	     \wh\bpi_\ell(\theta^*)- \bpi_\ell(\theta^*)  &=   \EE_n[\gamma_\ell(X; \theta^*)] - \EE_{\theta^*}[\gamma_\ell(X; \theta^*)] ,\\
	    \wh\bmu_\ell(\theta^*)- \bmu_\ell(\theta^*)   &= { \EE_n[\gamma_\ell(X; \theta^*) X]\over \EE_n[\gamma_\ell(X; \theta^*)]} - {\EE_{\theta^*}[\gamma_\ell(X; \theta^*) X]\over \EE_{\theta^*}[\gamma_\ell(X; \theta^*)]}
	\end{align*}
	as well as  $\bwhsw(\theta^*)-\bsw(\theta^*) $ which equals to
	\begin{align*}
		\EE_n[\gamma_\ell(X; \theta^*) (X-\wh \bmu_{\ell}(\theta^*))(X-\wh \bmu_{\ell}(\theta^*))^\T]  -\EE[\gamma_\ell(X; \theta^*) (X- \mu_{\ell}^*)(X- \mu_{\ell}^*)^\T] .
	\end{align*} 
	
	The following theorem states the rate of convergence of the above three terms.  Recall the distance $d(\sw,\sw^*)$ from \eqref{dist_sw}.

 \begin{theorem}\label{thm_concent}
    Grant condition \eqref{cond_min_Delta}.  Assume $n\ge d\log n$ and
    $
        n\pmin  \ge C \log(n)(1 \vee  \log(n)/d).
    $
    Then with probability at least $1- 4n^{-1}$, one has that for all $\ell \in [L]$, 
\begin{align} \label{concent_rate_pi} 
     {1\over \pi_{\ell}^*} | \wh \bpi_{\ell}(\theta^*) - \bpi_{\ell}(\theta^*) | &~ \lesssim ~ \sqrt{ \log n \over n \pi_{\ell}^*},\\\label{concent_rate_M} 
      \|\wh \bmu_\ell(\theta^*) - \bmu_{\ell}(\theta^*) \|_{\sw^*}  & ~ \lesssim ~  \sqrt{  d\log n \over n\pi_{\ell}^*}.
\end{align}
If, additionally, $n\ge d L^2\log n$ and $nd \ge L^2\log^3 n$ hold, then with the same probability,
\begin{align} \label{concent_rate_sw}
	d(\bwhsw(\theta^*), \bsw(\theta^*) ) &\lesssim \sqrt{d\log n \over n}.  
\end{align} 
\end{theorem}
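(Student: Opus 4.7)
My plan centers on the self-consistency of the population EM operators at $\theta^*$ (established in \cref{app_sec_self_consistency}), giving $\bpi_\ell(\theta^*)=\pi_\ell^*$, $\bmu_\ell(\theta^*)=\mu_\ell^*$ and $\bsw(\theta^*)=\sw^*$, so that each of the three quantities becomes a deviation of an i.i.d.\ empirical average from its mean. A unifying device across all three bounds is to split analysis into the regimes $\Dtmin\gtrsim\log n$ versus $\Dtmin\lesssim\log n$: in the former the responsibilities $\gamma_\ell(X_i;\theta^*)$ coincide with $1\{Y_i=\ell\}$ up to polynomial accuracy, reducing the problem to oracle-labelled Gaussian concentration, while in the latter the soft nature of $\gamma_\ell$ must be tracked explicitly, which is what produces the correction terms involving $\Dtmax/\Dtmin$.

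For \eqref{concent_rate_pi} I would simply apply Bernstein's inequality to the bounded i.i.d.\ variables $\gamma_\ell(X_i;\theta^*)\in[0,1]$, using $\EE[\gamma_\ell]=\pi_\ell^*$ and $\Var(\gamma_\ell)\le\EE[\gamma_\ell^2]\le\EE[\gamma_\ell]=\pi_\ell^*$, followed by a union bound over $\ell\in[L]$; the condition $n\pmin\gtrsim\log n$ absorbs the sub-exponential crossover term. For \eqref{concent_rate_M} I rewrite $\wh\bmu_\ell(\theta^*)-\mu_\ell^*=\EE_n[\gamma_\ell(X-\mu_\ell^*)]/\EE_n[\gamma_\ell]$, lower-bound the denominator by $\pi_\ell^*/2$ using \eqref{concent_rate_pi}, and attack the numerator with a vector Bernstein argument after reducing to $\sw^*=I_d$. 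The variance proxy in any unit direction $v$ is $\EE[\gamma_\ell^2(v^\T(X-\mu_\ell^*))^2]\le \EE[\gamma_\ell(v^\T(X-\mu_\ell^*))^2]$; conditioning on $Y=k$ this is $\pi_\ell^*$ for $k=\ell$ (Gaussian variance $1$ in direction $v$) and is exponentially small in $\Dtmin$ for $k\ne\ell$, because $\gamma_\ell$ decays like $\exp(-\Dtmin/2)$ away from $\mu_\ell^*$. Combined with an $\epsilon$-net over $S^{d-1}$ and a truncation separating the rare event $\gamma_\ell\|X-\mu_\ell^*\|_{\sw^*}\gg\sqrt{\Dtmax/\Dtmin}\,\log n$ from the bulk, I recover the leading $\sqrt{d\log n/(n\pi_\ell^*)}$ term together with the sub-exponential correction $(\log^{3/2}n/(n\pi_\ell^*))\sqrt{\Dtmax/\Dtmin}$.

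For \eqref{concent_rate_sw} -- the hardest step -- I start from the original expansion of $\bwhsw(\theta^*)$, introduce $\mu_\ell^*$ via $X-\wh\bmu_\ell(\theta^*)=(X-\mu_\ell^*)-(\wh\bmu_\ell(\theta^*)-\mu_\ell^*)$, and exploit the identity $\EE_n[\gamma_\ell(X-\mu_\ell^*)]=\wh\bpi_\ell(\theta^*)(\wh\bmu_\ell(\theta^*)-\mu_\ell^*)$ to collapse the cross terms. This yields
$$
\bwhsw(\theta^*)-\sw^*\ =\ \Bigl\{\EE_n\bigl[{\textstyle\sum_\ell}\gamma_\ell(X;\theta^*)(X-\mu_\ell^*)(X-\mu_\ell^*)^\T\bigr]-\sw^*\Bigr\}\ -\ {\textstyle\sum_\ell}\,\wh\bpi_\ell(\theta^*)\bigl(\wh\bmu_\ell(\theta^*)-\mu_\ell^*\bigr)\bigl(\wh\bmu_\ell(\theta^*)-\mu_\ell^*\bigr)^\T,
$$
where the second term is controlled directly by \eqref{concent_rate_pi}--\eqref{concent_rate_M} and produces the $\Dtmax/\Dtmin$-weighted residuals in the stated bound. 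For the first (PSD) term I would invoke matrix Bernstein in the $\sw^*$-normalized basis on the summands $V_i=\sum_\ell\gamma_\ell(X_i;\theta^*)(X_i-\mu_\ell^*)(X_i-\mu_\ell^*)^\T$, conditioning on $Y_i=k$: the $\ell=k$ summand is a Wishart-type term with conditional mean $\sw^*$ yielding the clean $\sqrt{d\log n/n}$ rate, while $\ell\ne k$ summands carry the weight $\gamma_\ell(X_i;\theta^*)\lesssim\exp(-c\Dtmin)$ in the bulk, and after truncation contribute only to the higher-order $(\log^2 n\cdot\Dtmax/\Dtmin)/n$ remainder.

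The main obstacle throughout Step 3 is avoiding a spurious $\sqrt{\Dtmax}$ factor in the leading $\sqrt{d\log n/n}$ rate; naive matrix Bernstein on $\sw^{*-1/2}X_i$ would inflate the bound by $\|\EE[XX^\T]\|_{\sw^*}\asymp 1+\Dtmax$. The remedy is precisely to exploit the exponential decay of $\gamma_\ell$ on the mis-specified components, so that only the diagonal $\ell=Y_i$ part drives the leading fluctuation and contributes a clean Wishart-type variance proxy $\sw^*\otimes\sw^*$. A secondary difficulty is carrying out the soft-responsibility moment estimates in the regime $\Dtmin\lesssim\log n$, where $\gamma_\ell$ cannot be approximated by a hard label; there one must control sub-exponential moments of $\gamma_\ell\cdot\|X-\mu_\ell^*\|_{\sw^*}^{k}$ for $k\in\{0,1,2\}$ by conditioning on $Y$ and integrating the Gaussian tails of the shifted non-central chi-square directly, which is the source of the $\log^{3/2}n$ and $\log^2 n$ factors in \eqref{concent_rate_M} and \eqref{concent_rate_sw}.
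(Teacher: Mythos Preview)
Your proposal is correct and follows essentially the same route as the paper: self-consistency reduces each quantity to an i.i.d.\ deviation, the analysis is split into the regimes $\Dtmin\lesssim\log n$ versus $\Dtmin\gtrsim\log n$, scalar/vector/matrix Bernstein inequalities are applied after truncation on $\|N_i\|_2$, and for $\sw$ the same decomposition into the Wishart-type term minus the rank-$L$ correction $\sum_\ell\wh\bpi_\ell(\wh\bmu_\ell-\mu_\ell^*)(\wh\bmu_\ell-\mu_\ell^*)^\T$ is used. One technical point worth noting: in the small-$\Dtmin$ case the paper centers the matrix summands at their \emph{conditional} means $\EE[H_i\mid Y_i=a]$ and then must separately control the bias $\sum_a(\wt\pi_a-\pi_a^*)M_a$; a naive bound there would cost an extra $\sqrt{L}$, so the paper applies a second matrix Bernstein to this term, a step your description does not make explicit (though applying matrix Bernstein \emph{unconditionally} to $V_i-\sw^*$, as your wording could also be read, avoids this issue altogether).
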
 
\begin{proof}
	The proof can be found in \cref{app_sec_proof_concentration}.  
\end{proof}

\subsubsection{Contraction of the sample level EM iterates}\label{sec_theory_samp_conv}

From the decomposition in \eqref{decomp_h_diff}, another key ingredient towards proving the convergence of the EM algorithm is to establish the contraction property of $\wh \bh(\wh\theta^{(t)})- \wh \bh(\theta^*)$  
for all three operators $\wh\bh \in \{\wh\bpi, \wh\bM, \wh\bsw\}$.  The following proposition states a one-step, local contraction result for any $\theta$ satisfying the initialization requirement in \eqref{cond_init_unknown}.
\begin{prop}\label{prop_EM_samp_contra}
		Under the separation condition \eqref{cond_min_Delta}, assume 
		\begin{equation}\label{rate_cond_param_known}
			dL\log^2(n)  {\sqrt{\Dtmax(L+\Dtmax)}}~ \le c ~ n\pmin
		\end{equation}  
		for some small constant $c>0$.
		There exists some $\kappa_n \in (0,1)$ such that with probability at least $1-5n^{-1}$, the following holds uniformly for any  $\theta = (\pi, M,\sw)$ satisfying \eqref{cond_init_unknown}:  
		\begin{align*} 
			d(\wh\bpi(\theta), \wh\bpi(\theta^*)) + 	d(\wh \bM(\theta), \wh \bM(\theta^*)) & ~ \le ~ \kappa_n  \left\{
			d(\pi,\pi^*) + d(M,M^*) + \sqrt{\Dtmax}~d(\sw, \sw^*)
			\right\}\\
			d(\bwhsw(\theta),\bwhsw(\theta^*)) &~\le ~ \kappa_n  \sqrt{\Dtmax} \left\{
			d(\pi,\pi^*) + d(M,M^*) + \sqrt{\Dtmax}~d(\sw, \sw^*)
			\right\}.
		\end{align*} 
\end{prop}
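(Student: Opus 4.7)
\textbf{Proof proposal for Proposition \ref{prop_EM_samp_contra}.}

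The plan is to mimic the population-level contraction proof of Theorem \ref{thm_EM_popu_unknown} at the sample level. As in the population analysis, I would first reparametrize any $\theta = (\pi, M, \sw)$ by $\omega(\theta) = (\pi, M, J(\theta))$ with $J(\theta) = \sw^{-1}M$, so that the responsibilities satisfy $\gamma_\ell(x;\theta) = \gamma_\ell(x;\omega(\theta))$ and consequently $\wh\bpi_\ell(\theta) = \wh\bpi_\ell(\omega(\theta))$, $\wh\bmu_\ell(\theta) = \wh\bmu_\ell(\omega(\theta))$, and $\bwhsw(\theta) = \bwhsw(\omega(\theta))$. Working in the $\omega$-coordinates circumvents the need to differentiate $\sw^{-1}$ directly and isolates the three directions $(\pi, M, J)$ in which sensitivity must be quantified.

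The core of the argument would be a sample-level analogue of Theorem \ref{thm_EM_population_omega} (referenced in the text as Theorem \ref{thm_EM_samp_omega}), asserting that, with high probability, the maps $\omega \mapsto \wh\bpi_\ell(\omega)$, $\omega \mapsto \wh\bmu_\ell(\omega)$, and $\omega \mapsto \bwhsw(\omega)$ are Lipschitz in each coordinate, uniformly over a neighborhood of $\omega(\theta^*)$ of the size dictated by \eqref{cond_init_unknown}. Specifically, for any such $\omega(\theta)$, I would derive
\[
\frac{1}{\pi_\ell^*}\bigl|\wh\bpi_\ell(\omega(\theta))-\wh\bpi_\ell(\omega(\theta^*))\bigr| \;\lesssim\; \wh\kappa_w \sqrt{\Dtmax}\bigl\{d(\pi,\pi^*) + d(M,M^*) + d(J(\theta),J(\theta^*))\bigr\},
\]
and similarly for $\|\wh\bmu_\ell(\omega(\theta)) - \wh\bmu_\ell(\omega(\theta^*))\|_{\sw^*}$ (with an extra $\sqrt{\Dtmax}$), where $\wh\kappa_w$ is the empirical analogue of the population Lipschitz constant $\kappa_w \asymp \pmin^{-1}\exp(-c\Dtmin)$. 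This is proved by a mean-value argument: bound the partial derivatives of $\gamma_\ell(X;\omega)$ with respect to $(\pi, M, J)$ pointwise (reusing the technical lemmas behind \eqref{eq_contraction}), and then show that their empirical averages concentrate uniformly around their expectations on the $\omega$-neighborhood. Finally, combining this bound with the $\theta \mapsto J(\theta)$ Lipschitz inequality \eqref{eq_lip_J}, which gives $d(J(\theta),J(\theta^*)) \le 2d(M,M^*) + \sqrt{\Dtmax}\, d(\sw,\sw^*)$, produces the first inequality of the proposition with
\[
\kappa_n \;\asymp\; \wh\kappa_w \cdot \Dtmax \;\asymp\; \frac{\Dtmax}{\pmin}\exp(-c\Dtmin) + (\text{uniform convergence slack}).
\]
For the covariance contraction, the identity \eqref{iter_sw_hat_alter} reduces $\bwhsw(\theta) - \bwhsw(\theta^*)$ to a quadratic expression in $\wh\bpi(\theta)$ and $\wh\bM(\theta)$, which, combined with the bound \eqref{eq_lip_sw} applied to the empirical second moment matrix $\whsT$, yields the extra $\sqrt{\Dtmax}$ factor on the right-hand side.

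The main obstacle is obtaining \emph{uniform} concentration of the empirical partial derivatives of $\gamma_\ell(X;\omega)$ and of the first two moments $\EE_n[\gamma_\ell(X;\omega)X]$, $\EE_n[\gamma_\ell(X;\omega)XX^\T]$, over the whole neighborhood of $\omega(\theta^*)$, with deviations small enough that the resulting $\kappa_n$ remains strictly below $1$. This is where condition \eqref{rate_cond_param_known} enters: a covering/$\epsilon$-net argument over the $d$-dimensional mean parameters and $L$-dimensional mixing weights, combined with a vector/matrix Bernstein inequality applied to the sub-Gaussian increments $\gamma_\ell(X_i;\omega)(X_i - \mu_\ell)$ (and their outer products), produces a uniform deviation of order $\sqrt{dL\Dtmax(L+\Dtmax)\log^2 n/n}$ up to constants, which must be absorbed into the population contraction $\kappa$ from \eqref{def_kappa}. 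Equation \eqref{rate_cond_param_known} is precisely what guarantees $\kappa_n < 1$. The delicate point is that the constants multiplying $d(\pi,\pi^*)$, $d(M,M^*)$ and $\sqrt{\Dtmax}\,d(\sw,\sw^*)$ on the right-hand side must match, so I would carry the factor $\sqrt{\Dtmax}$ through the calculation of each partial derivative with care, rather than rescaling at the end.
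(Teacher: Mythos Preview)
Your proposal is essentially correct and follows the same route as the paper: reparametrize via $\omega(\theta)=(\pi,M,J(\theta))$, establish sample-level Lipschitz continuity of $\wh\bpi_\ell$, $\wh\bmu_\ell$, $\bwhsw$ in the $\omega$-coordinates by bounding the partial derivatives (which reduce to the empirical quantities $\wh\cR_I^{(j)}$ of \eqref{def_cRs_samp}), obtain uniform control over the neighborhood via an $\epsilon$-net on $(\pi,M,J)$, and then convert back to $\theta$-coordinates using \eqref{eq_lip_J} and \eqref{eq_lip_sw}. One implementation detail worth flagging: the paper does not bound the empirical partial derivatives by applying Bernstein directly to the increments $\gamma_\ell(X_i;\omega)(X_i-\mu_\ell)$ as you suggest, but instead controls $\wh\cR_I^{(j)}$ by a bad-set argument (Lemma~\ref{lem_I_fN_samp}): for each $\omega$ in the net it isolates the indices $S(\omega)$ where the Gaussian noise is atypically large, bounds $|S(\omega)|$ via Bernstein on the indicator variables, and uses deterministic exponential bounds on the complement. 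This yields the contraction constant $\xi_n\sqrt{L+\Dtmax}$ with $\xi_n$ as in \eqref{def_xi_n}, and condition \eqref{rate_cond_param_known} is exactly what forces $\kappa_n\asymp\xi_n\sqrt{\Dtmax(L+\Dtmax)}<1$.
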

 
	 \cref{prop_EM_samp_contra} is proved in \cref{app_sec_proof_contraction_samp}.
	 Its proof follows the road-map of proving the population-level result in  \cref{thm_EM_popu_unknown}. By reparametrizing $\gamma_\ell(x; \theta)$ as $\gamma_{\ell}(x; \omega(\theta))$, where $\omega(\theta) = (\pi, M, J(\theta))$, the key technical difficulty lies in establishing the Lipschitz continuity of the mappings $\theta \mapsto \wh \bpi_\ell(\omega(\theta))$ and $\theta \mapsto \wh \bmu_{\ell}(\omega(\theta))$ for all $\theta$ satisfying \eqref{cond_init_unknown}. This reduces to deriving uniform convergence of certain empirical quantities of the form
	 $\EE_n[\partial \gamma_{\ell}(X; \omega(\theta))]$,
	 $\EE_n[\partial \gamma_{\ell}(X; \omega(\theta)) X]$, and
	 $\EE_n[\partial \gamma_{\ell}(X; \omega(\theta)) XX^\T]$,
	 where the partial derivatives of $\gamma_{\ell}(X; \omega(\theta))$ are taken with respect to $\pi$, $M$, and $J(\theta)$ separately. The uniform convergence established in \cref{lem_I_fN_samp}, together with  the population-level contraction rate $\kappa$ defined in \eqref{def_kappa}, determines the contraction rate $\kappa_n$. Under conditions \eqref{cond_min_Delta} and \eqref{rate_cond_param_known}, it is ensured that $\kappa_n < 1$.
	 
	 From \cref{prop_EM_samp_contra}, we also observe that estimating $\sw^*$ results in a slower algorithmic rate of convergence for the EM algorithm; specifically, it is slower by a factor of $\Dtmax$ compared to the case when $\sw^*$ is known.\\

 \begin{remark}[Condition on $\Dtmin$ and $\Dtmax$]\label{rem_separation_small}
	Condition \eqref{rate_cond_param_known} puts a restriction on the largest separation $\Dtmax$. Combined with condition \eqref{cond_min_Delta}, \cref{prop_EM_samp_contra} requires
	\[
	C \left(\log{\Dtmax \over \Dtmin}+\log{1\over \pmin}\right) \le \Dtmin \le  {c ~ n\pmin \over dL (\Dtmax /\Dtmin)  \log^2n }\left(
	1 + {n\pmin \over dL^2\log^2 n}
	\right).
	\]
	Although this already allows for a wide range of $\Dtmin$, it still imposes a restriction that prevents $\Dtmin$ from growing too quickly. Since a large $\Dtmin$ is expected to aid parameter estimation, this requirement is somewhat counterintuitive. Our analysis in the next section reveals that this constraint on $\Dtmin$ arises from  directly analyzing the contraction of the M-steps, and that it can be removed by adopting a different analytical approach. It is for this reason that the analysis presented in this section is only suitable for small to moderate separation.\\
\end{remark}

Combining \cref{thm_concent} with \cref{prop_EM_samp_contra} yields the following guarantees of the sample level EM iterates.  We state our results for known $\sw^*$ and unknown $\sw^*$ separately. 
  
\begin{theorem}[Known $\sw^*$]\label{thm_EM_samp_known}
	Grant conditions \eqref{cond_min_Delta} and \eqref{rate_cond_param_known}.  Assume $nd\ge L^2\log^3 n$.
	Whenever the initialization $\wh \pi^{(0)}$ and $\wh M^{(0)}$ satisfy  \eqref{cond_init_unknown} with $c_{\sw}=0$, 
	with probability at least $1-5n^{-1}$, the EM iterates $\wh \theta^{(t)} = (\wh \pi^{(t)},\wh M^{(t)},\sw^*)$ in \cref{iter_pi_hat,iter_mu_hat} satisfy
	\begin{align} \label{rate_EM_final_known}
		d(\wh\pi^{(t)}, \pi^*) +d(\wh M^{(t)}, M^*) 
		&~ \lesssim ~   \sqrt{d\log n\over n\pmin},\qquad \forall ~ t\ge C \log n.
	\end{align}  
\end{theorem}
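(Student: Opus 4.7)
The plan is to combine the one-step local contraction from Proposition \ref{prop_EM_samp_contra} with the pointwise concentration bound from Corollary \ref{cor_concent}, via the decomposition \eqref{decomp_h_diff} applied to $h \in \{\pi, M\}$. Since $\sw^*$ is known, $\whsw^{(t)} \equiv \sw^*$ and the covariance term $d(\sw,\sw^*)$ in Proposition \ref{prop_EM_samp_contra} drops out, giving
\[
d(\wh\bpi(\theta), \wh\bpi(\theta^*)) + d(\wh\bM(\theta), \wh\bM(\theta^*)) \le \kappa_n \bigl\{d(\pi, \pi^*) + d(M, M^*)\bigr\}
\]
uniformly over all $\theta$ obeying \eqref{cond_init_unknown} with $c_\sw = 0$. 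Working on the event of probability at least $1-5n^{-1}$ where both this bound and the concentration estimates in Corollary \ref{cor_concent} hold, and invoking the self-consistency identities $\bpi(\theta^*)=\pi^*$ and $\bM(\theta^*)=M^*$, the triangle inequality applied to \eqref{decomp_h_diff} will yield the one-step recursion
\[
d(\wh\pi^{(t+1)}, \pi^*) + d(\wh M^{(t+1)}, M^*) \le \kappa_n \bigl\{d(\wh\pi^{(t)}, \pi^*) + d(\wh M^{(t)}, M^*)\bigr\} + C\sqrt{\tfrac{d\log n}{n\pmin}}.
\]

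Next I would argue by induction on $t$ that the invariant \eqref{cond_init_unknown} is preserved along the whole trajectory, so that the recursion may be applied at every step. The base case is the hypothesized initialization. For the inductive step, the recursion together with the geometric decay of $\kappa_n^{t}$ shows that $d(\wh\pi^{(t)}, \pi^*)$ and $d(\wh M^{(t)}, M^*)$ remain bounded by $1/2$ and $c_\mu\sqrt{\Dtmin}$ respectively, provided the residual $C\sqrt{d\log n/(n\pmin)}/(1-\kappa_n)$ is strictly smaller than these thresholds. Combining \eqref{cond_min_Delta} and \eqref{rate_cond_param_known} produces $d\log n \ll n\pmin\Dtmin$, which gives exactly the required smallness.

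Once the recursion is established for every $t \ge 0$, unrolling yields
\[
d(\wh\pi^{(t)}, \pi^*) + d(\wh M^{(t)}, M^*) \le \kappa_n^{t}\bigl\{d(\wh\pi^{(0)}, \pi^*) + d(\wh M^{(0)}, M^*)\bigr\} + \frac{C}{1-\kappa_n}\sqrt{\tfrac{d\log n}{n\pmin}}.
\]
Because the population level contraction constant $\kappa$ in \eqref{def_kappa} is strictly less than $1$, and $\kappa_n$ inherits the same gap from Proposition \ref{prop_EM_samp_contra}, taking $t \ge C\log n$ with a sufficiently large constant drives the transient $\kappa_n^{t}\{d(\wh\pi^{(0)},\pi^*)+d(\wh M^{(0)},M^*)\}$ below $n^{-1}$, which is dominated by the residual $\sqrt{d\log n/(n\pmin)}$. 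This delivers \eqref{rate_EM_final_known}.

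The hard part will be the bookkeeping that keeps the entire trajectory on a single high-probability event. Since Proposition \ref{prop_EM_samp_contra} is already uniform in $\theta$ over the whole region \eqref{cond_init_unknown}, no fresh probabilistic concession is needed at each step of the induction; the remaining subtlety is purely deterministic, namely matching constants so that $C\sqrt{d\log n/(n\pmin)}/(1-\kappa_n)$ does not saturate the region. This is exactly the role played by the upper bound on $\Dtmax$ in \eqref{rate_cond_param_known}, together with the lower bound on $\Dtmin$ from \eqref{cond_min_Delta}.
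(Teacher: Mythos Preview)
Your proposal is correct and matches the paper's approach: decompose via \eqref{decomp_h_diff}, invoke Proposition~\ref{prop_EM_samp_contra} (with $d(\sw,\sw^*)=0$) for the contraction term and the pointwise concentration at $\theta^*$ for the residual, maintain the invariant \eqref{cond_init_unknown} by a one-step induction, and unroll the resulting linear recursion. One small caveat: cite Theorem~\ref{thm_concent} directly for the $\pi$ and $M$ bounds rather than Corollary~\ref{cor_concent}, since the second half of \eqref{cond_delta_ratio} (which is only needed for the $\sw$ concentration you do not use here) is not guaranteed by \eqref{rate_cond_param_known} alone.
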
 
\begin{proof}
	The proof is given in \cref{app_sec_proof_thm_EM_samp}.
\end{proof}

\cref{thm_EM_samp_known} states that, with a suitable initialization, the EM iterates achieve a statistical precision of order $\sqrt{d \log n / (n\pmin)}$ after only $\cO(\log n)$ iterations. The next theorem extends such guarantees to the case when $\sw^*$ is unknown and needs to be estimated.

\begin{theorem}[Unknown $\sw^*$]\label{thm_EM_samp}
     Grant condition \eqref{cond_min_Delta}, $nd\ge L^2\log^3 n$  and
	\begin{equation}\label{rate_cond_param_unknown}
		dL\log^2(n)  \Dtmax {\sqrt{(\Dtmax+1/\pmin )(L+\Dtmax)}} ~ \le c~  n\pmin. 
	\end{equation} 
	Whenever $\wh \theta^{(0)} = (\wh \pi^{(0)},\wh M^{(0)},\whsw^{(0)})$ satisfies  \eqref{cond_init_unknown}, 
	with probability $1-5n^{-1}$, the EM iterates $\wh \theta^{(t)} = (\wh \pi^{(t)},\wh M^{(t)},\whsw^{(t)})$ in \cref{iter_pi_hat,iter_mu_hat,iter_sw_hat} satisfy: for all $t\ge C\log n$,
	\begin{align}\label{rate_EM_final_unknown}
		d(\wh\pi^{(t)}, \pi^*)  + d(\wh M^{(t)}, M^*) 
		  \lesssim  \sqrt{d \log n\over n\pmin},\qquad  
		d(\whsw^{(t)},\sw^*)   
		   \lesssim    \sqrt{d\log n\over n} .
	\end{align}
\end{theorem}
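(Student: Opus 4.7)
The plan is to combine the one-step contraction of Proposition \ref{prop_EM_samp_contra} with the concentration bounds of Theorem \ref{thm_concent} (or, equivalently, Corollary \ref{cor_concent}) via the decomposition in \eqref{decomp_h_diff}, and then iterate. Because estimating $\sw^*$ introduces an extra factor $\sqrt{\Dtmax}$ in the $\sw$-contraction bound, the natural progress measure is the weighted error
\[
e_t ~:=~ d(\wh\pi^{(t)},\pi^*) + d(\wh M^{(t)},M^*) + \sqrt{\Dtmax}\, d(\whsw^{(t)},\sw^*),
\]
which aligns the two bounds in Proposition \ref{prop_EM_samp_contra} to share a common right-hand side. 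Initialization \eqref{cond_init_unknown} gives $e_0 \lesssim \sqrt{\Dtmin} + \sqrt{\Dtmax}$.

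For the one-step recursion, apply \eqref{decomp_h_diff} componentwise. Using self-consistency $\bh(\theta^*) = h^*$ and writing $\epsilon_{\pi M} := d(\wh\bpi(\theta^*),\pi^*) + d(\wh\bM(\theta^*),M^*)$ and $\epsilon_{\sw} := d(\bwhsw(\theta^*),\sw^*)$, Proposition \ref{prop_EM_samp_contra} (applied at $\theta = \wh\theta^{(t)}$, which must be verified inductively to lie in the region \eqref{cond_init_unknown}) yields
\[
d(\wh\pi^{(t+1)},\pi^*) + d(\wh M^{(t+1)},M^*) \le \kappa_n\, e_t + \epsilon_{\pi M},\qquad d(\whsw^{(t+1)},\sw^*) \le \kappa_n \sqrt{\Dtmax}\, e_t + \epsilon_{\sw}.
\]
Adding the second inequality multiplied by $\sqrt{\Dtmax}$ to the first gives
\[
e_{t+1} \le \kappa_n(1 + \Dtmax)\, e_t + \underbrace{\epsilon_{\pi M} + \sqrt{\Dtmax}\,\epsilon_{\sw}}_{=: E_n}.
\]

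The heart of the argument is verifying that the effective contraction factor $\rho := \kappa_n(1+\Dtmax)$ is strictly less than $1$, say at most $1/2$. At the population level, \eqref{def_kappa} gives $\kappa = (\Dtmax^2/\pmin) e^{-c\Dtmin}$, and the separation condition \eqref{cond_min_Delta} ensures $\kappa \cdot \Dtmax \ll 1$. At the sample level, the empirical error absorbed into $\kappa_n$ (made explicit in the proof of Proposition \ref{prop_EM_samp_contra}) is controlled by condition \eqref{rate_cond_param_unknown}, which is precisely \eqref{rate_cond_param_known} inflated by an extra factor $\Dtmax \sqrt{\Dtmax + 1/\pmin}$; this inflation is needed exactly to offset the extra $\Dtmax$ in $\rho = \kappa_n(1+\Dtmax)$. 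Once $\rho \le 1/2$, iterating yields $e_t \le \rho^t e_0 + 2 E_n$, and choosing $t \ge C\log n$ drives $\rho^t e_0$ below $n^{-C'}$, so that $e_t \lesssim E_n$.

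It remains to bound $E_n$ and to decouple the $\pi,M$ and $\sw$ pieces in the final statement. Condition \eqref{rate_cond_param_unknown} implies the ratio bound \eqref{cond_delta_ratio}, so Corollary \ref{cor_concent} applies and gives $\epsilon_{\pi M} \lesssim \sqrt{d\log n/(n\pmin)}$ and $\epsilon_{\sw} \lesssim \sqrt{d\log n/n}$; moreover \eqref{rate_cond_param_unknown} also ensures $\sqrt{\Dtmax}\,\epsilon_{\sw} \lesssim \sqrt{d\log n/(n\pmin)}$, so $E_n \lesssim \sqrt{d\log n/(n\pmin)}$. From $e_t \lesssim E_n$ we read off the $\pi$- and $M$-bounds in \eqref{rate_EM_final_unknown} directly, while the $\sw$-bound follows from $d(\whsw^{(t)},\sw^*) \le \kappa_n\, e_{t-1} + \epsilon_{\sw}$, which once $e_{t-1} \lesssim E_n$ gives $d(\whsw^{(t)},\sw^*) \lesssim \kappa_n E_n + \epsilon_{\sw} \lesssim \epsilon_{\sw} \lesssim \sqrt{d\log n/n}$ as required.

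The main obstacle is the interplay between the contraction factor and the covariance amplification: one must show that $\kappa_n(1+\Dtmax) \le 1/2$ holds simultaneously with $\wh\theta^{(t)}$ staying in the region \eqref{cond_init_unknown} at every step. The inductive invariant $e_t \le e_0$ (up to the statistical floor) is preserved because $E_n$ is, under \eqref{rate_cond_param_unknown}, much smaller than both $\sqrt{\Dtmin}$ and the absolute constants $c_\mu, c_\sw$ in the initialization; hence the radius of validity for Proposition \ref{prop_EM_samp_contra} is never violated. A minor additional bookkeeping step is ensuring that the concentration bounds of Theorem \ref{thm_concent} and the uniform contraction of Proposition \ref{prop_EM_samp_contra} hold on a single high-probability event of probability at least $1 - 5n^{-1}$, which follows from a union bound since both are $1 - O(n^{-1})$ events.
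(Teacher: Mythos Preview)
Your approach is essentially the one the paper takes: the same weighted error $e_t = d(\wh\pi^{(t)},\pi^*) + d(\wh M^{(t)},M^*) + \sqrt{\Dtmax}\,d(\whsw^{(t)},\sw^*)$ (this is exactly the paper's $d(\wh\theta^{(t)},\theta^*)$ in \eqref{def_dist_theta}), the same decomposition \eqref{decomp_h_diff}, the same one-step contraction (the paper invokes Theorem~\ref{thm_EM_samp_omega} plus Lemma~\ref{lem_lip_J_basic} rather than the packaged Proposition~\ref{prop_EM_samp_contra}, but the content is identical), and the same induction together with the concentration bounds from Theorem~\ref{thm_concent}.

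There is one slip worth flagging in your $\sw$-decoupling step. From Proposition~\ref{prop_EM_samp_contra} the one-step bound is $d(\whsw^{(t)},\sw^*) \le \kappa_n\sqrt{\Dtmax}\,e_{t-1} + \epsilon_{\sw}$, not $\kappa_n\,e_{t-1} + \epsilon_{\sw}$ as you wrote; with the correct factor you must show $\kappa_n\sqrt{\Dtmax}\,E_n \lesssim \epsilon_{\sw}$, i.e.\ essentially $\kappa_n\sqrt{\Dtmax}/\sqrt{\pmin} \lesssim 1$. This does \emph{not} follow from $\rho = \kappa_n(1+\Dtmax) \le 1/2$ alone (that only gives $\kappa_n\sqrt{\Dtmax} \lesssim 1$, leaving a factor $1/\sqrt{\pmin}$ uncontrolled). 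It \emph{does} follow from the full strength of \eqref{rate_cond_param_unknown}: the extra factor $\sqrt{\Dtmax + 1/\pmin}$ there (compared to \eqref{rate_cond_param_known}) is needed precisely for this decoupling, not only---as you suggest---to drive $\rho$ below $1/2$. The paper handles this by tracking the three components separately and then explicitly invoking \eqref{rate_cond_param_unknown} to absorb the cross term $\kappa\Dtmax(\epsilon_\pi + \epsilon_M + \sqrt{\Dtmax}\,\epsilon_{\sw})$ into $\epsilon_{\sw}$.
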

\begin{proof}
	The proof is given in \cref{app_sec_proof_thm_EM_samp}.
\end{proof}

 Comparing to \eqref{rate_cond_param_known}, the required condition in \eqref{rate_cond_param_unknown} becomes stronger due to the slower rate of convergence for estimating $\sw^*$, as seen in \cref{prop_EM_samp_contra}.

\begin{remark}[A special initialization of $\sw^*$]\label{rem_sT_samp}
	Recall the expression of $\whsw^{(t)}$ in  \eqref{iter_sw_hat_alter}.
	If we opt to initialize  
	$\whsw^{(0)}  = \whsT - \wh M^{(0)}\diag(\wh \pi^{(0)}) \wh M^{(0)\T}$ with $\whsT$ being the marginal sample covariance matrix,
	then similar as \cref{cor_EM_popu_unknown}, under condition  \eqref{rate_cond_param_unknown}   and provided that   $\wh\pi^{(0)}$ and $\wh M^{(0)}$ satisfy   \eqref{cond_init_unknown_special}, the conclusion in \cref{thm_EM_samp} remains. Its proof can be found in \cref{app_sec_proof_rem_sT_samp}.
\end{remark}

\subsection{Convergence of the EM algorithm under moderate to large separation}\label{sec_theory_samp_large}

In this section, we establish the convergence result of the EM algorithm when $\Dtmin$ is moderate to large. Our analysis is based on a surrogate loss function defined below. To provide some intuition, we note that for any data point $i \in [n]$ and label $\ell \in [L]$, when $\Dtmin$ increases, one should expect  $\PP_{\theta^*}(Y_i = \ell \mid X_i) = \gamma_{\ell}(X_i; \theta^*)$ in \eqref{def_gamma} to get closer to one if $Y_i = \ell$, and  closer to zero otherwise. This is also expected  for any  $\theta$ that is  close to $\theta^*$. Therefore, a useful metric to evaluate any given $\theta$ in this case should both characterize the discrepancy between $\gamma_{\ell}(X_i; \theta)$ and the true labels $1\{Y_i = \ell\}$ for all $i \in [n]$ and $\ell \in [L]$, and reflect the separation magnitude.

In light of this,  for any $\theta$ with $\gamma_\ell(\cdot;\theta)$ given in \eqref{def_gamma}, we define the loss function
\begin{equation}\label{def_phi_theta}
	\phi(\theta) :=  \sum_{\ell = 1}^L \sum_{a\in[L]\setminus\{\ell\}}\sum_{i\in \wh G_a} \gamma_{\ell}(X_i;\theta) \|\mu_{\ell}^* - \mu_a^*\|_{\sw^*}^2.
\end{equation}
Here $\wh G_1,\ldots, \wh G_L$ form a partition of  $[n]$, defined as $\wh G_a = \{i \in [n]: Y_i = a\}$ for each $a\in [L]$. It is easy to see that $\phi(\theta)$ equals zero if $\gamma_{\ell}(X_i; \theta) = 1\{Y_i = \ell\}$ for all $i \in [n]$. Therefore, establishing the convergence of $\phi(\wh \theta^{(t)})$ to zero is closely related to the community detection problem. Under this perspective, oracle estimator to which $\wh \theta^{(t)}$ converges is the MLE of $\theta^*$ assuming the true labels are observed, given by $\wt \theta := (\wt \pi, \wt M, \wt \sw)$, where for $\ell \in [L]$, 
\begin{equation}\label{def_theta_td}
	\wt \pi_\ell := {n_\ell\over n},\qquad \wt \mu_{\ell} = {1\over n_\ell} \sum_{i\in \wh G_\ell} X_i,\qquad \wt \sw = {1\over n}\sum_{\ell = 1}^L \sum_{i \in \wh{G}_\ell} (X_i - \wt \mu_{\ell})(X_i - \wt \mu_{\ell})^\T,
\end{equation}
where $n_\ell =  \sum_{i=1}^n 1\{Y_i=\ell\}$. Although loss functions analogous to (\ref{def_phi_theta}) have been employed in the analysis of clustering methods based on hard assignments \cite{chen2024achieving, gao2022iterative}, (\ref{def_phi_theta}) introduces a novel loss function tailored to the setting of soft assignments.

The following theorem states the convergence of $\phi(\wh \theta^{(t)})$, for $t\ge 0$,  with $\wh \theta^{(t)}$ being the EM iterates in \cref{iter_pi_hat,iter_mu_hat,iter_sw_hat}.
We start with the case of known $\sw^*$. 
 
	\begin{theorem}[Known $\sw^*$]\label{thm_conv_phi_known}
		Assume $\Dtmin \ge C\log (1/\pmin)$ and 
		\[
			n \ge  C d \left\{
			{\log n\over \pmin},~ L^2\log n, ~ {L^2\log(n\Dtmax) \over \pminsq \Dtmin^2} 
			\right\}
		\]
		for some large constant $C>0$. 
			For any $\wh \theta^{(0)} = (\wh \pi^{(0)},\wh M^{(0)},\sw^*)$ satisfying  
			\begin{itemize}[itemsep = 0mm]
				\item[(a)] either condition \eqref{cond_init_unknown} with   $
					c_\mu \le  \min  \{(\sqrt{2}-1)/2,   ~  c  ~  \pmin \Dtmin/L\},$
				\item[(b)] or $
				(1 / \pmin\Dtmin) (1+ {L /  \pmin \Dtmin }) \phi(\wh \theta^{(0)})   \le c~ n,$
			\end{itemize} 
			for some small constant $c>0$, 
		 	 with probability at least $1-4n^{-1}-\exp(-c'\Dtmin)$,  
			\[
			\phi(\wh\theta^{(t)})  ~ \le ~   n\exp(-c'\Dtmin) + {1\over 2}	\phi(\wh\theta^{(t-1)}),\qquad \forall ~ t\ge 1.
			\]
	\end{theorem}
	\begin{proof}
		The proof can be found in \cref{app_sec_proof_thm_conv_phi_known}.
	\end{proof}

	\cref{thm_conv_phi_known} states that $\phi(\wh \theta^{(t)})$ converges to $n\exp(-c'\Dtmin)$ linearly as $t\to \infty$. When the latter is small, one should expect $\wh \theta^{(t)}$ converges to $\wt \theta$ given in \eqref{def_theta_td} which further converges to $\theta^*$ in the optimal rate. We state such convergence of $\wh\theta^{(t)}$ to $\theta^*$ in \cref{thm_param_phi}.
	
	The key difference from the convergence of $\wh \theta^{(t)}$ established in \cref{prop_EM_samp_contra,thm_EM_samp_known} is that the convergence of $\phi(\wh \theta^{(t)})$ in \cref{thm_conv_phi_known} only requires a lower bound on $\Dtmin$, and  benefits from larger values of $\Dtmin$. This aligns with the intuition that recovering the labels $Y_1, \ldots, Y_n$ becomes easier as $\Dtmin$ increases.
	
	Regarding initialization requirements, condition (a) is stronger than \eqref{cond_init_unknown} in \cref{thm_EM_popu_unknown} only when $\Dtmin \pmin \le L/c$. This extra requirement is needed to have the factor $1/2$ in front of $\phi(\wh \theta^{(t-1)})$. On the other hand, the convergence of $\phi(\wh \theta^{(t)})$ allows for an alternative initialization scheme as given in (b) of \cref{thm_conv_phi_known}. 
	We refer to \cref{sec_init} for detailed discussion on initialization.

	The following proposition extends \cref{thm_conv_phi_known} to unknown $\sw^*$ in which the required conditions become slightly stronger.

\begin{theorem}[Unknown $\sw^*$]\label{thm_conv_phi}
	For some large constant $C>0$, assume
	$n \pmin \ge C d\log n$, $\Dtmin \ge C \log(1/\pmin) + C \log \log n$
	and 
	\begin{equation}\label{cond_n_phi}
	 n \ge C	 dL^2 \log(n\Dtmax) \left(
		{1\over \pminsq \Dtmin^2} +  {\log n \over \Dtmin} + 1
		\right).
	\end{equation} 
	For any $\wh \theta^{(0)} = (\wh \pi^{(0)},\wh M^{(0)},\whsw^{(0)})$ satisfying  
	\begin{itemize}[itemsep = 0mm]
		\item[(a)] either 	\eqref{cond_init_unknown} with  $
			c_\mu + c_{\sw} \le   \min \{(\sqrt{2}-1)/2,   c(1+\pmin \Dtmin)/L\},$
		\item[(b)] or $
			({\Dtmax / \Dtmin} + L+ {L /  \pminsq \Dtmin^2 })\phi(\wh \theta^{(0)})  \le c ~ n$
	\end{itemize} 
	for some small constant $c>0$, with probability $1-4n^{-1}-\exp(-c'\Dtmin)$, 
	\[
  \phi(\wh\theta^{(t)}) ~  \le~   n\exp(-c'\Dtmin) + {1\over 2}	\phi(\wh\theta^{(t-1)}),\qquad\forall ~ t\ge 1.
	\]
\end{theorem}
\begin{proof}
	Its proof is deferred to \cref{app_sec_proof_thm_conv_phi}.
\end{proof}

Comparing to \cref{thm_conv_phi_known}, both the condition in \eqref{cond_n_phi} and the initialization requirements in (a) and (b) become stronger due to estimating $\sw^*$. Although the required lower bound on $\Dtmin$ has one additional $\log\log n$ term, the results in \cref{thm_conv_phi}  still benefit from larger values of $\Dtmin$, in contrast to \cref{thm_EM_samp}.

\paragraph{Proof sketch of  \cref{thm_conv_phi_known,thm_conv_phi}.}  
The proof mainly consists of four main steps.  In the first step, for any $t\ge 1$, by using   the reparametrization $\omega(\wh \theta^{(t)}) = (\wh \pi^{(t)}, \wh M^{(t)}, J(\wh \theta^{(t)}))$ with $J(\wh \theta^{(t)}) = \whsw^{(t)-1}\wh M^{(t)}$ and adding and subtracting the oracle estimator $\wt \theta = (\wt \pi, \wt M, \wt \sw)$ in \eqref{def_theta_td},  
we bound $\phi(\wh\theta^{(t)}) \lesssim  \rI +\rII +\rIII$ by the following three terms
\begin{align*}
	\rI &=   \sum_{\ell = 1}^L \sum_{a\in[L]\setminus\{\ell\}}\sum_{i\in \wh G_a}   \gamma_{\ell}(X_i; \theta^*) \|\mu_{\ell}^* - \mu_a^*\|_{\sw^*}^2,\\
\rII &= \sum_{\ell = 1}^L \sum_{a\in[L]\setminus\{\ell\}}\sum_{i\in \wh G_a} 1\left\{
\left| N_i^\T\sw^{*1/2} (\wt J - J^*)(\be_a - \be_\ell)  \right| >  c' \|\mu_{\ell}^* - \mu_a^*\|_{\sw^*}^2
\right\} \|\mu_{\ell}^* - \mu_a^*\|_{\sw^*}^2,\\
\rIII&=  \sum_{\ell = 1}^L \sum_{a\in[L]\setminus\{\ell\}}  {n_a\over  \|\mu_{\ell}^* - \mu_a^*\|_{\sw^*}^2} \left(  \|\wh \mu_{\ell}^{(t)}-\wt \mu_{\ell}\|_{\sw^*}^2 + 	  \|\wh \mu_{a}^{(t)}-\wt \mu_{a}\|_{\sw^*} ^2+  \|(\whsw^{(t)}-\wt\sw)\wt J(\be_a - \be_{\ell})\|_{\sw^*}^2\right).
\end{align*}
Here $N_1,\ldots, N_n$ are i.i.d. from $\cN_d(0,\bI_d)$ and $\wt J := \wt\sw^{-1}\wt M$. The remaining steps analyze each term separately. The first term is relatively easy to analyze and the second step of our proof shows  that $\rI \lesssim n \exp(-c'\Dtmin)$ with probability at least $1-\exp(-c'\Dtmin)$.  

To bound $\rII$,  one could apply the inequality
$$
	| N_i^\T\sw^{*1/2} (\wt J - J^*)(\be_a - \be_\ell)  |  \le \|N_i\|_2 \|(\wt J - J^*)(\be_a -\be_{\ell})\|_{\sw^*}
$$ 
along with   $\max_{i \in [n]}\|N_i\|_2 = \cO_\PP(\sqrt{d + \log n})$ and 
$\|(\wt J - J^*)(\be_a -\be_{\ell})\|_{\sw^*} = \cO_\PP(\sqrt{(d+\log n) / n\pmin})$, which follow from the analysis of $\wt M$ and $\wt \sw$ in \eqref{def_theta_td}. However, this approach leads to the suboptimal requirement $n \gg d^2$ in order to ensure that $\rII$ is negligible. Instead, our analysis in \cref{lem_sigma_diff_td} of \cref{app_sec_tech_lemmas_thm_conv_phi} employs a leave-one-out technique to control this term more carefully, thereby improving the requirement to $n \gg d$.

Finally, the fourth step is to bound $\rIII$ in \cref{lem_phi_params} of \cref{app_sec_tech_lemmas_thm_conv_phi}  where we relate  the distance between  $\wh \theta^{(t)} = (\wh \bpi(\wh \theta^{(t-1)}),\wh \bM(\wh \theta^{(t-1)}), \bwhsw(\wh \theta^{(t-1)}))$ and $\wt \theta = (\wt \pi,\wt M, \wt \sw)$ to  $\phi (\wh \theta^{(t-1)})$. Together with condition \eqref{cond_n_phi}, we obtain $\rIII \le \phi(\wh \theta^{(t-1)}) / 2$, which completes the proof. \\

Using \cref{lem_phi_params} in the fourth step above, the convergence of $\phi(\wh \theta^{(t)})$ can be translated into the convergence of $\wh \theta^{(t)}$ toward $\wt \theta$. Combined with the analysis of the estimation error of $\wt \theta$, this immediately yields the following convergence rate of the EM algorithm for estimating $\theta^*$.

\begin{theorem}\label{thm_param_phi}
	Under conditions in \cref{thm_conv_phi}, with probability at least $1-n^{-1}-\exp(-c'\Dtmin)$, the following holds for all $t\ge C\log n$,  
	\begin{align*}
			d(\wh \pi^{(t)},\pi^*) + d(\wh M^{(t)}, M^*)  &~ \lesssim ~ \sqrt{d\log n\over n\pmin} + \exp(-c'\Dtmin),\\
			d(\whsw^{(t)},\sw^*)   
			& ~ \lesssim ~  \sqrt{d\log n\over n} +  \exp(-c'\Dtmin).
	\end{align*}
	As a result, when $\Dtmin \ge \log(n) / (2c')$, with probability at least $1-\cO(n^{-1} )$, for all $t\ge C\log n$,  
	\begin{equation*}
			d(\wh \pi^{(t)},\pi^*) + d(\wh M^{(t)}, M^*)  ~ \lesssim ~  \sqrt{d\log n\over n\pmin} ,\qquad 
		d(\whsw^{(t)},\sw^*)    ~ \lesssim ~  \sqrt{d\log n\over n}.
	\end{equation*}
\end{theorem}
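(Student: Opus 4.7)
The plan is to pass through the oracle MLE $\wt\theta$ defined in \eqref{def_theta_td}, writing for each $h\in\{\pi,M,\sw\}$,
\[
d(\wh h^{(t)}, h^*) \le d(\wh h^{(t)}, \wt h) + d(\wt h, h^*),
\]
and controlling the algorithmic piece $d(\wh h^{(t)},\wt h)$ through $\phi(\wh\theta^{(t-1)})$ while handling the statistical piece $d(\wt h,h^*)$ by direct concentration.

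For the algorithmic piece I would first iterate the one-step contraction of \cref{thm_conv_phi} to obtain
\[
\phi(\wh\theta^{(t)}) \le 2n\exp(-c'\Dtmin) + 2^{-t}\phi(\wh\theta^{(0)}), \qquad \forall\, t\ge 1.
\]
The crude a priori bound $\phi(\wh\theta^{(0)})\le n\Dtmax$ (from $\sum_\ell\gamma_\ell(X_i;\theta)\le 1$ and $\|\mu_\ell^*-\mu_a^*\|_{\sw^*}^2\le\Dtmax$) makes the geometric term polynomially small in $n$ once $t\ge C\log n$, leaving $\phi(\wh\theta^{(t-1)})\lesssim n\exp(-c'\Dtmin)$ up to a negligible remainder. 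Feeding this into \cref{lem_phi_params} from the proof sketch, which controls the weighted sum
\[
\sum_{\ell}\sum_{a\ne\ell}\frac{n_a}{\|\mu_\ell^*-\mu_a^*\|_{\sw^*}^2}\Bigl(\|\wh\mu_\ell^{(t)}-\wt\mu_\ell\|_{\sw^*}^2+\|\wh\mu_a^{(t)}-\wt\mu_a\|_{\sw^*}^2+\|(\whsw^{(t)}-\wt\sw)\wt J(\be_a-\be_\ell)\|_{\sw^*}^2\Bigr)
\]
by a constant multiple of $\phi(\wh\theta^{(t-1)})$, and exploiting nonnegativity of the summands together with $n_a\gtrsim n\pmin$ (Bernstein on multinomial counts) and the uniform bound $\|\mu_\ell^*-\mu_a^*\|_{\sw^*}^2\le\Dtmax$, one extracts individual coordinate bounds
\[
d(\wh M^{(t)},\wt M)^2 + d(\whsw^{(t)},\wt\sw)^2 \lesssim \frac{\Dtmax}{n\pmin}\phi(\wh\theta^{(t-1)}) \lesssim \frac{\Dtmax}{\pmin}\exp(-c'\Dtmin).
\]
The separation condition \eqref{cond_min_Delta}, which forces $\Dtmax/\pmin\le\exp(c'\Dtmin/2)$ provided $C$ is chosen large enough there, reduces the right-hand side to $\exp(-c''\Dtmin)$ for some $c''>0$; an analogous argument on the mixing probabilities gives $d(\wh\pi^{(t)},\wt\pi)\lesssim\exp(-c''\Dtmin)$.

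For the statistical piece, standard concentration applied to $\wt\theta$ yields $d(\wt\pi,\pi^*)\lesssim\sqrt{\log n/(n\pmin)}$ (Bernstein on multinomial counts), $d(\wt M,M^*)\lesssim\sqrt{d\log n/(n\pmin)}$ (using $\wt\mu_\ell-\mu_\ell^*\mid n_\ell\sim\cN_d(0,\sw^*/n_\ell)$, a $\chi_d^2$ tail, and $n_\ell\gtrsim n\pmin$), and $d(\wt\sw,\sw^*)\lesssim\sqrt{d\log n/n}$ (Wishart concentration after conditioning on the labels), each on an event of probability $1-\cO(n^{-1})$. The triangle inequality then delivers the first display. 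For the second display, the hypothesis $\Dtmin\ge\log n/(2c')$ forces $\exp(-c'\Dtmin)\le n^{-1/2}\le\sqrt{d\log n/n}$, so the exponential contribution is absorbed into the statistical rate.

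The main obstacle I anticipate is the coordinate-wise extraction from \cref{lem_phi_params}: the lemma only provides a single scalar inequality on the weighted sum above, and individual distance bounds must be read off by choosing the tightest weight per term. The factor $\Dtmax/\pmin$ lost in this step is precisely what the separation condition \eqref{cond_min_Delta} was calibrated to absorb via the exponential rate, so the final guarantee is not degraded.
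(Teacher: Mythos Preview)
Your overall plan---split through the oracle $\wt\theta$, iterate \cref{thm_conv_phi} to drive $\phi(\wh\theta^{(t)})/n\lesssim\exp(-c'\Dtmin)$, and handle $d(\wt\theta,\theta^*)$ by direct concentration---is exactly the paper's route, and the statistical piece is fine.

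The gap is in the algorithmic piece. You have misread \cref{lem_phi_params}: it does not bound the weighted sum you wrote down; it already gives \emph{direct} coordinate-wise bounds on $|\wh\bpi_\ell(\theta)-\wt\pi_\ell|$, $\|\wh\bmu_\ell(\theta)-\wt\mu_\ell\|_{\sw^*}$, and $d(\bwhsw(\theta),\wt\sw)$ in terms of $\phi(\theta)/n$ (equations \eqref{eq_phi_pi}, \eqref{eq_phi_mu_better}, \eqref{eq_phi_sw_better}). The paper simply plugs $\phi(\wh\theta^{(t-1)})/n\lesssim\exp(-c'\Dtmin)$ into these and obtains $d(\wh h^{(t)},\wt h)\lesssim\exp(-c''\Dtmin)$ with no $\Dtmax$ factor: the bounds carry only prefactors like $1/(\pmin\sqrt{\Dtmin})$ or $dL\log(q)/n$, both absorbable under $\Dtmin\ge C\log(1/\pmin)$ and the sample-size condition. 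The weighted-sum inequality you describe is a \emph{consequence} of applying the lemma inside the proof of \cref{thm_conv_phi}, not its content.

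Your proposed extraction from that weighted sum would in fact fail on two counts. First, it introduces a factor $\Dtmax/\pmin$, and you cannot absorb it via \eqref{cond_min_Delta} because \cref{thm_conv_phi} (hence \cref{thm_param_phi}) does not assume \eqref{cond_min_Delta}: its separation hypothesis is only $\Dtmin\ge C\log(1/\pmin)+C\log\log n$, which places no upper bound on $\Dtmax$. One may have $\Dtmax\sim n^{0.1}$ while $\Dtmin\sim\log\log n$ and still satisfy all conditions, and then $\sqrt{\Dtmax/\pmin}\,\exp(-c'\Dtmin/2)$ diverges. Second, for the covariance the weighted sum only controls $\|(\whsw^{(t)}-\wt\sw)\wt J(\be_a-\be_\ell)\|_{\sw^*}$ along the $L(L-1)$ directions $\wt J(\be_a-\be_\ell)$, from which the operator-norm distance $d(\whsw^{(t)},\wt\sw)$ cannot be recovered when $d>L$. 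The fix is immediate: cite \eqref{eq_phi_pi}, \eqref{eq_phi_mu_better}, \eqref{eq_phi_sw_better} directly and drop the extraction step.
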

\begin{proof}
	See \cref{app_sec_proof_thm_param_phi}.
\end{proof}

Compared to \cref{thm_EM_samp}, \cref{thm_param_phi} yields the same rates of convergence of the EM algorithm in the regime $\Dtmin \gtrsim \log n$. In the next section, we summarize the theoretical guarantees of using both analytical approaches and demonstrate that the EM algorithm achieves minimax optimality under the separation condition \eqref{cond_min_Delta}.

\subsection{Minimax optimality of the EM algorithm}\label{sec_theory_optimality}

In this section we establish the minimax optimality of the EM algorithm under the $L$-GMM. For obtaining the upper bounds, combining the results in \cref{sec_theory_samp_small,sec_theory_samp_large}  gives two scenarios in  \cref{tab_rates} under either of which the EM estimator achieves the rates in \eqref{rate_EM_final_known} for known $\sw^*$ and \eqref{rate_EM_final_unknown} for unknown $\sw^*$. 


\begin{table}[ht]
	\centering 
	\caption{Settings in which the EM estimation achieves the rates in  \eqref{rate_EM_final_known} and \eqref{rate_EM_final_unknown}}
	\label{tab_rates}
	\renewcommand{\arraystretch}{1.5}
	\resizebox{\textwidth}{!}{
		\begin{tabular}{l|l|l}
			\toprule
			&   Known $\sw^*$ & Unknown $\sw^*$ \\ 
			\midrule 
			\multirow{2}{*}{Analysis 1} & $\displaystyle\log{\Dtmax \over \pmin} \lesssim  \Dtmin \lesssim  {\Dtmin \over \Dtmax}{n\pmin \over dL^{3/2}\log^2(n)  }$ &  $\displaystyle\log{\Dtmax \over \pmin} \lesssim  \Dtmin \lesssim {\Dtmin \over \Dtmax} \sqrt{n\pi_{\min}^{*3/2} \over dL^{3/2}\log^2(n) }$ \\   
			& $\wh \theta^{(0)}$ satisfies   \eqref{cond_init_unknown} or (b) of \cref{thm_conv_phi_known} & $\wh \theta^{(0)}$ satisfies   \eqref{cond_init_unknown} or (b) of \cref{thm_conv_phi} \\
			\midrule 
			\multirow{3}{*}{Analysis 2} &   $\displaystyle \Dtmin  \gtrsim  \log(n) \vee \sqrt{dL^2 \log (n\Dtmax) \over n \pminsq}$ &  $ \displaystyle\Dtmin  \gtrsim  \log(n) \vee \sqrt{dL^2 \log (n\Dtmax) \over n \pminsq}$ \\
			& $d L^2 \log n \lesssim n, \quad d \log n \lesssim n\pmin $ &  $d L^2 \log (n\Dtmax)\lesssim n, \quad d \log n \lesssim n\pmin $ \\
			& $\wh \theta^{(0)}$ satisfies  (a) or (b) of \cref{thm_conv_phi_known}  &   $\wh \theta^{(0)}$ satisfies  (a) or (b) of \cref{thm_conv_phi} \\
			\bottomrule
		\end{tabular}
	}
\end{table}

As illustrated in \cref{fig_regime}, the two analytical approaches require different sets of conditions on $\Dtmin$, which overlap under mild assumptions on  $\Dtmax/\Dtmin$. In this regime, EM achieves the rates in \cref{sec_theory_samp_small,sec_theory_samp_large} which are shown below to be minimax optimal.

\begin{figure}[htbp]
  \centering 
    \begin{minipage}[t]{0.48\textwidth}\centering\includegraphics[width=\textwidth]{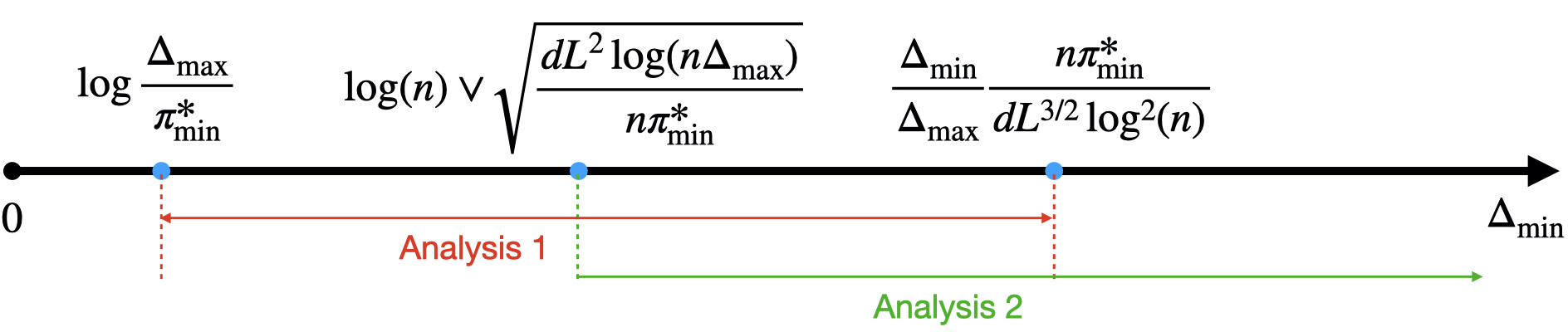}
  \end{minipage}
  \begin{minipage}[t]{0.48\textwidth}\centering\includegraphics[width=\textwidth]{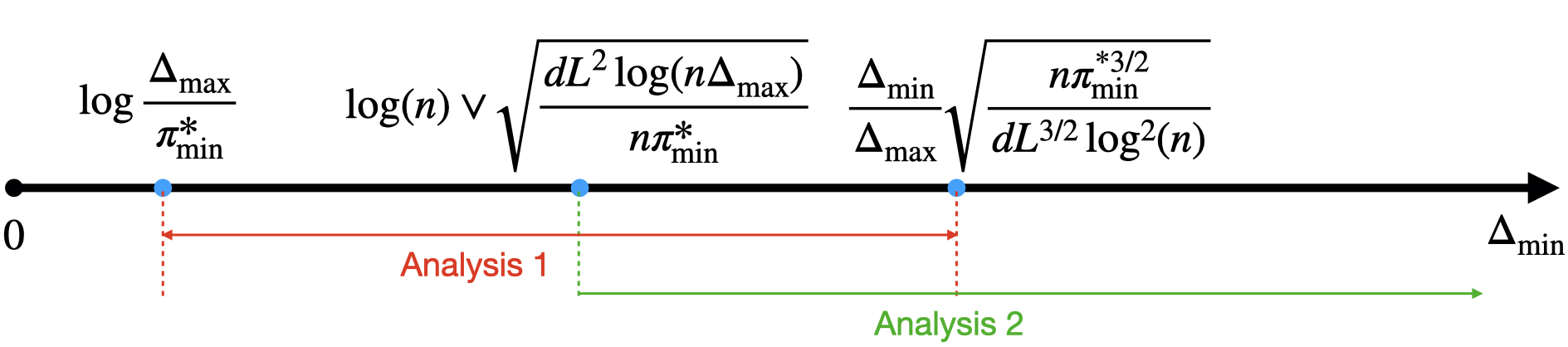}
  \end{minipage} 
  \caption{$\Dtmin$ requirement in which the EM estimation achieves the rates in  \eqref{rate_EM_final_known} and \eqref{rate_EM_final_unknown}. Left: known $\sw^*$; Right: unknown $\sw^*$.}
  \label{fig_regime}
\end{figure}

To further facilitate understanding, we state a simplified  corollary below which invokes the results in \cref{sec_theory_samp_small} for $\Dtmin = \cO(\log n)$ and states the guarantees of the EM algorithm for both known and unknown $\sw^*$.

\begin{cor}\label{cor_EM_final}
	Assume \eqref{cond_min_Delta} and $dL^2 \log(n) \le c n  \pminsq \Dtmin^2$  for some small constant $c>0$. 
	\begin{enumerate}[itemsep=0mm, topsep=1mm,leftmargin=8mm]
		\item[(i)]For known $\sw^*$,  if  
		$
		(\Dtmax  / \Dtmin) dL^{3/2} \log^{3}(n)    \le c n \pmin
		$
		holds, then with probability $1-\cO(n^{-1})$, the bound in \eqref{rate_EM_final_known} holds for the EM iterates $\wh \theta^{(t)} = (\wh \pi^{(t)}, \wh M^{(t)}, \sw^*)$ with $\wh \theta^{(0)}$ satisfying either (a) or (b) of \cref{thm_conv_phi_known},  after  $\cO(\log n)$ iterations. 
		\item [(ii)] For unknown $\sw^*$, if 
		$
		(\Dtmax / \Dtmin)^2   dL^{3/2}\log^4(n)   \le c n \pi_{\min}^{*3/2}
		$
		holds, then with probability $1-\cO(n^{-1})$, the bound in \eqref{rate_EM_final_known} holds for the EM iterates $\wh \theta^{(t)} = (\wh \pi^{(t)}, \wh M^{(t)}, \whsw^{(t)})$ with $\wh \theta^{(0)}$ satisfying either (a) or (b) of \cref{thm_conv_phi},  after  $\cO(\log n)$ iterations. 
	\end{enumerate}  
\end{cor}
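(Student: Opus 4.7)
The plan is to prove \cref{cor_EM_final} by a simple dichotomy on the magnitude of $\Dtmin$ relative to $\log n$, showing that in each of the two regimes one of the two analytical frameworks developed in \cref{sec_theory_samp_small,sec_theory_samp_large} applies and yields the desired bounds. Concretely, fix an absolute constant $C_0>0$ (matching the $c'$ implicit in \cref{thm_conv_phi_known,thm_conv_phi}) and treat the cases $\Dtmin \le C_0 \log n$ and $\Dtmin \ge C_0 \log n$ separately. The final conclusion then follows because the stated rate is the same in the two regimes, the initialization hypothesis is exactly the union of those used by the two approaches, and the probability bound is preserved up to a union bound.

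For the regime $\Dtmin \le C_0 \log n$ I would invoke Analysis~1, that is \cref{thm_EM_samp_known} in case (i) and \cref{thm_EM_samp} in case (ii). In this regime one has $\log n / \Dtmin \gtrsim 1$, so the corollary's bound $(\Dtmax/\Dtmin)\,dL^{3/2}\log^3(n) \le c\,n\pmin$ (respectively the analogous bound in (ii)) implies $\Dtmax \cdot dL^{3/2}\log^2(n) \lesssim n\pmin$, and using $\sqrt{\Dtmax(L+\Dtmax)}\le \Dtmax+\sqrt{L\Dtmax}$ together with $\Dtmax\ge\Dtmin\gtrsim 1$ from \eqref{cond_min_Delta}, this yields \eqref{rate_cond_param_known}. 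A parallel but slightly longer calculation in case (ii), which also needs to absorb the $\sqrt{\Dtmax+1/\pmin}$ factor using the assumption $dL^2\log(n)\le c\,n\pminsq\Dtmin^2$, gives \eqref{rate_cond_param_unknown}. The conclusion then follows directly from \cref{thm_EM_samp_known,thm_EM_samp}, with the initialization hypothesis supplied by \eqref{cond_init_unknown} (implied by \eqref{cond_init_theta_known}/\eqref{cond_init_theta_unknown}).

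For the regime $\Dtmin \ge C_0 \log n$ I would invoke Analysis~2, that is \cref{thm_conv_phi_known} (or \cref{thm_conv_phi}) followed by \cref{thm_param_phi}. In this regime $\log(n)/\Dtmin\le 1$, so \eqref{cond_n_phi} reduces to $n \gtrsim dL^2\log(n\Dtmax)\,(1/(\pminsq\Dtmin^2)+1)$; the first summand is controlled by the corollary's hypothesis $dL^2\log n \le c\,n\pminsq\Dtmin^2$ after replacing $\log n$ by $\log(n\Dtmax)$ (the additional factor is absorbed because $\log(n\Dtmax)\lesssim \log n$ when $\Dtmax$ is at most polynomial in $n$, which itself follows from the outer condition $(\Dtmax/\Dtmin) dL^{3/2}\log^3 n \lesssim n\pmin$). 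The initialization constant $c\pmin\Dtmin/L$ in \eqref{cond_init_theta_known}/\eqref{cond_init_theta_unknown} exceeds the constant $(\sqrt2-1)/2$ appearing in \eqref{cond_init_unknown}, since $\pmin\Dtmin/L \gtrsim 1$ (again by the corollary's condition). Since $\Dtmin \ge C_0 \log n$ with $C_0 \ge 1/(2c')$, the residual term $\exp(-c'\Dtmin)\le n^{-1/2}$ in \cref{thm_param_phi} is absorbed into the statistical rate $\sqrt{d\log n/(n\pmin)}$, producing exactly \eqref{rate_EM_final_known}/\eqref{rate_EM_final_unknown}. The probability bound $1-\cO(n^{-1})$ follows from $\exp(-c'\Dtmin) \le n^{-1}$.

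The main obstacle will be the algebraic bookkeeping at the interface $\Dtmin \asymp \log n$: I need to check that a \emph{single} set of corollary hypotheses uniformly implies \eqref{rate_cond_param_known}/\eqref{rate_cond_param_unknown} when $\Dtmin$ is below the threshold and \eqref{cond_n_phi} plus the relaxed initialization constant when $\Dtmin$ is above it. The corollary's extra $\log n$ factors in the hypotheses relative to Table~\ref{tab_rates} are calibrated precisely so that the worse of the two sides at the boundary is still dominated, and tracking these factors carefully (especially the interplay between $\Dtmax/\Dtmin$, $L$, and $\pmin$ in the unknown-$\sw^*$ case (ii), where one must pay an extra $\sqrt{\Dtmax/\pmin}$ factor) is where the care is required; everything else reduces to citing results already proved.
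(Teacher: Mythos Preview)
Your proposal is correct and matches the paper's intended approach. The paper does not give an explicit proof of this corollary; it presents it as a ``simplified'' consequence of combining the two analytical regimes summarized in Table~\ref{tab_rates}, and your dichotomy on $\Dtmin \lessgtr C_0\log n$---invoking \cref{thm_EM_samp_known,thm_EM_samp} on the small side and \cref{thm_param_phi} (via \cref{thm_conv_phi_known,thm_conv_phi}) on the large side---is exactly that combination made precise.

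One small omission: in the small-$\Dtmin$ regime you only check that the $\theta$-based initialization \eqref{cond_init_theta_known}/\eqref{cond_init_theta_unknown} implies \eqref{cond_init_unknown}, but the corollary also allows the $\phi$-based initialization \eqref{cond_init_phi_known}/\eqref{cond_init_phi_unknown}. The paper covers this at the end of the proof of \cref{thm_conv_phi}, where \cref{lem_phi_params} (specifically \eqref{eq_phi_pi}, \eqref{eq_phi_mu}, \eqref{eq_phi_sw}) is used to show that a small $\phi(\wh\theta^{(0)})$ forces $\wh\theta^{(0)}$ into the ball \eqref{cond_init_unknown}; you should cite this to close the case. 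Otherwise your algebraic checks (in particular the handling of $\sqrt{\Dtmax(L+\Dtmax)}$ and the extra $\sqrt{\Dtmax+1/\pmin}$ factor in (ii)) are in line with what the conditions were calibrated to absorb.
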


To benchmark the rates in \cref{rate_EM_final_known,rate_EM_final_unknown}, we  establish the minimax lower bounds of estimating $\theta^*$ under model \eqref{model}. We consider the following parameter space: for any $\alpha>0,\delta>0$ and  $A \ge 1$,
\begin{equation*}
	\Theta(\alpha, \delta, A)= \left\{
	(\pi, M, \sw):  \min_{\ell\in [L]} \pi_\ell \ge \alpha,  ~ \delta \le   \|\mu_k- \mu_{\ell}\|_{\sw}^2 \le  A \delta, ~ \forall~ k\ne \ell, ~~    \sw \succ 0
	\right\}.
\end{equation*}
Note that for any $\theta^*\in \Theta(\alpha, \delta, A)$, one has $\pmin \ge \alpha$, $\Dtmin \ge \delta$ and $\Dtmax/\Dtmin \le A$. The following theorem states the minimax lower bounds of estimating $\theta^*$ under model \eqref{model} for fixed $\alpha$, $\delta$ and $A$. The magnitude of $A$ also depends on the relative size of $d$ and $L$. 
Recall $d(\wh M,M)$ and $d(\whsw,\sw)$ from \eqref{def_dist_pi_M} and \eqref{dist_sw}.

\begin{theorem}\label{thm_lower_bound}
	Under model \eqref{model}, fix any $\alpha$ and $\delta$ such that  $d \le n \alpha \delta$. Fix  any $A\ge 4$ if $d \ge L-1$, and $A\ge C L^{2/d}$ with some absolute constant $C\ge 1$ if $d < L-1$. Then there exists some absolute constants $c_0\in (0,1)$ and $c_1 > 0$ such that  
	\[
	\inf_{\wh \theta} \sup_{\theta \in \Theta(\alpha, \delta, A)} \PP_{\theta}\left\{ 
	d(\wh M, M) \ge c_1 \sqrt{d \over n \alpha},~ d(\wh \sw, \sw) \ge c_1\sqrt{d \over n}
	\right\} \ge c_0,
	\]
	where the infimum $\wh \theta$ is taken over all estimators. 
\end{theorem}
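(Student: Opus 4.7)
The plan is to lower-bound both rates by Fano's inequality over carefully chosen packings, and then combine them by applying Fano to the product of the two sub-packings. Since $d(\wh M, M)$ is a maximum over components, only the rarest component (with mixing probability $\alpha$) needs to be perturbed---this is the key to obtaining the $1/\sqrt{n\alpha}$ rate rather than $1/\sqrt{n}$. The covariance perturbation will hold all mean vectors fixed. The product of the two sub-packings then produces a single family on which a joint Fano bound yields the stated lower bound.

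I would first fix a base configuration $\theta_0 = (\pi^*, M_0, \bI_d)$ with $\pi_L^* = \alpha$ and $M_0$ chosen so that every squared pairwise Euclidean distance lies strictly inside $[\delta, A\delta]$ with constant-factor slack at both endpoints. For $d \ge L-1$ a scaled regular simplex achieves this under $A \ge 4$; for $d < L-1$, the hypothesis $A \ge CL^{2/d}$ is precisely the room required by a volume argument to pack $L$ points in $\RR^d$ with bounded max-to-min pairwise-distance ratio. For the mean, take a Varshamov--Gilbert packing $V \subset \{-1,+1\}^d$ of size $|V|\ge 2^{cd}$ with minimum Hamming distance $cd$, and set $\mu_L^{(v)} = \mu_L^* + \epsilon v$ with $\epsilon = c_1/\sqrt{n\alpha}$. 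The condition $d \le n\alpha\delta$ ensures $\epsilon\sqrt{d} \ll \sqrt{\delta}$ so each perturbed parameter stays in $\Theta(\alpha,\delta,A)$; the per-sample KL between two such mixtures is bounded by $\alpha\epsilon^2\|v-v'\|_2^2/2$, giving an $n$-sample total $\lesssim c_1^2 d$; and the packing yields $\|\mu_L^{(v)}-\mu_L^{(v')}\|_2 \gtrsim \sqrt{d/(n\alpha)}$. For the covariance, take a $(1/2)$-packing $U$ of the unit sphere of $\RR^d$ with $|U| \ge 5^d$ and set $\sw^{(u)} = \bI_d + \tau u u^\T$ with $\tau = c_2\sqrt{d/n}$; this is positive definite, preserves the separation interval when $c_2$ is small, has pairwise operator-norm distance $\gtrsim \tau$, and per-sample KL of order $\tau^2$.

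I would then form the product family $\{\theta_{v,u}: v \in V, u \in U\}$ of cardinality $|V|\cdot|U| \ge 2^{cd}\cdot 5^d$ (so $\log|V\times U| \asymp d$), bound the pairwise $n$-sample KL by $C(c_1^2 + c_2^2)\,d$ via a direct Taylor-type expansion of the mixture log-likelihood ratio around $\theta_0$, and invoke Fano's inequality. A minimum-distance decoding of $\wh v$ from $\wh\mu_L$ and of $\wh u$ from a suitable summary of $\wh\sw$ reduces the Fano error event to the one in the theorem, and choosing $c_1,c_2$ small enough so that the KL-to-$\log|V\times U|$ ratio stays below $1 - 2c_0$ forces Fano's bound to exceed $c_0$. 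The main obstacle I expect is twofold: first, verifying that every joint perturbation $\theta_{v,u}$ stays in $\Theta(\alpha,\delta,A)$ in the low-dimensional regime $d<L-1$, where the separation ceiling $A\delta$ is tight and the rank-one perturbation $\tau u u^\T$ shifts every Mahalanobis separation at once---this requires keeping $M_0$ a constant factor inside $[\sqrt{\delta},\sqrt{A\delta}]$ and choosing $c_1,c_2$ small; second, showing that the joint KL is genuinely the sum of the two marginal KLs up to constants, which does not follow from a naive chain rule because $\mu_L$ and $\sw$ enter the mixture likelihood nonlinearly and must be handled by a direct expansion.
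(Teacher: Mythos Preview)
Your overall strategy is sound and close to the paper's, but there is a genuine gap in the combination step. Applying Fano to the product family $\{\theta_{v,u}\}$ lower-bounds the probability of the event $\{(\hat v,\hat u)\ne(v,u)\}=\{\hat v\ne v\}\cup\{\hat u\ne u\}$, and your minimum-distance reduction gives $\{\hat v\ne v\}\subseteq\{d(\hat M,M)\ge s_M\}$ and $\{\hat u\ne u\}\subseteq\{d(\hat\sw,\sw)\ge s_\sw\}$. Together these lower-bound $\PP_\theta\bigl(\{d(\hat M,M)\ge s_M\}\cup\{d(\hat\sw,\sw)\ge s_\sw\}\bigr)$, the \emph{union}, whereas the theorem asks for the \emph{intersection}. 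A single Fano application on a product index cannot tell which coordinate failed, so it cannot force both errors simultaneously; your sentence ``reduces the Fano error event to the one in the theorem'' is therefore incorrect as stated.

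The paper avoids this by proving two \emph{separate} marginal lower bounds, each with probability at least $(1+c_0)/2>1/2$, and then combining via $\PP(A\cap B)\ge\PP(A)+\PP(B)-1$. The way to make this work at a common $\theta$ with your product family is to apply Fano (in its average form) to the $V$-family uniformly over each fixed $u$, and to the $U$-family uniformly over each fixed $v$ (both bounds are stable because $\sw^{(u)}\approx\bI_d$ and the mean perturbation is small); averaging each over the full grid and then using the inclusion--exclusion bound on complements yields some $(v^*,u^*)$ at which both marginal error probabilities exceed $1/2$. Your product construction is in fact well suited to this repair---you just need two separate Fano applications rather than one joint one. Two minor side remarks: your claim $|U|\ge 5^d$ for a $(1/2)$-packing of $\Sp^{d-1}$ has the inequality backwards (the correct lower bound is of order $2^{d}$, which still gives $\log|U|\asymp d$); and your rank-one covariance perturbation $\bI_d+\tau uu^\T$ is simpler than the paper's construction (which uses structured off-diagonal matrices $B(m)$ indexed by a Varshamov--Gilbert code in $\{0,1\}^{d/2}$) but delivers the same rate.
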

\begin{proof}
	The proof can be found in \cref{app_sec_proof_lower_bound}.
\end{proof}

\cref{thm_lower_bound} implies that under the conditions of \cref{cor_EM_final}, the EM algorithm  is minimax optimal, up to a multiplicative logarithmic factor. The restriction on $\Dtmax / \Dtmin$ depends on the size $d$ and $L$. Specifically, for $d\ge L-1$,  the most favorable case is $\Dtmax / \Dtmin \asymp 1$   whence \cref{cor_EM_final} requires $dL^{3/2} \log^4(n) = \cO(n \pmin )$ for known $\sw^*$ and  $dL^{3/2} \log^4(n) = \cO(n \pi_{\min}^{*3/2})$ for unknown $\sw^*$. While for $d < L-1$,  the most ideal case is $\Dtmax / \Dtmin \asymp L^{2/d}$ whence  \cref{cor_EM_final} requires $n\pmin \gg  d L^{2/d} L^{3/2}\log^3(n)$ for known $\sw^*$ and  $n\pi_{\min}^{*3/2} \gg  d L^{4/d} L^{3/2}\log^4(n)$  for unknown $\sw^*$.  Such requirement becomes the strongest in the univariate case $d=1$ whence  one needs $n\pmin  \gg   L^{7/2} \log^3(n)$ and $n\pi_{\min}^{*3/2} \gg   L^{11/2} \log^4(n)$ for known and unknown $\sw^*$, respectively.  It is worth mentioning that in such univariate setting with $\sw^*$ known, even for fixed $L$ and $\Dtmin$, the denoising method-of-moment approach  in \cite{WuYang2020} requires at least $(\Dtmax/\Dtmin) L^{4} \le \pminsq n^{1/(2L-1)}$  to achieve the $\sqrt{n}$-rate, which is stronger than what \cref{cor_EM_final} requires.  For either $L=L(n)$ and $\Dtmin = \Dtmin(n)$ satisfying \eqref{cond_min_Delta}, or unknown $\sw^*$, the rate obtained in \citet[Theorem 2]{WuYang2020} is not optimal.\\

Finally, to conclude this section, we state a simplified   result for the $2$-GMM with both unknown $\pi^*$ and $\sw^*$, and compared it with the existing result. 
\begin{cor}[$L=2$]\label{cor_EM_2GMM}
	Assume $\Dtmin \ge C\log(1/\pmin)$, $C d \log(n) \le  n  \pminsq \Dtmin^2$  and 
		$
		    C d \log^4(n)   \le n \pi_{\min}^{*3/2}
		$
    for some large constant $C>0$. Further assume $\wh \theta^{(0)}$ satisfies either \eqref{cond_init_unknown}, or  $ \phi(\wh \theta^{(0)})/n  \le c(1\wedge \pminsq \Dtmin^2)$. Then with probability $1-\cO(n^{-1})$, 
    \eqref{rate_EM_final_known} holds for all $t\ge C\log n$.
\end{cor}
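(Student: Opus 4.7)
\textbf{Proof plan for \cref{cor_EM_2GMM}.} The plan is to derive the result as a direct specialization of \cref{cor_EM_final}(ii) to the case $L=2$. The crucial simplifying observation is that a $2$-GMM admits only a single pairwise squared Mahalanobis distance between its components, so that $\Dtmax = \Dtmin$ and hence $\Dtmax/\Dtmin = 1$. With this identity in hand, every condition of \cref{cor_EM_final}(ii) collapses to something explicit in terms of $\pmin$, $\Dtmin$, $d$, and $n$.

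First I would verify the separation condition. The general requirement \eqref{cond_min_Delta}, namely $\Dtmin \gtrsim \log(\Dtmax/\Dtmin) + \log(1/\pmin)$, reduces for $L=2$ to $\Dtmin \gtrsim \log(1/\pmin)$, which is exactly the hypothesis. Next I would translate the two sample-complexity conditions of \cref{cor_EM_final}(ii). The condition $dL^{2}\log(n) \le c\, n\pminsq \Dtmin^{2}$ becomes $4d\log(n) \le c\, n\pminsq\Dtmin^{2}$, absorbed into the hypothesis $Cd\log(n)\le n\pminsq\Dtmin^{2}$; and the condition $(\Dtmax/\Dtmin)^{2}dL^{3/2}\log^{4}(n) \le c\, n\pi_{\min}^{*3/2}$ becomes $2^{3/2}d\log^{4}(n) \le c\, n\pi_{\min}^{*3/2}$, matching the assumption $Cd\log^{4}(n)\le n\pi_{\min}^{*3/2}$ up to an absolute constant.

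Third, I would check the initialization. The first alternative in \cref{cor_EM_final}(ii) requires both \eqref{cond_init_unknown} and \eqref{cond_init_theta_unknown}; for $L=2$ the extra factor $c(1+\pmin\Dtmin)/L$ in \eqref{cond_init_theta_unknown} is bounded below by $c/2$, so \eqref{cond_init_theta_unknown} becomes an absolute-constant condition that is already implied by (or absorbed into) the constants $c_\mu, c_\sw$ appearing in \eqref{cond_init_unknown}. Hence \eqref{cond_init_unknown} alone suffices. For the $\phi$-based alternative, \eqref{cond_init_phi_unknown} with $L=2$ and $\Dtmax=\Dtmin$ reads
\[
\left(3 + \frac{2}{\pminsq\Dtmin^{2}}\right)\frac{\phi(\wh\theta^{(0)})}{n}\le c,
\]
which, after splitting the cases $\pminsq\Dtmin^{2}\le 1$ and $\pminsq\Dtmin^{2}\ge 1$ and absorbing absolute constants, is equivalent to $\phi(\wh\theta^{(0)})/n \le c'\bigl(1\wedge\pminsq\Dtmin^{2}\bigr)$, which is exactly the stated condition.

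With all hypotheses of \cref{cor_EM_final}(ii) thus verified, the conclusion that the bound in \eqref{rate_EM_final_unknown} holds with probability $1-\cO(n^{-1})$ for every $t\ge C\log n$ follows immediately. There is no genuine technical obstacle here; the proof is essentially a bookkeeping exercise confirming that each multiplicative factor involving $L$ or $\Dtmax/\Dtmin$ in \cref{cor_EM_final}(ii) specializes to an absolute constant when $L=2$. The only place requiring a small amount of care is the reduction of the $\phi$-initialization condition \eqref{cond_init_phi_unknown} to the compact form $\phi(\wh\theta^{(0)})/n \le c(1\wedge\pminsq\Dtmin^{2})$, which is handled by the case split above.
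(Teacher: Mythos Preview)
Your proposal is correct and matches the paper's intent: the corollary is stated without an explicit proof, being presented as a direct specialization of \cref{cor_EM_final}(ii) to $L=2$ using $\Dtmax=\Dtmin$. Your bookkeeping of the separation, sample-complexity, and initialization conditions is accurate, including the reduction of \eqref{cond_init_phi_unknown} to $\phi(\wh\theta^{(0)})/n\le c(1\wedge\pminsq\Dtmin^2)$ via the case split.
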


For the $2$-GMM with both $\pi^*$ and $\sw^*$ unknown, \cite{cai2019chime} establishes convergence of the EM algorithm under
\begin{equation}\label{cond_cai}
    C \le \Dtmin \le C',\quad C\le \lambda_d(\sw^*) \le \lambda_1(\sw^*) \le C',\quad c < \pi_1^*, \pi_2^* \le 1-c.
\end{equation}
Their initialization requirement is specified on $\pi^{(0)}, M^{(0)}$ and $\beta^{(0)}$, with the last quantity being the initialization of $\beta^* := \sw^{*-1}(\mu_2^*-\mu_1^*)$. Concretely, their condition (C1) requires at least 
\begin{equation}\label{cond_init_cai}
    d(\pi^{(0)},\pi^*) + d(M^{(0)}, M^*) + \|\beta^{(0)}-\beta^*\|_2 = \cO( \sqrt{\Dtmin / d} ).
\end{equation}
Since our initialization of $\beta^*$ is $\beta^{(0)} = \sw^{(0)-1}(\mu_2^{(0)} -\mu_1^{(0)})$ with $\sw^{(0)}$ given in \cref{thm_EM_popu_unknown}, inspecting our proof reveals that condition \eqref{cond_init_unknown} can be replaced by 
\[  
     d(\pi^{(0)}, \pi^*)   < 1,\quad  \|\beta^{(0)}-\beta^*\|_2 +  d(M^{(0)},M^*)  ~  \le  ~ c_0 \sqrt{\Dtmin},
\]
a much milder condition than \eqref{cond_init_cai} for large $d$. On the other hand, although our optimal rates in (\ref{rate_EM_final_known}) coincide with those derived in \cite{cai2019chime} under their condition (\ref{cond_cai}), our analysis allows $\Dtmin$ to grow and accommodates highly unbalanced mixing weights with $\pmin = o(1)$ as $n\to \i$. In this more general setting, an inspection of the proof in \cite{cai2019chime} reveals that their analysis yields
$
    d(\wh M, M^*) \lesssim (\pmax / \pminsq)\Dtmax^{3/2} \sqrt{{d\log (d) / n} }
$ 
whereas  our analysis gives the optimal rate in \eqref{rate_EM_final_unknown}. 

\subsection{Application to community detection}\label{sec_theory_clustering}

 As noted earlier, our second analytical approach in \cref{sec_theory_samp_large} is closely connected to the community detection problem, which concerns predicting the labels $Y_1,\ldots,Y_n$. In this section, we discuss its implications and show that both of our analytical approaches to the EM algorithm can in fact be used to quantify its performance in community detection.

For any parameter $\theta$, the Bayes rule of predicting  the label of any $x$ from \eqref{model} is  
\begin{equation}\label{def_g}
	g(x; \theta) =  \argmax_{\ell \in [L]} ~ \PP_{\theta}(Y = \ell \mid X= x) =  \argmax_{\ell \in [L]} ~   \gamma_{\ell} (x; \theta).
\end{equation}
The misclustering error between the true labels $Y_1,\ldots, Y_n$ and $g(X_1;\theta),\ldots, g(X_n;\theta)$  is commonly measured by  the Hamming distance, given by
\begin{equation}\label{def_ell_theta}
	\ell(\theta):= {1\over n}\sum_{i =1}^n 1\{Y_i \ne g(X_i; \theta)\}.
\end{equation} 
In \cref{app_sec_proof_thm_clustering}, we prove
$
\ell(\theta) \le 2\phi(\theta) /(n \Dtmin),
$
so that invoking \cref{thm_conv_phi} readily yields the convergence rate of the misclustering error of EM when $\Dtmin \gtrsim \log (1/\pmin) +  \log \log n$. Combined with \cref{thm_EM_samp} of using the first analysis in \cref{sec_theory_samp_small}, we are able to extend this to $\Dtmin \gtrsim \log (1/\pmin) +   \log(\Dtmax \wedge \log n)$.

\begin{theorem}\label{thm_clustering}
	Grant $n \pmin \ge C d\log n$  and $\Dtmin \ge C \log (1/\pmin) + C \log(\Dtmax \wedge  \log n)$ for some large constant $C>0$. 
	\begin{itemize}[itemsep = 0mm]
		\item  If $\Dtmax \le \log n$, assume \eqref{rate_cond_param_unknown} with $\wh \theta^{(0)}$ satisfying either \eqref{cond_init_unknown} or (b) of \cref{thm_conv_phi};
		
		\item 	If $\Dtmax \ge \log n$, assume   \eqref{cond_n_phi}  
		with $\wh \theta^{(0)}$ satisfying either (a) or (b) of \cref{thm_conv_phi};
	\end{itemize} 
	then with probability at least $1-n^{-1}-\exp(-c\Dtmin)$ for some absolute constant $0<c<1/8$, 
	\begin{equation}\label{rate_misclustering_rate}
	\ell(\wh\theta^{(t)} )  ~ \le~    \exp(-c\Dtmin),\qquad \forall ~ t\ge 2\log n.
	\end{equation}
	Furthermore, whenever $\Dtmin > \log(n)/c$, with probability at least   $1-\cO(n^{-1})$, we have the exact recovery, that is, $\ell (\wh \theta^{(t)}) = 0$ for all $t\ge 2\log n$.
\end{theorem}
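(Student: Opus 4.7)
The key ingredient is a clean deterministic bridge from the hard-assignment Hamming loss $\ell(\theta)$ to the soft-assignment loss $\phi(\theta)$:
\[
\ell(\theta) \;\le\; \frac{2\phi(\theta)}{n\Dtmin}, \qquad \text{for every } \theta.
\]
I would prove this pointwise: if $g(X_i;\theta)\neq Y_i =: a$, then the definition of $g$ as the $\gamma$-argmax combined with $\sum_\ell \gamma_\ell(X_i;\theta)=1$ forces $\gamma_a(X_i;\theta)\le 1/2$, hence $\sum_{\ell\neq a}\gamma_\ell(X_i;\theta)\ge 1/2$. Multiplying each term by $\|\mu_\ell^*-\mu_a^*\|_{\sw^*}^2/\Dtmin \ge 1$ and summing over $i$ and $a$ yields the inequality. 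With this in hand, the problem of bounding $\ell(\wh\theta^{(t)})$ reduces to showing $\phi(\wh\theta^{(t)})\lesssim n\Dtmin\exp(-c\Dtmin)$.

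Next I would obtain such a bound on $\phi(\wh\theta^{(t)})$ in the two regimes singled out in the hypothesis. When $\Dtmax \ge \log n$, the assumptions match those of \cref{thm_conv_phi} exactly, so its recursion $\phi(\wh\theta^{(t)})\le n\exp(-c'\Dtmin)+\tfrac12\phi(\wh\theta^{(t-1)})$ iterates to $\phi(\wh\theta^{(t)})\le 2n\exp(-c'\Dtmin)+2^{-t}\phi(\wh\theta^{(0)})$. A crude bound $\phi(\wh\theta^{(0)})\le n\Dtmax$ (total $\gamma$-mass one per datum, Mahalanobis distances at most $\Dtmax$) combined with $t\ge 2\log n$ renders the transient term negligible. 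When instead $\Dtmax\le \log n$, I would first run $t_0\asymp\log n$ iterations of EM and invoke \cref{thm_EM_samp}, whose hypothesis \eqref{rate_cond_param_unknown} is the one assumed here, to obtain the parameter rates in \eqref{rate_EM_final_unknown}. I would then decompose $\phi(\wh\theta^{(t_0)})\le \phi(\theta^*)+|\phi(\wh\theta^{(t_0)})-\phi(\theta^*)|$: the first piece admits a direct Gaussian tail bound $\phi(\theta^*)\lesssim n\exp(-c'\Dtmin)$ that is precisely the bound on term $\rI$ in the proof sketch of \cref{thm_conv_phi}, and the second piece is controlled by the Lipschitz continuity of $\omega\mapsto \gamma_\ell(\cdot;\omega)$ developed in the proofs of \cref{thm_EM_popu_unknown} and \cref{prop_EM_samp_contra}, with the restriction $\Dtmax\le \log n$ used to keep the Lipschitz constant under control.

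Combining these $\phi$-bounds with the first inequality gives $\ell(\wh\theta^{(t)})\le 4\exp(-c'\Dtmin)/\Dtmin \le \exp(-c\Dtmin)$ for any $c<c'$, which is \eqref{rate_misclustering_rate}. For the exact-recovery statement: when $\Dtmin>\log n/c$, the bound above gives $\ell(\wh\theta^{(t)})\le \exp(-c\Dtmin)<1/n$ with probability $1-\cO(n^{-1})$. Since $n\ell(\wh\theta^{(t)})$ is a nonnegative integer counting misclassified points, it must equal zero, giving $\ell(\wh\theta^{(t)})=0$.

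I expect the $\Dtmax\le \log n$ case to be the main technical obstacle: showing $|\phi(\wh\theta^{(t_0)})-\phi(\theta^*)|\le n\exp(-c\Dtmin)$ at a rate compatible with the misclustering target requires tracking the sharp dependence of the Lipschitz constant for $\omega\mapsto \gamma_\ell(x;\omega)$ on $\Dtmax$ and $\pmin$. A naive estimate would produce a bound scaling like $n\Dtmax \cdot d(\wh\theta^{(t_0)},\theta^*)$, acceptable only because $\Dtmax\le \log n$ here; threading the same partial-derivative estimates for $\gamma_\ell$ used in \cref{thm_EM_population_omega,thm_EM_samp_omega} should deliver the required bound.
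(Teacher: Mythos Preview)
Your bridge inequality $\ell(\theta)\le 2\phi(\theta)/(n\Dtmin)$, its proof, the handling of the regime $\Dtmax\ge\log n$ via the recursion of \cref{thm_conv_phi}, and the integer-valued argument for exact recovery are all exactly what the paper does.

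For the regime $\Dtmax\le\log n$, however, the paper takes a different route than your add--subtract--$\phi(\theta^*)$ plan. Rather than bounding $|\phi(\wh\theta^{(t)})-\phi(\theta^*)|$ by Lipschitz continuity of $\gamma_\ell$, the paper reruns the very same decomposition used in the proof of \cref{thm_conv_phi} but replacing $\wt J$ by $J^*$: one bounds $\gamma_\ell(X_i;\wh\theta^{(t)})$ itself (not its difference from $\gamma_\ell(X_i;\theta^*)$) by a logistic term plus the indicator $1\{|N_i^\T\sw^{*1/2}(J(\wh\theta^{(t)})-J^*)(\be_a-\be_\ell)|>(c/4)\Dt_{a\ell}\}$, and then applies the crude Chebyshev-type conversion $1\{|Z|>c\}\le(Z/c)^2$. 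This yields
\[
\phi(\wh\theta^{(t)}) \;\le\; n\exp(-c\Dtmin) + C\,nL\Bigl\{\tfrac{1}{\Dtmin}\,d(\wh M^{(t)},M^*)^2 + d(\whsw^{(t)},\sw^*)^2\Bigr\},
\]
which is \emph{quadratic} in the parameter distances. Invoking \cref{thm_EM_samp} then finishes.

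Your Lipschitz route would naturally produce a bound \emph{linear} in $d(\wh\theta^{(t)},\theta^*)$, and your proposed fix---importing the partial-derivative estimates from \cref{thm_EM_population_omega} and \cref{thm_EM_samp_omega}---does not transfer cleanly: those estimates control $\EE_n[\partial\gamma_\ell]$ and related averages (the quantities $\wh\cR_I^{(j)}$), whereas $\nabla_\omega\phi$ is a sum over $i\in\wh G_a$ weighted by $\Dt_{a\ell}$, a structurally different object. Worse, the sample bound $\wh\cR_I^{(0)}\lesssim\exp(-c\Dtmin)/\pmin + dL\log(q)/(n\pmin)$ carries a non-exponential statistical error; once multiplied by $n\Dtmax\cdot d(\wh\omega,\omega^*)$ as in your sketch, this term does not drop below $\exp(-c\Dtmin)$ under the stated hypotheses. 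The paper's indicator trick sidesteps all of this by producing the squared dependence directly, without any mean-value expansion of $\gamma_\ell$.
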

\begin{proof}
	See \cref{app_sec_proof_thm_clustering}.
\end{proof} 

It is worth comparing   \cref{thm_clustering} with the existing literature on the community detection problem under GMMs. A large body of work focuses on exact label recovery, with spectral clustering being the primary driving approach \citep{dasgupta1999learning,sanjeev2001learning,vempala2002spectral,kannan2008spectral,awasthi2012improved,achlioptas2005spectral}. Along this direction, the best known separation condition for exact recovery \citep{achlioptas2005spectral} requires
\begin{equation}\label{cond_Achli}
	\min_{k\ne \ell} {\|\mu_k^*-\mu_{\ell}^*\|_2^2 \over  \lambda_1(\sw^*)}  ~    \gtrsim  ~  {1 \over \pmin }+ L^2 +  {L\log(nL)} 
\end{equation}
provided that $n \pmin \gtrsim  L (d +\log L)\log n$. By contrast,  \cref{thm_clustering}  for exact recovery requires  the separation condition $\Dtmin \gtrsim \log(1/\pmin)  + \log n$. 
Since  
 $\Dtmin \ge  	\min_{k\ne \ell}\|\mu_k^*-\mu_{\ell}^*\|_2^2 / \lambda_1(\sw^*)$, ours is weaker than \eqref{cond_Achli}, especially when $\pmin$ is small or $L$ large.   As shown in \cite{Ndaoud2022}, the requirement  $\Dtmin\gtrsim \log n$ is necessary for exact recovery even for the symmetric, isotropic 2-GMM. Regarding the sample complexity, under condition \eqref{cond_Achli}, our condition \eqref{cond_n_phi} simplifies to $dL^2 \log (n\Dtmax) \lesssim n$ which is   milder when $\log(\Dtmax /\Dtmin) \ll 1/\pmin$.


The first rate of convergence for the misclustering error was established in \citet{lu2016statistical}, where the authors showed that Lloyd's algorithm, when initialized by spectral clustering and run for $\lceil 4 \log n \rceil$ iterations, yields estimated labels $\wh Y_1, \ldots, \wh Y_n$ that satisfy
\begin{equation}\label{rate_Lloyd}
	{1\over n}\sum_{i=1}^n 1\{Y_i \ne \wh Y_i\} \le \exp\left\{ 
	-(1+o(1))\min_{k\ne \ell} {\|\mu_k^* -\mu_{\ell}^*\|_2^2  \over 8\lambda_1(\sw^*)}
	\right\}
\end{equation}
with high probability.
Their result requires the separation condition
\begin{equation}\label{cond_Lu2016}
	\min_{k \ne \ell} \frac{\|\mu_k^* - \mu_{\ell}^*\|_2^2}{\lambda_1(\sw^*)} ~ \gg ~ \frac{L}{\pmin} + \frac{dL^2}{n \pmin},
\end{equation}
under the assumption that $L \log n \le n \pminsq$. In comparison, \cref{thm_clustering} requires a weaker separation when $\min\{\log(\Dtmax/\Dtmin), \log\log n\} \ll L / \pmin$. This is due to the fact that the EM algorithm uses soft assignments in the E-steps, which influence the updates of the mean vectors in the M-steps. On the other hand, under condition \eqref{cond_Lu2016}, \cref{thm_clustering} still requires  $dL \log^4(n) \le n \pmin$, which is a stronger requirement on the sample size $n$ than that in \cite{lu2016statistical}. This is attributable to the fact that the EM algorithm also estimates the covariance matrix $\sw^*$.

Precisely because Lloyd's algorithm does not explicitly estimate $\sw^*$, the rates in \eqref{rate_Lloyd} are only optimal in the special case where $\sw^* \propto \bI_d$.  \citet{chen2024achieving} recently established the minimax lower bound of the misclustering error for a fixed, general $\sw^*$  under the $L$-GMM: 
\[
	\inf_{\wh Y_1,\ldots, \wh Y_n} \sup_{Y_1,\ldots, Y_n} \EE_{X \mid Y}\left[
		{1\over n}\sum_{i=1}^n 1\{Y_i \ne \wh Y_i\} 
	\right] \ge \exp\left\{ 
	-(1 + o(1)){ \Dtmin \over 8}
	\right\}
\]
provided that $\Dtmin /\log L \to \i$. Here $\EE_{X\mid Y}$ denotes the  conditional expectation of $X_1,\ldots, X_n$ under model \eqref{model} when $Y_1,\ldots, Y_n$ are treated as deterministic quantities. In \cite{chen2024achieving}, they also show that a modified Lloyd's algorithm that uses the hard assignments but also estimates $\sw^*$ provably achieves the above lower bound under conditions including $L = \cO(1)$, $1/\pmin=\cO(1)$, $d^2= \cO(n)$, $\Dtmin \to \infty$ and $c\le \lambda_d(\sw^*)\le \lambda_1(\sw^*) \le C$, as well as an initialization such that $ \sum_{i =1}^n 1\{Y_i \ne \wh Y^{(0)}\} = o(n)$.  By contrast, our result in \cref{thm_clustering} shows that the EM algorithm also achieves the optimal rate of convergence under milder conditions, owing to the use of soft assignments.


\section{Initialization and numerical studies}

We discuss the practical choice of initializations in \cref{sec_init}. Numerical studies are conducted in \cref{sec_sims} to corroborate our theoretical findings.

\subsection{Choice of the initialization}\label{sec_init}

Our analysis in \cref{sec_theory_popu,sec_theory_samp} has focused on the convergent behavior of EM under the assumption that a suitable initialization is provided. In this section, we discuss practical procedures for choosing such initializations. Recall that our theoretical results accommodate two types of initialization schemes: (a) Initializing with $\theta^{(0)}$ that is close to  $\theta^*$ in the sense of \eqref{cond_init_unknown}, or
(b) Initializing the posterior probabilities $\gamma_\ell(X_i; \theta^{(0)})$ for $i \in [n]$ and $\ell \in [L]$ such that $\phi(\theta^{(0)})$ in \eqref{def_phi_theta} is small, as required by (b) of \cref{thm_conv_phi}.

For scheme (b),  if some preliminary estimates of the labels can be obtained, say $\wh Y_1, \ldots, \wh Y_n$, then one can initialize by setting $\gamma_\ell(X_i;  \theta^{(0)}) := 1\{\wh Y_i = \ell\}$ so that 
\[
\phi(\theta^{(0)}) = \sum_{\ell = 1}^L \sum_{a \in [L]\setminus\{\ell\}} \sum_{i \in \wh{G}_a} 1\{\wh Y_i = \ell\} \|\mu_a^*-\mu_{\ell}^*\|_{\sw^*}^2 \le {\Dtmax} ~  {1\over n}\sum_{i=1}^n 1\{Y_i \ne \wh Y_i\}.
\]
This leads to sufficient conditions on the proportion of misclassified data points under which (b) of \cref{thm_conv_phi_known} and (b) of \cref{thm_conv_phi} hold. For example, when the separation condition \eqref{cond_Lu2016} is satisfied, one can use the Lloyd's algorithm to obtain preliminary labels. Indeed, the rates of Lloyd's algorithm given in \eqref{rate_Lloyd} ensure that both (b) of \cref{thm_conv_phi_known} and (b) of \cref{thm_conv_phi} hold, provided the following balancing condition is met:
\[
\min_{k\ne \ell} {\|\mu_k^* -\mu_{\ell}^*\|_2^2  \over \lambda_1(\sw^*)} 
~ \gg ~   \log  
{\max_{k,\ell} \|\mu_k^* -\mu_{\ell}^*\|_2^2  \over \min_{k\ne \ell} \|\mu_k^* -\mu_{\ell}^*\|_2^2} + \log  
{\lambda_1(\sw^*)\over  \lambda_p(\sw^*)}.
\]  
This condition ensures that neither the cluster centers nor the covariance structure are too unbalanced relative to the separation.

On the other hand, when the separation between components is insufficient, scheme (a) can still be employed, provided that a consistent estimator of $\theta^*$ is available. In the context of \cref{rem_sT_samp}, it can be further simplified to find a consistent estimator of $\pi^*$ and $M^*$. A consistent estimator of $\theta^*$ can be obtained, for example, via the Method of Moments (MoM) approaches mentioned in the Introduction. When $d = 1$, the denoising MoM estimator proposed in \cite{WuYang2020} achieves a convergence rate of $(1/\pmin + 1/\Dtmin)   n^{-1/\cO(L)}$. This exponential dependence on $L$ is unavoidable in the absence of any separation condition, and can be improved as the separation increases. For the multivariate $L$-GMM, a fast and consistent MoM estimation is proposed in \cite{LindsayBasak}, though no explicit convergence rates are provided. Recently, for higher dimensions ($d \ge 2$) and $L = 2$, \citet{kalai2010efficiently} proposed a MoM procedure that provably recovers the mixture parameters under the minimal separation. This framework was later extended to $L \ge 3$ by \citet{moitra2010settling}. While their estimation errors exhibit only polynomial dependence on $d$, $1/\pmin$, and $1/\Dtmin$, the degree of such polynomial is high, and the required sample complexity still grows exponentially in $L$. More recently, \citet{doss2023optimal} developed an estimator for the mixing measure that is minimax optimal in the multivariate isotropic $L$-GMM setting. In principle, this estimator could be used to construct a consistent estimator of $\theta^*$ with lower-order dependence on $d$, but this direction is not pursued in their work. For general anisotropic $L$-GMMs with $L \ge 3$, designing a provably consistent and computationally efficient MoM estimator with low sample complexity remains an open and important direction for future research.

In practice, one could initialize EM with different preliminary label estimates obtained from other clustering algorithms, and then select the final estimator that yields the highest likelihood. In our simulation studies below, we adopt initialization scheme (a) for simplicity.

\subsection{Simulation studies}\label{sec_sims}

In this section we first examine the decreasing behavior per iteration of the EM algorithm by tracking its optimization error and statistical error. Next, we evaluate how the convergence rate of the EM algorithm depends on the minimal mixing proportion $\pmin$, the smallest separation among all Gaussian components $\Dtmin$, and whether the covariance matrix $\sw^*$ is known. Third, we track the empirical convergence rate by varying the sample size $n$, and then compare it with our theoretical minimax optimal rate. 
In all cases, the data are generated from model \eqref{model}. 

We initialize the EM algorithm as $\wh{\pi}^{(0)} = 0.7 \pi^* + 0.3 \,\text{Dir}(5)$,
$\wh{\mu}_\ell^{(0)} = \mu_\ell^* + \nu_\ell$, for $\ell \in [L]$ and $
\wh{\sw}^{(0)} = \sw^* + (0.2 \cdot {0.4^2}/d) \,AA^\top$
where  $\text{Dir}(5)$ is the symmetric Dirichlet distribution,
$\nu_\ell$ is drawn uniformly from the sphere of radius $r$ in $\mathbb{R}^d$,  
and $A \in \mathbb{R}^{d \times d}$ has i.i.d. entries from $\mathcal{N}(0,1)$. We set $r = 0.4$ for the first two settings below while $r=0.2$ for the last setting.


\paragraph{Optimization error and statistical error} 

To examine the decreasing behavior per iteration of the optimization error and statistical error, we fix $n = 10,000$, $d = 10$, and $L = 3$. We set $\mu_\ell^* = \Dt_0 / \sqrt{2}\be_\ell$ for all $\ell \in [L]$ and $\sw^* = 0.4^2 \bI_d$, with $\be_\ell$ being the $\ell$th canonical basis vector of $\RR^d$. Thus, $\Dtmin = \Dtmax = (\Dt_0/0.4)^2$. We fix $\Dt_0 = 1.4$ under model \eqref{model} with known $\sw^*$ and perform 10 independent trials. \cref{fig:cov_M_Swknow} shows the results of these simulations. The left plots of \cref{fig:cov_M_Swknow} correspond to
the balanced case $\pi= (1/3, 1/3, 1/3)$ , while the right plot corresponds to the imbalanced case
$\pi = (0.6, 0.2, 0.2)$. The red curves plot the log transformed statistical error, $\log d(\wh{M}^{(t)}, M^*)$, versus the iteration number, whereas the blue curves plot the log transformed optimization error, $\log d(\wh{M}^{(t)}, \wh{M})$, where $\wh{M}$ denotes the convergent point. As seen from the red curves, the statistical error decreases geometrically before leveling off at a plateau. In contrast, the optimization error decreases geometrically down to numerical tolerance. Moreover, comparing the left and right plots, a smaller $\pmin$ results in a slower convergence rate.

\paragraph{Convergence rate with parameters}
We now investigate how the performance of EM algorithms depends on various parameters such as $\Dtmin$. We fix $n=10,000, d=10$ and $L=3$. We set the  mean vectors as 
$\mu_1 = \Delta_0 /\sqrt{2} \be_1$, 
$\mu_2 = \Delta_0 /\sqrt{2} \be_7$, and 
$\mu_3 = \Delta_0 /\sqrt{2} \be_{10}$, 
with $\Sigma^* = 0.4^2 \bI_d$. 
Since the covariance is isotropic, this choice is equivalent to using $\{\be_1, \be_2, \be_3\}$, 
and we adopt it for numerical stability when varying $\Dt_0$ within $\{1.2,1.4, 1.6, 1.8,2.0\}$. We perform 10 independent trials for each case. The averages of $d(\wh{M}^{(t)}, M^*)$ and $d(\wh{\sw}^{(t)}, \sw^*)$ are plotted versus iterations. The left plots of \cref{fig:convergence_rate} correspond to the balanced case $\pi = (1/3, 1/3, 1/3)$, while the right plots correspond to the imbalanced case $\pi = (0.6, 0.2, 0.2)$. We observe that the statistical error decreases initially and then reaches a plateau after several iterations. As expected, a larger $\Dtmin$ yields a faster convergence rate. Comparing the left and right plots, a smaller $\pmin$ again results in a slower convergence rate. Finally, comparing the first row (known $\sw^*$) with the second row (unknown $\sw^*$), we see that the known $\sw^*$ case converges faster. Taken together, all of these findings are consistent with \cref{thm_EM_popu_unknown} and \cref{cor_EM_popu_known}.

\begin{figure}[htbp]
    \hspace{0.7cm}
    \includegraphics[width=1\linewidth]{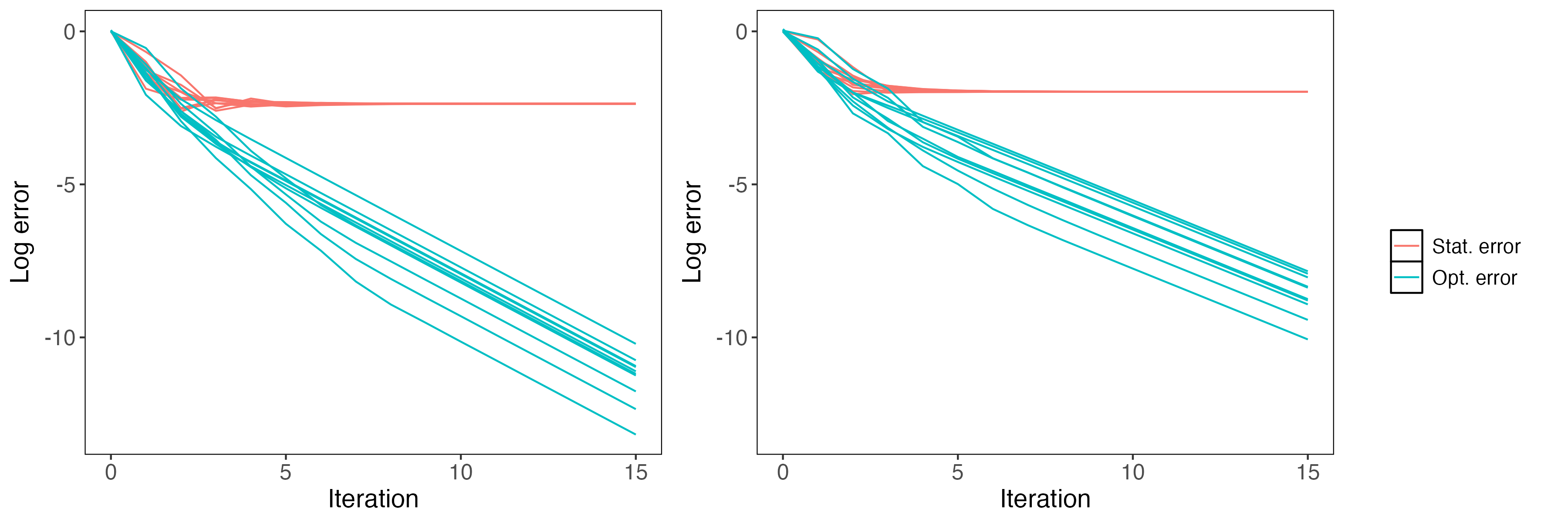}
    \caption{Plots of the iteration number versus log optimization error $\log(d(\wh{M}^{(t)},\wh{M}))$ and log statistical error $\log(d(\wh{M}^{(t)},M^*))$. Left: balanced mixing probability $\pi^*=(1/3,1/3,1/3)$. Right: imbalanced mixing probability $\pi^*=(0.6,0.2,0.2)$.}
    \label{fig:cov_M_Swknow}
\end{figure}

\begin{figure}[htbp]
    \centering
        \begin{subfigure}[t]{1\linewidth}
        \hspace{0.7cm}
        \includegraphics[width=\linewidth]{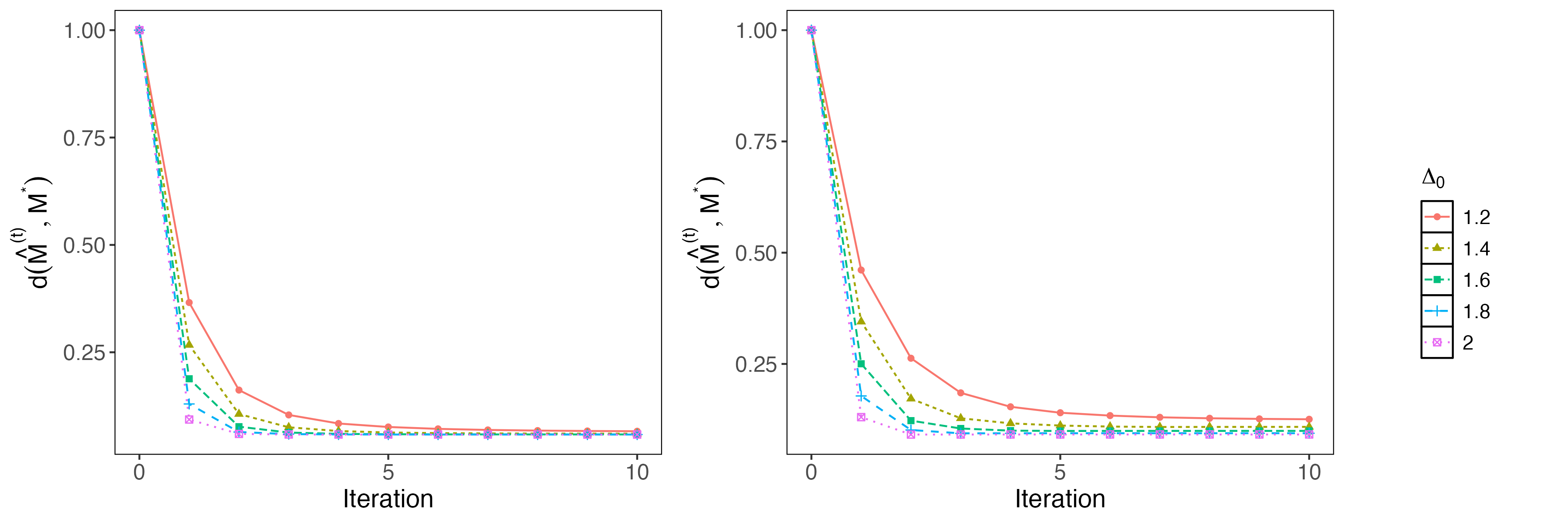}
        \caption{} 
    \end{subfigure}%
    ~\\
    \begin{subfigure}[t]{1\linewidth}
        \hspace{0.7cm}
        \includegraphics[width=\linewidth]{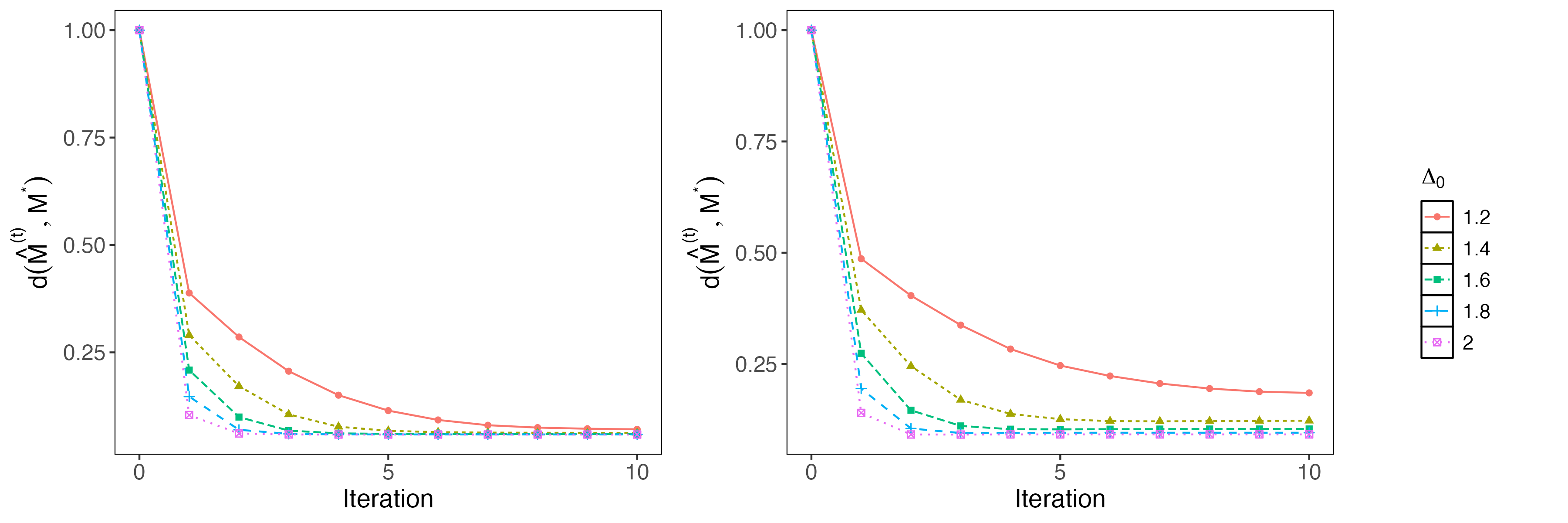}
        \caption{} 
    \end{subfigure} %
~ \\

    \begin{subfigure}[t]{1\linewidth}
        \hspace{0.7cm}
        \includegraphics[width=\linewidth]{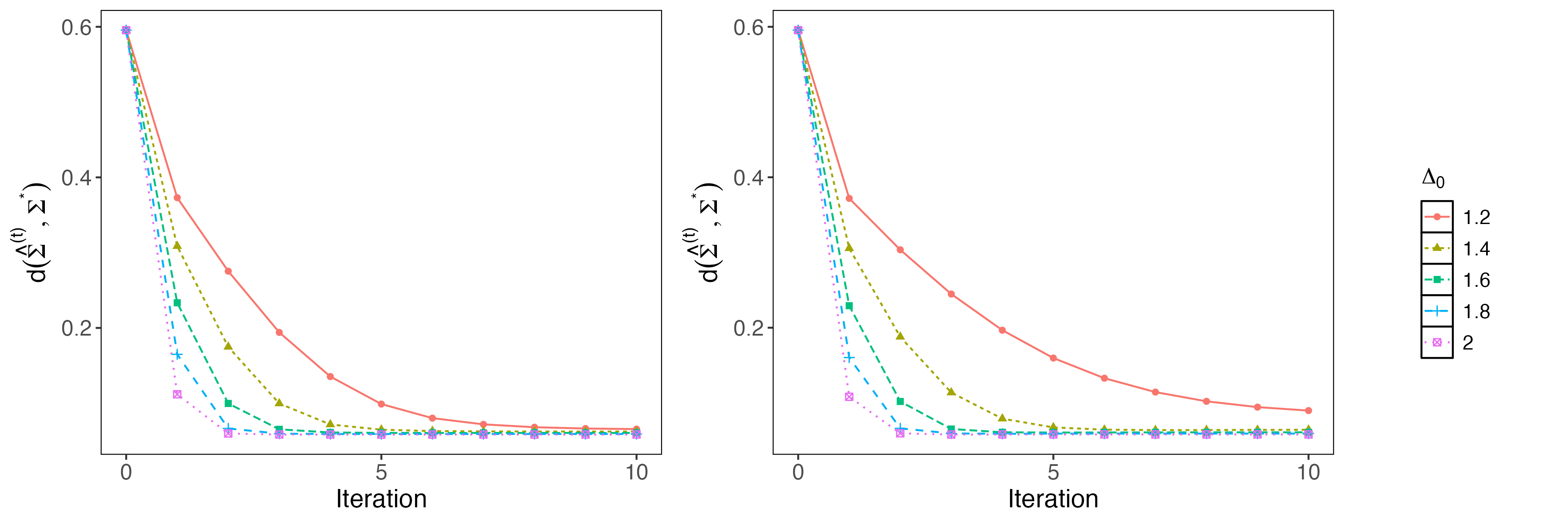}
        \caption{} 
    \end{subfigure}%

    \caption{Convergence of EM on the mixture of $L=3$ Gaussians in $\mathbb{R}^{10}$. The first row (known $\sw^*$) and the second row (unknown $\sw^*$) show average mean estimation statistical error under different separations; the bottom row (unknown $\sw^*$) shows average covariance estimation statistical error. Left: balanced mixing probability $\pi^*=(1/3,1/3,1/3)$. Right: imbalanced mixing probability $\pi^*=(0.6,0.2,0.2)$. }
    \label{fig:convergence_rate}
\end{figure}

\paragraph{Minimax Optimal Convergence Rate vs. Empirical Rate} 
In this section, we compare the empirical rates with the minimax optimal rates. Using similar data generation and initialization as before, we fix $L = 5$ and $d = 50$, set $\mu_\ell = 2\sqrt{2}\be_\ell$ and $\pi_\ell = 1/L$ for all $\ell \in [L]$, and vary $n$ over the sequence from 6,000 to 40,000 with equal increments of 2,000. We compare the isotropic model $\sw^* = 0.4^2 \bI_d$ with the compound symmetry model $\sw^* = 0.6 \cdot\bI_d + 0.4 \cdot \mathbf{1}\mathbf{1}^\T$. The optimal rates (see \cref{tab_EM_compare}) are $d(\wh M, M^*) \asymp \sqrt{d/(n\pmin)}$ and $d(\wh\sw,\sw^*) \asymp \sqrt{d/n}$. We then plot the empirical convergence rates against the optimal rates and fit a line through the origin. The empirical convergence rates under the two models exhibit similar behavior. The $R^2$ values for all cases are greater than $0.99$. The near-linear alignment in \cref{fig:emp-vs-minimax-rate} thus indicates that the observed rates closely follow the minimax theoretical rates. 

\begin{figure}[htbp]
  \centering
    \begin{minipage}[t]{0.48\textwidth}
   \centering
    \includegraphics[width=\textwidth]{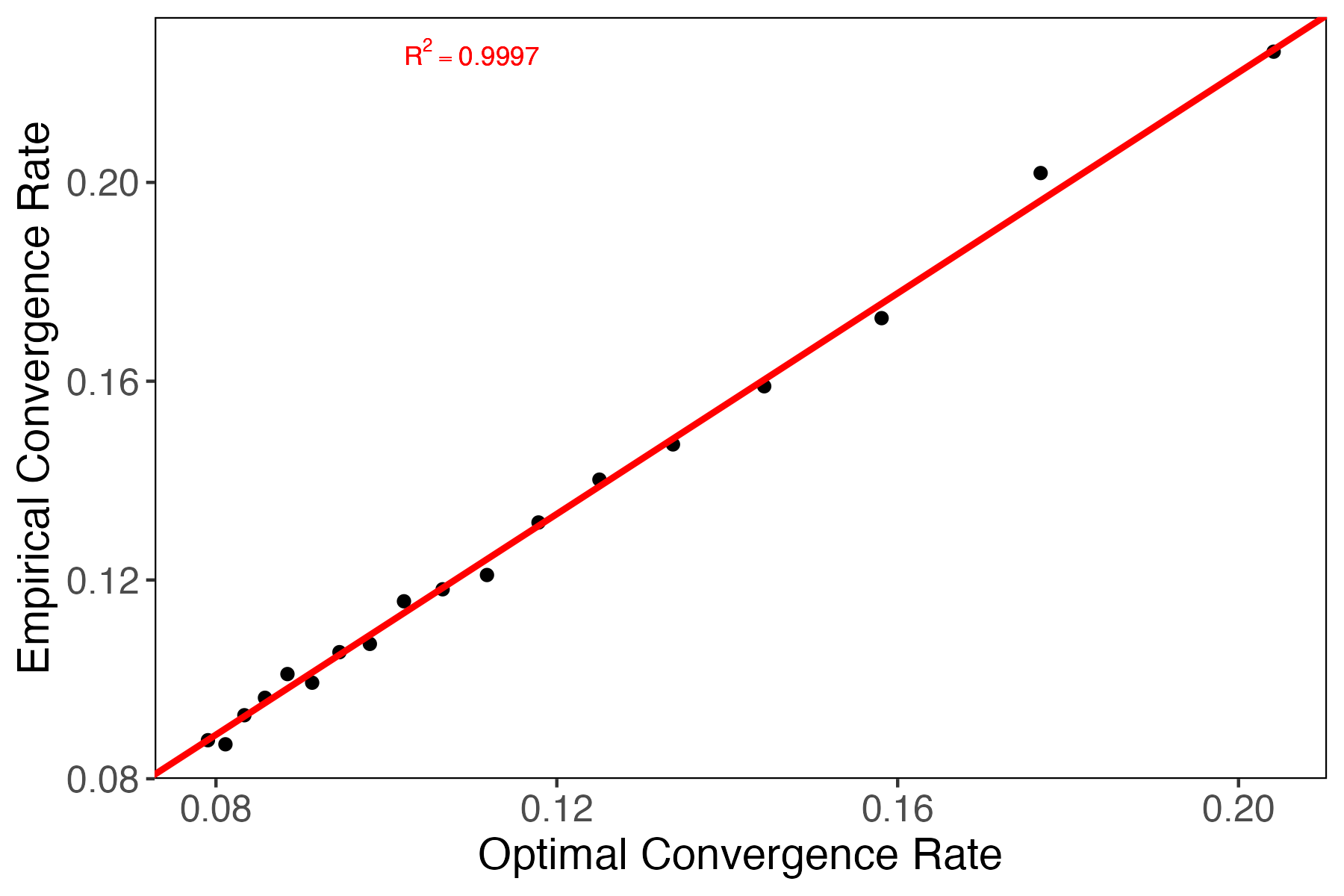}
  \end{minipage}
 \hfill
     \begin{minipage}[t]{0.48\textwidth}
   \centering
    \includegraphics[width=\textwidth]{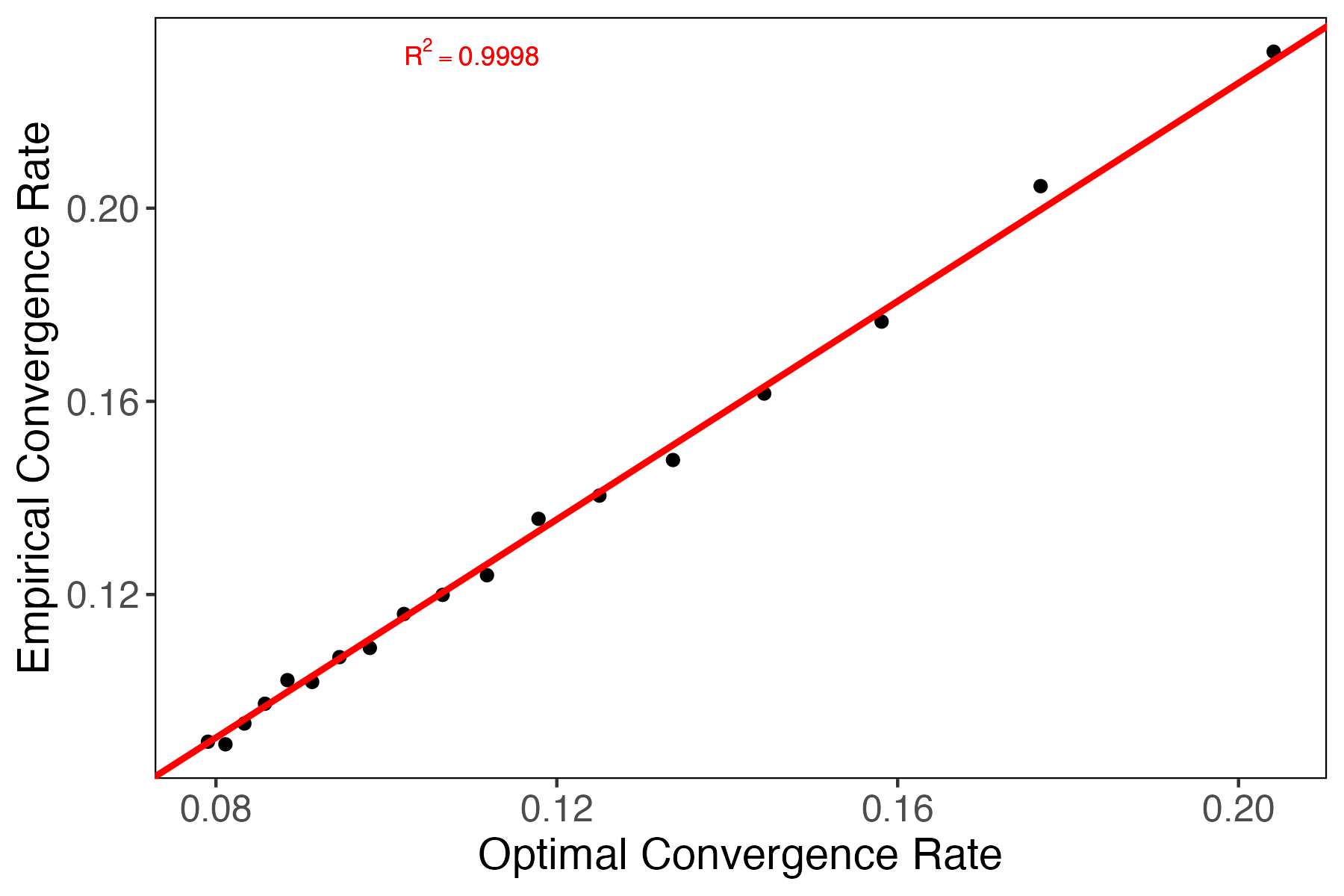}
  \end{minipage}
 \hfill
 
  \begin{minipage}[t]{0.48\textwidth}
    \centering
    \includegraphics[width=\textwidth]{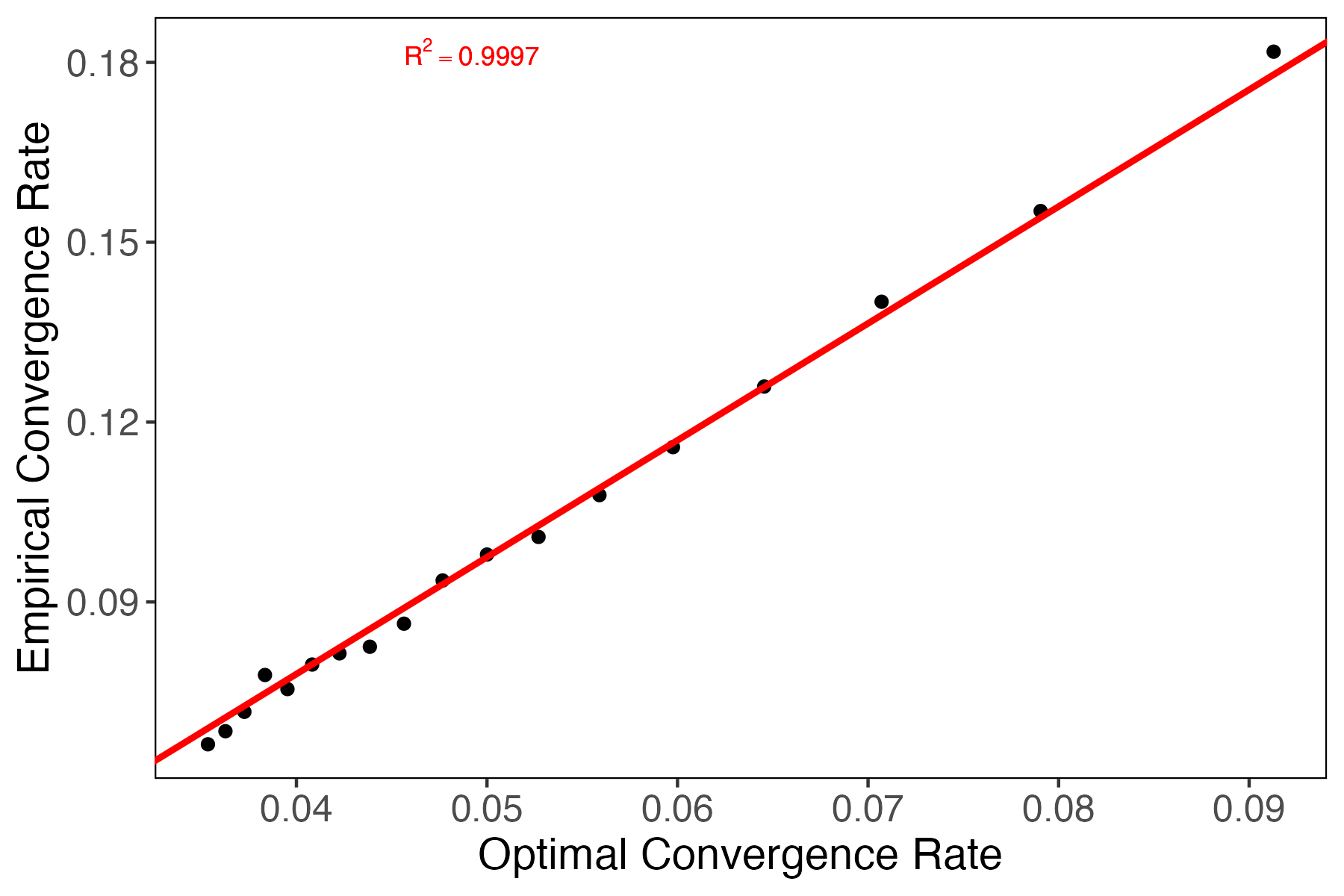}
  \end{minipage}
  \hfill
  \begin{minipage}[t]{0.48\textwidth}
    \centering
    \includegraphics[width=\textwidth]{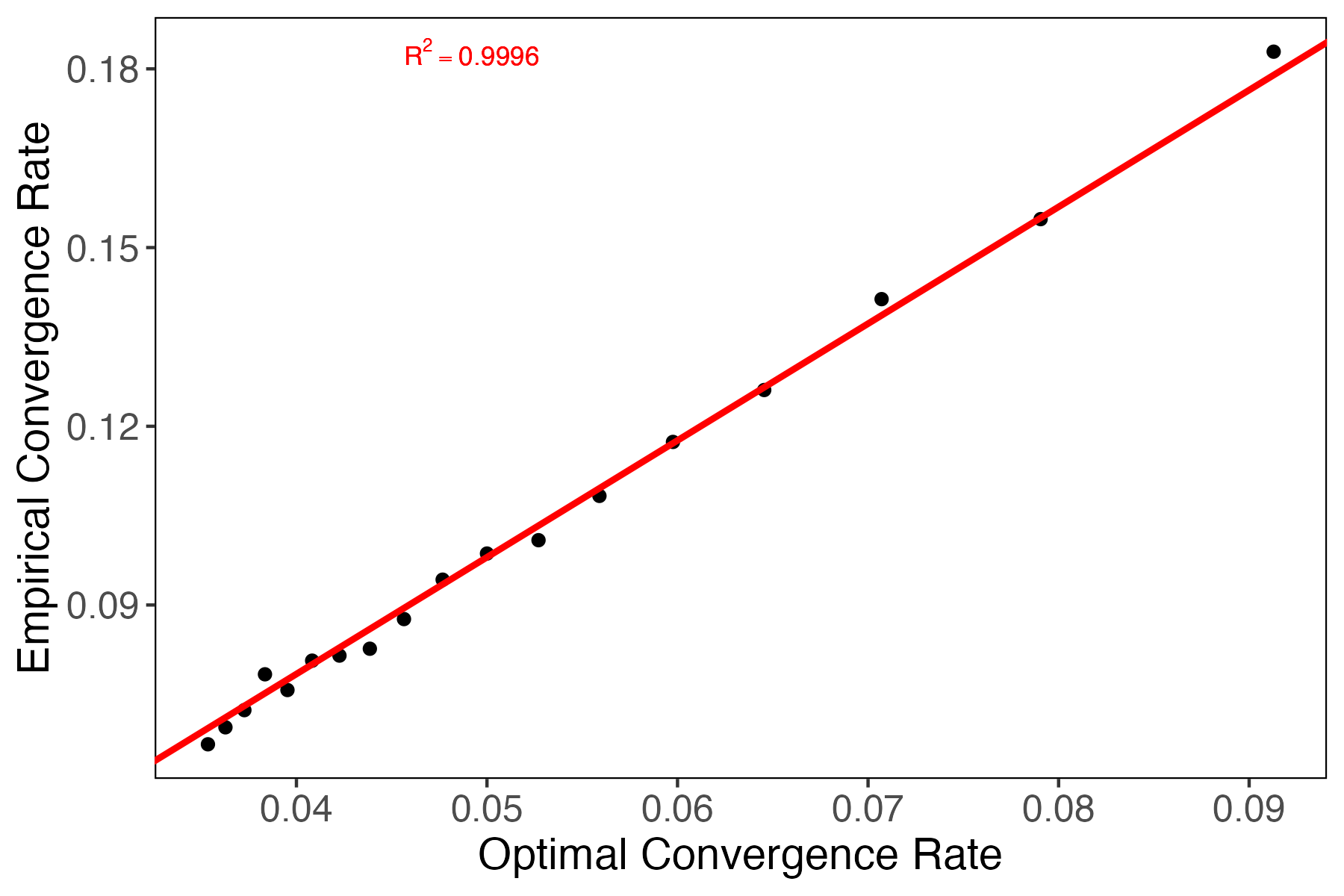}
  \end{minipage}
  
  \caption{Empirical vs.\ optimal rates. Left: isotropic model; Right: compound symmetry model. Top row: mean; Bottom row: covariance.}
  \label{fig:emp-vs-minimax-rate}
\end{figure}

    {\small 
        \setlength{\bibsep}{0.85pt}{
        \bibliographystyle{abbrvnat}
        \bibliography{ref}
        }
    }

\newpage

\appendix

\cref{app_sec_tab} contains the full comparison table with existing theoretical works of EM. Lemma \ref{lem_inv_sw_hat} is proved in 
\cref{app_sec_proof_lem_inv_sw_hat}. 
The proofs of population-level convergence of the EM algorithm are stated in \cref{app_sec_proof_EM_popu}.  \cref{app_sec_proof_sample_small} contains the proofs of the sample-level results in \cref{sec_theory_samp_small} when the separation is small to moderate while \cref{app_sec_proof_sample_large} contains the proofs when the separation is moderate to large. \cref{app_sec_proof_lower_bound} contains the proof of the minimax lower bounds. Auxiliary lemmas are collected in \cref{app_sec_aux}.

\section{A full comparison table with existing theoretical works of the EM algorithm under the $L$-GMM}\label{app_sec_tab}

\begin{table}[ht]
	\centering
	\caption{Comparison with existing theoretical results for the EM algorithm under the $L$-GMM. Symmetric 2-GMM means $\pi_1= \pi_2 = 1/2$ and $\mu_1=-\mu_2 = \mu$. In \cite{cai2019chime}, $\beta = \sw^{-1}(\mu_2-\mu_1)$.}
	\label{tab_EM_compare_full}
	\renewcommand{\arraystretch}{1.3}
	\resizebox{\textwidth}{!}{
		\begin{tabular}{l|c|c|c|c|c}
			\toprule
			& $L$ & Parameter settings & Separation $\Dtmin$  & Initialization & Convergence Rates \\
			\midrule
			\cite{EM2017}               & $2$      & symmetric, $\sw=\bI_d$      & $>C$           & $\|\mu^{(0)}-\mu\|_2 \le \|\mu\|_2/4$      & $d(\wh M, M) \lessapprox \|\mu\|_2^3\sqrt{d/n})$ \\
			\cite{Xu_Hsu_Maleki}        & $2$      &  symmetric, $\sw$ known       & $> 0$ & $ \mu^{(0)\T}\mu \ne 0$      & $d(\wh M, M) = o(1)$ \\
			\cite{daskalakis2017ten}    & $2$      &  symmetric,  $\sw$ known       & $\wt\Omega((d/n)^{1/3})$ & $\mu^{(0)\T}\mu \ne 0$    & $d(\wh M, M) \lessapprox (\sqrt{d/n} \wedge \|\mu\|_2^{-2}\sqrt{d/n})$ \\ 
             \cite{wu2021randomly} & $2$      &  symmetric,  $\sw=\bI_d$       & $(0, C]$ & $\mu^{(0)\T}\mu \ne 0$    & $d(\wh M, M) \lessapprox \left((d/n)^{1/4} \wedge  \|\mu\|_2^{-1}\sqrt{d / n}\right)$ \\ 
             \cite{weinberger2022algorithm} & $2$      & \makecell{$\pi_2>\pi_1$ known,  $\sw=\bI_d$ \\ $\mu_2 = -\mu_1 = \mu$}      & $(0, C]$ & $\mu^{(0)\T}\mu \ge  0$    &$d(\wh M, M) \lessapprox\left((d/n)^{1/4} \wedge  \|\mu\|_2^{-1}\sqrt{d / n} \wedge (\pi_2-\pi_1)^{-1}\sqrt{d/n}\right)$ \\ 
			\cite{cai2019chime}         & $2$      &  \makecell{$c\le \pi_1,\pi_2 \le 1-c$\\ $c \prec \sw \prec   C$}                               & $[C,C']$       &    \makecell{$d(\pi^{(0)},\pi) + d(M^{(0)}, M) $\\
            $+\|\beta^{(0)} - \beta\|_2 \le c\sqrt{\Dtmin / d}$}  & $\displaystyle d(\wh M,M) + \|\wh\beta-\beta\|_2 \lesssim \sqrt{d/n}$ \\
			\midrule  
			\cite{dasgupta2007probabilistic} & $\ge 2$ & $\sw=\sigma^2\bI_d$     & $\displaystyle\Omega\Bigl(\sqrt{d \log \pi_{\min}^{-1}} + \log \pi_{\min}^{-1}\Bigr)$          &  A particular initialization  & $\displaystyle d(\wh\pi,\pi)+ d(\wh M, M)  \lesssim \left( \sqrt{d /( n\pi_{\min})} + {\exp(-c\sqrt{d})/ \pi_{\min}}\right) $\\
            \cite{kwon2020algorithm}& $\ge 2$   &  $\sw=\sigma^2\bI_d$                            & $\log(1/\pi_{\min})$         & $d(\pi^{(0)},\pi) < 1,~ d(M^{(0)}, M) \le c \sqrt{\Dtmin}   $ 
            & A sample-splitting version of the EM \\
			\cite{yan2017convergence}   & $\ge 2$  & $\pi$ known, $\sw=\bI_d$                              & $ \displaystyle\Omega\Bigl((d \wedge L)\log{(\Dtmax / \pi_{\min})}\Bigr)$         & $ d(M^{(0)}, M) \le c \sqrt{\Dtmin}$ 
            & $\displaystyle d(\wh M, M) \lessapprox {(\pi_{\max} / \pi_{\min})} \sqrt{\Dtmax}(\sqrt{d} + L^3\Dtmax)\sqrt{d/ n}$ \\
			\cite{Zhao2020}   & $\ge 2$  & $\pi$ known, $\sw=\bI_d$                              & $\displaystyle\Omega\Bigl((d \wedge L)\log{(\Dtmax / \pi_{\min})}\Bigr)$         & $ d(M^{(0)}, M) \le c \sqrt{\Dtmin}$ 
            & $\displaystyle d(\wh M, M) \lessapprox({\sqrt{\Dtmax} / \pi_{\min}}) \sqrt{dL / n} $ \\
            \cite{segol2021improved} &$\ge2$  &   $\pi$ known, $\sw=\bI_d$                              & $ \log(1/\pi_{\min}) $         & $ d(M^{(0)}, M) \le c \sqrt{\Dtmin}   $ 
            & $\displaystyle d(\wh M, M) \lessapprox  (1 / \pi_{\min}) \sqrt{dL/ n}  $ \\
			\midrule 
			\multirow{2}{*}{This work}                   & \multirow{2}{*}{$\ge 2$}   & \multirow{2}{*}{Full generality}                            &  \multirow{2}{*}{$\displaystyle\Omega\Bigl(\log{(\Dtmax / \pi_{\min})}\Bigr)$}       &      $d(\pi^{(0)},\pi) < 1,~d(M^{(0)}, M)\le c\sqrt{\Dtmin}$   &$\displaystyle d(\wh\pi,\pi) + d(\wh M, M) \lessapprox \sqrt{d / (n\pi_{\min})} $\\
			& & & &  \makecell{$ d(\sw^{(0)}, \sw)\le  c $,  or via preliminary labels} & $\displaystyle d(\whsw, \sw) \lessapprox \sqrt{d / n} $\\
			\bottomrule
		\end{tabular}
	}
\end{table}

\section{Proof of \cref{lem_inv_sw_hat}}\label{app_sec_proof_lem_inv_sw_hat}

    We first state and proof a simple lemma that is used in proving \cref{lem_inv_sw_hat}.
    Let $\Gamma$ be any $n\times L$ matrix with each row $\Gamma_{i\cdot} \in \cS^L$ for all $i\in [n]$. Here  $\cS^L$ denotes the $L$-dimensional probability simplex. Let  $P_\bX \in \RR^{n\times n}$ denote the orthogonal projection matrix onto the column space of $\bX:=(X_1, X_2, \ldots, X_n)^\T \in \RR^{n\times d}$. Write $P_\bX^\perp = \bI_n -  P_\bX$.
\begin{lemma}\label{lem_Hhat_inv}
    If there exists at least one $i\in [n]$ such that $\Gamma_{i\ell} > 0$ for all $\ell \in [L]$, then the matrix
    $$
      \diag(\Gamma^{\T}1_n) -  \Gamma^\T P_\bX~ \Gamma
    $$
    is strictly positive definite. 
\end{lemma}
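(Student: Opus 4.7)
The plan is to analyze the quadratic form $v^\T [D - \Gamma^\T P_\bX \Gamma] v$ for $D := \diag(\Gamma^\T 1_n)$, showing it is nonnegative for every $v$ and strictly positive whenever $v \ne 0$. The two main ingredients are a Jensen-type bound that exploits row-stochasticity of $\Gamma$, and the contraction $\|P_\bX y\|_2 \le \|y\|_2$ coming from $P_\bX$ being an orthogonal projection.

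First I would expand
\[
v^\T D v = \sum_{i=1}^n \sum_{\ell=1}^L \Gamma_{i\ell}\, v_\ell^2,
\]
and apply Jensen's inequality row by row using $\Gamma_{i\cdot} \in \cS^L$:
\[
(\Gamma v)_i^2 = \Bigl(\sum_\ell \Gamma_{i\ell} v_\ell\Bigr)^2 \le \sum_\ell \Gamma_{i\ell}\, v_\ell^2,
\]
with equality iff $v_\ell$ is constant on $\supp(\Gamma_{i\cdot})$. Summing in $i$ gives $v^\T D v \ge \|\Gamma v\|_2^2$, while $v^\T \Gamma^\T P_\bX \Gamma v = \|P_\bX \Gamma v\|_2^2 \le \|\Gamma v\|_2^2$, with equality iff $\Gamma v \in \operatorname{col}(\bX)$. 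Chaining the two,
\[
v^\T \bigl[D - \Gamma^\T P_\bX \Gamma\bigr] v \;\ge\; \|\Gamma v\|_2^2 - \|P_\bX \Gamma v\|_2^2 \;=\; \|P_\bX^\perp \Gamma v\|_2^2 \;\ge\; 0,
\]
which already gives positive semi-definiteness.

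To upgrade this to strict positivity, I argue by contradiction. Suppose $v \ne 0$ makes the quadratic form vanish; then both inequalities above must be tight. Tightness of the Jensen step at the row $i_0$ where $\Gamma_{i_0 \ell} > 0$ for every $\ell$ forces $v_\ell$ to be constant in $\ell$, i.e.\ $v = c\, 1_L$ with $c \ne 0$. Using $\Gamma 1_L = 1_n$ (row sums equal one) gives $\Gamma v = c\, 1_n$, and tightness of the projection step then demands $1_n \in \operatorname{col}(\bX)$, which is precluded by the nondegeneracy that accompanies the ``$\whsT$ is invertible'' setting of \cref{lem_inv_sw_hat} (for instance, via centering of $\bX$ or generic position of the $X_i$). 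The main obstacle is precisely this final step: the PSD part is a one-line quadratic-form bound, but strict positivity genuinely requires both the full-support row $i_0$, to pin the bad direction down to $v \propto 1_L$, and the auxiliary nondegeneracy on $\bX$ to eliminate it.
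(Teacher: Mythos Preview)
Your approach coincides with the paper's: both hinge on the chain $v^\T D v \ge \|\Gamma v\|_2^2 \ge \|P_{\bX}\Gamma v\|_2^2$, which the paper writes as the decomposition $D - \Gamma^\T P_{\bX}\Gamma = (D - \Gamma^\T\Gamma) + \Gamma^\T P_{\bX}^\perp \Gamma$. You are in fact more careful than the paper in tracking the equality case, correctly reducing the only possible null direction to $v = c\,1_L$ and then to the condition $1_n \in \operatorname{col}(\bX)$.

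The gap is your final sentence. Invertibility of $\whsT = n^{-1}\bX^\T\bX$ only says that $\bX$ has full column rank $d$; it does not rule out $1_n \in \operatorname{col}(\bX)$ (any full-rank $\bX$ whose column space happens to contain the constant vector is a counterexample). The centering assumption in the paper is at the population level, $\EE[X]=0$, not $\bX^\T 1_n = 0$. So nothing in the stated hypotheses excludes this direction, and the lemma as written is actually false without adding $1_n \notin \operatorname{col}(\bX)$ --- an event that holds almost surely under the Gaussian mixture when $n>d$, but is not implied by the lemma's assumptions. The paper's own proof shares this gap, in cruder form: it asserts that $D - \Gamma^\T\Gamma$ is strictly positive definite whenever some row of $\Gamma$ has full support, but $v = 1_L$ annihilates every summand $\diag(\Gamma_{i\cdot}) - \Gamma_{i\cdot}\Gamma_{i\cdot}^\T$ since each row of $\Gamma$ sums to one. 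Your analysis is thus sharper than the paper's, but the missing ingredient $1_n \notin \operatorname{col}(\bX)$ must be assumed, not derived from invertibility of $\whsT$.
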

\begin{proof}
    Start with
    \begin{align*}
     \diag(\Gamma^{\T}1_n) -  \Gamma^{^\T}P_{\bX} \Gamma  =  \diag(\Gamma^{\T}1_n) -\Gamma^{^\T} \Gamma +\Gamma^{^\T} P_{\bX}^{\perp}\Gamma.
\end{align*} 
It remains to show that $\diag(\Gamma^{\T}1_n) -\Gamma^{^\T} \Gamma$ is  strictly positive definite.  
For any non-zero vector $v\in \RR^L$, we have
\begin{align*}
    v^\T \left(\diag(\Gamma^{\T}1_n) -\Gamma^{^\T} \Gamma\right) v 
    = & \sum_{i=1}^n v^\T \left(\diag(\Gamma_{i\cdot}) - \Gamma_{i\cdot}\Gamma_{i\cdot}^\T\right)v.
\end{align*}
Since $\Gamma_{i\cdot}\in \cS^L$ for all $i\in [n]$, the above is always non-negative. Moreover, it is strictly greater than zero as long as there exists at least one $i\in [n]$ such that $\Gamma_{i\ell} > 0$ for all $\ell\in [L]$. This completes the proof.  
\end{proof}

\begin{proof}[Proof of \cref{lem_inv_sw_hat}]
    For any $t\ge 0$ with any $\wh\theta^{(t)}$, write  
    \begin{equation}\label{def_Gamma_hat}
        \wh \Gamma^{(t)} := \Gamma(\bX, \wh\theta^{(t)})  :=  
    \begin{bmatrix}
    \gamma_1(X_1; \wh\theta^{(t)}) & \cdots & \gamma_L(X_1; \wh\theta^{(t)}) \\  
    \vdots & \ddots & \vdots \\  
    \gamma_1(X_n; \wh\theta^{(t)}) & \cdots & \gamma_L(X_n; \wh\theta^{(t)})
    \end{bmatrix} \in \RR^{n \times L}.
    \end{equation}
    Then
    using \cref{iter_pi_hat,iter_mu_hat}  gives that 
    \[
            \wh M^{(t)} =  \bX   \wh \Gamma^{(t)} \diag^{-1}( \wh \Gamma^{(t)\T}1_n), \qquad \wh\pi^{(t)} = {1\over n} \wh \Gamma^{(t)\T}1_n
    \]
    so that \eqref{iter_sw_hat_alter}  yields
    \begin{align*}
        \whsw^{(t)} & = \whsT - \wh M^{(t)} \diag(\wh\pi^{(t)}) \wh M^{(t)\T}\\
        &= {1\over n}\bX^\T\bX - {1\over n}\bX   \wh \Gamma^{(t)} \diag^{-1}( \wh \Gamma^{(t)\T}1_n) \wh \Gamma^{(t)\T}\bX^\T\\
        &= {1\over n}\bX^\T\left[\bI_n - P_{\bX}^{1/2}\wh \Gamma^{(t)} \diag^{-1}( \wh \Gamma^{(t)\T}1_n) \wh \Gamma^{(t)\T} P_{\bX}^{1/2} \right]\bX.
    \end{align*} 
    Observe that the matrix $H:= P_{\bX}^{1/2}\wh \Gamma^{(t)} \diag^{-1}( \wh \Gamma^{(t)\T}1_n) \wh \Gamma^{(t)\T} P_{\bX}^{1/2}$ is  positive semi-definite and has rank at most $L$. Moreover, for any $1\le r\le L$, its the $r$th largest eigenvalue satisfies 
    \[
        0\le \lambda_r(H) = \lambda_r\left(\diag^{-1/2}( \wh \Gamma^{(t)\T}1_n) \wh \Gamma^{(t)\T}P_{\bX} \wh \Gamma^{(t)}\diag^{-1/2}( \wh \Gamma^{(t)\T}1_n)\right) < 1
    \]
    by using \cref{lem_Hhat_inv} and the fact that each row of $\wh\Gamma^{(t)}$ has $L$ non-zero entries. Therefore $(\bI_n - H)$ has rank $n$ and the invertibility of $\whsw^{(t)}$ follows from that of $\whsT$, completing the proof. 
\end{proof}

\section{Proofs of \cref{sec_theory_popu}: convergence  of the population level EM algorithm} \label{app_sec_proof_EM_popu}

\noindent {\bf Notation.} Throughout the proof, we write 
\begin{equation}\label{def_Delta}
	\Delta_{k\ell} = \|\mu_k^*-\mu_\ell^*\|_{\sw^*}^2 = (\mu_k^*-\mu_\ell^*)^\T \sw^{*-1} (\mu_k^*-\mu_\ell^*),\qquad \forall ~ k,\ell \in [L].
\end{equation}
Recall $\Dtmax$ and $\Dtmin$ from \eqref{Delta_max_min}.
Due to $\EE[X]=0$, we have 
\begin{equation}\label{eq_Dt_max_Dt_inf}
	{1\over 4}\Dt_{\max} \le \max_{\ell \in [L]} ~ \mu_\ell^{*\T} \sw^{*-1} \mu_\ell^* \le \Dt_{\max}.
\end{equation}
For any $\pi = (\pi_1,\ldots,\pi_L)^\T$, we write 
$
\pi_{\min} = \min_{\ell\in [L]} \pi_\ell.
$

Recall that given any parameter $\theta$, the population level M-step in \cref{iter_pi,iter_mu,iter_sw} are
\begin{align}\label{def_E_step_popu_append}
  \bpi_\ell(\theta) = \EE_{\theta^*}[\gamma_\ell (X; \theta)],\qquad \bmu_\ell(\theta) = {\EE_{\theta^*}[\gamma_\ell (X; \theta) X] \over \EE_{\theta^*}[\gamma_\ell (X; \theta)]},\qquad \text{for all $\ell \in [L]$,}
\end{align}
and, by collecting $\bpi(\theta) = (\bpi_1(\theta),\ldots,\bpi_L(\theta))^\T$ and $\bM(\theta) = (\bmu_1(\theta), \ldots, \bmu_L(\theta))$, 
\begin{align}\label{def_E_step_sw_popu_append}
    \bsw(\theta) = \EE_{\theta^*}[XX^\T] - \bM(\theta) \diag(\bpi(\theta)) \bM(\theta)^\T.
\end{align}
Note that they all depend on $\gamma_\ell(X; \theta)$. We start by rewriting
    \begin{equation}\label{def_gamma_omega}
        \gamma_\ell(x; \theta) = \frac{\pi_\ell}{ \sum_{k =1}^L \pi_k \exp\left\{(x-{1\over 2}( \mu_\ell+\mu_k))^\T J(\theta) (\be_k-\be_\ell)\right\}} := \gamma_\ell(x;\omega(\theta))
    \end{equation}
for any  $x\in \RR^d$ and $\ell \in [L]$,  where we introduce
    \begin{equation}\label{def_omega}
         \omega(\theta)  = (\pi,  M, J( \theta)) 
    \end{equation}
    with $J(\theta):=\sw^{-1}M$.

\subsection{Self-consistency of the (population level) EM iterates}\label{app_sec_self_consistency}

The so-called {\em self-consistency} of the above update is easy to verify: for any $k\in [L]$, we have 
\begin{align*}
    &\bpi_k(\theta^*)  = \EE_{\theta^*}[\gamma_k(X;\theta^*)]  = \EE_{\theta^*}[ \PP_{\theta^*}(Y=k \mid X)] = \PP_{\theta^*}(Y = k) = \pi_k^*,\\
    &\bmu_k(\theta^*)  = \frac{\EE_{\theta^*}[\gamma_k(X;\theta^*)X]}{\EE_{\theta^*}[\gamma_k(X;\theta^*)]} = \frac{\EE_{\theta^*}[\PP_{\theta^*}(Y=k \mid X)X]}{\pi_k^*}= \frac{\EE_{\theta^*}[1\{Y=k\}X]}{\pi_k^*}=\frac{\pi_k^* \mu^*_k}{\pi_k^*} =\mu_k^*.
\end{align*} 
As a result, 
\begin{equation}\label{eq_sw_sT}
    \bsw(\theta^*) = \sT^* - M^*D_{\pi^*}M^{*\T} = \sw^*
\end{equation}
and $J(\theta^*) = \sw^{*-1}M^* =: J^*.$
In the subsequent proofs,  we drop the subscripts $\theta^*$ in $\EE_{\theta^*}$ and $\PP_{\theta^*}$ when there is no confusion. 

\subsection{A local neighborhood of $\omega^*$ and Lipschitz property of $J(\cdot)$ and $\sw(\cdot)$}

    For any $\omega = (\pi, M, J)$ and $\omega' = (\pi',M', J')$, by recalling $d(\pi,\pi')$ and $d(M,M')$ from \eqref{def_dist_pi_M}, define 
	\begin{equation}\label{def_dist_J}
		d(J, J') := {1\over 2} \max_{a,\ell \in[L]}\|\sw^{*1/2} (J-J')(\be_a - \be_{\ell})  \|_2
	\end{equation}  
    and the distance between $\omega$ and $\omega'$ as
    \begin{equation}\label{def_dist_omega}
        d(\omega,\omega') :=  d(\pi,\pi') + \sqrt{\Dtmax} ~ d(M,M') +\sqrt{\Dtmax}~   d(J,J').
    \end{equation}
    For some constant  $0<c_0<(\sqrt{2}-1)/2$, denote  the size-$c_0$ neighborhood of $\omega^*$ by  
    \begin{align}\label{cond_init_w_J} \nonumber
        \cB_d(\omega^*) \equiv \cB_d(\omega^*, c_0) = &\left\{
           (\pi,M,J):   d(\pi, \pi^*) \le {1\over 2}, ~ {d(M,M^*)\over \sqrt{\Dtmin}} \le c_0,  \right. \\
          & \hspace{1cm}\left. ~ \max_{a\ne \ell} {1\over  \sqrt{\Dt_{a\ell}}} \left\|\sw^{*1/2} (J  -J^*) (\be_a - \be_{\ell})\right\|_2 \le 2c_0
        \right\}.
    \end{align}  
	For future reference, \cref{lem_lip_J_basic} below ensures  that  for any $\theta$ satisfying \eqref{cond_init_unknown},  
	\begin{align*}
			 {1\over  \sqrt{\Dt_{a\ell}}} \left\|\sw^{*1/2} (J(\theta)  -J^*) (\be_a - \be_{\ell})\right\|_2 &\le    {\|\mu_a -\mu_a^*\|_{\sw^*} +  \|\mu_\ell -\mu_\ell^*\|_{\sw^*}\over (1 - c_\sw)\sqrt{\Dt_{a\ell}}}+ {c_\sw\over 1 - c_\sw} \\
			 &\le {2c_\mu + c_\sw \over 1-c_\sw}
	\end{align*}
    holds for all $a\ne \ell$. Consequently,  
	its corresponding $\omega(\theta) \in \cB_d(\omega^*,c_0)$ with $c_0 = (c_\mu + c_\sw/2)/(1-c_\sw)  $ and 
    \begin{equation}\label{init_dist_omega}
        d(\omega(\theta), \omega^*) \le {1\over 2} + c_0\sqrt{\Dtmax\Dtmin} + c_0 \Dtmax\le {1\over 2} + 2c_0\Dtmax.
    \end{equation}

   \begin{lemma}\label{lem_lip_J_basic}
   		Fix any  $\theta =(\pi, M, \sw)$ with $d(\sw,\sw^*)<1$. For any $a\ne \ell \in [L]$, one has
   		\[
   			  \|\sw^{*1/2}(J(\theta) - J(\theta^*))(\be_a-\be_{\ell})\|_2 \le    {\|\mu_a -\mu_a^*\|_{\sw^*} +  \|\mu_\ell -\mu_\ell^*\|_{\sw^*}\over 1 - d(\sw, \sw^*)}+ {d(\sw,\sw^*)\over 1 - d(\sw, \sw^*)}\sqrt{\Dt_{a\ell}}.
   		\]
   		Moreover, if $d(\sw, \sw^*) \le 1/2$, then   
   		\[
   			d(J(\theta),J(\theta^*))  \le   2d(M,M^*) +\sqrt{\Dtmax}~ d(\sw, \sw^*).
   		\]
   \end{lemma}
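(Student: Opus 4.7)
The plan is to split
\[
J(\theta)-J(\theta^*)=\sw^{-1}M-\sw^{*-1}M^*=\sw^{-1}(M-M^*)+(\sw^{-1}-\sw^{*-1})M^*,
\]
which isolates the dependence on $M-M^*$ from that on $\sw-\sw^*$. For the second summand I will use the resolvent identity $\sw^{-1}-\sw^{*-1}=\sw^{-1}(\sw^*-\sw)\sw^{*-1}$. Both summands will then be handled in whitened coordinates: set $\widetilde{\sw}:=\sw^{*-1/2}\sw\sw^{*-1/2}$, so that $\|\widetilde{\sw}-\bI_d\|_{\op}=d(\sw,\sw^*)$ and, by the Neumann bound (which is precisely why the hypothesis $d(\sw,\sw^*)<1$ is needed), $\|\widetilde{\sw}^{-1}\|_{\op}\le(1-d(\sw,\sw^*))^{-1}$.

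After left-multiplying by $\sw^{*1/2}$ and right-multiplying by $\be_a-\be_\ell$, the identity $\sw^{*1/2}\sw^{-1}=\widetilde{\sw}^{-1}\sw^{*-1/2}$ turns the first summand into $\widetilde{\sw}^{-1}\sw^{*-1/2}(M-M^*)(\be_a-\be_\ell)$. Using $(M-M^*)(\be_a-\be_\ell)=(\mu_a-\mu_a^*)-(\mu_\ell-\mu_\ell^*)$ and the triangle inequality, this contributes at most $(1-d(\sw,\sw^*))^{-1}(\|\mu_a-\mu_a^*\|_{\sw^*}+\|\mu_\ell-\mu_\ell^*\|_{\sw^*})$. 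The same whitening step converts the second summand into $\widetilde{\sw}^{-1}(\bI_d-\widetilde{\sw})\sw^{*-1/2}(\mu_a^*-\mu_\ell^*)$, whose norm is at most $\|\widetilde{\sw}^{-1}\|_{\op}\,\|\bI_d-\widetilde{\sw}\|_{\op}\,\sqrt{\Delta_{a\ell}}=d(\sw,\sw^*)\sqrt{\Delta_{a\ell}}/(1-d(\sw,\sw^*))$. Summing the two contributions yields the first inequality.

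For the second inequality I will plug $d(\sw,\sw^*)\le 1/2$ into the first (so $(1-d(\sw,\sw^*))^{-1}\le 2$ and $d(\sw,\sw^*)/(1-d(\sw,\sw^*))\le 2d(\sw,\sw^*)$), bound $\|\mu_a-\mu_a^*\|_{\sw^*}+\|\mu_\ell-\mu_\ell^*\|_{\sw^*}\le 2d(M,M^*)$ by the very definition of $d(M,M^*)$, and bound $\sqrt{\Delta_{a\ell}}\le\sqrt{\Dtmax}$. Dividing by $2$ to account for the factor $1/2$ in the definition \eqref{def_dist_J} of $d(J,J')$ then absorbs the leading $2$'s and produces exactly $2d(M,M^*)+\sqrt{\Dtmax}\,d(\sw,\sw^*)$.

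No single step is genuinely difficult: the whole argument is careful bookkeeping built around one resolvent identity and one whitening substitution. The only point that requires attention is the invertibility of $\widetilde{\sw}$ together with the operator-norm bound on $\widetilde{\sw}^{-1}$, which is precisely where the assumption $d(\sw,\sw^*)<1$ is consumed.
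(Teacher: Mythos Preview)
Your proposal is correct and follows essentially the same route as the paper: the same additive decomposition $J(\theta)-J(\theta^*)=\sw^{-1}(M-M^*)+\sw^{-1}(\sw^*-\sw)\sw^{*-1}M^*$, the same whitening $\widetilde{\sw}=\sw^{*-1/2}\sw\sw^{*-1/2}$, and the same passage from the first bound to $d(J(\theta),J(\theta^*))$ via the factor $1/2$ in \eqref{def_dist_J}. The only cosmetic difference is that the paper invokes Weyl's inequality to get $\lambda_j(\widetilde{\sw})\ge 1-d(\sw,\sw^*)$ (hence $\|\widetilde{\sw}^{-1}\|_{\op}\le(1-d(\sw,\sw^*))^{-1}$), whereas you obtain the same bound via a Neumann-series argument; both are equivalent here.
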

	\begin{proof} 
		Weyl's inequality ensures that 
		\begin{equation}\label{eigs_Sw}
			1-d(\sw, \sw^*)  \le \lambda_j(\sw^{*-1/2}\sw \sw^{*-1/2}) \le 1+d(\sw, \sw^*),\quad \forall ~ 1\le j\le d.
		\end{equation}
		Thus  $\sw$ is invertible.  The first result follows from 
		\begin{align}\label{decomp_J_diff}\nonumber
			&\|\sw^{*1/2}(J(\theta) - J(\theta^*))(\be_a -\be_{\ell})\|_2\\\nonumber
			&=\left\|\sw^{*1/2} \left(\sw^{-1}(M - M^*) + \sw^{-1}(\sw^* - \sw) \sw^{*-1}M^*\right)(\be_a -\be_{\ell})\right\|_2\\ \nonumber
			& \le    \| \sw^{*-1/2}\sw^{-1}\sw^{*-1/2}\|_\op\left\{\|(M - M^*)(\be_a -\be_{\ell})\|_{\sw^*}+d(\sw,\sw^*) \|  \mu_a^* -\mu_{\ell}^*\|_{\sw^*}\right\}\\  
			&\le  {1\over 1 - d(\sw, \sw^*)}\left\{\|\mu_a -\mu_a^*\|_{\sw^*} +  \|\mu_\ell -\mu_\ell^*\|_{\sw^*}+  d(\sw,\sw^*)\sqrt{\Dt_{a\ell}}\right\}
		\end{align}
		where the last step uses  \eqref{eigs_Sw}. The second result follows trivially from the first one.
	\end{proof}

    For $\sw = \sT - M \diag(\pi) M^\T$ and $ \sw'= \sT' - M' \diag(\pi') M^{'\T},$
    the following lemma relates $d(\sw, \sw')$ to $d(\pi,\pi')$, $d(M, M')$ and $d(\sT,\sT')$ 
    with 
    $$
        d(\sT, \sT') := \|\sw^{*-1/2}(\sT - \sT')\sw^{*-1/2}\|_\op.
    $$
    
    \begin{lemma}\label{lem_lip_sw} Fix any  $\theta =(\pi, M, \sw)$ with $
    	\sw = \sT - M \diag(\pi) M^\T 
    	$ and  $\theta' =(\pi', M', \sw')$ with 
        $
        \sw':= \sT' - M' \diag(\pi') M^{'\T}.
        $
        \begin{enumerate}  
        \item[(a)] 
        Taking $\theta'=\theta^*$, if  $d(M,M')\le  \|\sw^{*-1/2}M'\|_{2,\i} $, then 
        \begin{equation}\label{lip_sw}
        d(\sw, \sw')\le  3 \|\sw^{*-1/2}M'\|_{2,\i}  d(M,M') + 2 \|\sw^{*-1/2}M'\|_{2,\i} ^2 d(\pi,\pi')+d(\sT, \sT').
        \end{equation}
        \item [(b)] If  
        \begin{equation}\label{cond_dist_EM_Sigma}
        	6\sqrt{\Dtmax}d(M,M^*) + 4\Dtmax d(\pi,\pi^*) + 2d(\sT, \sT^*) \le 1,
        \end{equation}
        we have 
        \[
         	d(J(\theta),J(\theta^*)) \le 8\Dtmax d(M, M^*) + 4\Dtmax^{3/2}d(\pi,\pi^*) + 2\sqrt{\Dtmax} d(\sT, \sT^*).
        \] 
        \end{enumerate} 
    \end{lemma}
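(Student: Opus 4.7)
\textbf{Plan for Lemma \ref{lem_lip_sw}.}

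For part~(a), the natural starting point is the identity
\[
  \sw - \sw' = (\sT - \sT') \;-\; \bigl[M\diag(\pi)M^\T - M'\diag(\pi')M^{'\T}\bigr],
\]
together with the telescoping decomposition
\[
  M\diag(\pi)M^\T - M'\diag(\pi')M^{'\T} = M'\diag(\pi-\pi')M^{'\T} + M'\diag(\pi)(M-M')^\T + (M-M')\diag(\pi)M^{'\T} + (M-M')\diag(\pi)(M-M')^\T.
\]
The crucial choice here is to arrange the decomposition so that at most one factor of $M-M'$ or $\pi-\pi'$ appears in each cross term, and so that the ``outer'' factors are $M'$ rather than $M$; this is what will allow the hypothesis $d(M,M')\le \|\sw^{*-1/2}M'\|_{2,\infty}$ to absorb the quadratic residual.

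After conjugating by $\sw^{*-1/2}$ and writing $A':=\sw^{*-1/2}M'$ and $B:=\sw^{*-1/2}(M-M')$, I apply the elementary identity that, for diagonal $D$ and matrices with matching column dimension,
\[
  \Bigl\|\sum_{\ell} D_{\ell\ell}(A_1)_{\cdot\ell}(A_2)_{\cdot\ell}^\T\Bigr\|_\op \;\le\; \sum_{\ell} |D_{\ell\ell}|\,\|(A_1)_{\cdot\ell}\|_2\,\|(A_2)_{\cdot\ell}\|_2.
\]
Each of the four resulting contributions is then controlled using (i) $\sum_\ell|\pi_\ell-\pi_\ell'|\le \sum_\ell \pi_\ell^* d(\pi,\pi') = d(\pi,\pi')$, (ii) $\sum_\ell\pi_\ell=1$, (iii) $\max_\ell\|(B)_{\cdot\ell}\|_2=d(M,M')$, and (iv) the relevant column-wise magnitudes of $A'$, which are bounded by $\|\sw^{*-1/2}M'\|_{2,\infty}$. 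The purely quadratic piece $\|B\diag(\pi)B^\T\|_\op\le d(M,M')^2$ is reduced to $\|\sw^{*-1/2}M'\|_{2,\infty}\,d(M,M')$ using the hypothesis of part~(a). Collecting the four terms yields the advertised coefficients $3,\,2,\,1$ (with a factor of $2$ allowed for the $d(\pi,\pi')$ contribution to simplify the statement).

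For part~(b), the plan is to combine Lemma~\ref{lem_lip_J_basic} with part~(a) applied at $\theta'=\theta^*$. First, using \eqref{eq_Dt_max_Dt_inf} we have $\|\sw^{*-1/2}M^*\|_{2,\infty}\le\sqrt{\Dtmax}$ and also $\|\sw^{*-1/2}M^*\|_{2,\infty}\ge \sqrt{\Dtmax}/2$, so that the hypothesis $d(M,M^*)\le \|\sw^{*-1/2}M^*\|_{2,\infty}$ of part~(a) is automatically implied by the first term in \eqref{cond_dist_EM_Sigma}. Part~(a) then yields
\[
  d(\sw,\sw^*) \;\le\; 3\sqrt{\Dtmax}\,d(M,M^*) + 2\Dtmax\,d(\pi,\pi^*) + d(\sT,\sT^*),
\]
and condition \eqref{cond_dist_EM_Sigma} is precisely the statement that the right-hand side is at most $1/2$. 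This verifies the prerequisite $d(\sw,\sw^*)\le 1/2$ of Lemma~\ref{lem_lip_J_basic}, so
\[
  d(J(\theta),J(\theta^*))\;\le\; 2d(M,M^*) + \sqrt{\Dtmax}\,d(\sw,\sw^*).
\]
Substituting in the bound for $d(\sw,\sw^*)$ gives $(3\Dtmax+2)\,d(M,M^*) + 2\Dtmax^{3/2}\,d(\pi,\pi^*) + \sqrt{\Dtmax}\,d(\sT,\sT^*)$. Since the paper's running separation condition \eqref{cond_min_Delta} forces $\Dtmax\ge\Dtmin\gg 1$, we have $3\Dtmax + 2 \le 8\Dtmax$, and the advertised bound follows.

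The main obstacle is the bookkeeping in part~(a): one has to pick the telescoping so that the outer factors are $M'$ (otherwise the quadratic residual cannot be absorbed cleanly by the hypothesis), and then track the norms of each piece carefully enough that the coefficients match the statement. Part~(b), once part~(a) is in hand, reduces to checking that condition \eqref{cond_dist_EM_Sigma} is exactly the condition under which $d(\sw,\sw^*)\le 1/2$ holds, so that \cref{lem_lip_J_basic} can be invoked and the two bounds composed.
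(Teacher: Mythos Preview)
Your proposal is correct and follows essentially the same route as the paper. The only cosmetic difference is in part~(a): the paper uses the three-term telescoping
\[
  M'\diag(\pi')(M'-M)^\T + M'(\diag(\pi')-\diag(\pi))M^\T + (M'-M)\diag(\pi)M^\T,
\]
so that the middle and last pieces carry a factor of $M$ (bounded via $\|\sw^{*-1/2}M\|_{2,\infty}\le\|\sw^{*-1/2}M'\|_{2,\infty}+d(M,M')$), which is where the coefficient $2$ on $d(\pi,\pi')$ arises; your four-term split isolates the quadratic residual $(M-M')\diag(\pi)(M-M')^\T$ explicitly and in fact yields the sharper coefficient $1$. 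Part~(b) is handled identically in both, and your observation that one needs $\Dtmax$ bounded away from zero (supplied by \eqref{cond_min_Delta}) to absorb the additive $+2$ into $8\Dtmax$ and to verify the hypothesis of part~(a) is a detail the paper leaves implicit.
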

    \begin{proof}
    	 Part (a) follows by  
        \begin{align*}\nonumber
            d(\sw, \sw') & \le \|\sw^{*-1/2}M' \diag(\pi') (M'-M)^\T \sw^{*-1/2}\|_\op\\
            &\quad +\|\sw^{*-1/2}M'(\diag(\pi')  -  \diag(\pi))M^\T \sw^{*-1/2}\|_\op\\\nonumber
            &\quad +\|\sw^{*-1/2}(M'-M) \diag(\pi) M^{\T}  \sw^{*-1/2}\|_\op+ \|\sw^{*-1/2}(\sT - \sT')\sw^{*-1/2}\|_\op\\\nonumber
            &\le  \|\sw^{*-1/2}M'\|_{2,\i} d(M,M') + d(\pi,\pi')  \|\sw^{*-1/2}M'\|_{2,\i} \left[ \|\sw^{*-1/2}M'\|_{2,\i} + d(M,M')\right]\\\nonumber
            &\quad + d(M,M') \left[\|\sw^{*-1/2}M'\|_{2,\i} + d(M,M')\right] + d(\sT, \sT')\\
            &\le  3\|\sw^{*-1/2}M'\|_{2,\i}  d(M,M') + 2\|\sw^{*-1/2}M'\|_{2,\i}^2 d(\pi,\pi')+d(\sT, \sT').
        \end{align*}
        The last step uses $d(M,M')\le \|\sw^{*-1/2}M'\|_{2,\i} $. 
        
       To prove part (b), since condition \eqref{cond_dist_EM_Sigma} ensures that $d(\sw,\sw^*)\le 1/2$ holds, the last claim then follows by plugging the above bound with $\theta' = \theta^*$ into the result of \cref{lem_lip_J_basic}. 
    \end{proof}
    
\subsection{Proof of \cref{thm_EM_popu_unknown}: contraction of the population level EM iterates}\label{app_sec_proof_thm_EM_population}

	The following theorem states the one-step contraction property of the population level M-steps $\bpi(\omega)$, $\bM(\omega)$ and $\bsw(\omega)$ for any $\omega \in \cB_d(\omega^*)$. Recall $d(\pi,\pi^*)$ from \eqref{def_dist_omega}.
    
    \begin{theorem}\label{thm_EM_population_omega}
        Grant condition \eqref{cond_min_Delta}. For some absolute constants $c,C>0$, let 
        \begin{equation}\label{def_kappa_omega}
            \kappa_{\omega} =  \frac{C}{\pmin}\exp{(-c\Dtmin)}.
        \end{equation} 
        For any $\omega = (\pi, M, J) \in \cB_d(\omega^*)$, 
        we have 
        \begin{align}\label{rate_contract_pi}
            d(\bpi(\omega), \bpi(\omega^*)) & ~ \le ~  \kappa_{\omega} ~ d(\omega,\omega^*),\\\label{rate_contract_mu}
         d(\bM(\omega), \bM(\omega^*))  & ~ \le ~   \kappa_{\omega}  \sqrt{\Dtmax}  ~ d(\omega,\omega^*),\\\label{rate_contract_sw}
         d(\bsw(\omega), \bsw(\omega^*)) &~ \le ~  \kappa_{\omega}~  \Dtmax ~  d(\omega,\omega^*).
        \end{align}  
    \end{theorem}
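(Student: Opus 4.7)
}
The plan is to establish each of the three contractions by proving componentwise Lipschitz continuity of the operators $\omega\mapsto\bpi(\omega)$, $\omega\mapsto\bM(\omega)$, and $\omega\mapsto\bsw(\omega)$ in the three slots $\pi$, $M$, and $J$ separately, with an exponentially small constant $\asymp\pmin^{-1}\exp(-c\Dtmin)$ in each slot. The common starting point is the fundamental theorem of calculus along the straight-line path $\omega(s)=\omega^*+s(\omega-\omega^*)$, $s\in[0,1]$, which gives
\begin{equation*}
    \gamma_\ell(x;\omega)-\gamma_\ell(x;\omega^*)=\int_0^1\bigl\langle\nabla_\pi\gamma_\ell(x;\omega(s)),\pi-\pi^*\bigr\rangle+\bigl\langle\nabla_M\gamma_\ell,M-M^*\bigr\rangle+\bigl\langle\nabla_J\gamma_\ell,J-J^*\bigr\rangle\,ds.
\end{equation*}
Differentiating the explicit expression \eqref{def_gamma_omega} via the quotient rule produces terms of the form $\gamma_\ell(x;\omega)\gamma_k(x;\omega)$ multiplied by constants, linear factors $(x-(\mu_\ell+\mu_k)/2)$, or bilinear tensors, depending on which slot is being differentiated. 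The whole convexity of $\cB_d(\omega^*)$ ensures that $\omega(s)\in\cB_d(\omega^*)$ for every $s$, so all pointwise estimates need only be uniform on this ball.

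For $\bpi_\ell(\omega)-\bpi_\ell(\omega^*)=\EE[\gamma_\ell(X;\omega)-\gamma_\ell(X;\omega^*)]$, I would take expectation in the identity above and bound the three resulting expectations separately. The key estimate is that for each $k\ne\ell$, the weighted expectation $\EE[\gamma_\ell(X;\omega(s))\gamma_k(X;\omega(s))\,h_{k\ell}(X)]$ is of order $\pmin^{-1}\exp(-c\Dtmin)$ times an appropriate size factor, where $h_{k\ell}$ is $1$, $\|X-(\mu_\ell+\mu_k)/2\|_{\sw^*}$, or a matrix analogue. This is the exponential concentration of the cross-posterior $\gamma_\ell\gamma_k$ on the ``wrong'' regions, and I expect it to be quantified uniformly on $\cB_d(\omega^*)$ by the technical lemmas \cref{lem_delta_J_order,lem_I_fN,lem_var_gamma_X}; the linear-in-$x$ and quadratic-in-$x$ factors then contribute at most $\sqrt{\Dtmax}$ and $\Dtmax$ after integration against the Gaussian mixture. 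Combining slot by slot with \eqref{def_dist_omega} yields $d(\bpi(\omega),\bpi(\omega^*))\le\kappa_\omega\,d(\omega,\omega^*)$.

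For $\bmu_\ell$, use the algebraic identity
\begin{equation*}
    \bpi_\ell(\omega)\bigl(\bmu_\ell(\omega)-\mu_\ell^*\bigr)=\EE\bigl[\gamma_\ell(X;\omega)(X-\mu_\ell^*)\bigr]-\bigl(\bpi_\ell(\omega)-\pi_\ell^*\bigr)\mu_\ell^*.
\end{equation*}
A uniform lower bound $\bpi_\ell(\omega)\ge\pi_\ell^*/2$ on $\cB_d(\omega^*)$ follows from the just-proved $\bpi$-bound together with \eqref{cond_min_Delta}. The first right-hand term is handled by exactly the same path-integration as for $\bpi$ but with an extra factor $X-\mu_\ell^*$ inside the expectation, which inflates each slot-wise bound by at most $\sqrt{\Dtmax}$ via $\|\mu_k^*-\mu_\ell^*\|_{\sw^*}\le\sqrt{\Dtmax}$ and $\EE[\gamma_\ell\gamma_k\|Z\|_{\sw^*}]\lesssim\sqrt{\Dtmax}\cdot\EE[\gamma_\ell\gamma_k]$; the second right-hand term uses the $\bpi$ contraction with $\|\mu_\ell^*\|_{\sw^*}\le\sqrt{\Dtmax}$ from \eqref{eq_Dt_max_Dt_inf}. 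For $\bsw$, apply \cref{lem_lip_sw}(a) with $\theta'=\theta^*$ and $\sT=\sT^*$ to the identity $\bsw(\omega)=\sT^*-\bM(\omega)\diag(\bpi(\omega))\bM(\omega)^\T$; this gives $d(\bsw(\omega),\bsw(\omega^*))\le 3\sqrt{\Dtmax}\,d(\bM(\omega),M^*)+2\Dtmax\,d(\bpi(\omega),\pi^*)$, and the contractions of $\bpi$ and $\bM$ produce the advertised $\kappa_\omega\,\Dtmax$ prefactor.

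\emph{Main obstacle.} The technical heart of the proof is Step 1, namely showing the uniform exponential decay $\EE[\gamma_\ell(X;\omega)\gamma_k(X;\omega)\,h_{k\ell}(X)]\lesssim\pmin^{-1}\exp(-c\Dtmin)$ over the whole ball $\cB_d(\omega^*)$, not merely pointwise at $\omega^*$. At the true parameter one has the clean identity $\gamma_\ell(X;\omega^*)=\PP_{\theta^*}(Y=\ell\mid X)$ and the decay follows from concentration of the posterior near $0/1$ indicators; at a perturbed $\omega$ both the exponent in \eqref{def_gamma_omega} and the normalizing denominator $D_\ell$ are shifted. The ball $\cB_d(\omega^*)$ in \eqref{cond_init_w_J} is defined precisely so that each perturbation of $\sw^{*1/2}(J-J^*)(\be_a-\be_\ell)$ is at most a constant fraction of $\sqrt{\Delta_{a\ell}}$, which keeps the effective quadratic form negative to leading order and leaves the associated Gaussian moment-generating functions exponentially small; a similar control holds for the $M$-perturbation via $d(M,M^*)\le c_0\sqrt{\Dtmin}$. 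Ensuring this translates through the quotient-rule derivatives into the uniform bound above, with the sharp $\pmin^{-1}$ dependence rather than $\pmin^{-L}$, is where I expect the auxiliary lemmas to do most of the work.
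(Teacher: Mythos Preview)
Your plan is correct and largely matches the paper's argument: path-integrate $\gamma_\ell(x;\omega)-\gamma_\ell(x;\omega^*)$ slot by slot, bound the resulting derivative expectations uniformly over $\cB_d(\omega^*)$ via the exponential-decay lemmas (the paper packages these as the quantities $\cR_I^{(0)},\cR_I^{(1)},\cR_I^{(2)}$ in \eqref{def_cRs} and controls them in \cref{lem_I_fN}), and read off the $\bsw$ bound from \cref{lem_lip_sw}(a). Two points of comparison are worth noting.

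First, the paper interpolates sequentially---$\pi$ with $(\bar M,\bar J)$ fixed, then $M$ with $(\pi^*,\bar J)$ fixed, then $J$ with $(\pi^*,M^*)$ fixed---rather than along a single straight line; either scheme works since all intermediate points remain in $\cB_d(\omega^*)$. Second, and more substantively, your treatment of $\bmu_\ell$ differs from the paper's. The paper differentiates the ratio $\bmu_\ell(\omega)=\EE[\gamma_\ell X]/\EE[\gamma_\ell]$ directly, which produces factors $X-\bmu_\ell(\omega)$ inside the expectations; this forces a self-referential bound via $D_{\mu_\ell}=\sup_{\omega\in\cB_d(\omega^*)}\|\bmu_\ell(\omega)-\mu_\ell^*\|_{\sw^*}$ that is then closed using the smallness of the coefficient. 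Your route instead exploits $\bpi_\ell(\omega)\bigl(\bmu_\ell(\omega)-\mu_\ell^*\bigr)=\EE[\gamma_\ell(X;\omega)(X-\mu_\ell^*)]$ (note: the extra term $-(\bpi_\ell(\omega)-\pi_\ell^*)\mu_\ell^*$ you wrote is spurious; the identity holds without it), so the fixed centering $\mu_\ell^*$ appears and one can bound $\|\mu_b^*-\mu_\ell^*\|_{\sw^*}\le\sqrt{\Dtmax}$ directly. This avoids the $D_{\mu_\ell}$ bootstrap and is a genuine simplification. One place where you are a bit loose is the handling of the Gaussian noise part: after writing $X=\sw^{*1/2}N+\mu_b^*$, the terms linear and bilinear in $N$ require Cauchy--Schwarz against $\EE[I_{a\ell}NN^\T]$ rather than a crude $\|N\|_2$ bound, which is exactly what the paper's $\cR_I^{(1)},\cR_I^{(2)}$ machinery does; your sketch correctly anticipates that \cref{lem_I_fN,lem_var_gamma_X} supply this.
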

    \begin{proof}
        The proofs of \eqref{rate_contract_pi} and \eqref{rate_contract_mu} can be found in \cref{app_sec_proof_rate_contract_pi_w_J} and \cref{app_sec_proof_rate_contract_mu_w_J}, respectively. The proof of \eqref{rate_contract_sw} uses \eqref{lip_sw} of \cref{lem_lip_sw} with $\theta' = \theta^*$ and $\sT = \sT^*$.
    \end{proof}

 Combining \cref{thm_EM_population_omega} and \cref{lem_lip_J_basic}, we are ready to prove \cref{thm_EM_popu_unknown}.

\begin{proof}[Proof of \cref{thm_EM_popu_unknown}]
    Fix any $\theta = (\pi, M, \sw)$ satisfying \eqref{cond_init_unknown}. By induction, it suffices to prove 
    \begin{equation}\label{one_step}
        d(\bpi(\theta), \pi^*) +  d(\bM(\theta), M^*) + d(\bsw(\theta), \sw^*) \le \kappa \left\{d(\pi,\pi^*) +   d(M,M^*)+  d(\sw, \sw^*)  \right\}
    \end{equation}
    and the updated parameters $\bpi(\theta), \bM(\theta),\bsw(\theta)$ still satisfy \eqref{cond_init_unknown}. 

    To prove \eqref{one_step},  since condition \eqref{cond_init_unknown}  ensures   $\omega = (\pi, M, J(\theta)) \in \cB_d(\omega^*)$, one has
    \begin{align*}
           d(\bpi(\theta), \pi^*)
         &    =   d(\bpi(\omega(\theta)), \bpi(\omega(\theta^*)))\\
         &  \le    \kappa_{\omega}  \left\{d(\pi,\pi^*) + \sqrt{\Dtmax} ~ d(M,M^*) +\sqrt{\Dtmax}~ d(J(\theta), J^*) \right\} &&\text{by \cref{thm_EM_population_omega}}\\
         &   \le \kappa_{\omega}  \left\{d(\pi,\pi^*) + 3\sqrt{\Dtmax} ~ d(M,M^*) + {\Dtmax}~ d(\sw, \sw^*) \right\} &&\text{by \cref{lem_lip_J_basic}}
    \end{align*}
    with $\kappa_{\omega}$ given by \eqref{def_kappa_omega}. 
    Similarly, we also have 
    \begin{align*}
         d(\bM(\theta), M^*)  & ~ \le ~   \kappa_{\omega}  \sqrt{\Dtmax}  \left\{d(\pi,\pi^*) + 3\sqrt{\Dtmax}~  d(M,M^*)+ {\Dtmax}~ d(\sw, \sw^*)  \right\}.
    \end{align*} 
    By \cref{lem_lip_sw} with $\sT = \sT^*$, we further have 
    \begin{align*}
         d(\bsw(\theta), \sw^*)    ~ &\le ~ 3 \sqrt{\Dtmax} ~ d(\bM(\theta),M^*) + 2\Dtmax d(\bpi(\theta),\theta^*) \\
         &~\le ~5 \kappa_{\omega}~  \Dtmax   \left\{  d(\pi,\pi^*) +3\sqrt{\Dtmax}~  d(M,M^*)+{\Dtmax}~ d(\sw, \sw^*)  \right\},
    \end{align*}
    thereby proving \eqref{one_step} with $\kappa = 21\Dtmax^2\kappa_w$. Furthermore, since condition \eqref{cond_min_Delta} ensures 
    \begin{align*}
    	 d(\bpi(\theta), \pi^*) & \le \kappa_w \left\{
    	{1\over 2} + 3c_\mu \sqrt{\Dtmax \Dtmin} +  \Dtmax c_\sw
    	\right\} \le {1\over 2},\\
    	 d(\bM(\theta), M^*)  & ~ \le ~   \kappa_{\omega}  \sqrt{\Dtmax}  \left\{{1\over 2}+ 3c_\mu \sqrt{\Dtmax \Dtmin} + {\Dtmax} c_\sw\right\} \le c_\mu \sqrt{\Dtmin},\\
    	 d(\bsw(\theta), \sw^*)  
    	 &\lesssim 5\kappa_{\omega}~  \Dtmax   \left\{ {1\over 2}+ 3c_\mu \sqrt{\Dtmax\Dtmin}+{\Dtmax} c_\sw  \right\} \le c_\sw,
    \end{align*} 
    we conclude that $\bpi(\theta), \bM(\theta)$ and $\bsw(\theta)$ still satisfy condition \eqref{cond_init_unknown} so that using induction completes the proof.
\end{proof}


\subsubsection{Proof of \cref{rate_contract_pi} of \cref{thm_EM_population_omega}: Contraction of the population level M-step of the mixing probabilities}\label{app_sec_proof_rate_contract_pi_w_J}

\begin{proof} 
    Fix any $\bar\omega = (\bar\pi, \bar M, \bar J)\in \cB_d(\omega^*)$  given by \eqref{cond_init_w_J} and pick any $\ell \in [L]$. Our goal is to bound from above 
    \[
        \left|\bpi_\ell(\bar \omega) - \bpi_\ell(\omega^*)\right| =  \left|\EE[\gamma_\ell (X;\omega^*)] - \EE[\gamma_\ell(X; \bar \omega)]\right|.
    \]
    For any $u\in (0,1)$, by writing 
    $$ \pi_u = (1-u)\pi^* + u \bar\pi,\quad  M_u = (1-u)M^*+u\bar M,\quad  J_u = (1-u)J^* + u\bar J,$$ 
    we have  
    \begin{align*}
        &\bpi_\ell(\bar \omega) - \bpi_\ell(\omega^*)
         = \nabla^\pi \bpi_\ell(\bar\omega) +  \nabla^M \bpi_\ell(\bar\omega) +  \nabla^J \bpi_\ell(\bar\omega)
    \end{align*}
    where 
    \begin{equation}\label{def_nabla_pi}
    \begin{split}
        \nabla^\pi \bpi_\ell(\bar\omega) &:= \sum_{k=1}^{L-1}\int_0^1 \EE \left\langle \frac{\partial \gamma_\ell(X;\omega)}{\partial \pi_k}\Big|_{\omega=(\pi_u,\bar M,\bar J)},\bar \pi_k - \pi_k^*\right\rangle \rd u \\ 
        \nabla^M \bpi_\ell(\bar\omega) &:= 
          \sum_{k=1}^L \int_0^1 \EE  \left\langle \frac{\partial\gamma_{\ell}(X;\omega)}{\partial \mu_k}\Big|_{\omega=(\pi^*,M_u,\bar J)},\bar \mu_k - \mu_k^*\right\rangle \rd u\\
          \nabla^J \bpi_\ell(\bar\omega) &:=  
        \int_0^1 \EE  \left\langle \frac{\partial\gamma_{\ell}(X;\omega)}{\partial J}\Big|_{\omega=(\pi^*,M^*,J_u)}, \bar J - J^*\right\rangle \rd u.
    \end{split}
    \end{equation} 
    Below we bound each terms separately in terms of
    \begin{equation}\label{def_cRs}
        \begin{split}
            \cR_I^{(0)} &:= \sup_{\omega\in\cB_d(\omega^*)}  \max_{\ell \in [L]} \sum_{a \in [L]\setminus\{\ell\}} \pi_a \EE\left[I_{a\ell}(X,\omega)\right],\\
            \cR_I^{(1)} &:= \sup_{\omega\in\cB_d(\omega^*)}\max_{\ell \in [L]}~   \Bigl\|\sum_{a \in [L]\setminus\{\ell\}} \pi_a\EE\left[I_{a\ell}(X,\omega) NN^\T \right]\Bigr\|_\op,\\ \cR_I^{(2)} &:= \sup_{\omega\in\cB_d(\omega^*)} \max_{\ell \in [L]}   \sum_{a \in [L]\setminus\{\ell\}} \pi_a \left\| \EE\left[I_{a\ell}(X,\omega) NN^\T\right]\right\|_\op,
        \end{split}
    \end{equation}
    where, for any $a,\ell \in [L]$ and $x\in \RR^L$, we write 
    \begin{align}\label{def_I_aell}
        I_{a\ell}(x,\omega) &:= \frac{\exp\{(x - \frac{1}{2}(\mu_\ell + \mu_a))^\T J (\be_a - \be_\ell)\}}{(\pi_\ell + \sum_{k\neq \ell} \pi_k \exp\{(x - \frac{1}{2}(\mu_\ell + \mu_k))^\T J (\be_k - \be_\ell\})^2}.
    \end{align} 
	Clearly, we have $\cR_I^{(1)}\le \cR_I^{(2)}$.

\paragraph{Controlling the partial derivative w.r.t. $\pi_k$'s.}
We bound from above 
\[
  | \nabla^\pi \bpi_\ell(\bar\omega)| ~ \le   \sup_{\omega \in \cB_d(\omega^*)}\left| \sum_{k=1}^{L-1}\EE\left[ \frac{\partial \gamma_\ell(X;\omega)}{\partial \pi_k} \bigg |_\omega    \right] \left(\bar \pi_k - \pi_k^*\right)\right|.
\]
Pick any $\omega \in \cB_d(\omega^*)$. Recall that for any  $x\in \RR^d$ and $\ell \in [L]$,  
\[
    \gamma_\ell(x;\omega) = \frac{\pi_\ell}{ \sum_{k =1}^L \pi_k \exp\left\{(x-{1\over 2}( \mu_\ell+\mu_k))^\T  J (\be_k-\be_\ell)\right\}}.
\] 
Since $\pi_L = 1 - \sum_{\ell = 1}^{L-1}\pi_\ell$, basic calculation gives that: for all  $1\leq k,\ell \leq L-1$,
\begin{align}\label{eq_deriv_pi_1}
    \frac{\partial \gamma_\ell (x;\omega)}{\partial \pi_k} &= 
     \pi_\ell I_{L\ell}(x,\omega)-\pi_\ell  I_{k\ell}(x,\omega), \qquad  k\neq \ell,\\\label{eq_deriv_pi_2}
    \frac{\partial \gamma_\ell(x;\omega)}{\partial \pi_\ell} &= 
    \sum_{a \in [L]\setminus\{\ell\}} \pi_a I_{a \ell}(x,\omega)  + \pi_\ell I_{L\ell}(x,\omega),\\\label{eq_deriv_pi_3}
    \frac{\partial \gamma_L(x;\omega)}{\partial \pi_k} &= 
    \sum_{a=1}^{L-1} \pi_a I_{aL}(x,\omega) - \pi_L I_{kL}(x,\omega), 
\end{align}
so that for all $1\le k,\ell < L$ with $k\ne \ell$, 
\begin{align}\label{bd_part_deriv_gamma_pi}\nonumber
 | \nabla^\pi \bpi_\ell(\bar\omega)| 
& ~ \le  \sup_{\omega \in \cB_d(\omega^*)}\left| \sum_{k<L, k\ne \ell}\EE\left[  \pi_\ell I_{L\ell}(X,\omega)-\pi_\ell  I_{k\ell}(X,\omega)   \right] \left(\bar \pi_k - \pi_k^*\right)\right. \\\nonumber
&\qquad\qquad +   \left.  \sum_{a \in [L]\setminus\{\ell\}} \pi_a \EE\left[I_{a \ell}(X,\omega) \right]\left(\bar \pi_\ell - \pi_\ell^*\right)+ \pi_\ell \EE\left[ I_{L\ell}(X,\omega)\right] \left(\bar \pi_\ell - \pi_\ell^*\right)\right|\\\nonumber
&~ \le   \sup_{\omega \in \cB_d(\omega^*)}   \Bigl\{  (\pi_\ell + \pi_\ell^*) \sum_{a \in [L]\setminus\{\ell\}} \pi_a\EE[I_{a \ell}(X,\omega)] \Bigr\}\max_{k<L}  \frac{|\bar \pi_k - \pi_k^*|}{\pi_k^*} \\\nonumber
&\qquad +   \sup_{\omega \in \cB_d(\omega^*)} \pi_\ell\EE\left[  I_{L\ell}(X,\omega)  \right]\left|   \sum_{k<L} \left(\bar \pi_k - \pi_k^*\right)\right|\\
  & ~   \lesssim  ~   \pi_{\ell}^*~   \cR_I^{(0)}  d(\bar\pi, \pi^*). 
\end{align}
In the last step, we  use $\sum_{k< L} (\bar \pi_k -\pi_k^*) = \pi_L^*-\bar \pi_L$, the definition of $ \cR_I^{(0)}$ in \eqref{def_cRs} and 
\begin{equation}\label{eq_order_pis}
   1/2\le  \pi_\ell/ \pi_\ell^* \le 3/2,\qquad \forall~  \ell\in [L],
\end{equation}
which is implied by $d(\pi,\pi^*)\le 1/2$ from \eqref{cond_init_w_J}.   By \eqref{eq_deriv_pi_3}, the bound in \eqref{bd_part_deriv_gamma_pi} also holds for $\ell = L$ as 
\begin{align*}
	| \nabla^\pi \bpi_L(\bar\omega)| 
	& ~ \le  \sup_{\omega \in \cB_d(\omega^*)}\left|\sum_{k=1}^{L-1}\left(\sum_{a=1}^{L-1} \pi_a \EE\left[I_{aL}(X,\omega)\right] - \pi_L\EE \left[ I_{kL}(X,\omega)\right] \right)  \left(\bar \pi_k - \pi_k^*\right)\right|\\
	&~ \le   \sup_{\omega \in \cB_d(\omega^*)}   \left|{\pi_L^*-\bar \pi_L\over \pi_L^*} \pi_L^*\sum_{a=1}^{L-1} \pi_a \EE\left[I_{aL}(X,\omega)\right] -\pi_L  \sum_{k=1}^{L-1}\pi_k^*\EE \left[ I_{kL}(X,\omega)\right]{\bar \pi_k - \pi_k^*\over \pi_k^*} \right|\\
	& ~   \lesssim  ~   \pi_L^*~  \cR_I^{(0)}~  d(\bar\pi, \pi^*). 
\end{align*}

\paragraph{Controlling the partial derivative w.r.t. $\mu_k$'s}

We bound from above 
\[
   | \nabla^M \bpi_\ell(\bar\omega)| ~ \le \sup_{\omega \in \cB_d(\omega^*)} \left|\sum_{k=1}^L \EE\left[\left\langle \frac{\partial \gamma_\ell(X;\omega)}{\partial \mu_k}\bigg |_\omega ,~  \bar \mu_k -\mu_k^*\right\rangle \right] \right|.
\]
For any $k\neq \ell$, note that  
    \begin{align}\label{eq_deriv_mu_1}
        \frac{\partial \gamma_\ell(x;\omega)}{\partial \mu_k} &= 
          \frac{\pi_\ell \pi_k}{2} I_{k\ell}(x,\omega)  J(\be_k - \be_\ell),\\ \label{eq_deriv_mu_2}
        \frac{\partial \gamma_\ell(x;\omega)}{\partial \mu_\ell} &= 
        \sum_{a\in[L]\setminus\{\ell\}}\frac{\pi_\ell \pi_a}{2} I_{a\ell}(x,\omega)   J(\be_a - \be_\ell)  = \sum_{a\in[L]\setminus\{\ell\}} \frac{\partial \gamma_\ell(x;\omega)}{\partial \mu_a}.
    \end{align}
Fix any $a\in [L]$ with $a\ne \ell$. By using \eqref{eq_deriv_mu_1}, we have 
\begin{align}
     \left| \EE  \left\langle \frac{\partial \gamma_\ell(X;\omega) }{\partial \mu_a}, \bar  \mu_a - \mu_a^* \right\rangle \right| 
    &= {\pi_\ell \pi_a\over 2} \EE  \left[I_{a\ell}(X,\omega) \right]  \left|  (\bar  \mu_a - \mu_a^*)^\T J (\be_a - \be_\ell) \right|  \notag \\
    &\leq {\pi_\ell \pi_a\over 2}  \EE  \left[I_{a\ell}(X,\omega) \right] \| \sw^{*1/2} J (\be_a - \be_\ell) \|_2 \|\bar  \mu_a - \mu_a^*\|_{\sw^*},\notag\\
    &\le   \pi_\ell \pi_a  \sqrt{\Delta_{a\ell}}  ~ \EE  \left[I_{a\ell}(X,\omega) \right] \|  \bar  \mu_a - \mu_a^* \|_{\sw^*} \label{bd_deriv_gamma_mu_a}
\end{align}
where in the last step we used 
\begin{align}
    &\| \sw^{*1/2} J (\be_a - \be_\ell) \|_2\notag\\ 
    &\le  \|\sw^{*1/2}  (J - J^*) (\be_a - \be_\ell) \|_2  +  \| \sw^{*1/2} J^* (\be_a - \be_\ell) \|_2 \notag\\
    & \le  2\sqrt{\Delta_{a\ell}} &&\text{by \eqref{cond_init_w_J} \& \eqref{def_Delta}}.\label{bd_J_col_diff}
\end{align}
Similarly, we also have 
\begin{align}
     \left| \EE  \left\langle \frac{\partial \gamma_\ell(X;\omega) }{\partial \mu_\ell}, \bar \mu_\ell - \mu_\ell^* \right\rangle \right|  
    &\leq   \pi_\ell  \sum_{a\in[L]\setminus\{\ell\}} \pi_a  \EE[I_{a\ell}(X,\omega)] \sqrt{\Delta_{a\ell}} ~   \|\bar  \mu_\ell - \mu_\ell^*\|_{\sw^*} \notag \\
    &\leq   \pi_{\ell} \sqrt{\Dtmax} ~ \cR_I^{(0)} \|\bar  \mu_\ell - \mu_\ell^*)\|_{\sw^*}. \label{db_deriv_gamma_mu_l}
\end{align}
Combining \eqref{bd_deriv_gamma_mu_a} and \eqref{db_deriv_gamma_mu_l} together with \eqref{eq_order_pis} gives that for all $ \ell \in [L]$,
\begin{align}\label{bd_part_deriv_gamma_mu}
       | \nabla^M \bpi_\ell(\bar\omega)| ~ \lesssim ~  \pi_\ell^*   \sqrt{\Dtmax} ~ \cR_I^{(0)}  d(\bar  M, M^*). 
\end{align}

\paragraph{Controlling the partial derivative w.r.t. $J$}    
   We proceed to bound 
    $$
     | \nabla^J \bpi_\ell(\bar\omega)| ~ \le \sup_{\omega \in \cB_d(\omega^*)} \left|\EE\left[\left\langle \frac{\partial \gamma_\ell(X;\omega)}{\partial J}\bigg|_{\omega}, ~ \bar J- J^*\right\rangle \right]\right|.
    $$
    Note that the partial derivatives of $\gamma_\ell(x,\omega)$ with respect to $J$ is
    \begin{align}\label{eq_deriv_J}
        \frac{\partial \gamma_\ell(x;\omega)}{\partial J} = 
        -  \sum_{a\in[L]\setminus\{\ell\}}  \pi_\ell \pi_a I_{a\ell}(x,\omega) \left(x - \frac{\mu_\ell + \mu_a}{2}\right)(\be_a - \be_\ell)^\T \in \RR^{d\times L}
    \end{align}
    with $I_{a\ell}(x,\omega)$ given in \eqref{def_I_aell}. 
    Since 
    \begin{align*}
        \left\langle  \left(X-\frac{\mu_a+\mu_\ell}{2} \right) (\be_a - \be_\ell)^\T,  \bar  J-J^*  \right \rangle
        = ~ &  
        \left(X-\frac{\mu_a+\mu_\ell}{2} \right)^\T  (\bar J-J^*) (\be_a - \be_\ell),
    \end{align*} 
    the main task is to  analyze
    \begin{align}
        &  \sum_{a\in[L]\setminus\{\ell\}}  \pi_a \EE  \left[ I_{a\ell}(X,\omega) \left(X - \frac{\mu_\ell +  \mu_a}{2}\right)^\T (\bar J-J^*) (\be_a - \be_\ell)   \right] \notag \\ 
         &= ~    \sum_{a\in[L]\setminus\{\ell\}}  \pi_a\sum_{b=1}^L \pi_b^*   \EE  \left[  I_{a\ell}(X,\omega)  \mid Y = b\right]  \left(\mu_b^* - \frac{\mu_\ell + \mu_a}{2}\right)^\T  (\bar J-J^*) (\be_a - \be_\ell)   \label{1} \\
        &\qquad +   \sum_{a\in[L]\setminus\{\ell\}}  \pi_a \EE  \left[ I_{a\ell}(X,\omega) ~   N^\T  \sw^{*1/2} (\bar J-J^*) (\be_a - \be_\ell)  \right]\label{2}
    \end{align}
    where we use the fact that 
    \begin{align}\label{cond_distr_Z}
         X ~  \stackrel{d}{=} ~ \sw^{*1/2} N + \sum_{b=1}^L 1\{Y=b\} \mu_b^*
    \end{align} 
    with $N\sim \cN_d(0,\bI_d)$.  
Since condition \eqref{cond_init_w_J} ensures that 
\begin{align}
    \| \mu_\ell^* - (\mu_{\ell}+\mu_a)/2\|_{\sw^*} 
    \leq {\sqrt{\Dt_{a\ell}}\over 2}   +{1\over 2}\|\mu^*_\ell - \mu_\ell\|_{\sw^*}+{1\over 2}\|\mu^*_a - \mu_a\|_{\sw^*}  
    \leq  \sqrt{\Delta_{a\ell}}, \label{mu_diff} 
\end{align} 
by the definition in \eqref{def_cRs}, we find 
\begin{align*}
    \eqref{1}  
    &\leq \sum_{a\in[L]\setminus\{\ell\}}  \pi_a \EE[I_{a\ell}(X,\omega)] \max_{b\in[L]}\left( 
    	\sqrt{\Dt_{b\ell}} + \sqrt{\Dt_{a\ell}}
    \right) \left\|\sw^{*1/2} (\bar J-J^*) (\be_a - \be_\ell) \right\|_2  
    \\ 
    & \le   4  \sqrt{\Dtmax} ~ \cR_I^{(0)} ~ d(\bar  J, J^*).
\end{align*}
To bound \eqref{2}, applying Cauchy-Schwarz inequality twice and  using  $\|\sw^{*1/2} (\bar J-J^*) (\be_a - \be_\ell)\|_2\le 2d(\bar J,J^*)$ yield
\begin{align}\label{CauchySchwarz1}\nonumber
 &\sum_{a\in[L]\setminus\{\ell\}}  \pi_a \EE  \left[ I_{a\ell}(X,\omega) ~   N^\T  \sw^{*1/2} (\bar J-J^*) (\be_a - \be_\ell)  \right]\\\nonumber
 &\leq  2d(\bar J,J^*) \sqrt{ \sum_{a \in [L]\setminus\{\ell\}} \pi_a \EE[I_{a\ell}(X,\omega)]}  \sqrt{\sum_{a \in [L]\setminus\{\ell\}} \pi_a \|\EE[I_{a\ell}(X,\omega) NN^\T \|_\op}\\
	&\leq  2d(\bar J,J^*)  \sqrt{    \cR_I^{(0)}  \cR_I^{(2)}}. 
\end{align}
The last step use the definitions of $\cR_I^{(0)}$ and $\cR_I^{(2)}$ in \eqref{def_cRs}.  
Combining the bounds of \eqref{1} and \eqref{2} and using \eqref{eq_order_pis} give 
\begin{align}\label{bd_part_deriv_gamma_J}
    & | \nabla^J \bpi_\ell(\bar\omega)|   ~ \lesssim \pi_{\ell}^*  \left(  \sqrt{    \cR_I^{(0)}  \cR_I^{(2)}} + \sqrt{\Dtmax} ~ \cR_I^{(0)}\right)    ~ d(\bar J,J^*).
\end{align}

Finally, by collecting the bounds in \cref{bd_part_deriv_gamma_pi,bd_part_deriv_gamma_J,bd_part_deriv_gamma_mu}, we conclude that
\begin{align*}
      {|\bpi_\ell(\bar \theta) - \pi_\ell^*|\over \pi_\ell^*}  & \le{1 \over \pi_\ell^*}\left|\EE[\gamma_\ell(X;\bar \omega)] - \EE[\gamma_\ell (X;\omega^*)] \right|\\ &\lesssim  \sqrt{    \cR_I^{(0)}  \cR_I^{(2)}}   d(\bar J,J^*) +  \cR_I^{(0)}  d(\bar \omega, \omega^*)\\
      &\lesssim \kappa_w ~ d(\bar \omega, \omega^*) &&\text{by  \cref{lem_I_fN}}.
\end{align*}
Tis completes the proof of \eqref{rate_contract_pi}.
\end{proof}

\subsubsection{Proof of \cref{rate_contract_mu} of \cref{thm_EM_population_omega}: Contraction of the population level M-step of the mean vectors}\label{app_sec_proof_rate_contract_mu_w_J}

\begin{proof}   
For any $\omega = (\pi, M,J)$ and $\ell \in [L]$, 
note that  
$$ 
\bmu_\ell(\omega) -\mu_{\ell}^*= \frac{\EE[\gamma_\ell(X;\omega)(X-\mu_{\ell}^*)]}{\EE[\gamma_\ell(X;\omega) ]} = {1\over \bpi_{\ell}(\omega)}\EE\left[(\gamma_\ell(X;\omega)-\gamma_\ell(X;\omega^*))(X-\mu_{\ell}^*)\right].
$$  
Fix any  $\bar \omega \in \cB_d(\omega^*)$ and any $\ell \in [L]$. 
It then follows that 
\begin{align*}
\bmu_\ell(\bar \omega) -\bmu_\ell(\omega^*)   &= \nabla^{\pi}\bmu_\ell(\bar\omega) + \nabla^{M}\bmu_\ell(\bar\omega)  + \nabla^{J}\bmu_\ell(\bar\omega) 
\end{align*}
where
\begin{equation}\label{def_nabla_mu}
\begin{split} 
 \nabla^{\pi}\bmu_\ell(\bar\omega)   
 &:=  \sum_{k=1}^{L-1} \int_0^1   
\frac{1}{\bpi_\ell(\omega)}\EE\left[ \left(X-\mu_\ell^*\right) \left\langle { \partial \gamma_{\ell}(X;\omega)\over \partial \pi_k }\Big|_{\omega = (\pi_u, \bar M, \bar J)}, ~ \left(\bar \pi_k - \pi_k^*\right) \right\rangle \right]  \rd u\\
 \nabla^{M}\bmu_\ell(\bar\omega)&:=  \sum_{k=1}^{L} \int_0^1  \frac{1}{\bpi_\ell(\omega)}\EE\left[ \left(X-\mu_\ell^*\right) \left\langle {\partial \gamma_{\ell}(X;\omega) \over \partial \mu_k} \Big|_{\omega = (\pi^*, M_u, \bar J)}, ~ \bar \mu_k - \mu_k^* \right\rangle \right] \rd u \\
 \nabla^{J}\bmu_\ell(\bar\omega) & :=   \int_0^1   \frac{1}{\bpi_\ell(\omega)}\EE\left[ \left(X-\mu_\ell^*\right) \left\langle{\partial \gamma_{\ell}(X;\omega)\over \partial J}\Big|_{\omega = (\pi^*, M^*, J_u)}, ~ \bar J-J^* \right\rangle  \right]  \rd u . 
\end{split}
\end{equation}
The main task below is to bound each terms  in terms of $\cR_I^{(j)}$ with $j\in \{0,1,2\}$.
For future reference, we notice that 
\begin{equation}\label{rate_equiv_pi}
      1/2  \le \bpi_\ell(\omega) /\pi_\ell^* \le 3/2,\qquad \forall~  \omega \in \cB_d(\omega^*),
\end{equation}
since 
\begin{equation}
    d(\bpi(\omega),\bpi(\omega^*)) \overset{\eqref{rate_contract_pi}}{\le} \kappa_w d(\omega,\omega^*) \overset{\eqref{init_dist_omega}}{\le} 3   \kappa_w \Dtmax \overset{\eqref{cond_min_Delta}}{\le} 1/2.\label{bd_dist_omega_ball}
\end{equation}

\paragraph{Controlling the partial derivative of $\bmu_\ell(\omega)$ w.r.t. $\pi_k$'s}
We need to bound
from above 
\[
  \left\|\nabla^{\pi}\bmu_\ell(\bar\omega)\right\|_{\sw^*} \le \sup_{\omega \in \cB_d(\omega^*)}  \frac{1}{\bpi_\ell(\omega)} \left\|  \sum_{k=1}^{L-1}(\bar \pi_k - \pi_k^*) ~ \EE\left[ \left(X-\mu_\ell^*\right)   {\partial \gamma_{\ell}(X;\omega) \over \partial \pi_k}  \right]\right\|_{\sw^*}.
\]
From the expressions of $\partial \gamma_{\ell}(x;\omega)$ in  \cref{eq_deriv_pi_1,eq_deriv_pi_2,eq_deriv_pi_3} as well as the definition of $I_{a\ell}(x,\omega)$ in \eqref{def_I_aell},  
by similar arguments of proving \eqref{bd_part_deriv_gamma_pi}, we obtain that for any $\ell <L$, 
\begin{align}\label{bd_part_mu_pi}\nonumber
   \left\|\nabla^{\pi}\bmu_\ell(\bar\omega)\right\|_{\sw^*}  &\lesssim d(\bar\pi,\pi^*) \sup_{\omega \in \cB_d(\omega^*)}  \frac{\pi_\ell + \pi_{\ell}^* }{\bpi_\ell(\omega)}  \left\|\sum_{a\in[L]\setminus\{ \ell\}} \pi_a \EE\left[I_{a\ell}(X,\omega)  (X-\mu_\ell^*) \right] \right\|_{\sw^*}\\ 
    &\lesssim   d(\bar\pi, \pi^*)  \sup_{\omega \in \cB_d(\omega^*)}  \left\| \sum_{a\in[L]\setminus\{ \ell\}} \pi_a \EE\left[I_{a\ell}(X,\omega)  (X-\mu_\ell^*) \right] \right\|_{\sw^*}.
\end{align}
The last step uses \eqref{eq_order_pis} and \eqref{rate_equiv_pi}.
For any $\omega \in \cB_d(\omega^*)$, using \eqref{cond_distr_Z} and the definitions of $\cR_I^{(0)}$ and $\cR_I^{(1)}$ in \eqref{def_cRs} gives
\begin{align}\nonumber
&\left\|  \sum_{a\in[L]\setminus\{ \ell\}}  \pi_a\EE\left[ I_{a\ell}(X,\omega)  (X -  \mu_\ell^*) \right] \right\|_{\sw^*}\\\nonumber 
 &\le   \left\|  \sum_{a\in[L]\setminus\{ \ell\}} \pi_a \EE \left[ I_{a\ell}(X,\omega)   N  \right] \right\|_2 + \left\|  \sum_{a\in[L]\setminus\{ \ell\}}\pi_a \sum_{b = 1}^L \pi_b^*   \EE \left[ I_{a\ell}(X,\omega)  \left(\mu_b^* - \mu_\ell^*\right) \mid Y = b \right] \right\|_{\sw^*}\\\nonumber
&\le \sqrt{\cR_I^{(0)}\cR_I^{(1)}}+    \cR_I^{(0)}  \max_{b\in [L]}\left\|  \mu_b^* - \mu_\ell^*\right\|_{\sw^*}\\\label{bd:expz}
&\le  \sqrt{\cR_I^{(0)}\cR_I^{(1)}}    +  \cR_I^{(0)} \sqrt{\Dtmax} .
\end{align}
We thus obtain that for any $\ell < L$,
\begin{align}\label{bd_part_deriv_M_pi} 
    \left\|\nabla^{\pi}\bmu_\ell(\bar\omega)\right\|_{\sw^*} & \lesssim   ~ \left(\sqrt{\cR_I^{(0)}\cR_I^{(1)}}    +  \cR_I^{(0)}\sqrt{\Dtmax}  \right)~   d(\bar\pi,\pi^*).
\end{align}
From \eqref{eq_deriv_pi_3} and by analogous argument, it is easy to see that  \eqref{bd_part_deriv_M_pi} also holds for $\ell = L$.

\paragraph{Controlling the partial derivative of $\bmu_\ell(\omega)$ w.r.t. $M$}
By the expression of $\partial \gamma_\ell(x;\omega)/\partial \mu_k$ in \eqref{eq_deriv_mu_1} -- \eqref{eq_deriv_mu_2} and the definition of $I_{a\ell}(x,\omega)$ in \eqref{def_I_aell},   $\|\nabla^{M}\bmu_\ell(\bar\omega)\|_{\sw^*} $ is bounded from above by
\begin{align*}
   &\sup_{\omega \in \cB_d(\omega^*)}  \left\| \sum_{k=1}^L  {1\over \bpi_\ell(\omega)}\EE \left[    (X - \mu_\ell^*)\left \langle\frac{\partial \gamma_\ell(X;\omega)}{\partial \mu_k}, \bar \mu_k - \mu_k^*\right\rangle\right]\right\|_{\sw^*}\\ 
    & \le  \sup_{\omega \in \cB_d(\omega^*)} {2\pi_\ell \over  \bpi_\ell(\omega)}\left\| \sum_{k\in [L]\setminus\{\ell\}}     \pi_k \EE\left[ I_{k\ell}(X,\omega)  (X - \mu_\ell^*) \right]  (\be_k -\be_\ell)^\T J^\T (\bar \mu_k - \mu_k^*)\right\|_{\sw^*}\\
    &\le   \sup_{\omega \in \cB_d(\omega^*)} 6 \left\| \sum_{k\in [L]\setminus\{\ell\}}     \pi_k \EE\left[ I_{k\ell}(X,\omega)  N \right]  (\be_k -\be_\ell)^\T J^\T (\bar \mu_k - \mu_k^*)\right\|_2\\
    &~ +   \sup_{\omega \in \cB_d(\omega^*)}  6\left\| \sum_{k\in [L]\setminus\{\ell\}}     \pi_k \EE\left[ I_{k\ell}(X,\omega)  \sum_{b=1}^L 1\{Y=b\} \right] (\mu_b^*- \mu_\ell^*) (\be_k -\be_\ell)^\T J^\T (\bar \mu_k - \mu_k^*)\right\|_{\sw^*}.
\end{align*}
The second step uses \eqref{eq_order_pis} and \eqref{rate_equiv_pi}.
By Cauchy-Schwarz inequality, the first term is no greater than 
\begin{align}\label{CauchySchwarz2}\nonumber
	 & \sup_{\omega \in \cB_d(\omega^*)}  6\Bigl\| \sum_{k\in [L]\setminus\{\ell\}}     \pi_k \EE\left[ I_{k\ell}(X,\omega)  NN^\T \right]  \Bigr\|_\op^{1/2}\\\nonumber
	 &\qquad \qquad \Bigl( \sum_{k\in [L]\setminus\{\ell\}}     \pi_k \EE\left[ I_{k\ell}(X,\omega)   \right] \Bigr)^{1/2}  \max_{k\in [L]} \left|(\be_k -\be_\ell)^\T J^\T (\bar \mu_k - \mu_k^*)\right|\\\nonumber
	 &\lesssim  \sqrt{\cR_I^{(0)}\cR_I^{(1)}} ~ \|\sw^{*1/2}J(\be_k -\be_\ell)\|_2  \|\bar \mu_k - \mu_k^*\|_{\sw^*}\\
	 &\lesssim  \sqrt{\cR_I^{(0)}\cR_I^{(1)}} ~ \sqrt{\Dtmax}~   d(\bar M,M^*)
\end{align}
with the last step due to  \eqref{bd_J_col_diff}.
Furthermore, the second term is bounded by 
\begin{align*}
	 6 \cR_I^{(0)} \max_{b\in[L]} \left\| \mu_b^*- \mu_\ell^*\right\|_{\sw^*} \max_{k\in [L]} \|\sw^{*1/2}J(\be_k -\be_\ell)\|_2  \|\bar \mu_k - \mu_k^*\|_{\sw^*} \lesssim   \Dtmax~  \cR_I^{(0)} ~ d(\bar M,M^*)
\end{align*}
using \eqref{bd_J_col_diff} and \eqref{rate_equiv_pi}. We thus have
\begin{align}\label{bd_part_deriv_M_mu} 
    \left\|\nabla^M\bmu_\ell(\bar\omega)\right\|_{\sw^*} ~   \lesssim ~    \left(\sqrt{\cR_I^{(0)}\cR_I^{(1)}} +  \cR_I^{(0)}\sqrt{\Dtmax} \right)  \sqrt{\Dtmax}~ d(\bar M,M^*). 
\end{align}  

\paragraph{Controlling the partial derivative of $\bmu_\ell(\omega)$ w.r.t. $J$}
We proceed to bound from above 
\begin{align*}
  \left\|\nabla^J \bmu_\ell(\bar\omega)\right\|_{\sw^*} & ~ \le   \sup_{\omega \in \cB_d(\omega^*)} \frac{1}{\bpi_\ell(\omega)} \left\|     \EE\left[ \left(X-\mu_\ell^*\right) \left\langle {\partial \gamma_{\ell}(X;\omega)\over \partial J}\Big|_\omega , \bar J-J^* \right\rangle  \right] \right\|_{\sw^*}.
\end{align*} 
 By \eqref{eq_deriv_J} and \eqref{rate_equiv_pi}, we need to bound the following term 
\begin{align*}
&\left\|  \sum_{a\in[L]\setminus\{\ell\}}  \pi_a\EE  \left[  (X - \mu_\ell^*) \left\langle I_{a\ell}(X,\omega) \left(X - \frac{\mu_{\ell} + \mu_{a}}{2}\right) (\be_{a} - \be_{\ell})^{\T}, \bar J-J^* \right\rangle \right] \right\|_{\sw^*} \\ \notag
& = \left\|    \sum_{a\in[L]\setminus\{\ell\}}  \pi_a\EE  \left[  I_{a\ell}(X,\omega)  \sw^{*-1/2}(X - \mu_\ell^*) \left(X - \frac{\mu_{\ell} + \mu_{a}}{2}\right)^{\T} (\bar J-J^*) (\be_{a} - \be_{\ell}) \right] \right\|_2. 
\end{align*}
Recalling \eqref{cond_distr_Z}, we have the decomposition
\begin{align}
 \sw^{*-1/2} (X - \mu_\ell^*) 
 \left( X - \frac{\mu_\ell + \mu_a}{2} \right)^\top   \sw^{*-1/2} 
\stackrel{d}{=} ~  N N^\T + T_1 + T_2 +T_3
\end{align}
where the matrices $T_1,T_2,T_3$ are given by
\begin{align*}
 T_1 &= \sw^{*-1/2}   \sum_{b=1}^L 1\{Y=b\}(\mu_b^* - \mu_\ell^*)  N^\top \\
T_2 &= N  \sum_{b=1}^L 1\{Y=b\}  \left(\mu_b^* - \frac{\mu_\ell + \mu_a}{2} \right)^\top  \sw^{*-1/2}\\
T_3 &= \sw^{*-1/2} \sum_{b=1}^L  1\{Y=b\} \left(\mu_b^* - \mu_\ell^*  \right) 
\sum_{b=1}^L 1\{Y=b\}  \left( \mu_b^* - \frac{\mu_\ell + \mu_a}{2} \right)^\T\sw^{*-1/2}.  
\end{align*} 
By using \cref{CauchySchwarz1}, we find
\begin{align*}
   &\left\|   \sum_{a\in[L]\setminus\{\ell\}}  \pi_a\EE\left[I_{a\ell}(X,\omega)  T_1  \right]\sw^{*1/2} (\bar J-J^*) (\be_{a} - \be_{\ell})\right\|_2 \\
   &\le  \left|    \sum_{a\in[L]\setminus\{\ell\}}  \pi_a\EE\left[I_{a\ell}(X,\omega)   N^\top  \right]\sw^{*1/2} (\bar J-J^*) (\be_{a} - \be_{\ell})\right|\max_{b\in[L]} \left\|  \mu_b^* - \mu_\ell^*\right\|_{\sw^*} \\ 
    &\le 2 \sqrt{\Dtmax}\sqrt{ \cR_I^{(0)}\cR_I^{(2)}}  ~ d(\bar J,J^*). 
\end{align*}
Similarly, by  \eqref{mu_diff} and the argument in \eqref{CauchySchwarz2},  we also find
\begin{align*}
     &\left\|   \sum_{a\in[L]\setminus\{\ell\}}  \pi_a \EE\left[I_{a\ell}(X,\omega)  T_2\right]\sw^{*1/2} (\bar J-J^*) (\be_{a} - \be_{\ell})\right\|_2\\
     &\le \max_{b\in[L]} \left\|   \sum_{a\in[L]\setminus\{\ell\}}  \pi_a \EE\left[I_{a\ell}(X,\omega)  N    \right]   \left(\mu_b^* - \frac{\mu_\ell + \mu_a}{2} \right)^\top (\bar J-J^*) (\be_{a} - \be_{\ell})\right\|_2\\ 
     &\leq 
     	 \Bigl\|   \sum_{a\in[L]\setminus\{\ell\}}  \pi_a \EE\left[I_{a\ell}(X,\omega)  N N^\T   \right]  \Bigr\|_\op ^{1/2} \\
     	 &\qquad \Bigl(
       \sum_{a\in[L]\setminus\{\ell\}}  \pi_a \EE\left[I_{a\ell}(X,\omega)   \right]  
     \Bigr)^{1/2}\max_{b\in[L]}  \left\|\mu_b^* - \frac{\mu_\ell + \mu_a}{2}\right \|_{\sw^*} \|\sw^{*1/2} (\bar J-J^*) (\be_{a} - \be_{\ell})\|_2 \\
      & \leq 2\sqrt{\Dtmax}   \sqrt{\cR_I^{(0)}\cR_I^{(1)}}  ~d(\bar J,J^*).
\end{align*}
as well as  
\begin{align*}
   &\left\|   \sum_{a\in[L]\setminus\{\ell\}}  \pi_a\EE\left[I_{a\ell}(X,\omega)   T_3\right]\sw^{*1/2} (\bar J-J^*) (\be_{a} - \be_{\ell})\right\|_2\\
   &=   \sum_{a\in[L]\setminus\{\ell\}}  \pi_a\EE\left[I_{a\ell}(X,\omega)   \right]   \max_{b\in[L]} \left\|  \mu_b^* - \mu_\ell^*\right\|_{\sw^*}
 \left\|\mu_b^* - \frac{\mu_\ell + \mu_a}{2} \right\|_{\sw^*} 2d(\bar J,J) \\  
   &\le  4\Dtmax  \cR_I^{(0)}d(\bar J,J).
\end{align*}
Finally, since one can very by using the Cauchy-Schwarz inequality that 
\begin{align*}
	\left\|  \sum_{a\in[L]\setminus\{\ell\}}  \pi_a\EE[I_{a\ell}(X,\omega) NN^\T]\sw^{*1/2} (\bar J-J^*) (\be_{a} - \be_{\ell})\right\|_2   \le 2\sqrt{\cR_I^{(1)}\cR_I^{(2)}} d(\bar J,J^*),
\end{align*}
combining with the previous three bounds gives 
\begin{align}\label{bd_part_deriv_M_J} 
    \left\|\nabla^{J}\bmu_\ell(\bar\omega)\right\|_{\sw^*}   & \lesssim  \left[\sqrt{\cR_I^{(1)}\cR_I^{(2)}}  +  \sqrt{\Dtmax}\left( \sqrt{\cR_I^{(0)}\cR_I^{(2)}}  +\sqrt{\Dtmax} \cR_I^{(0)}\right)\right]d(\bar J,J^*).
\end{align}
 
In view of \cref{bd_part_deriv_M_pi,bd_part_deriv_M_mu,bd_part_deriv_M_J} and using $\cR_I^{(1)}\le \cR_I^{(2)}$, we 
conclude that
\begin{equation}\label{bd_part_derive_M_total}
\begin{split}
     \left\|\bmu_\ell(\bar\omega) - \bmu_\ell(\omega^*)\right\|_{\sw^*}  
    & \lesssim  \left( \sqrt{\cR_I^{(0)}\cR_I^{(2)}}    +  \cR_I^{(0)} \sqrt{\Dtmax} \right) ~   d(\bar\omega,\omega^*)+ \sqrt{\cR_I^{(1)}\cR_I^{(2)}} d(\bar J,J^*)\\
    &\lesssim  \kappa_w \sqrt{\Dtmax}~  d(\bar \omega,\omega^*)
\end{split}
\end{equation}  
where the last step invokes \cref{lem_I_fN}. This completes the proof of \eqref{rate_contract_mu}.
\end{proof}

\subsection{Key technical lemmas used in the proof of \cref{thm_EM_population_omega}}\label{app_sec_tech_lemmas_EM_popu}

For any $k,\ell \in [L]$ and $x\in \RR^L$, define 
\begin{align}\label{def_W_theta}
      W_{k\ell}(x, \omega) &:= \left(x - \frac{\mu_\ell + \mu_k}{2}\right)^\T J (\be_k - \be_\ell).
\end{align} 
so that for any $j\in [L]$, 
\begin{equation}\label{cond_distr_W}
   \left( W_{k\ell}(X, \omega) \mid Y = j \right)  ~ \stackrel{d}{=} ~ \delta^j_{k\ell}(\omega) + N ^\T \sw^{*1/2}J(\be_k - \be_\ell)
\end{equation}
where $N \sim \cN_d(0,\bI_d)$ and 
\begin{align}\label{def_sigma_theta}
    \delta_{k\ell}^j(\omega)=  \bigl(\mu_j^* - \frac{\mu_\ell + \mu_k}{2}\bigr)^\T J (\be_k - \be_\ell).
\end{align}
For future reference, we note that 
\[
	\|\sw^{*1/2}J^*(\be_k - \be_\ell)\|_2^2 = \Delta_{k\ell} = \delta_{k\ell}^k(\omega^*) =-\delta_{k\ell}^\ell (\omega^*).
\] 
Consider the event 
\begin{equation}\label{def_event_N}
    \cE_{N}= \left\{|N^\T \sw^{*1/2}J(\be_k - \be_\ell)| < \frac{c_2}{2c_1} \sqrt{\Dt_{k\ell} } ~ \|\sw^{*1/2}J(\be_k - \be_\ell)\|_2\right\}
\end{equation}
with $c_2 = (1-4c_0-4c_0^2)/2$ and $c_1 = 1+2c_0$.
We know that 
\begin{equation}\label{bd_tail_prob_N1c}
    \PP(\cE_N^c) \leq 2\exp\left(- \frac{c_2^2}{8c_1^2}\Dt_{k\ell}\right).
\end{equation}
\cref{lem_delta_J_order} further ensures that for any fixed $\omega \in \cB_d(\omega^*)$ and any $k,\ell \in [L]$, on the event $\cE_N$,
\begin{align}
    (W_{k\ell}(X,\omega) \mid Y = \ell) & \leq -c_2 \Delta_{k\ell} + {c_2\over 2}\sqrt{\Delta_{k\ell}}\sqrt{\Dt_{k\ell}} \le -{c_2 \over 2} \Delta_{k\ell}, \label{lb_W_kell_ell} \\\label{lb_W_kell_k}
    (W_{k\ell}(X,\omega) \mid Y = k) & \ge  ~ c_2 \Delta_{k\ell} -{c_2\over 2}\sqrt{\Delta_{k\ell}}\sqrt{\Dt_{k\ell}} \ge  ~ {c_2 \over 2} \Delta_{k\ell}. 
\end{align} 
\begin{lemma}\label{lem_delta_J_order}
    For any $\omega = (\pi, M, J) \in \cB_d(\omega^*,c_0)$ given in \eqref{cond_init_w_J} and any $k,\ell\in [L]$, one has
    \begin{align*} 
            &\| \sw^{*1/2}J(\be_k - \be_\ell)\|_2 \le (1+2c_0) \sqrt{\Delta_{k\ell}}
    \end{align*}
    and 
    \[
        \delta_{k\ell}^k(\omega) \ge  {1\over 2}\left(
           1  - 4c_0 - 4c_0^2
         \right) \Delta_{k\ell},\qquad \delta_{k\ell}^\ell(\omega) \le - {1\over 2}\left(
           1  - 4c_0 - 4c_0^2
         \right) \Delta_{k\ell}. 
    \]
\end{lemma}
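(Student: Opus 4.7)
All three bounds follow from elementary triangle-inequality arguments once we unpack the three constraints that define $\cB_d(\omega^*,c_0)$ in \eqref{cond_init_w_J}. Throughout, I will use (i) $\|\mu_j-\mu_j^*\|_{\sw^*}\le c_0\sqrt{\Dtmin}\le c_0\sqrt{\Dt_{k\ell}}$ for every $j$; (ii) $\|\sw^{*1/2}(J-J^*)(\be_k-\be_\ell)\|_2\le 2c_0\sqrt{\Dt_{k\ell}}$; and (iii) the identity $\sw^{*1/2}J^*(\be_k-\be_\ell)=\sw^{*-1/2}(\mu_k^*-\mu_\ell^*)$, whose squared Euclidean norm is exactly $\Delta_{k\ell}$ by \eqref{def_Delta}.

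The first bound is immediate: split $J=J^*+(J-J^*)$ and apply the triangle inequality together with (ii) and (iii) to obtain $\|\sw^{*1/2}J(\be_k-\be_\ell)\|_2 \le \sqrt{\Delta_{k\ell}}+2c_0\sqrt{\Delta_{k\ell}}=(1+2c_0)\sqrt{\Delta_{k\ell}}$.

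For the lower bound on $\delta_{k\ell}^k(\omega)$, the key algebraic manipulation is
\[
\mu_k^*-\frac{\mu_k+\mu_\ell}{2}=\frac{\mu_k^*-\mu_\ell^*}{2}+\frac{\mu_k^*-\mu_k}{2}+\frac{\mu_\ell^*-\mu_\ell}{2}.
\]
Substituting this into \eqref{def_sigma_theta} and splitting $J=J^*+(J-J^*)$ in the first summand produces four terms. The principal contribution $(1/2)(\mu_k^*-\mu_\ell^*)^\T J^*(\be_k-\be_\ell)=\Delta_{k\ell}/2$ is the positive part. The remaining three cross terms are estimated by Cauchy--Schwarz in the $\sw^*$-inner product: the $(J-J^*)$-piece is bounded in absolute value by $(1/2)\sqrt{\Delta_{k\ell}}\cdot 2c_0\sqrt{\Delta_{k\ell}}=c_0\Delta_{k\ell}$ thanks to (ii)--(iii), while each $(\mu_j^*-\mu_j)$-piece is bounded by $(1/2)\cdot c_0\sqrt{\Delta_{k\ell}}\cdot (1+2c_0)\sqrt{\Delta_{k\ell}}=(c_0(1+2c_0)/2)\Delta_{k\ell}$ by combining (i) with the first bound of the lemma. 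Summing yields
\[
\delta_{k\ell}^k(\omega)\ge \tfrac{1}{2}\Delta_{k\ell}\bigl[1-2c_0-2c_0(1+2c_0)\bigr]=\tfrac{1}{2}\Delta_{k\ell}(1-4c_0-4c_0^2),
\]
as claimed. The bound on $\delta_{k\ell}^\ell(\omega)$ follows by swapping $k\leftrightarrow\ell$ in the algebraic identity, which flips the sign of the principal term to $-\Delta_{k\ell}/2$; the three cross terms have identical absolute-value bounds.

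There is no substantive obstacle; the only care required is constant bookkeeping, specifically ensuring that the factor $(1+2c_0)$ from the first bound propagates into the $(\mu_j^*-\mu_j)$-cross-terms, which is what produces the quadratic term $-4c_0^2$ in the final expression rather than a weaker coefficient.
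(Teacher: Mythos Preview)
Your proof is correct and follows essentially the same approach as the paper: both arguments split the $\delta$-quantity via the algebraic identity $\mu_j^*-\tfrac{\mu_k+\mu_\ell}{2}=\tfrac{1}{2}(\mu_j^*-\mu_{j'}^*)+\tfrac{1}{2}(\mu_k^*-\mu_k)+\tfrac{1}{2}(\mu_\ell^*-\mu_\ell)$, isolate the principal $\pm\tfrac{1}{2}\Delta_{k\ell}$ term, and bound the three cross terms via Cauchy--Schwarz using the $\cB_d(\omega^*,c_0)$ constraints. The only cosmetic difference is that the paper writes the argument for $\delta_{k\ell}^\ell$ (and splits $J=J^*+(J-J^*)$ uniformly across the expression), whereas you handle $\delta_{k\ell}^k$ first (splitting $J$ only in the principal summand and using the first bound for the full $J$ in the $\mu$-error pieces); the constant bookkeeping yields the identical error $(2c_0+2c_0^2)\Delta_{k\ell}$ in both organizations.
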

\begin{proof}
    The first result follows from
    \begin{align*}
        \| \sw^{*1/2}J(\be_k - \be_\ell)\|_2
        &\le \|\sw^{*1/2} J^*(\be_k - \be_\ell)\|_2 + \|\sw^{*1/2} (J -J^*)(\be_k - \be_{\ell})\|_2\hspace{-2cm}\\
        &\le   (1 + 2c_0)\sqrt{\Delta_{k\ell}}  &&\text{by \eqref{def_Delta} and \eqref{cond_init_w_J}}.
    \end{align*}
    For the other result, notice that 
    \begin{align*}
          \delta_{k\ell}^\ell(\omega) &=(\be_k - \be_\ell)^\T J^\T \left(\mu_\ell^* - \frac{\mu_\ell + \mu_k}{2})\right)\\
          &= (\be_k - \be_\ell)^\T J^{*\T} \left(\mu_\ell^* - \frac{\mu_\ell^* + \mu_k^*}{2}\right) + (\be_k - \be_\ell)^\T (J - J^*)^\T \left(\mu_\ell^* - \frac{\mu_\ell + \mu_k}{2}\right)\\
          &\qquad + {1\over 2}(\be_k - \be_\ell)^\T J^{*\T}  (\mu_\ell^*-\mu_\ell +  \mu_k^*-\mu_k).
    \end{align*}
    The first term equals to
    \[
         - {1\over 2}(\mu_k^*-\mu_\ell^*)^\T \sw^{*-1}(\mu_k^*-\mu_\ell^*) \overset{\eqref{def_Delta} }{=} -{1\over 2}\Dt_{k\ell},
    \]
    and the second and third terms are no greater than
    \begin{align*}
        & {1\over 2} \|\sw^{*1/2} (J - J^*)(\be_k - \be_\ell)\|_2 \left(
       \|\mu_{\ell}-\mu_{\ell}^*-\mu_{k}+\mu_{k}^*\|_{\sw^*} +\sqrt{\Dt_{k\ell}}
        \right)\\
        &\quad  +    {1\over 2} \|\mu_{\ell}-\mu_{\ell}^*-\mu_{k}+\mu_{k}^*\|_{\sw^*}    \sqrt{\Dt_{k\ell}}.
    \end{align*}
    By \eqref{cond_init_w_J}, we conclude 
    $$
         - \left(
            {1\over 2} - 2c_0 - 2c_0^2
         \right)\Delta_{k\ell} \ge   \delta_{k\ell}^\ell (\omega) \ge  -\left(
            {1\over 2} + 2c_0 + 2c_0^2
         \right)\Delta_{k\ell}. 
    $$
    By the symmetric argument, we can also prove the result for $\delta_{k\ell}^k (\omega)$. The proof is complete.
\end{proof}

	The following lemma controls the (conditional) expectation of various quantities related with $\gamma_\ell(X;\omega)$ and the Gaussian noise $N$. Note that due to independence between $N$ and $Y$, these conditional expectations are deterministic.

	\begin{lemma}\label{lem_var_gamma_X}
		For any $\omega \in \cB_d(\omega^*,c_0)$ given by \eqref{cond_init_w_J}, there exists some absolute positive constants $c=c(c_0)$ and $C>0$ such that  for any  $b\ne \ell\in [L]$ and any $a \ge 1$, 
		\begin{align}\label{bd_exp_gam_b}
			\EE[\gamma_{\ell}(X;\omega)^a\mid Y=b] &~ \le~  C \left({\pi_\ell^*  \over \pi_b^*}\exp(-c\Dt_{b\ell})\right)^a+\exp(-c\Dt_{b\ell}),\\\label{bd_exp_gam_N_b}
			\left\|\EE\left[\gamma_{\ell}(X;\omega) N  \mid Y=b\right]\right\|_2&~ \le~  C{\pi_\ell^*+\pi_b^* \over \pi_b^*} \exp(-c\Dt_{b\ell}),\\\label{bd_exp_gam_NN_op_b}
			\left\|\EE\left[\gamma_{\ell}(X;\omega) N  N^\T \mid Y=b\right]\right\|_{\op}&~ \le~  C{\pi_\ell^*+\pi_b^* \over \pi_b^*} \exp(-c\Dt_{b\ell}),\\\label{bd_exp_gam_N_ell2_b}
			\EE\left[\gamma_{\ell}(X;\omega)\|N\|_2^2 \mid Y=b\right] &~ \le~  C{\pi_\ell^*+\pi_b^* \over \pi_b^*} d\exp(-c\Dt_{b\ell}),\\\label{bd_exp_gamma_N_ell4_b}
			\left\|\EE\left[\gamma_{\ell}(X;\omega)\|N\|_2^2 NN^\T \mid Y=b\right]\right\|_\op &~ \le~  C{\pi_\ell^*+\pi_b^* \over \pi_b^*} d\exp(-c\Dt_{b\ell}).
		\end{align}
		In particular, the above holds for $\omega = \omega^*$ with $c_0 = 0$.
	\end{lemma}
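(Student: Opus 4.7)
The plan is to first establish the moment bound \eqref{bd_exp_gam_b}, from which the four remaining inequalities will follow via Cauchy--Schwarz and standard Gaussian moment computations. Fix $b \ne \ell$ and $\omega \in \cB_d(\omega^*, c_0)$, and condition on $Y = b$, so that $N := \sw^{*-1/2}(X - \mu_b^*) \sim \cN_d(0, \bI_d)$. Specializing the event $\cE_N$ of \eqref{def_event_N} to the pair $(k, \ell) = (b, \ell)$ gives $\PP(\cE_N^c) \le 2\exp(-(c_2^2/(8c_1^2))\Delta_{b\ell})$ by \eqref{bd_tail_prob_N1c}, while on $\cE_N$ the estimate \eqref{lb_W_kell_k} yields $W_{b\ell}(X, \omega) \ge (c_2/2)\Delta_{b\ell}$. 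Dropping all but the $k = b$ term in the denominator of $\gamma_\ell$ and using \eqref{eq_order_pis} to bound $\pi_\ell/\pi_b$ by $3\pi_\ell^*/\pi_b^*$ produces the pointwise estimate $\gamma_\ell(X;\omega) \le 3(\pi_\ell^*/\pi_b^*)\exp(-(c_2/2)\Delta_{b\ell})$ on $\cE_N$, while $\gamma_\ell^a \le 1$ holds trivially on $\cE_N^c$. Combining the two contributions with the tail bound on $\PP(\cE_N^c)$ yields \eqref{bd_exp_gam_b}.

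With \eqref{bd_exp_gam_b} at $a = 2$ in hand, subadditivity of the square root combined with absorption of constants gives $\sqrt{\EE[\gamma_\ell^2 \mid Y = b]} \lesssim (\pi_\ell^* + \pi_b^*)/\pi_b^* \exp(-c'\Delta_{b\ell})$ for some absolute $c' > 0$. For \eqref{bd_exp_gam_N_b} and \eqref{bd_exp_gam_NN_op_b} I would then invoke the variational identities
\begin{align*}
\|\EE[\gamma_\ell N \mid Y = b]\|_2 &= \sup_{\|v\|_2 = 1} \bigl|\EE[\gamma_\ell\, v^\T N \mid Y = b]\bigr|, \\
\|\EE[\gamma_\ell N N^\T \mid Y = b]\|_{\op} &= \sup_{\|v\|_2 = 1} \EE\bigl[\gamma_\ell (v^\T N)^2 \mid Y = b\bigr].
\end{align*}
A single Cauchy--Schwarz step then bounds each by $\sqrt{\EE[\gamma_\ell^2 \mid Y = b]}$ multiplied by $\sqrt{\EE[(v^\T N)^2]} = 1$ or $\sqrt{\EE[(v^\T N)^4]} = \sqrt{3}$, since $v^\T N \sim \cN(0,1)$. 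This variational route is essential: a coordinatewise Cauchy--Schwarz would introduce a spurious $\sqrt{d}$ factor that is not present in the target bounds.

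For \eqref{bd_exp_gam_N_ell2_b} and \eqref{bd_exp_gamma_N_ell4_b}, Cauchy--Schwarz can be applied directly against $\|N\|_2^2$ (respectively against $\|N\|_2^2 (v^\T N)^2$ inside the $\sup_v$), and the $\chi_d^2$-moment identities $\EE\|N\|_2^4 = d(d+2)$ together with $\EE\|N\|_2^8 \lesssim d^4$ deliver precisely the advertised factor of $d$. The case $\omega = \omega^*$ is immediate because $\omega^* \in \cB_d(\omega^*, 0)$. The main obstacle, such as it is, lies in the first paragraph: obtaining the pointwise upper bound $\gamma_\ell \le 3(\pi_\ell^*/\pi_b^*)\exp(-(c_2/2)\Delta_{b\ell})$ on $\cE_N$ relies on \eqref{lb_W_kell_k}, and hence on \cref{lem_delta_J_order}, to translate the one-dimensional Gaussian concentration of $N^\T \sw^{*1/2} J(\be_b - \be_\ell)$ into a quantitative lower bound on $W_{b\ell}(X, \omega)$ conditional on $Y = b$. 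Once that step is secured, the remaining four bounds reduce to routine Gaussian moment computations.
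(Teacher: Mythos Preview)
Your proposal is correct and follows essentially the same approach as the paper: split on the event $\cE_N$ to obtain the pointwise bound $\gamma_\ell \le 3(\pi_\ell^*/\pi_b^*)\exp(-(c_2/2)\Dt_{b\ell})$ on $\cE_N$, control the complement by \eqref{bd_tail_prob_N1c}, and then deduce the remaining four inequalities from the resulting second-moment bound via Cauchy--Schwarz and Gaussian moment identities. The only minor difference is that for \eqref{bd_exp_gam_NN_op_b} and \eqref{bd_exp_gamma_N_ell4_b} the paper repeats the $\cE_N$-splitting (using the $a=1$ pointwise bound on $\gamma_\ell$ on $\cE_N$ and Cauchy--Schwarz only on the complement), whereas you apply Cauchy--Schwarz uniformly against $\sqrt{\EE[\gamma_\ell^2\mid Y=b]}$; both routes yield the stated bounds up to absolute constants.
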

	\begin{proof}
		Pick any $b\ne \ell$ and $\omega \in \cB_d(\omega^*)$. Recall $\cE_N$ from \eqref{def_event_N}.  Start with
		 \begin{align*}
			  \EE[\gamma_{\ell}(X;\omega)^a\mid Y=b]  
			&\le  	\EE\left[\gamma_{\ell}(X;\omega)^a 1\{\cE_N\} \mid Y=b\right]    + \PP(\cE_N^c).
		\end{align*}  
		Note that on $\cE_N$ and conditioning on $Y=b$, 
		\begin{align}
			\gamma_{\ell}(X;\omega)^a    \le \left(\frac{\pi_{\ell}}{ \pi_b \exp(W_{b\ell}(X,\omega))} \right)^a \overset{\eqref{lb_W_kell_k}}{\le}  \left(\frac{\pi_{\ell}}{ \pi_b}\,\exp\Bigl(-{c_2\over 2}\Dt_{b\ell}\Bigr)\right)^a.\label{bd_gamma_ell_N1}
		\end{align}
		By using \eqref{eq_order_pis} and \eqref{bd_tail_prob_N1c}, we obtain 
		\begin{align*} 
			 \EE[\gamma_{\ell}(X;\omega)^a\mid Y=b]   
			&    \le  3^a \left(\frac{\pi^*_{\ell}}{ \pi^*_b}\,\exp\Bigl(-{c_2\over 2}\Delta_{b\ell}\Bigr)\right)^a +   \exp\Bigl(-\frac{c_2^2}{8c_1^2}\Dt_{b\ell}\Bigr).
		\end{align*}
		 Setting $c=\min\{c_2/2, c_2^2/(16 c_1^2)\}$  gives  \eqref{bd_exp_gam_b}.
		
		 \cref{bd_exp_gam_N_b} follows by Cauchy-Schwarz inequality and \eqref{bd_exp_gam_b} with $a = 2$ as  
		\begin{align*}
			\left\|\EE\left[\gamma_{\ell}(X;\omega) N  \mid Y=b\right]\right\|_2&=\sup_{v \in \Sp^d} \EE \left[ \gamma_{\ell}(X;\omega^*) v^\T N \mid Y = b \right]\\ 
			&\le \sqrt{\EE \left[\gamma_{\ell}(X;\omega^*)^2\mid Y = b \right]}\sup_{v \in \Sp^d} \sqrt{\EE[(v^\T N)^2]}.
		\end{align*}
	
		 We use similar arguments to prove the other results. For  \eqref{bd_exp_gam_NN_op_b}, we find  that 
		 \begin{align*}
		 		\left\|\EE\left[\gamma_{\ell}(X;\omega) N  N^\T \mid Y=b\right]\right\|_{\op}  &\le   \left\|\EE\left[\gamma_{\ell}(X;\omega) N  N^\T  1\{\cE_N\}   \mid Y=b\right]\right\|_{\op}\\
		 		&\quad   +\left\|\EE\left[  N  N^\T   1\{\cE_N^c \}  \right]\right\|_{\op}.  
	 	\end{align*}
	 	The second term is bounded by
	 	\begin{align}\label{bd_NN_N1_comp}
	 		\sup_{v\in \Sp^d} \EE\left[   v^\T  N N^\T v 1\{\cE_N^c\}  \right] 
	 		&\le  \sup_{v\in \Sp^d} \sqrt{\EE\left[ (N^\T v)^4\right]} \sqrt{\PP(\cE_N^c\}} \le 2 \exp\Bigl(-\frac{c_2^2}{16c_1^2}\Dt_{b\ell}\Bigr). 
	 	\end{align}
	 	Since the first term can be bounded by using \eqref{bd_gamma_ell_N1} with $a=1$:
	 	\begin{align*}
	 	\left\|\EE\left[\gamma_{\ell}(X;\omega) N  N^\T  1\{\cE_N\}   \mid Y=b\right]\right\|_{\op}
	 		&\le {\pi_{\ell} \over \pi_b} \exp\left(-{c_2 \over 2}\Dt_{b\ell}\right) \sup_{v\in\Sp^d}\EE \left[  (N^\T v)^2    \right]\\
	 		&\le  3{\pi_{\ell}^* \over \pi_b^*} \exp\left(-{c_2 \over 2}\Dt_{b\ell}\right),
	 	\end{align*} 
 		we obtain \eqref{bd_exp_gam_NN_op_b}.    \cref{bd_exp_gam_N_ell2_b} follows by noting that
		\begin{align*}
			\EE\left[\gamma_\ell(X;\omega) \|N\|_2^2\mid Y=b\right] &\le \sqrt{\EE[\gamma_{\ell}(X;\omega^*)^2 \mid Y = b]}\sqrt{\EE[\|N\|_2^4]} \le 6{\pi_\ell^*+\pi_b^* \over \pi_b^*}d  \exp(-c\Dt_{b\ell}).
		\end{align*}
		Finally, by \eqref{bd_gamma_ell_N1}, we   have 
		\begin{align*}
			&\left\|\EE\left[\gamma_{\ell}(X;\omega)\|N\|_2^2 NN^\T \mid Y=b\right]\right\|_\op\\&\le \sup_{v \in \Sp^d} {\pi_{\ell}\over \pi_b}\exp(-c\Dt_{b\ell}) \EE\left[ \|N\|_2^2 (v^\T N)^2 \right] + \sup_{v \in \Sp^d} \sqrt{\EE[(N^\T u)^2 \|N\|_2^2]}\sqrt{\PP(\cE_N^c)}\\
			&\lesssim  {\pi_\ell^*+\pi_b^* \over \pi_b^*}d  \exp(-c\Dt_{b\ell}),
		\end{align*}
		proving \eqref{bd_exp_gamma_N_ell4_b}. This completes the proof.
	\end{proof}

    The following lemma establishes upper bounds of $\cR_I^{(j)}$, for $j\in \{0,1,2\}$, given by \eqref{def_cRs}. 
\begin{lemma}\label{lem_I_fN}
    For some constant $c>0$ depending only on $c_1$ and $c_2$ given in \eqref{def_event_N}, and some absolute constant $C>0$, one has
    \begin{align}\label{bd_cRs}
         \max_{j\in \{0,1\}} \cR_I^{(j)}  ~ \leq~  {C\over \pmin } \exp\left(-c\Dtmin\right),\qquad \cR_I^{(2)} ~ \leq~  {CL\over \pmin } \exp\left(-c\Dtmin\right).
    \end{align} 
\end{lemma}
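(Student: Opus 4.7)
The proof hinges on the algebraic identity
\[
\pi_a\, I_{a\ell}(x,\omega) \;=\; \frac{\gamma_a(x;\omega)\,\gamma_\ell(x;\omega)}{\pi_\ell},
\]
which follows by writing $I_{a\ell}(x,\omega) = f_a(x)f_\ell(x)/S(x)^2$ and $\gamma_k(x;\omega) = \pi_k f_k(x)/S(x)$, with $f_k$ the Gaussian density of component $k$ and $S(x) = \sum_k \pi_k f_k(x)$. This reduces all three quantities to moments of products of posterior weights, which can then be controlled via the conditional expectation bounds in \cref{lem_var_gamma_X}. A second structural observation, specific to $\cR_I^{(1)}$, is that $\sum_{a \neq \ell} \gamma_a(x;\omega) = 1 - \gamma_\ell(x;\omega)$, which collapses the sum into
\[
\sum_{a\neq\ell}\pi_a\,\EE[I_{a\ell}(X,\omega) NN^\T] \;=\; \frac{1}{\pi_\ell}\,\EE\bigl[\gamma_\ell(1-\gamma_\ell)\,NN^\T\bigr],
\]
thereby eliminating any explicit summation over $a$. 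Throughout, I will exploit the inequality $L \le 1/\pmin$ (immediate from $\sum_\ell \pi_\ell^* = 1$) to absorb stray $L$ factors into $1/\pmin$, together with the separation condition \eqref{cond_min_Delta}, which permits absorption of polynomial factors such as $\sqrt{\Dt_{a\ell}}$ or $\sqrt{\Dtmax}/\pmin$ into the exponential decay via $\sqrt{x}\exp(-cx) \le C\exp(-c'x)$ for any $c' < c$.

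For $\cR_I^{(0)}$, after reducing to $(1/\pi_\ell)\sqrt{\Dt_{a\ell}}\sum_b \pi_b^*\,\EE[\gamma_a\gamma_\ell \mid Y = b]$, I will split by the value of $b$. For $b = \ell$, use $\gamma_\ell \le 1$ and apply \eqref{bd_exp_gam_b} to $\EE[\gamma_a \mid Y = \ell]$; symmetrically for $b = a$. For $b \notin \{a, \ell\}$, use the inequality $\gamma_a\gamma_\ell \le \sqrt{\gamma_a\gamma_\ell}$ (since both weights lie in $[0,1]$) followed by Cauchy--Schwarz to obtain $\EE[\gamma_a\gamma_\ell \mid Y = b] \le \sqrt{\EE[\gamma_a \mid Y = b]\,\EE[\gamma_\ell \mid Y = b]}$, then apply \eqref{bd_exp_gam_b} to each factor. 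The resulting decay $\exp(-c\Dt_{ab}/2 - c\Dt_{b\ell}/2)$ is converted into $\exp(-c\Dt_{a\ell}/4)$ via the Mahalanobis triangle inequality $\sqrt{\Dt_{ab}} + \sqrt{\Dt_{b\ell}} \ge \sqrt{\Dt_{a\ell}}$, equivalently $\Dt_{ab} + \Dt_{b\ell} \ge \Dt_{a\ell}/2$. Summing the three cases over $a$ and $b$ and using $L \le 1/\pmin$ together with \eqref{cond_min_Delta} yields the desired bound.

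For $\cR_I^{(1)}$, starting from the collapsed form, I will condition on $Y = b$ and bound the operator norm of each conditional matrix expectation. For $b = \ell$, use $\gamma_\ell(1-\gamma_\ell) \le 1 - \gamma_\ell = \sum_{a\neq\ell}\gamma_a$ and apply \eqref{bd_exp_gam_NN_op_b} term-by-term; the resulting sum $\sum_{a \neq \ell}(\pi_a^*+\pi_\ell^*)\exp(-c\Dt_{a\ell})/\pi_\ell^*$ is bounded by $(1/\pi_\ell^* + L)\exp(-c\Dtmin) \le (C/\pmin)\exp(-c\Dtmin)$ since $L\pmin \le 1$. For $b \neq \ell$, use $\gamma_\ell(1-\gamma_\ell) \le \gamma_\ell$ and apply \eqref{bd_exp_gam_NN_op_b} directly, which yields a bound of the same order after summing over $b$ with the outer factor $\pi_b^*/\pi_\ell$.

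For $\cR_I^{(2)}$, since the operator norm sits inside the sum over $a$, I will bound each term $\pi_a \|\EE[I_{a\ell} NN^\T]\|_\op = (1/\pi_\ell)\|\EE[\gamma_a\gamma_\ell NN^\T]\|_\op$ separately, running the same case analysis as for $\cR_I^{(0)}$ but replacing the scalar moment by its matrix analogue through $\|\EE[\gamma_a\gamma_\ell NN^\T \mid Y = b]\|_\op \le \sup_{\|v\|_2 = 1} \sqrt{\EE[\gamma_a\gamma_\ell \mid Y=b]}\,\sqrt{\EE[(v^\T N)^4]}$. Summing over $a \ne \ell$ introduces the unavoidable factor of $L$, giving $\cR_I^{(2)} \le (CL/\pmin)\exp(-c\Dtmin)$. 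The principal technical obstacle throughout is precisely the case $b \notin \{a,\ell\}$: the natural conditional bounds yield decay in $\Dt_{ab}$ and $\Dt_{b\ell}$ rather than in $\Dt_{a\ell}$, and without the Cauchy--Schwarz step combined with the Mahalanobis triangle inequality, neither the polynomial factor $\sqrt{\Dt_{a\ell}}$ nor the summation over $a$ could be absorbed into the exponential.
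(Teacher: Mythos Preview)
Your proof has a genuine gap: the key algebraic identity $\pi_a I_{a\ell}(x,\omega) = \gamma_a(x;\omega)\gamma_\ell(x;\omega)/\pi_\ell$ does \emph{not} hold for arbitrary $\omega = (\pi,M,J) \in \cB_d(\omega^*)$. From \eqref{def_gamma_omega}, the denominator of $\gamma_\ell$ is $D_\ell = \sum_k \pi_k\exp(W_{k\ell})$ while that of $\gamma_a$ is $D_a = \sum_k \pi_k\exp(W_{ka})$; your identity would require the cocycle relation $W_{ka}+W_{a\ell}=W_{k\ell}$, which a direct expansion shows is equivalent to $M^\T J$ being symmetric. This holds when $\omega=\omega(\theta)$ (since then $M^\T J = M^\T\sw^{-1}M$), but the supremum defining $\cR_I^{(j)}$ is over all of $\cB_d(\omega^*)$, and the interpolation paths $(\pi^*,M_u,\bar J)$ and $(\pi^*,M^*,J_u)$ used in \cref{app_sec_proof_rate_contract_pi_w_J,app_sec_proof_rate_contract_mu_w_J} generically violate the symmetry (e.g.\ $M^{*\T}\bar\sw^{-1}\bar M$ is not symmetric). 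For the same reason, the companion observation $\sum_{a\ne\ell}\gamma_a = 1-\gamma_\ell$ also fails on $\cB_d(\omega^*)$. So your Gaussian-density justification ``$\gamma_k = \pi_k f_k/S$'' simply does not apply in this parametrization.

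The paper circumvents this by never separating the $a$-sum: it uses the identity $\pi_\ell\sum_{a\ne\ell}\pi_a I_{a\ell} = \gamma_\ell(1-\gamma_\ell)$, which \emph{is} valid for every $\omega$ because both sides are built from the $W_{k\ell}$'s with the same base point $\ell$. This collapsed form is then bounded via \cref{lem_var_gamma_X} (after crudely replacing $\sqrt{\Dt_{a\ell}}$ by $\sqrt{\Dtmax}$ for $\cR_I^{(0)}$ and absorbing it through \eqref{cond_min_Delta}), and $\cR_I^{(2)}$ is handled by the one-line inequality $\cR_I^{(2)}\le (L-1)\cR_I^{(1)}$. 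Your $\cR_I^{(1)}$ argument reaches the correct collapsed expression, but your route there --- via $\sum_{a\ne\ell}\gamma_a = 1-\gamma_\ell$ --- is invalid; and your term-by-term strategy for $\cR_I^{(0)}$ and $\cR_I^{(2)}$, including the $b\notin\{a,\ell\}$ case with the Mahalanobis triangle inequality, does not go through once the identity is replaced by the correct $\pi_a I_{a\ell} = \tilde\gamma_a^{(\ell)}\gamma_\ell/\pi_\ell$ with $\tilde\gamma_a^{(\ell)} := \pi_a\exp(W_{a\ell})/D_\ell$, since \cref{lem_var_gamma_X} gives no direct control on $\EE[\tilde\gamma_a^{(\ell)}\mid Y=b]$ for $b\notin\{a,\ell\}$.
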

\begin{proof}
	We first prove $j=0$.
	 For any $\ell\in [L]$ and $\omega$, define 
	\[
	I_\ell(X,\omega)  := \pi_{\ell} \sum_{a \in [L]\setminus\{\ell\}} \pi_a  I_{a\ell}(X,\omega) = \gamma_{\ell}(X;\omega)(1-\gamma_{\ell}(X;\omega)).
	\]
	so that $\cR_I^{(0)} = \sup_{\omega\in \cB_d(\omega^*)} \max_{\ell \in [L]}  \pi_{\ell}^{-1} \EE[I_\ell (X,\omega) ]$. 
	Note that $\EE[I_\ell (X,\omega) ] $ equals to 
	\begin{align*}
		 &  \sum_{b=1}^L \pi_b^* \EE\left[I_\ell (X,\omega) \mid Y = b \right]    \\
		&\le  \pi_\ell^* ~ \EE\left[(1-\gamma_{\ell}(X;\omega)) \mid Y = \ell \right] + \sum_{b\in[L]\setminus\{\ell\}} \pi_b^* ~ \EE\left[ \gamma_{\ell}(X;\omega) \mid Y = b \right]  \\
		&= \pi_\ell^* \sum_{a\in [L]\setminus\{\ell\}} \EE\left[\gamma_a(X;\omega)) \mid Y = \ell \right] + \sum_{b\in[L]\setminus\{\ell\}} \pi_b^* ~ \EE\left[ \gamma_{\ell}(X;\omega) \mid Y = b \right] \\
		&\lesssim    \pi_\ell^* \sum_{a\in [L]\setminus\{\ell\}}   {\pi_a^*+\pi_\ell^* \over \pi_\ell^*}\exp(-c\Dtmin) + \sum_{b\in[L]\setminus\{\ell\}} \pi_b^* {\pi_\ell^*+\pi_b^* \over \pi_b^*}\exp(-c\Dtmin) &&\text{by  \eqref{bd_exp_gam_b}}\\
		&\lesssim (1+L\pi_{\ell}^*)\exp(-c\Dtmin).
	\end{align*} 
	This together with \eqref{eq_order_pis} gives
	\[
		\cR_I^{(0)} \lesssim \max_{\ell \in [L]} \left({1\over \pi_{\ell}^*}+ L\right)  \exp\left(-c\Dtmin\right)\lesssim  {1\over \pmin}  \exp\left(-c\Dtmin\right)
	\]
	proving the result for $j=0$. 

To prove the result of $j=1$, we similarly bound $\|\EE[I_\ell (X,\omega)NN^\T ] $ by 
\begin{align*}
     \pi_\ell^* \sum_{a\in [L]\setminus\{\ell\}}    \left\|\EE\left[\gamma_a(X;\omega))NN^\T  \mid Y = \ell \right]\right\|_\op + \sum_{b\in[L]\setminus\{\ell\}} \pi_b^*   \left\| \EE\left[ \gamma_{\ell}(X;\omega)NN^\T  \mid Y = b \right]  \right\|_\op.
\end{align*}
Invoking \eqref{bd_exp_gam_NN_op_b} and \eqref{eq_order_pis} gives the result of $j=1$.  

Finally, for $j =2$, note that 
\begin{align*}
	\cR_I^{(2)}  &\le   \sup_{\omega\in\cB_d(\omega^*)} \max_{\ell \in [L]} \sup_{\max_a \|v_a\|_2=1} \sum_{a \in [L]\setminus\{\ell\}} \pi_a \EE\left[I_{a\ell}  (N^\T v_a)^2\right]\\
	&\le  \sup_{\omega\in\cB_d(\omega^*)} \max_{\ell \in [L]} \sup_{\max_b \|v_b\|_2=1} \sum_{b \in [L]\setminus\{\ell\}}\sum_{a \in [L]\setminus\{\ell\}} \pi_a \EE\left[I_{a\ell}   (N^\T v_b)^2\right]\\
	&\le (L-1) \sup_{\omega\in\cB_d(\omega^*)}  \left\|\sum_{a \in [L]\setminus\{\ell\}} \pi_a \EE\left[I_{a\ell} N N^\T \right]\right\|_\op\\
	&= (L-1)\cR_I^{(1)}.
\end{align*}
The proof is complete. 
\end{proof}

\section{Proofs of the sample level results in \cref{sec_theory_samp_small}}\label{app_sec_proof_sample_small}

 \subsection{Proof of \cref{thm_concent}: concentration inequalities of  the M-steps at the true parameter}\label{app_sec_proof_concentration}
 
 Recall the sample level M-steps $\wh \bpi_\ell(\cdot)$, $\wh \bmu_\ell(\cdot)$ and $\bwhsw(\cdot)$ from \cref{iter_pi_hat,iter_mu_hat,iter_sw_hat,iter_sw_hat_alter}.
 As done in \cref{app_sec_proof_thm_EM_population}, we write $\gamma_\ell(x;\theta) = \gamma_\ell(x;\omega)$ with $\omega = (\pi, M, J)$.

 \subsubsection{Proof of \cref{concent_rate_pi}: concentration of $\wh\pi_{\ell}(\cdot)$}
 \begin{proof}
      For any $\ell \in [L]$, we apply the Bernstein inequality to bound
 \begin{align*}
 	\wh\bpi_\ell(\omega^*) - \bpi_\ell(\omega^*) &= \EE_n[\gamma_\ell(X;\omega^*)] - \EE[\gamma_\ell(X;\omega^*)]  = {1 \over n}\sum_{i=1}^n \left\{\gamma_\ell(X_i;\omega^*) - \EE[\gamma_\ell(X_i;\omega^*)] \right\}.
 \end{align*}
 Since the variance of each $\gamma_\ell(X_i;\omega^*)$ is no greater than 
 \begin{align} \nonumber
 	\EE[\gamma_\ell^2(X_i;\omega^*)]  
 	& \le \pi_\ell^* + \sum_{b\in [L]\setminus\{\ell\}} \pi_b^*~  \EE[\gamma_\ell(X_i;\omega^*)^2 \mid Y=b]  \\\nonumber
 	& \le \pi_\ell^* + (L  \pi_\ell^*+ 1) \exp{(-c\Dtmin)} &&\text{by \cref{lem_var_gamma_X}} \notag\\
 	& \lesssim \pi_\ell^* &&\text{by \eqref{cond_min_Delta}},\label{bd_var_gam_l}
 \end{align} 
 applying the Bernstein's inequality gives that for all $t>0$,  
 \[
 \PP\left\{|\wh\bpi_\ell(\omega^*) - \bpi_\ell(\omega^*)| \lesssim \sqrt{\pi_\ell^* t\over n} + {t\over n} \right\} \ge 1-e^{-t}.
 \]
 Taking the union bounds over $\ell \in [L]$, choosing $t=\log(n)$ and using $n\pmin \ge C\log n$  yield  \eqref{concent_rate_pi}.
 \end{proof}

 \subsubsection{Proof of \cref{concent_rate_M}: concentration of  $\wh \mu_{\ell}(\cdot)$}
 \begin{proof} 
 We work on the event  
 \begin{equation}\label{def_event_pi}
 	\cE_\pi := \bigcap_{\ell \in [L]}\left\{ {| \wh \bpi_{\ell}(\omega^*) -\pi_\ell^*|\over \pi_\ell^*} + {| \wt \pi_{\ell} -\pi_\ell^*|\over \pi_\ell^*} \le C \sqrt{\log n\over n\pi_\ell^*}\right\}.
 \end{equation}
 which, by  \eqref{concent_rate_pi} and   \cref{lem_concen_pis},  holds with probability at least $1-n^{-1}$ under $n\pmin \ge C \log n$.  On the event $\cE_\pi$, we have
 \begin{equation}\label{sandwich_pis}
 	1/2 \le \wh \bpi_{\ell}(\omega^*) / \pi_{\ell}^* \le 3/2,\qquad c  \le  \wt\pi_{\ell}/\pi_\ell^* \le C,\qquad \forall ~ \ell \in [L].
 \end{equation} 
 Pick any $\ell \in [L]$. Notice that
 \begin{align}\label{decomp_mu_hat_diff}
 	\wh\bmu_\ell(\omega^*) - \mu_\ell^* = {\EE_n[\gamma_\ell(X;\omega^*)(X-\mu_{\ell}^*)] \over \EE_n [\gamma_\ell(X;\omega^*)]} &= {1\over \wh \bpi_{\ell}(\omega^*)}{1\over n} \sum_{i =1}^n \underbrace{\gamma_\ell(X_i;\omega^*)(X_i-\mu_{\ell}^*)}_{V_{\ell,i}}.
 \end{align} 
 By 
 $\EE[V_{\ell,i}] =  \EE[\gamma_\ell(X_i;\omega^*)](\bmu_{\ell}(\omega^*) - \mu_{\ell}^*) =
 	0 $, we have  
$$
 	\|\wh\bmu_\ell(\omega^*) - \bmu_\ell(\omega^*)\|_{\sw^*} 
 	   =   {1\over n \wh\bpi_{\ell}(\omega^*)} \left\|   \sum_{i=1}^n \left(V_{\ell,i}-\EE[V_{\ell,i}]\right)  \right\|_{\sw^*}.
 $$
 to which we use a truncation argument together with the vector-valued Bernstein inequality to analyze. From \citet[Lemma 5.3]{segol2021improved} we know $\sw^{*-1/2}V_{\ell,i}$ is a sub-Gaussian random vector with some absolute sub-Gaussian constant $C_v>0$. Consider the event
 \begin{align}\label{def_event_N_2}
 	\cE_2 = \bigcap_{i=1}^n \cE_{2,i},\qquad \cE_{2,i}:= \left\{  \|V_{\ell,i}\|_{\sw^*} \leq   C_v(\sqrt{d} + 2\sqrt{\log n})  \right\}
 \end{align}
 and  by \cref{subG_quad_bd}, one has $\PP(\cE_{2,i}) \ge 1-n^{-2}$ and $\PP(\cE_2)\ge 1-n^{-1}$. We first find that 
 \begin{align*}
 	\EE[ \|V_{\ell,i}\|_{\sw^*}^2] &\le    \EE\left[
 	 \gamma_\ell(X_i;\omega^*)  \|X_i-\mu_{\ell}^*\|_{\sw^*}^2 
 	\right] \\
 	&\le  \pi_\ell^* \EE\left[
 	  \|N_i\|_2^2  
 	\right] +2\sum_{a\in [L]\setminus\{\ell\}}  \pi_a^* \EE\left[
 	\gamma_\ell(X_i;\omega^*)(\|N_i\|_2^2 + \Dt_{a\ell}) \mid Y_i = a
 	\right] \\
 	 &\lesssim d \pi_\ell^*  + \sum_{a \in [L]\setminus\{\ell\}}  (\pi_{\ell}^*+\pi_a^*) (d+\Delta_{a\ell})   \exp(-c\Dt_{a\ell}) &&\text{by  \cref{lem_var_gamma_X}}\\
 	 &\lesssim d  \pi_{\ell}^* +   (L\pi_{\ell}^*+1) d  \exp{(-c'\Dtmin)} &&\text{by }\cE_\pi\\
 	 &\lesssim d \pi_{\ell}^* &&\text{by \eqref{cond_min_Delta}}.
 \end{align*}
 Then  
 \begin{align}\label{prob_comp_eX} 
  	{1 \over n} \left\|   \sum_{i=1}^n\EE[V_{\ell,i}  1\{\cE_{2,i}^c\}] \right\|_{\sw^*} \le ~ \max_{i \in [n]} \EE\left[ \|V_{\ell,i}\|_{\sw^*}^2\right]^{1/2} \PP(\cE_{2,i}^c)^{1/2} \lesssim \sqrt{d \pi_\ell^* \over n^2}.
\end{align}
 On the event $\cE_2$, 
 it remains to  bound from above 
 \begin{align}\label{targe_V_ell}
 	{1 \over n}	\left\| \sum_{i=1}^n  \Bigl\{V_{\ell,i} 1\{\cE_{2,i}\} - \EE\left[V_{\ell,i} 1\{\cE_{2,i}\}   \right] \Bigr\}\right\|_{\sw^*}.
 \end{align}
Clearly,  the following holds almost surely,
\[
\max_{i \in [n]} \|V_{\ell,i} 1\{\cE_{2,i}\} \|_{\sw^*}\le  C_v(\sqrt{d}+ 2\sqrt{\log n}).
\]
In conjunction with the above variance bound, applying the vector-valued Bernstein's inequality in \cref{lem_bernstein_vector} gives that for all $t\ge 0$,
 \[
  \eqref{targe_V_ell}   \lesssim  \sqrt{t  \over n}\sqrt{d  \pi_{\ell}^*}+   {t \over n}  \sqrt{d+\log n} 
 \] 
 with probability at least $1- e^{-t}-2n^{-1}$.  Together with    \eqref{prob_comp_eX},   by choosing $t = \log n$ and using $n\pmin  \ge C \log(n)(1 \vee  \log(n)/d)$, we obtain that for any $t\ge 1$, with probability $1-3n^{-1}$, 
\begin{align}
	\|\wh\bmu_\ell(\omega^*) - \bmu_\ell(\omega^*)\|_{\sw^*} &\lesssim   \sqrt{  d\log n \over n\pi_{\ell}^*}  + { \log n  \over n\pi_{\ell}^*}\sqrt{d+\log n}  \lesssim  \sqrt{  d\log n \over n\pi_{\ell}^*} . \label{bd_mu_hat_diff}
\end{align}
This completes the proof of \eqref{concent_rate_M}.
 \end{proof}

 \subsubsection{Proof of \cref{concent_rate_sw}: concentration of $\whsw(\cdot)$}
 \begin{proof}
 	We analyze the term
 	\[
 	d(\bwhsw(\omega^*), \bsw(\omega^*)) =  \left\|
 	\sw^{*-1/2}(\bwhsw(\omega^*) - \bsw(\omega^*))\sw^{*-1/2}
 	\right\|_{\op}.
 	\]
 	It is easy to verify the decomposition
 	\begin{align}\label{decomp_sw_hat}\nonumber
 		\whsw (\omega^*) &= \EE_n \left[\sum_{\ell = 1}^L \gamma_\ell(X, \omega^*)(X - \wh\bmu_\ell(\omega^*))(X - \wh\bmu_\ell(\omega^*))^\T \right] \\
 		& = {1 \over n}  \sum_{i=1}^n   \sum_{\ell = 1}^L\gamma_\ell(X_i;\omega^*)(X_i - \mu_\ell^*)(X_i - \mu_\ell^*)^\T -   \sum_{\ell = 1}^L \wh \bpi_\ell(\omega^*)(\mu_\ell^* - \wh \bmu_\ell(\omega^*))(\mu_\ell^* - \wh \bmu_\ell(\omega^*))^\T.
 	\end{align}
 	For any  $i\in [n]$, define 
 	\[
 		H_i:= \sum_{\ell = 1}^L  \sw^{*-1/2}\gamma_\ell(X_i;\omega^*)(X_i - \mu_\ell^*)(X_i - \mu_\ell^*)^\T \sw^{*-1/2}
 	\]
 	and note that   $\EE[H_i ] =\bI_d.$
 	Adding and subtracting terms gives  
 	$d(\bwhsw(\omega^*), \bsw(\omega^*))  \le   \rI + \rII $
 	where 
 	\begin{align}\nonumber
 		\rI := ~ &{1 \over n} \left\|    \sum_{i=1}^n \left(  H_i - \EE[H_i]\right)\right\|_\op\\ \label{def_rIII_cov}
 		\rII := ~ & \left\|   \sum_{\ell = 1}^L \wh \bpi_\ell(\omega^*)\sw^{*-1/2}(\mu_\ell^* - \wh \bmu_\ell(\omega^*))(\mu_\ell^* - \wh \bmu_\ell(\omega^*))^\T \sw^{*-1/2} \right\|_\op .
 	\end{align} 
    By \eqref{bd_mu_hat_diff} and $\cE_\pi$ in \eqref{def_event_pi},  with probability at least $1-3n^{-1}$,
 	\begin{align}\label{bd_rII_cov_dev}
 		\rII  &\lesssim   \sum_{\ell = 1}^L  \pi_{\ell}^* \left\|\mu_\ell^* - \wh \bmu_\ell(\omega^*)\right\|_{\sw^*}^2 \lesssim   {  dL\log n \over n }. 
 	\end{align}

    To bound $\rI$, note that $[\gamma_\ell(X_i;\omega^*)]^{1/2}(X_i-\mu_\ell^*)$ is also sub-Gaussian with some constant $C>0$. 
 	In the following we work on the event   
    \[
        \cE_2 = \bigcap_{i=1}^n \cE_{2,i},\qquad \cE_{2,i}:= \left\{  \|[\gamma_\ell(X_i;\omega^*)]^{1/2}(X_i-\mu_\ell^*)\|_{\sw^*} \leq   C_v(\sqrt{d} + 2\sqrt{\log n})  \right\}
    \]
    We have $\rI \le \rI_1 + \rI_2$ with  
 	\begin{align*}
 			\rI_1 &:= {1 \over n} \left\|    \sum_{i=1}^n \left(  H_i 1\{\cE_{2,i}\}- \EE[H_i 1\{\cE_{2,i}\} ]\right)\right\|_\op,\\
 				\rI_2 &:=  {1 \over n} \left\|    \sum_{i=1}^n   \EE\left[H_i 1\{\cE_{2,i}^c\} \right]\right\|_\op.
 	\end{align*} 
 	
     We first bound $\rI_1$ by the matrix-valued Bernstein inequality of \cref{lem_bernstein_mat}. For any $i\in [n]$, we have that almost surely,
 	\begin{align*}
 		 \|H_i 1\{\cE_{2,i}\}\|_\op &\le \sum_{\ell=1}^L \gamma_{\ell}(X_i;\omega^*)\|X_i-\mu_{\ell}^*\|_{\sw^*}^21\{\cE_{2,i}\} \lesssim L(d + \log n).
 	\end{align*}
 	To bound the variance term in the Bernstein inequality, we decompose 
 	\begin{align*}
 		&\left\|   \EE[H_i^2 1\{\cE_{2,i}\}]
 		\right\|_\op \\
 		&\le \left\| \sum_{a=1}^L \pi_a^* \EE\left[ \gamma_a^2(X_i;\omega^*) \sw^{*-1/2}(X_i-\mu_a^*)(X_i-\mu_a^*)^\T \sw^{*-1/2} \|X_i-\mu_a^*\|_{\sw^*}^2  \mid Y_i = a\right]
 		\right\|_\op \\
 		& \quad + \left\| \sum_{a=1}^L \pi_a^* \EE\left[ \left(\sum_{\ell\in[L]\setminus\{a\}}\gamma_\ell(X_i;\omega^*) \sw^{*-1/2}(X_i-\mu_{\ell}^*)(X_i-\mu_{\ell}^*)^\T \sw^{*-1/2}\right)^2  \mid Y_i = a\right]
 		\right\|_\op.
 	\end{align*}
 	The first term is bounded by 
 	\begin{align}\label{bd_var_1}
 		&\left\| \sum_{a=1}^L \pi_a^* \EE\left[ \gamma_a(X_i;\omega^*)  N_iN_i^\T  \|N_i\|_2^2  \mid Y_i = a \right]\right\|_\op \le \sup_{v\in \Sp^d}   \EE[ (N_i^\T v)^2 \|N_i\|_2^2] \le 3 d. 
 	\end{align}
 	while the second term is no greater than 
 	\begin{align}\label{bd_var_2}\nonumber
 		&\sup_{v\in \Sp^d} \sum_{a=1}^L \pi_a^* \EE\left[ \sum_{\ell\in[L]\setminus\{a\}}\gamma_\ell(X_i;\omega^*) \langle v, X_i-\mu_{\ell}^*\rangle_{\sw^*}^2 
 		\sum_{\ell\in[L]\setminus\{a\}}\gamma_\ell(X_i;\omega^*) \left\| X_i-\mu_{\ell}^*\right\|_{\sw^*}^2  \mid Y_i = a\right]\\\nonumber
 		&\le 4\sup_{v\in \Sp^d} \sum_{a=1}^L \pi_a^*	\sum_{\ell\in[L]\setminus\{a\}} \EE\left[
 	\gamma_\ell(X_i;\omega^*)  \left((v^\T N_i)^2 + \Dt_{a\ell}\right) \left(\|N_i\|_2^2 + \Dt_{a\ell} \right) \mid Y_i = a
 		\right]\\\nonumber
 		&\lesssim  \sum_{a = 1}^L   \sum_{\ell\in[L]\setminus\{a\}} (\pi_{\ell}^*+\pi_a^*)
 		(d+\Dt_{a\ell} )\Dt_{a\ell}  \exp(-c\Dt_{a\ell})\\
 		&\lesssim   dL \exp(-c\Dt_{\min})
 	\end{align}
 	where the penultimate step uses \cref{lem_var_gamma_X}. Combining \eqref{bd_var_1} and \eqref{bd_var_2}  together with \eqref{cond_min_Delta} yields that 
 	\begin{align*}
 		\left\| \sum_{i=1}^n  \EE[H_i^2 1\{\cE_{2,i}\}]
 		\right\|_\op \lesssim nd +  n d L  \exp(-c\Dtmin) \lesssim nd.
 	\end{align*}
 	An application of the matrix-valued Bernstein inequality in \cref{lem_bernstein_mat} gives: for all $t\ge 0$, 
 	\begin{align} \label{bd_rI_cov_dev}
 	  	\rI_1  \lesssim \sqrt{dt \over n} + {t L(d+\log n )\over n} 
 	\end{align}
 with probability at least $1-de^{-t}-2n^{-1}$. Regarding $\rI_2$, by using the arguments in \eqref{bd_var_1} and \eqref{bd_var_2}, one can deduce that
 	\begin{equation*}
 		\rI_2 \le \max_{i \in [n]} \left\| \EE\left[H_i 1\{\cE_{2,i}^c\}
 		\right]\right\|_{\op}  \le \max_{i \in [n]} \sqrt{\|\EE[ H_i \|H_i\|_\op]\|_\op}\sqrt{\PP(\cE_{2,i}^c)} \lesssim {\sqrt{d}\over n}.
 	\end{equation*}
 	Thus the bound in  \eqref{bd_rI_cov_dev}  holds for $\rI$ for all $t\ge 1$. In conjunction with \eqref{bd_rII_cov_dev}, by choosing $t = 2\log n$ and using $d\le n$ and $\log n \le n\pmin \le n/L$, we conclude that with probability at least $1-4/n$, 
 	\begin{align*}
 		d(\bwhsw(\omega^*), \bsw(\omega^*)) & \lesssim \sqrt{d\log n \over n}  +     {  (d+\log n)L\log  n \over n }
 	\end{align*}
    completing the proof of \eqref{concent_rate_sw}.
    \end{proof}

\subsection{Contraction of the sample level M-steps}\label{app_sec_proof_contraction_samp}

For any $\theta$ satisfying \eqref{cond_init_unknown}, by recalling that $\omega = \omega(\theta)= (\pi, M,J(\theta))$, we establish in this subsection the sample level contraction.
We introduce a few additional quantities:
\begin{equation}\label{def_cRs_samp}
	\begin{split}
		\wh\cR_I^{(0)}   & := \sup_{\omega\in\cB_d(\omega^*)} \max_{\ell \in [L]} \sum_{a \in [L]\setminus\{\ell\}} \pi_a   \EE_n\left[I_{a\ell}(X,\omega)\right]\\
		\wh\cR_I^{(1)} & := \sup_{\omega\in\cB_d(\omega^*)} \max_{\ell \in [L]} ~ \Bigl\|\sum_{a \in [L]\setminus\{\ell\}} \pi_a\EE_n\left[I_{a\ell}(X,\omega) NN^\T\right]\Bigr\|_\op\\
		\wh\cR_I^{(2)} & :=  \sup_{\omega\in\cB_d(\omega^*)}  \max_{\ell \in [L]} \sum_{a \in [L]\setminus\{\ell\}} \pi_a  \left\|\EE_n\left[I_{a\ell}(X,\omega) NN^\T\right]\right\|_{\op}\\
	\end{split}
\end{equation}
which are the sample level counterparts of those in \eqref{def_cRs}.  In \cref{lem_I_fN_samp} we show that with probability at least $1-n^{-1}$, 
\[
	\wh\cR_I^{(0)}+\wh\cR_I^{(1)}  =\cO\left( \xi_n \right),\qquad 	\wh\cR_I^{(2)} = \cO\left( L\xi_n\right).
\]
with 
\begin{equation}\label{def_xi_n}
	\xi_n = {1\over \pminsq}\exp(-c\Dtmin) + {dL\log(q)\log(n)\over n\pmin}.
\end{equation}
and $q := n(\Dtmax + 1/\pmin)$.
	
	The following theorem gives the one-step contraction of the sample level M-steps. Recall $d(\omega,\omega^*)$ from \eqref{def_dist_omega}.

   \begin{theorem}\label{thm_EM_samp_omega}
	Grant condition \eqref{cond_min_Delta}.  On the event 
	\begin{equation}\label{def_event_cR_hat}
	 \Dtmax(\Dtmax  +L) \xi_n^2+ d(\wh\bpi(\omega^*), \bpi(\omega^*))\le c,
	\end{equation}
	for any $\omega \in \cB_d(\omega^*)$, 
	we have 
	\begin{align}\label{rate_contract_pi_hat}
		d(\wh\bpi(\omega), \wh\bpi(\omega^*)) & ~ \le ~ \xi_n (
		1+\sqrt{L/ \Dtmax}
		) ~ 
		d(\omega, \omega^*),\\\label{rate_contract_mu_hat}
		d(\wh \bM(\omega), \wh \bM(\omega^*))  & ~ \le ~ \xi_n   \sqrt{L + \Dtmax} ~ d(\omega,\omega^*).
	\end{align}
Furthermore, if the event 
	\begin{equation}\label{def_event_Mhat} 
				 d(\wh \bM(\omega^*),  \bM(\omega^*)) \le  \sqrt{\Dtmax} 
	\end{equation}
holds, then 
\begin{align}\label{rate_contract_Sigma_hat}
		d(\bwhsw(\omega),\bwhsw(\omega^*)) &~\le ~  \xi_n   \sqrt{(L + \Dtmax)\Dtmax}    ~ 
		d(\omega, \omega^*).
	\end{align}  
\end{theorem}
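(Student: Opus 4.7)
The plan is to mirror the population-level argument for Theorem \ref{thm_EM_population_omega} (Sections \ref{app_sec_proof_rate_contract_pi_w_J} and \ref{app_sec_proof_rate_contract_mu_w_J}), replacing every expectation $\EE$ by its empirical counterpart $\EE_n$. For any $\bar\omega=(\bar\pi,\bar M,\bar J)\in \cB_d(\omega^*)$ and any $\ell\in[L]$, I will write
\[
\wh\bpi_\ell(\bar\omega)-\wh\bpi_\ell(\omega^*) \;=\; \wh\nabla^\pi \wh\bpi_\ell(\bar\omega) + \wh\nabla^M \wh\bpi_\ell(\bar\omega) + \wh\nabla^J \wh\bpi_\ell(\bar\omega),
\]
and analogously for $\wh\bM$, where the three summands are defined exactly as in \eqref{def_nabla_pi} and \eqref{def_nabla_mu} but with $\EE$ replaced by $\EE_n$. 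Since the partial-derivative formulas \eqref{eq_deriv_pi_1}--\eqref{eq_deriv_J} are pointwise identities in $x$, they carry over verbatim, so every pointwise bound used in the population proof remains valid under $\EE_n$. The entire analysis therefore reduces to controlling the empirical quantities $\wh\cR_I^{(0)}, \wh\cR_I^{(1)}, \wh\cR_I^{(2)}$ in \eqref{def_cRs_samp} uniformly in $\omega\in\cB_d(\omega^*)$; this is exactly what Lemma \ref{lem_I_fN_samp} provides, yielding $\wh\cR_I^{(0)},\wh\cR_I^{(1)}\lesssim \xi_n$ and $\wh\cR_I^{(2)}\lesssim L\xi_n$ on a high-probability event.

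Step 1 is to rerun the derivations of \eqref{bd_part_deriv_gamma_pi}, \eqref{bd_part_deriv_gamma_mu}, \eqref{bd_part_deriv_gamma_J} with $\EE_n$ in place of $\EE$ to obtain $|\wh\bpi_\ell(\bar\omega)-\wh\bpi_\ell(\omega^*)|/\pi_\ell^* \lesssim \wh\cR_I^{(0)}d(\bar\omega,\omega^*)+\sqrt{\wh\cR_I^{(0)}\wh\cR_I^{(2)}}\,d(\bar J,J^*)$; inserting the bounds on the $\wh\cR_I^{(j)}$ delivers \eqref{rate_contract_pi_hat} up to the factor $\sqrt{L/\Dtmax}$ that originates from the $\sqrt{\wh\cR_I^{(2)}}$ piece. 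In parallel, reproducing the decomposition that led to \eqref{bd_part_deriv_M_pi}, \eqref{bd_part_deriv_M_mu}, \eqref{bd_part_deriv_M_J} and collecting terms as in \eqref{bd_part_derive_M_total} produces \eqref{rate_contract_mu_hat} after bounding the empirical analogue $\wh D_{\mu_\ell}:=\sup_{\omega}\|\wh\bmu_\ell(\omega)-\wh\bmu_\ell(\omega^*)\|_{\sw^*}$ by a self-bounding argument identical to the population one.

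Step 2 is to justify the one place where the empirical proof actually diverges from the population proof, namely the lower bound on the denominator $\wh\bpi_\ell(\omega)$ that plays the role of \eqref{rate_equiv_pi}. The event in \eqref{def_event_cR_hat} provides $d(\wh\bpi(\omega^*),\bpi(\omega^*))=d(\wh\bpi(\omega^*),\pi^*)\le c$ by self-consistency, and combining this with the contraction bound \eqref{rate_contract_pi_hat} applied at $\omega$ (which is itself $o(1)$ on $\cB_d(\omega^*)$ once $\Dtmax(\Dtmax+L)\xi_n^2\le c$ holds and \eqref{init_dist_omega} is invoked) yields $1/2\le \wh\bpi_\ell(\omega)/\pi_\ell^*\le 3/2$ uniformly on the ball, which is precisely what is needed to absorb the normalization. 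For \eqref{rate_contract_Sigma_hat} I will exploit the closed form \eqref{iter_sw_hat_alter}, so that $\bwhsw(\omega)-\bwhsw(\omega^*)=\wh\bM(\omega^*)\diag(\wh\bpi(\omega^*))\wh\bM(\omega^*)^\T-\wh\bM(\omega)\diag(\wh\bpi(\omega))\wh\bM(\omega)^\T$ (the common $\whsT$ cancels); then part (a) of Lemma \ref{lem_lip_sw} applied with $\theta'$ chosen as $(\wh\bpi(\omega^*),\wh\bM(\omega^*),\cdot)$ and with both $\sT,\sT'$ equal to $\whsT$ gives
\[
d(\bwhsw(\omega),\bwhsw(\omega^*)) \lesssim \|\sw^{*-1/2}\wh\bM(\omega^*)\|_{2,\infty}\,d(\wh\bM(\omega),\wh\bM(\omega^*)) + \|\sw^{*-1/2}\wh\bM(\omega^*)\|_{2,\infty}^2\, d(\wh\bpi(\omega),\wh\bpi(\omega^*)).
\]
The event \eqref{def_event_Mhat} together with $\|\sw^{*-1/2}M^*\|_{2,\infty}\le \sqrt{\Dtmax}$ (from \eqref{eq_Dt_max_Dt_inf}) gives $\|\sw^{*-1/2}\wh\bM(\omega^*)\|_{2,\infty}\lesssim \sqrt{\Dtmax}$, and plugging in \eqref{rate_contract_pi_hat}--\eqref{rate_contract_mu_hat} yields \eqref{rate_contract_Sigma_hat}.

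The main obstacle is entirely contained in Lemma \ref{lem_I_fN_samp}, which demands uniform-in-$\omega$ control of the empirical integrals $\EE_n[I_{a\ell}(X,\omega)]$, $\EE_n[I_{a\ell}(X,\omega)NN^\T]$ over the infinite-dimensional set $\cB_d(\omega^*)$; this is why $\xi_n$ contains the additive statistical term $dL\log(q)\log(n)/(n\pmin)$ on top of the pointwise mean $\exp(-c\Dtmin)/\pminsq$. Given that lemma, the rest of the argument is bookkeeping that parallels the population proof line by line, and the factor $\sqrt{L+\Dtmax}$ that distinguishes the sample-level rate from the population-level rate \eqref{rate_contract_mu} tracks directly from the extra $\sqrt{L}$ in the bound on $\wh\cR_I^{(2)}$.
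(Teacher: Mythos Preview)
Your proposal is correct and follows essentially the same route as the paper: replace $\EE$ by $\EE_n$ in the population derivations \eqref{bd_part_deriv_gamma_pi}--\eqref{bd_part_deriv_gamma_J} and \eqref{bd_part_deriv_M_pi}--\eqref{bd_part_deriv_M_J}, feed in the bounds $\wh\cR_I^{(0)},\wh\cR_I^{(1)}\lesssim\xi_n$ and $\wh\cR_I^{(2)}\lesssim L\xi_n$ from Lemma~\ref{lem_I_fN_samp}, use event~\eqref{def_event_cR_hat} to secure $\wh\bpi_\ell(\omega)\asymp\pi_\ell^*$, and close with Lemma~\ref{lem_lip_sw}(a) applied with $\sT=\sT'=\whsT$ and $\|\sw^{*-1/2}\wh\bM(\omega^*)\|_{2,\infty}\lesssim\sqrt{\Dtmax}$ from event~\eqref{def_event_Mhat}. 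One small discrepancy: the paper defines $\wh D_{\mu_\ell}:=\sup_{\omega}\|\wh\bmu_\ell(\omega)-\bmu_\ell(\omega^*)\|_{\sw^*}$ (centered at $\mu_\ell^*$, not at $\wh\bmu_\ell(\omega^*)$), which is the quantity that actually appears when bounding $\|\mu_b^*-\wh\bmu_\ell(\omega_u)\|_{\sw^*}$ along the interpolation path; your centering would force you to carry an extra $\|\wh\bmu_\ell(\omega^*)-\mu_\ell^*\|_{\sw^*}$ term through the self-bounding, but this is absorbed the same way and does not change the outcome.
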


\begin{proof}
We work on the events in \eqref{def_event_cR_hat} and \eqref{def_event_Mhat}.
\paragraph{Contraction of $\wh\pi_\ell$.}
Fix any $\ell \in [L]$ and $\bar\omega$. We have 
\begin{align*}
     \wh\bpi_\ell(\bar\omega) - \wh\bpi_\ell(\omega)  &=    \EE_n[\gamma_\ell(X,\bar\omega) - \gamma_\ell(X,\omega^*) ] = \nabla^\pi \wh\bpi_\ell(\bar\omega) + \nabla^M \wh\bpi_\ell(\bar\omega) + \nabla^J \wh\bpi_\ell(\bar\omega)
\end{align*} 
where, similar as \eqref{def_nabla_pi},
\begin{align*}
     \nabla^\pi \wh\bpi_\ell(\bar\omega) &:= \sum_{k=1}^{L-1}\int_0^1 \EE_n \left\langle \frac{\partial \gamma_\ell(X;\omega)}{\partial \pi_k}\Big|_{\omega=(\pi_u,\bar M,\bar J)},\bar \pi_k - \pi_k^*\right\rangle \rd u \\ 
        \nabla^M \wh\bpi_\ell(\bar\omega) &:= 
          \sum_{k=1}^L \int_0^1 \EE_n  \left\langle \frac{\partial\gamma_{\ell}(X;\omega)}{\partial \mu_k}\Big|_{\omega=(\pi^*,M_u,\bar J)},\bar \mu_k - \mu_k^*\right\rangle \rd u\\
          \nabla^J \wh\bpi_\ell(\bar\omega) &:=  
        \int_0^1 \EE_n  \left\langle \frac{\partial\gamma_{\ell}(X;\omega)}{\partial J}\Big|_{\omega=(\pi^*,M^*,J_u)}, \bar J - J^*\right\rangle \rd u.
\end{align*}
By repeating the same argument of proving \cref{bd_part_deriv_gamma_pi,bd_part_deriv_gamma_mu,bd_part_deriv_gamma_J}, we have
\begin{align*}
    | \nabla^\pi \wh\bpi_\ell(\bar\omega)| &~ \lesssim ~\pi_\ell^* ~  \wh \cR_I^{(0)} ~ d(\bar\pi, \pi^*) ,\\
    |\nabla^M \wh\bpi_\ell(\bar\omega)|&~\lesssim~  \pi_\ell^*  ~   \wh  \cR_I^{(0)} \sqrt{\Dtmax} ~ d(\bar M, M^*),\\
    |\nabla^J \wh\bpi_\ell(\bar\omega)|&~\lesssim ~ \pi_\ell^*    \left( \wh \cR_I^{(0)}\sqrt{\Dtmax} +  \sqrt{\wh \cR_I^{(0)} \wh \cR_I^{(2)}}\right) d(\bar J,J^*)
\end{align*}
so that by recalling \eqref{def_dist_omega}, 
\begin{align*}
    &{1\over \pi_\ell^*}|\wh\bpi_\ell(\bar\omega) - \wh\bpi_\ell(\omega^*)| ~ \lesssim  ~   \sqrt{\wh \cR_I^{(0)} \wh \cR_I^{(2)}}  d(\bar J,J^*)  + \wh \cR_I^{(0)}  
 d(\bar\omega, \omega^*).
\end{align*} 
 
 \paragraph{Contraction of $\wh \mu_\ell$.} Similar to \eqref{def_nabla_mu}, we have 
\[
    \wh\bmu_\ell(\bar\omega) - \wh\bmu_\ell(\omega^*) = \nabla^{\pi}\wh \bmu_\ell(\bar\omega)  + \nabla^{M}\wh \bmu_\ell(\bar\omega) +\nabla^{J}\wh \bmu_\ell(\bar\omega) 
\]
where 
\begin{align*}
     \nabla^{\pi}\wh \bmu_\ell(\bar\omega)  &:= \sum_{k=1}^{L-1} \int_0^1   
\frac{1}{\wh\bpi_\ell(\omega)}\EE_n\left[ \left(X-\wh\bmu_\ell(\omega^*)\right) \left\langle { \partial \gamma_{\ell}(X;\omega)\over \partial \pi_k }\Big|_{\omega = (\pi_u, \bar M, \bar J)}, ~ \left(\bar \pi_k - \pi_k^*\right) \right\rangle \right]  \rd u\\
 \nabla^{M}\wh \bmu_\ell(\bar\omega)&:= \sum_{k=1}^{L} \int_0^1  \frac{1}{\wh\bpi_\ell(\omega)}\EE_n\left[ \left(X-\wh\bmu_\ell(\omega^*)\right) \left\langle {\partial \gamma_{\ell}(X;\omega) \over \partial \mu_k} \Big|_{\omega = (\pi^*, M_u, \bar J)}, ~ \bar \mu_k - \mu_k^* \right\rangle \right] \rd u \\
 \nabla^{J}\wh \bmu_\ell(\bar\omega) & :=    \int_0^1   \frac{1}{\wh\bpi_\ell(\omega)}\EE_n\left[ \left(X-\wh\bmu_\ell(\omega^*)\right) \left\langle{\partial \gamma_{\ell}(X;\omega)\over \partial J}\Big|_{\omega = (\pi^*, M^*, J_u)}, ~ \bar J-J^* \right\rangle  \right]  \rd u. 
\end{align*}
Note that on the event \eqref{def_event_cR_hat},  for any $\omega \in \cB_d(\omega^*)$,
\begin{align*}
    d(\wh\bpi(\omega), \bpi(\omega^*)) &\le d(\wh\bpi(\omega), \wh \bpi(\omega^*)) +  d(\wh\bpi(\omega^*), \bpi(\omega^*))\\
    &\le  \sqrt{\wh \cR_I^{(0)} \wh \cR_I^{(2)}}   d(\bar J,J^*)  + \wh \cR_I^{(0)}  
 d(\omega, \omega^*) + d(\wh\bpi(\omega^*), \bpi(\omega^*))\\
 &\le 1/2 &&\text{by \eqref{def_dist_omega} \& \eqref{bd_dist_omega_ball}}
\end{align*} 
which further ensures 
\[
    1/2 \le \wh \bpi_\ell(\omega) / \pi_\ell^* \le 3/2,\qquad \forall ~ \ell\in[L].
\]
Meanwhile the event \eqref{def_event_Mhat} implies 
\begin{equation}\label{def_D_mu_hat}
   \|\wh \bmu_\ell(\omega^*) - \mu_b^*\|_{\sw^*} \le \sqrt{\Dtmax} +  d(\wh \bM(\omega^*),  \bM(\omega^*))  \le 2\sqrt{\Dtmax}.
\end{equation}
By similar arguments of proving \cref{bd_part_deriv_M_pi,bd_part_deriv_M_mu,bd_part_deriv_M_J}, we find that
\begin{align*}
 \left\|\nabla^{\pi}\wh \bmu_\ell(\bar\omega)\right\|_{\sw^*} &\lesssim ~\left[\sqrt{\wh \cR_I^{(0)} \wh \cR_I^{(1)}} + \wh\cR_I^{(0)}  \sqrt{\Dtmax}  \right]  d(\bar\pi, \pi^*)\\
   \left\|\nabla^{M}\wh \bmu_\ell(\bar\omega)\right\|_{\sw^*}     
        & \lesssim  ~  \left[\sqrt{\wh \cR_I^{(0)} \wh \cR_I^{(1)}}  +\wh\cR_I^{(0)}  \Dtmax   \right] d(\bar M,M^*)\\
    \left\|\nabla^{J}\wh \bmu_\ell(\bar\omega)\right\|_{\sw^*}
   &\lesssim ~ \left[\sqrt{\wh\cR_I^{(1)}\wh\cR_I^{(2)}} +   \left( \sqrt{\wh \cR_I^{(0)} \wh \cR_I^{(2)}} +\wh\cR_I^{(0)}\sqrt{\Dtmax} \right) \sqrt{\Dtmax} \right]d(\bar J, J^*).
\end{align*}
Therefore, similar as \eqref{bd_part_derive_M_total},
\begin{align*}
     \left\| \wh\bmu_\ell(\bar\omega) - \wh\bmu_\ell(\omega^*) \right\|_{\sw^*} ~ &\le ~  \sqrt{\wh\cR_I^{(1)}\wh\cR_I^{(2)}}  d(\bar J,J^*)   +  
     \left(\sqrt{\wh \cR_I^{(0)} \wh \cR_I^{(2)}}  +\wh\cR^{(0)}_I \sqrt{\Dtmax}\right) d(\bar\omega,\omega^*),
\end{align*} 
as desired.

\paragraph{Contraction of $\bwhsw$.} 
Since 
\[
		\bwhsw(\bar\omega)  = \whsT - \sum_{\ell = 1}^L  
		\wh \bpi_{\ell}(\bar\omega) \wh \bmu_{\ell}(\bar\omega)\wh \bmu_{\ell}(\bar\omega)^\T
\]
and using 
  \eqref{rate_contract_mu_hat} gives
\[
d(\wh \bM(\bar\omega),\wh \bM(\omega^*))\le  \xi_n\sqrt{L+\Dtmax}  ~ d(\bar\omega,\omega^*) \overset{\eqref{bd_dist_omega_ball}}{\le}  3 \Dtmax\xi_n\sqrt{L+\Dtmax}   \overset{\eqref{def_event_cR_hat}}{\le} \sqrt{\Dtmax},
\]
invoking part (a) of \cref{lem_lip_sw} 
with $\theta = (\wh \bpi(\bar \omega), \wh \bM(\bar \omega), \bwhsw(\bar \omega))$ and  $\theta' = (\wh \bpi(\omega^*), \wh \bM(\omega^*), \bwhsw(\omega^*))$ gives  
	\begin{align*}
			&  d(\bwhsw(\bar\omega),\bwhsw(\omega^*)) ~ \le  3 \max_{\ell \in[L]}  \|\wh \bmu_{\ell}(\omega^*)\|_{\sw^*} d(\wh \bM(\bar \omega),\wh \bM(\omega^*)) +   \max_{\ell \in[L]} 2\|\wh \bmu_{\ell}(\omega^*)\|_{\sw^*}^2 d(\wh \bpi(\bar\omega),\wh \bpi(\omega^*)).
	\end{align*}
    Since the event  \eqref{def_event_Mhat} ensures
    \[
    	 \|\wh \bmu_{\ell}(\omega^*) \|_{\sw^*} \le \|\bmu_{\ell}(\omega^*) \|_{\sw^*}  +  d(\wh \bM(\omega^*),  \bM(\omega^*))  \le 2\sqrt{\Dtmax}
    \] 
   invoking the contraction results of $\wh \pi$ and $\wh M$ completes the proof.
\end{proof}

\subsubsection{Deviation inequalities of $\wh\cR_I^{(j)}$ for $j\in \{0,1,2\}$}

The following lemma states  stochastic bounds for  $\wh\cR_I^{(j)}$, $j\in \{0,1,2\}$, given by \eqref{def_cRs_samp}. 
\begin{lemma}\label{lem_I_fN_samp}
	With probability at least $1-n^{-dL}$, 
	\begin{align}\label{bd_cR_hat_0}
		\wh \cR_I^{(0)}  ~ &\lesssim  ~  {L^2\over \pmin}\exp(-c\Dtmin) + {dL\log(q)\over n\pmin},\\\label{bd_cR_hat_1}
		\wh \cR_I^{(1)}  ~ &\lesssim  ~  {L\over \pminsq}\exp(-c\Dtmin) + {dL\log(q)\log(n)\over n\pmin},\\\label{bd_cR_hat_2}
		\wh \cR_I^{(2)}  ~ &\lesssim  ~  {L^2\over \pminsq}\exp(-c\Dtmin) + {dL^2\log(q)\log(n)\over n\pmin} 
	\end{align} 
\end{lemma}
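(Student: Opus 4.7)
The plan is to bound $\wh\cR_I^{(j)}$ for $j \in \{0,1,2\}$ by combining pointwise Bernstein-type concentration with a covering argument over $\cB_d(\omega^*)$. The starting point is the identity $\pi_\ell \sum_{a \ne \ell} \pi_a I_{a\ell}(x,\omega) = \gamma_\ell(x;\omega)(1-\gamma_\ell(x;\omega))$, derived in the proof of \cref{lem_I_fN}. Combined with the elementary inequality $\gamma(1-\gamma) \le 1/4$, this yields the deterministic bound $\sum_{a\ne \ell}\pi_a I_{a\ell}(x,\omega) \le 1/(4\pi_\ell)$ for every $x\in\RR^d$ and every $\omega \in \cB_d(\omega^*)$. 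A second-moment control, $\EE[(\sum_{a\ne\ell} \pi_a I_{a\ell}(X,\omega))^2] \lesssim \exp(-c\Dtmin)/\pmin^2$, follows from the conditional analysis of \cref{lem_var_gamma_X} on the event $\cE_N$ of \eqref{def_event_N}, with \eqref{bd_tail_prob_N1c} handling its complement.

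At a fixed $\omega \in \cB_d(\omega^*)$ and fixed $\ell \in [L]$, the scalar Bernstein inequality applied to the i.i.d.\ summands $Z_i = \sum_{a\ne\ell}\pi_a I_{a\ell}(X_i,\omega)$ gives a pointwise deviation bound for $\wh\cR_I^{(0)}$. For $\wh\cR_I^{(1)}$ and $\wh\cR_I^{(2)}$, the plan is to apply the matrix Bernstein inequality already used in the proof of \cref{thm_concent} to the matrix-valued summands, after truncating on the event $\cE_2$ from \eqref{def_event_N_2}. On $\cE_2$, the almost-sure operator norm of each summand is of order $(d+\log n)/\pmin$, the operator-norm variance is of order $d\exp(-c\Dtmin)/\pmin^2$ by \eqref{bd_exp_gam_NN_op_b} and \eqref{bd_exp_gamma_N_ell4_b} of \cref{lem_var_gamma_X}, and the contribution from the untruncated tail is handled as in \eqref{bd_NN_N1_comp}. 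This truncation step is where the extra $\log n$ factor in the second summands of \eqref{bd_cR_hat_1} and \eqref{bd_cR_hat_2} originates.

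To convert these pointwise bounds into a uniform statement, a $\delta$-net $\cN_\delta$ of $\cB_d(\omega^*)$ is constructed in the product metric of \eqref{def_dist_omega}. Since $\cB_d(\omega^*)$ has effective dimension $\cO(dL)$ and diameter controlled by $q$, a volumetric argument gives $\log|\cN_\delta| \lesssim dL\log(q/\delta)$. The key auxiliary step is Lipschitz control: differentiating $\log I_{a\ell}(x,\omega) = W_{a\ell}(x,\omega) - 2\log D_\ell(x,\omega)$ with respect to $\pi$, $M$, and $J$ exactly as in \cref{app_sec_proof_rate_contract_pi_w_J,app_sec_proof_rate_contract_mu_w_J}, and using the lower bound $D_\ell(x,\omega)\ge \pi_\ell \gtrsim \pmin$ valid on $\cB_d(\omega^*)$, produces a Lipschitz constant for $\omega \mapsto I_{a\ell}(x,\omega)$ and $\omega\mapsto I_{a\ell}(x,\omega)NN^\T$ (restricted to $\cE_2$) that is polynomial in $d$, $L$, $\log n$, $\Dtmax$, and $1/\pmin$. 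Choosing $\delta = n^{-C}$ for a sufficiently large constant $C$ makes the discretization residual $\cO(n^{-1})$, which is absorbed into the target rates.

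Finally, combining the pointwise Bernstein bounds at each of the $|\cN_\delta|\cdot L$ points via a union bound, with deviation parameter $t \asymp dL\log q$, succeeds with probability at least $1 - n^{-dL}$. Summing the population bounds of \cref{lem_I_fN} and simplifying the Bernstein square-root term via $\sqrt{ab}\le (a+b)/2$ then yields \eqref{bd_cR_hat_0}--\eqref{bd_cR_hat_2}. The main technical obstacle will be the Lipschitz step: the derivative of $I_{a\ell}(x,\omega)NN^\T$ in $\omega$ couples the quadratic term $NN^\T$ with $D_\ell^{-2}$, so a naive bound that lets $x$ or $\omega$ roam freely would make the Lipschitz constant exponential in $n$ and render the covering argument useless. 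Carefully exploiting both the restriction to $\cB_d(\omega^*)$ and the truncation $\cE_2$ is what keeps the Lipschitz constant polynomial in $n$, so that the cost $\log(1/\delta)$ of the net stays at the $\log q$ level and does not disturb the target rates.
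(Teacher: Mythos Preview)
Your scheme works for $j=0$ and is in fact slightly cleaner than the paper's: applying scalar Bernstein to $Z_i=\pi_\ell^{-1}I_\ell(X_i,\omega)$ with the boundedness $Z_i\le 1/(4\pi_\ell)$ and the variance control $\EE[Z_i^2]\lesssim \exp(-c\Dtmin)/\pmin^2$ does give \eqref{bd_cR_hat_0} after union bounding over the net. The paper instead introduces, for each $\omega$ in the net, a ``bad'' index set $S(\omega)=\{i:\exists k\ne\ell,\,|N_i^\T\sw^{*1/2}J(\be_k-\be_\ell)|>\nu\Delta_{k\ell}\}$, bounds $|S(\omega)|$ by Bernstein on the indicators, and uses deterministic bounds on $I_\ell$ over the complement.

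The genuine gap is in your argument for $j=1$. With matrix Bernstein applied to $H_i=Z_iN_iN_i^\T\,1\{\cE_{2,i}\}$, the almost-sure bound is $U:=\|H_i\|_\op\lesssim (d+\log n)/\pmin$, as you say. Union bounding over a net of log-cardinality $\asymp dL\log q$ forces the log-probability level $t\asymp dL\log q$, and the linear Bernstein term $Ut/n$ then contributes
\[
\frac{(d+\log n)}{\pmin}\cdot\frac{dL\log q}{n}\;\asymp\;\frac{d^2L\log q}{n\pmin},
\]
which exceeds the target $dL\log(q)\log(n)/(n\pmin)$ in \eqref{bd_cR_hat_1} by a factor of $d/\log n$. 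There is no slack here: $Z_i$ genuinely attains order $1/\pmin$ when $\gamma_\ell\approx 1/2$, $\|N_i\|_2^2$ genuinely attains order $d$ on $\cE_2$, and the net cannot be made smaller than $\exp(c\,dL\log q)$ since $J$ alone ranges over a $dL$-dimensional set. The paper avoids this by \emph{not} applying matrix Bernstein to the weighted sum. Instead it reuses the bad set $S(\omega)$ from the $j=0$ argument, writes
\[
\Bigl\|\EE_n[I_\ell(X,\omega)NN^\T]\Bigr\|_\op\;\le\;\frac{1}{n}\Bigl\|\sum_{i\in S(\omega)}N_iN_i^\T\Bigr\|_\op+\max_{i\in[S(\omega)]^c}I_\ell(X_i,\omega)\cdot\Bigl\|\frac{1}{n}\sum_{i=1}^nN_iN_i^\T\Bigr\|_\op,
\]
and controls the first term uniformly over all $|S|\le m$ via \cref{lem_N_op_norm}, which gives $\|\sum_{i\in S}N_iN_i^\T\|_\op\lesssim d+m\log(en/m)$ with probability $1-e^{-m}$. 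The point is that this bound is linear in $|S|$, not $|S|\cdot d$, so plugging in $m\asymp dL\log q+nL^2\exp(-c\Dtmin)$ recovers the stated rate. Your $\sqrt{ab}\le(a+b)/2$ step addresses only the square-root Bernstein term; it is the linear term $Ut/n$ that breaks the argument.
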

We prove \cref{lem_I_fN_samp} in the following a few subsections.
Since it requires to establish  uniform convergence  over $\omega \in \cB_d(\omega^*)$,
our proof  is based on a discretization argument of this ball. We start by constructing an $\epsilon$-net of $\cB_d(\omega^*)$ in the following subsection.

\subsubsection{An $\epsilon$-net construction of $\cB_d(\omega^*)$}
Let $\cN_{\epsilon_1}$ be an $\epsilon_1$-net of $\{v\in \RR^d: \|v\|_2 \le c_0\sqrt{\Dtmin}\}$. 
Define 
\[
    \cN_{\epsilon_1}(\mu_1^*) = \left\{
        \mu_1^* + \sw^{*1/2}v: v\in \cN_{\epsilon_1}
    \right\}.
\]
Then for any $\mu$ such that $\|\mu-\mu_1^*\|_{\sw^*} \le c_0\sqrt{\Dtmin}$, there exists 
$v_{\mu}\in \cN_{\epsilon_1}$ such that 
$\| \sw^{*-1/2}(\mu-\mu_1^*) - v_\mu \|_2\le \epsilon_1$ which further implies that 
$\mu' := \mu_1^* + \sw^{*1/2}v_{\mu} \in \cN_{\epsilon_1}(\mu_1^*) $ satisfies
$
    \|\mu-\mu'\|_{\sw^*} = \| \sw^{*-1/2}(\mu-\mu_1^*) - v_\mu \|_2 \le \epsilon_1. 
$
Therefore, $\cN_{\epsilon_1}(\mu_1^*)$ is an $\epsilon_1$-net of $\{\mu: \|\mu-\mu_1^*\|_{\sw^*} \le c_0\sqrt{\Dtmin}\}$ with respect to   $\|\cdot\|_{\sw^*}$.  Moreover
\[
    \cN_{\epsilon_1}(M^*) = \cN_{\epsilon_1}(\mu_1^*)\otimes \cdots \otimes \cN_{\epsilon_1}(\mu_L^*)
\]
is an $\epsilon_1$-net of $\{M\in \RR^{d\times L}: d(M,M^*) \le c_0\sqrt{\Dtmin}\}$ with respect to the distance $d(M,M')$ in \eqref{def_dist_pi_M}. By construction, we have  $d(M,M^*) \le c_0\sqrt{\Dtmin}$ for any $M \in \cN_{\epsilon_1}(M^*)$.  

By analogous argument, we can construct an $\epsilon_2$-net of $\{J\in \RR^{d\times L}: d(J,J^*) \le c_0\sqrt{\Dtmin}\}$ with respect to $d(J,J')$ in \eqref{def_dist_J}, 
\[
    \cN_{\epsilon_2}(J^*) = \cN_{\epsilon_2}(J_1^*)\otimes \cdots \otimes \cN_{\epsilon_2}(J_L^*)
\]
with 
\[
    \cN_{\epsilon_2}(J_a^*) = \left\{J_a^* + \sw^{*-1/2} v: v\in \cN_{\epsilon_2}\right\},\quad \forall~ a\in [L].
\]
Then for any $J_a$ such that $\|\sw^{*1/2}(J_a - J^*_a) \|_2 \leq c_0 \sqrt{\Dtmin}$, there exists $v \in \cN_{\epsilon_2}$ such that $\| \sw^{*1/2}(J_a - J_a^*) - v\|_2 \leq \epsilon_2$ which further implies that $J_a':=J_a^* + \sw^{*-1/2}v \in \cN_{\epsilon_2}(J_a^*)$ satisfies
$
\|\sw^{*1/2}(J_a - J_a')\|_2 = \|\sw^{*1/2}(J_a - J_a^*) - v\|_2 \leq \epsilon_2.
$

Finally, regarding $\pi$, there exists an $\epsilon_3$-net $\cN_{\epsilon_3}(\cS^L)$ of the simplex in $\RR^L$ with respect to the sup-norm 
\citep{ghosal2001entropies}[Lemma A.4]. As a result, for any $\pi$ such that $d(\pi, \pi^*) \le c_0$, there exists $\pi'\in \cN_{\epsilon_3}(\cS^L)$ such that 
\[
    d(\pi,\pi') \le {\epsilon_3\over \pmin},\qquad d_{\pi}(\pi',\pi^*) \le c_0 +{\epsilon_3\over \pmin} \le 2c_0 
\]
provided that $\epsilon_3 \le c_0 \pmin$.

Combining all three nets constructed above, we have  
\begin{equation}\label{card_nets}
   |\cNeps| := \left|\cN_{\epsilon_1}(M^*) \otimes  \cN_{\epsilon_2}(J^*) \otimes \cN_{\epsilon_3}(\cS^L)\right| \le \left(5 \over \epsilon_3\right)^{L-1} \left(3c_0\sqrt{\Dtmin} \over \epsilon_1\epsilon_2\right)^{dL}.
\end{equation}

\subsubsection{Proof of \cref{lem_I_fN_samp} for $j=0$}

\begin{proof}
	The entire proof is based on the event   $\cE_2$ given in \eqref{def_event_N_2}.
    By the definitions in \eqref{def_cRs_samp} and \eqref{def_I_aell}, we have  
    \begin{align*}
        \wh\cR_I^{(0)} &= \sup_{\omega\in\cB_d(\omega^*)} \max_{\ell \in [L]} {1\over \pi_{\ell}}  \EE_n\left[I_{\ell}(X,\omega)\right] 
    \end{align*} 
	where we write 
	\begin{equation}\label{def_I_ell}
		 I_{\ell}(X,\omega) =   \pi_{\ell}\sum_{a \in [L]\setminus\{\ell\}} \pi_a  I_{a\ell}(X,\omega).
	\end{equation}
    We use a discretization argument to handle the supremum. Recall $\cNeps$ from \eqref{card_nets}. For any $\omega \in \cB_d(\omega^*)$, there exists some $\omega'\in \cNeps$ such that for all $\ell\in [L]$, 
    \begin{align*}
         \EE_n\left[I_{\ell}(X,\omega)\right]  ~ \leq~  
         \EE_n\left[I_{\ell}(X,\omega')\right] +  \left|  \EE_n\left[I_{\ell}(X,\omega)-I_{\ell}(X,\omega')\right]\right| .
    \end{align*}
    By invoking \cref{lem_taylor} and choosing  
    \[
    	\epsilon_1 = \frac{dL}{n\sqrt{\Dtmax}},\qquad \epsilon_2 =\frac{L}{n}\sqrt{d\log n\over \Dtmax}, \qquad \epsilon_3= \pmin {dL \over n},
    \]
    one has that for all  $\omega\in\cB_d(\omega^*)$,
        \begin{align} 
         \max_{\ell \in [L]}\max_{i \in [n]}    \left| I_\ell(X_i, \omega) - I_\ell(X_i, \omega')\right|  & \lesssim  \epsilon_2  \sqrt{d +\log n + \Dtmax}+ \epsilon_1 \sqrt{\Dtmax}  +  \frac{\epsilon_3}{\pmin} \notag\\
          &\lesssim {dL\log n\over n} \label{bd_taylor_max_I_ell}.
    \end{align}
	Thus, for all $\ell \in[L]$
	\begin{align}\label{bd_taylor_I_ell} 
    	 \left|  \EE_n\left[I_{\ell}(X,\omega)-I_{\ell}(X,\omega')\right]\right|
        &\lesssim {dL\log n\over n}.
    \end{align}
    We proceed to bound 
    $\max_{\ell \in[L]}\EE_n[I_{\ell}(X,\omega)]$ over $\omega \in \cNeps$.   
    For any $\omega\in\cNeps$,  define the set 
    \begin{equation}\label{def_set_S}
    	S(\omega) = \left\{
    	i\in [n]: \max_{k\ne \ell \in [L]} \left(
    	 |N_i^\T  \sw^{*1/2}J(\be_k - \be_\ell)| - \nu\Delta_{k\ell}
    	\right) > 0
    	\right\}
    \end{equation}
	for some constant  $0<\nu < (
	1/2  - 2c_0 - 2c_0^2)$.  Further let $[S(\omega)]^c := [n]\setminus S(\omega)$ be its complement set. 

     Recall the model \eqref{model}, we partition $[n] = \bigcup_{a=1}^L \wh G_a$ with 
 \begin{equation}\label{def_groups}
 	\wh G_a = \left\{i\in[n]: Y_i = a\right\}, \qquad n_a:= |\wh G_a|, \qquad \forall \ a\in [L].
 \end{equation}
  Further let 
 \begin{equation}\label{def_pi_tilde}
 \wt\pi_{\ell} := {n_\ell \over n},\qquad \forall~  \ell \in [L].
 \end{equation}
	We first observe that for any $i\in \wh G_\ell \cap [S(\omega)]^c$, using   \eqref{cond_distr_W}, \cref{lem_delta_J_order} and the choice of $\nu$ gives 
	\begin{align}\label{bd_I_ell_ell}
		I_{\ell}(X_i,\omega) \le  {1 \over \pi_{\ell} } \sum_{a \in [L]\setminus\{\ell\}} \pi_a \exp(N_i^\T \sigma_{a\ell}(\omega) + \delta_{a\ell}^\ell (\omega))\le  {1\over \pi_{\ell}} \exp(-c'\Dtmin).
	\end{align}
	Similarly, for any $b\in [L]\setminus \{\ell\}$ and $i\in \wh G_b\cap [S(\omega)]^c$, we have 
    \begin{align}
    		I_{\ell}(X_i,\omega) \le   {\pi_{\ell} \over  \pi_b \exp(N_i^\T \sigma_{b\ell}(\omega) + \delta_{b\ell}^b (\omega))} \overset{\eqref{lem_delta_J_order}}{\le}  {\pi_{\ell} \over \pi_b} \exp(-c'\Dtmin).\label{bd_I_ell_b}
    \end{align}
	Combining these two gives 
	\begin{align}\label{bd_Iell_S_comp} \nonumber
		  {1\over n}\sum_{i \in [S(\omega)]^c} I_{\ell}(X_i, \omega) 
		 &\le   {1\over n}\sum_{i \in [S(\omega)]^c} \left(1\{Y_i = \ell\}+   \sum_{b\in [L]\setminus\{\ell\}}  1\{Y_i = b\}\right) I_{\ell}(X_i, \omega)\\\nonumber
		 &\le {n_\ell \over n\pi_{\ell}} \exp(-c'\Dtmin) + \sum_{b\in [L]\setminus\{\ell\}} {n_b\over n} {\pi_{\ell} \over \pi_b} \exp(-c'\Dtmin)\\
		 &\lesssim (1 + L \pi_{\ell} )\exp(-c'\Dtmin).
	\end{align}
    On the other hand, by letting 
    \[
    	Z_i(\omega) :=1 \left\{\max_{k\ne \ell \in [L]\setminus \{\ell\}} \left( |N_i^\T \sw^{*1/2}J(\be_k - \be_\ell)|
     - \nu\Delta_{k\ell}\right) > 0\right\}
    \]
    we know that $	Z_1(\omega),\ldots, 	Z_n(\omega) $ are i.i.d. with 
    \begin{align}\label{tail_prob_Z_i}\nonumber
    	\EE[Z_i(\omega)] &\le 2\sum_{k\ne \ell \in [L]} \PP\left(
    		N_i^\T  \sw^{*1/2}J(\be_k - \be_\ell)  > \nu \Delta_{k\ell}
    	\right)\\\nonumber
    	&\le ~ 2\sum_{k\ne \ell \in [L]}  \exp\left(-\nu^2\Delta_{k\ell}^2 / \| \sw^{*1/2}J(\be_k - \be_\ell)\|_2^2\right)\\
    	&\le ~ 2 L^2 \exp(-c\Dtmin) &&\text{by \cref{lem_delta_J_order}}.
    \end{align}
	An application of the Bernstein inequality together with taking the  union bound over  $\omega \in \cNeps$ gives that with probability at least $1-n^{-dL}$, 
    \begin{align}\label{deviation_Zi}
     	{1\over n}\left| \sum_{i\in [n]} \left(  Z_i(\omega)  - \EE[Z_i(\omega)]\right)\right| \lesssim \sqrt{ dL^3\log q \over n} \exp(-c\Dtmin) + {dL\log q\over n}
    \end{align}
	holds for all  $\omega \in \cNeps$.   Here we let  $q:=  n (\pi_{\min}^{*-1}\vee\Dtmax)$,  With  the same probability, we therefore have
	\begin{align}\nonumber
		 \max_{\omega\in \cNeps} ~ 	|S(\omega)|  
		 &\le \sqrt{n dL^3\log(q)  } \exp(-c\Dtmin) + dL\log(q) +  \max_{\omega\in \cNeps} \ 	  \EE[Z_i(\omega)]\notag\\
		&\lesssim ~ dL\log(q) + n L^2\exp(-c\Dtmin) .\label{bd_S_card}
	\end{align}
	The last step uses $n\ge d$, \eqref{tail_prob_Z_i} and \eqref{cond_min_Delta}.
	Armed with \eqref{bd_Iell_S_comp} and \eqref{bd_S_card}, we find that 
	\begin{align*}
		\max_{\omega \in \cNeps}  \max_{\ell\in[L]} 
		{1\over \pi_{\ell}} \EE_n\left[I_{\ell}(X,\omega)\right]&\le \max_{\omega \in \cNeps}  \max_{\ell\in[L]}{1\over n\pi_{\ell}} \left\{
		 \sum_{i \in S(\omega)}+ \sum_{i \in [S(\omega)]^c} 
		\right\} I_{\ell}(X_i,\omega)\\
		&\le \max_{\omega \in \cNeps}  \max_{\ell\in[L]} \left\{{|S(\omega)|\over n\pi_{\ell}} +  \left({1\over \pi_{\ell}} +  L\right)\exp(-c'\Dtmin)\right\}\\
		&\lesssim ~ {L^2\over \pmin}\exp(-c\Dtmin) + {dL\log(q)\over n\pmin}.
	\end{align*}
	The last step also uses \eqref{eq_order_pis}. This completes the proof of \eqref{bd_cR_hat_0}.
\end{proof}

The following  lemma controls the discretization error in $I_\ell(X_i,\omega)$.

\begin{lemma}\label{lem_taylor}
     For any $\omega \in \cB_d(\omega^*)$ and $\omega' \in \cNeps$, on the event $\cE_2$,  we have 
    \begin{align*}
         &\max_{\ell \in [L]}\max_{i \in [n]} ~    \left| I_\ell(X_i, \omega) - I_\ell(X_i, \omega')\right|   \lesssim  \epsilon_2  \sqrt{d +\log n + \Dtmax}+ \epsilon_1 \sqrt{\Dtmax}  +  \frac{\epsilon_3}{\pmin}.
    \end{align*}
\end{lemma}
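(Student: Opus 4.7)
\textbf{Plan for the proof of Lemma~\ref{lem_taylor}.} The starting point is the algebraic identity
\[
I_\ell(X,\omega) \;=\; \gamma_\ell(X;\omega)\bigl(1-\gamma_\ell(X;\omega)\bigr),
\]
which follows from summing the definition \eqref{def_I_aell} over $a\neq\ell$ and using $\sum_{a\neq\ell}\pi_a\exp(W_{a\ell}(X,\omega)) = \operatorname{denom}_\ell - \pi_\ell$, where $\operatorname{denom}_\ell$ is the denominator of $\gamma_\ell$. Since $t\mapsto t(1-t)$ is $1$-Lipschitz on $[0,1]$ and both $\gamma_\ell$ values lie in $[0,1]$, this yields the pointwise reduction $|I_\ell(X_i,\omega)-I_\ell(X_i,\omega')|\le|\gamma_\ell(X_i;\omega)-\gamma_\ell(X_i;\omega')|$, so it suffices to bound the right-hand side, uniformly in $i\in[n]$ on $\cE_2$ and $\ell\in[L]$, by the claimed quantity.

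Next, I would apply the fundamental theorem of calculus along a three-segment path from $\omega'$ to $\omega$ that updates $\pi$, then $M$, then $J$ one coordinate-block at a time, producing the decomposition $\gamma_\ell(X_i;\omega)-\gamma_\ell(X_i;\omega')=T_\pi+T_M+T_J$, where each $T_\bullet$ is a line integral of the corresponding partial derivative of $\gamma_\ell$ given in \eqref{eq_deriv_pi_1}--\eqref{eq_deriv_J}. The set $\cB_d(\omega^*)$ is convex in the $(\pi,M,J)$-coordinates by \eqref{cond_init_w_J}, so every interpolating point remains in $\cB_d(\omega^*)$. Consequently, the norm bound $\|\sw^{*1/2}J(\be_a-\be_\ell)\|_2\le(1+2c_0)\sqrt{\Delta_{a\ell}}$ from \cref{lem_delta_J_order} and the partition-of-unity-type identity $\sum_{a\neq\ell}\pi_\ell\pi_a I_{a\ell}(X_i,\omega)=\gamma_\ell(1-\gamma_\ell)\le 1/4$ (derived in the same manner as the main identity) both apply uniformly along the path.

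Each of the three terms is then bounded by pairing the derivative factor with the corresponding net resolution. For $T_M$, the derivative \eqref{eq_deriv_mu_1}--\eqref{eq_deriv_mu_2} contributes the factor $\pi_\ell\pi_k I_{k\ell}/2$ multiplied by $J(\be_k-\be_\ell)$; combined with $\|\sw^{*1/2}J(\be_k-\be_\ell)\|_2\lesssim\sqrt{\Dtmax}$ and $\|\mu_k-\mu_k'\|_{\sw^*}\le\epsilon_1$, then summed via the partition identity, this yields $|T_M|\lesssim\epsilon_1\sqrt{\Dtmax}$. For $T_J$, the derivative \eqref{eq_deriv_J} contains $X_i-(\mu_\ell+\mu_a)/2$, whose $\sw^*$-norm is at most $\|N_i\|_2+\sqrt{\Dtmax}\lesssim\sqrt{d+\log n+\Dtmax}$ on $\cE_2$; together with $\|\sw^{*1/2}(J-J')(\be_a-\be_\ell)\|_2\le 2\epsilon_2$ and the partition identity, this gives $|T_J|\lesssim\epsilon_2\sqrt{d+\log n+\Dtmax}$. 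The main technical obstacle is the $\pi$-piece, where the naive bound $|\partial_{\pi_k}\gamma_\ell|\le 2/\pmin$ would cost a spurious factor of $L$ upon summing over $k\in[L-1]$. This is avoided by rewriting $\pi_\ell I_{k\ell}=(\pi_\ell\pi_k I_{k\ell})/\pi_k$, using $\pi_k\ge\pmin/2$ (from $d(\pi,\pi^*)\le 1/2$) to peel off the $1/\pi_k$ factor onto the increment $|\pi_k-\pi_k'|\le\epsilon_3$, and only then invoking the partition identity $\sum_k\pi_\ell\pi_k I_{k\ell}\le 1$, which yields $|T_\pi|\lesssim\epsilon_3/\pmin$ as claimed.
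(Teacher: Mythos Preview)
Your argument is correct and reaches the same conclusion, but it proceeds differently from the paper's proof. Both begin with the identity $I_\ell=\gamma_\ell(1-\gamma_\ell)$ and the reduction to bounding $|\gamma_\ell(X_i;\omega)-\gamma_\ell(X_i;\omega')|$. From there, however, the paper does \emph{not} integrate along a path: it writes $\gamma_\ell(x;\omega)=\psi_\ell(f^{(\ell)}(x,\omega))$ with logits $f_h^{(\ell)}(x,\omega)=(x-\tfrac12(\mu_h+\mu_\ell))^\T J(\be_h-\be_\ell)+\log(\pi_h/\pi_\ell)$ and the softmax-type map $\psi_\ell(a)=1/\sum_h e^{a_h}$, proves the elementary pointwise Lipschitz bound $|\psi_\ell(a)-\psi_\ell(b)|\le\max_h|a_h-b_h|$, and then bounds each $|f_h^{(\ell)}(X_i,\omega)-f_h^{(\ell)}(X_i,\omega')|$ directly by splitting into the $N$-piece, the $(\mu_b^*-\tfrac12(\mu_h+\mu_\ell))$-piece, the $(\mu-\mu')$-piece, and the $\log\pi$-piece. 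This avoids any line integral and hence the need to check that intermediate points remain in $\cB_d(\omega^*)$. Your route reuses the partial-derivative bookkeeping already developed for the contraction analysis (equations \eqref{eq_deriv_pi_1}--\eqref{eq_deriv_J}), which is natural in context but slightly heavier here; the paper's softmax-Lipschitz shortcut is more self-contained for this particular lemma. One small point on your $T_\pi$ sketch: the derivative formulas in the constrained parametrization also contain the common term $\pi_\ell I_{L\ell}$ across all $k<L$, which your ``peel off $1/\pi_k$'' step does not directly address; you need either $\sum_{k<L}(\pi_k-\pi_k')=-(\pi_L-\pi_L')$ to collapse it, or to work instead with the unconstrained derivatives $\partial_{\pi_k}\gamma_\ell=\delta_{k\ell}/D-\gamma_\ell e^{W_{k\ell}}/D$ and apply your rewriting there, where it goes through cleanly.
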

\begin{proof}
	Recall from \eqref{def_I_ell} and \eqref{def_I_aell} that 
	\[
		 I_\ell(x, \omega) =   \frac{\pi_{\ell}\sum_{a\ne \ell} \pi_a \exp\{(x - \frac{1}{2}(\mu_\ell + \mu_a))^\T J (\be_a - \be_\ell)\}}{(\pi_\ell + \sum_{k\neq \ell} \pi_k \exp\{(x - \frac{1}{2}(\mu_\ell + \mu_k))^\T J (\be_k - \be_\ell\})^2} \overset{\eqref{def_gamma_omega}}{=} \gamma_\ell(x;\omega) \left(
		 1-\gamma_{\ell}(x;\omega)\right).
	\]
	It is easy to see that  
	\begin{align*}
		\left| I_\ell(x, \omega)  - I_\ell(x, \omega')\right| &\le 3\left| \gamma_{\ell}(x;\omega) -  \gamma_{\ell}(x;\omega')\right|.
	\end{align*}
     Define $$f_{h}^{(\ell)}(x,\omega)=(x - \frac{1}{2}(\mu_h + \mu_\ell))^\T J(\be_h - \be_\ell)+\log{\frac{\pi_h}{\pi_\ell}},\quad \forall ~ h\in [L].$$ 
     For any $a\in \RR^L$ with $a_\ell = 0$, define $\psi_\ell (a)= 1 / (\sum_{h=1}^L e^{a_h}).$  Since
\begin{align*}
    \psi_\ell(a) - \psi_\ell(b) 
    & = \frac{\sum_{h}e^{a_h}(1-e^{b_h -a_h})}{\sum_{h}e^{a_h} \sum_{h}e^{b_h}} \\
    & \leq \frac{\sum_{h}e^{a_h}|b_h -a_h|}{\sum_{h}e^{a_h} \sum_{h}e^{b_h}} &&\text{by } 1-e^x \leq   |x| \text{ and } e^{b_\ell} = 1 \\ 
    &\leq   \max_{h \in [L]} |a_h-b_h|, 
\end{align*}
applying it with $a = f^{(\ell)}(X,\omega)$ and $b=  f^{(\ell)}(X,\omega')$ gives 
\begin{align*}
	  \left| \gamma_{\ell}(X,\omega) -  \gamma_{\ell}(X,\omega')\right|
	&=  \left|  \psi(f^{(\ell)}(X,\omega)) - \psi(f^{(\ell)}(X,\omega'))   \right|   \leq   \max_{h \in [L]} |f^{(\ell)}_{h}(X,\omega) - f^{(\ell)}_{h}(X, \omega')| .
\end{align*}
By using \eqref{cond_distr_Z}, we have 
\begin{align*}
     & |f_{h}^{(\ell)}(X,\omega) - f^{(\ell)}_{h}(X,\omega')| \\
    & \leq \left| N^\T\sw^{*1/2}(J-J')(\be_h - \be_\ell)\right|  + \max_{b\in[L]}\left|\left(\mu_b^*- \frac{1}{2}(\mu_h + \mu_\ell)\right)^\T (J - J^\prime)(\be_h -\be_\ell)\right| \\
    &\quad + \frac{1}{2} \left| (\mu_h -\mu_h^\prime + \mu_\ell - \mu_\ell^\prime)^\T J(\be_h - \be_\ell^\prime) \right|  + \left|\log(\pi_h/\pi_h') - \log(\pi_\ell/\pi_\ell')\right| \\
    & \lesssim \epsilon_2 \|N\|_2  + \sqrt{\Dtmax} (\epsilon_1 + \epsilon_2) + \frac{\epsilon_3}{\pi_{\min}} 
\end{align*} 
where the last step uses \eqref{mu_diff}, \eqref{bd_J_col_diff} and 
$$\left|\log \frac{\pi_h}{\pi_h'} \right| \leq \frac{|\pi_h - \pi'_h|}{\pi_h  \wedge \pi'_h} \lesssim  {\epsilon_3 \over \pi_{\min}}$$
due to   the concavity of $x \mapsto \log(x)$.
 Invoking $\cE_2$ and \eqref{eq_order_pis} completes the proof.  
\end{proof}

\subsubsection{Proof of \cref{lem_I_fN_samp} for $j=1$}
\begin{proof}
	The proof is based on $\cE_2$ and 
	\[
		\cE_{\op} := \left\{ {1\over n}\left\|
		\sum_{i=1}^n N_iN_i^\T  
		\right\|_\op \le   {2d \over n}+12\right\}
	\]
	which according to \cref{lem_N_op_norm} holds with probability $1-e^{-n}$.
    By definition in \eqref{def_cRs_samp} and \eqref{def_I_ell}, we have 
    \begin{align*}
    	 \wh\cR_I^{(1)} &= \sup_{\omega\in\cB_d(\omega^*)} \max_{\ell \in[L]} {1\over \pi_{\ell}}\left\| \EE_n\left[I_{\ell}(X,\omega) N N^\T\right]\right\|_\op.
    \end{align*} 
	Since for any $\omega \in \cB_d(\omega^*)$, there exists some $\omega' \in \cNeps$ such that 
	\begin{align*}
		&\left\| \EE_n\left[I_{\ell}(X,\omega) N N^\T\right]\right\|_\op -  \left\| \EE_n\left[I_{\ell}(X,\omega') N N^\T\right]\right\|_\op\\ 
		&\le {1\over n} \left\|\sum_{i=1}^n N_i N_i^\T \right\|_\op \max_{i \in [n]} \left|
		I_\ell(X_i,\omega) - I_\ell(X_i,\omega') 
		\right|\\
		&\lesssim   {dL\log n\over n} &&\text{by \eqref{bd_taylor_max_I_ell} and $\cE_{\op}$},
	\end{align*} 
	it suffices to bound, for all $\omega \in \cNeps$ and $\ell \in[L]$,
	\begin{align*}
		  \left\| \EE_n\left[I_{\ell}(X,\omega) N N^\T\right]\right\|_{\op}
		&\le   {1\over n}\left\|
		\sum_{i \in S(\omega)} I_{\ell}(X_i,\omega) N_iN_i^\T  
		\right\|_\op + {1\over n}\left\|
		\sum_{i \in [S(\omega)]^c} I_{\ell}(X_i,\omega) N_iN_i^\T  
		\right\|_\op \\
		&\le    {1\over n}\left\|
		\sum_{i \in S(\omega)}  N_iN_i^\T  
		\right\|_\op + {1\over n}\left\|
		\sum_{i=1}^n N_iN_i^\T  
		\right\|_\op\max_{i\in  [S(\omega)]^c} I_{\ell}(X_i,\omega) \\
		&\lesssim {1\over n}\left\|
		\sum_{i \in S(\omega)}  N_iN_i^\T  
		\right\|_\op +  \left({1\over \pi_{\ell}}+ {\pi_{\ell} \over \pi_{\min}}\right) \exp(-c'\Dtmin).
	\end{align*}
	The last step uses $\cE_{\op}$, \eqref{bd_I_ell_ell} and \eqref{bd_I_ell_b}.
	Let $m := C(dL\log(q) + n L\exp(-c\Dtmin))$ for some  large constant $C>0$.  It suffices to consider $m \le n$ as we can set $m=n$ otherwise. 
  Invoking  \cref{lem_N_op_norm} for $S = S(\omega) \le m$ together with \eqref{bd_S_card} gives that with probability at least $1-e^{-m} \ge 1 - n^{-dL}$,
  \begin{align*} 
  	{1\over n}\left\|
  	\sum_{i \in S(\omega)}  N_iN_i^\T  
  	\right\|_\op  &\le  
  	{2d + 12m\log(en/m) \over n} \lesssim { dL\log(q) \log(n)  \over n} +L^2\exp(-c\Dtmin),
  \end{align*}  
   thereby completing the proof.
\end{proof}

 \subsubsection{Proof of \cref{lem_I_fN_samp} for $j=2$}
 \begin{proof}
 	By repeating the same argument of bounding $\cR_I^{(2)}$ in \eqref{lem_I_fN} with $\EE$ replaced by $\EE_n$, we have 
 	$\wh\cR_I^{(2)} \le (L-1)\wh\cR_I^{(1)}$. 
 \end{proof}

\subsection{Proof of \cref{thm_EM_samp_known} \& \cref{thm_EM_samp}}\label{app_sec_proof_thm_EM_samp}

\begin{proof}
	For any $t\ge 1$ and each $h\in \{\pi, M,\Sigma\}$,  triangle inequality gives
	\begin{align*}
		d(\wh h^{(t)},  \bh(\theta^*))  &\le d\left(\wh \bh(\wh\theta^{(t-1)}), \wh \bh(\theta^*)\right) + d\left(\wh \bh(\theta^*), \bh(\theta^*)\right) \\
		&=d\left(\wh \bh(\wh\omega^{(t-1)}), \wh \bh(\omega^*)\right) + d\left(\wh \bh(\omega^*), \bh(\omega^*)\right) 
	\end{align*}
	where in the last step we also re-parametrize by using $\wh\omega^{(t-1)} := \omega(\wh\theta^{(t-1)})$.
	Provided that $\wh\theta^{(t-1)}$ satisfies \eqref{cond_init_unknown}, its reparametrization  $\wh\omega^{(t-1)}\in \cB_d(\omega^*)$ given by \eqref{cond_init_w_J}.  We work on the event under which \cref{thm_concent} and \cref{lem_I_fN_samp} holds and for notational simplicity, define 
	\[
	\epsilon_\pi = C\sqrt{\log n\over n\pmin},\quad \epsilon_M = C\sqrt{d\log n\over n\pmin},\quad \epsilon_\sw= C\sqrt{d\log n\over n}.
	\]

	For known $\sw^*$, invoking \cref{thm_EM_samp_omega} and \cref{lem_lip_J_basic} thus gives  
	\begin{align}\label{one_step_contract_pi}\nonumber
		d(\wh\bpi(\wh\omega^{(t-1)}),\wh \bpi(\omega^*)) 
		&\le ~ (\kappa/3) \left\{d(\wh\pi^{(t-1)},\pi^*) +   d(\wh M^{(t-1)},M^*) + d(J(\wh \theta^{(t-1)}),J(\theta^*)) \right\}\\
		&\le ~  \kappa \left\{d(\wh\pi^{(t-1)},\pi^*) +   d(\wh M^{(t-1)},M^*)  \right\}
	\end{align}
	where 
	\[
	\kappa = C\xi_n\sqrt{\Dtmax+L}  \overset{ \eqref{def_xi_n}}{=} C \left({1\over \pminsq}\exp(-c\Dtmin) + {dL\log(q)\log(n)\over n\pmin}\right)\sqrt{\Dtmax+L}.  
	\]
	We note $\log q\asymp \log n$ under \eqref{rate_cond_param_known}.
	Similarly, we have
	\begin{align}\label{one_step_contract_M}
		d( \wh \bM(\wh\omega^{(t-1)}), \wh \bM(\omega^*)) 
		&\le    \kappa \sqrt{\Dtmax} \left\{d(\wh\pi^{(t-1)},\pi^*) + d(\wh M^{(t-1)},M^*)  \right\}.
	\end{align}
	Together with \cref{thm_concent}, we find that
	\begin{align}\nonumber
		d(\wh\pi^{(t)}, \pi^*)  &  \le  \epsilon_\pi  +  \kappa  \left\{d(\wh\pi^{(t-1)},\pi^*) +   d(\wh M^{(t-1)},M^*)  \right\}\\\nonumber
		&\le \epsilon_\pi  +   (1/2+c_0\sqrt{\Dtmin}) \kappa\notag\\
		& \le 1/2\label{init_cond_iter_pi}
	\end{align}
	where the last step uses \eqref{cond_init_unknown} with $c_\sw=0$ and conditions \eqref{cond_min_Delta} \& \eqref{rate_cond_param_known}. Similarly, 
	\begin{align}\label{init_cond_iter_M}\nonumber
		d(\wh M^{(t)}, M^*)   
		&\le \epsilon_M  +  \kappa \sqrt{\Dtmax} \left\{d(\wh\pi^{(t-1)},\pi^*) +   d(\wh M^{(t-1)},M^*)  \right\}\\\nonumber
		& \le \epsilon_M  +  (1/2+c_0\sqrt{\Dtmin})\kappa\sqrt{\Dtmax}\\
		&  \le c_0 \sqrt{\Dtmin}.
	\end{align} 
	The last two displays ensure that $\wh\theta^{(t)}$ satisfies \eqref{cond_init_unknown} hence $\wh\omega^{(t)}\in \cB_d(\omega^*)$. By  \cref{one_step_contract_pi,one_step_contract_M} and induction, we obtain
	\begin{align*} 
		d(\wh\pi^{(t)}, \pi^*)  &\le \epsilon_\pi  +  \kappa  \left\{d(\wh\pi^{(t-1)},\pi^*) +   d(\wh M^{(t-1)},M^*)  \right\}\\
		&\le \epsilon_\pi + \kappa(\epsilon_\pi+\epsilon_M) + \kappa^2(1+\sqrt{\Dtmax}) \left\{d(\wh\pi^{(t-2)},\pi^*) +   d(\wh M^{(t-2)},M^*)  \right\}\\
		&\le \epsilon_\pi+  (\epsilon_\pi + \epsilon_M) \kappa  \left[1 +\kappa(1+\sqrt{\Dtmax})+ \cdots +  \kappa^{t-2}(1+\sqrt{\Dtmax})^{t-2}\right]\\
		&\qquad +  \kappa^t (1+\sqrt{\Dtmax})^{t-1}\left\{d(\wh\pi^{(0)},\pi^*) +    d(\wh M^{(0)},M^*)  \right\}\\
		&\le  \epsilon_\pi+  {\kappa  \over 1-\kappa-\kappa\sqrt{\Dtmax}} (\epsilon_\pi + \epsilon_M) + (2\kappa\sqrt{\Dtmax})^t \left\{d(\wh\pi^{(0)},\pi^*) +   d(\wh M^{(0)},M^*)  \right\}
	\end{align*}
	as well as, by using \eqref{one_step_contract_M},
	\begin{align*}
		d(\wh M^{(t)}, M^*)&\le \epsilon_M  +  \kappa \sqrt{\Dtmax} \left\{d(\wh\pi^{(t-1)},\pi^*) +   d(\wh M^{(t-1)},M^*)  \right\}\\ 
		&\le  \epsilon_M+  {\kappa\sqrt{\Dtmax} \over 1-2\kappa\sqrt{\Dtmax}}(\epsilon_\pi + \epsilon_M) +( 2\kappa\sqrt{\Dtmax})^t \left\{d(\wh\pi^{(0)},\pi^*)  +   d(\wh M^{(0)},M^*)  \right\}.
	\end{align*}
	Therefore,  with $\kappa_n = 2\kappa \sqrt{\Dtmax}$, one has that
	for all $t\ge 1$,  
	\begin{align*} 
		d(\wh\pi^{(t)}, \pi^*) +d(\wh M^{(t)}, M^*) 
		&~ \lesssim ~   \sqrt{d\log n\over n\pmin}  +\kappa_n^t \left\{d(\wh\pi^{(0)},\pi^*) +   d(\wh M^{(0)},M^*)  \right\},
	\end{align*}   
	completing the proof of \cref{thm_EM_samp_known}.\\
	
	For unknown $\sw^*$,  denote the distance between the initialization $\wh \theta^{(0)}$ and $\theta^*$ by
	\begin{equation}\label{def_dist_theta}
		d(\wh \theta^{(0)},\theta^*) := d(\wh\pi^{(0)},\pi^*) +   d(\wh M^{(0)},M^*) +\sqrt{\Dtmax} ~ d(\whsw^{(0)}, \sw^*).
	\end{equation}
	Note that condition \eqref{cond_init_unknown} ensures $d(\wh \theta^{(0)},\theta^*) \le 1/2 + c_\pi \sqrt{\Dtmin}+c_\sw\sqrt{\Dtmax}$.
	By similar arguments, we note from \cref{thm_EM_samp_omega} and \cref{lem_lip_J_basic} that 
	\begin{align*}
		d(\wh\bpi(\wh\omega^{(t-1)}),\wh \bpi(\omega^*))  &~ \le~  \kappa  ~  d(\wh \theta^{(t-1)},\theta^*)\\
		d( \wh \bM(\wh\omega^{(t-1)}), \wh \bM(\omega^*))  &~ \le~ \kappa\sqrt{\Dtmax}  ~ d(\wh \theta^{(t-1)},\theta^*)\\
		d(\bwhsw(\wh \omega^{(t-1)}),\bwhsw(\omega^*))  &~ \le~    \kappa {\Dtmax}  ~ d(\wh \theta^{(t-1)},\theta^*).
	\end{align*} 
	Invoking \cref{thm_concent}  and using the fact that   $\wh\theta^{(t-1)}$  satisfies \eqref{cond_init_unknown} give  
	\begin{align*}  
		d(\whsw^{(t)},\sw^*) &\le \epsilon_{\sw} +  \kappa{\Dtmax} ~ d(\wh \theta^{(t-1)},\theta^*)\\
		&\le \epsilon_{\sw}  + \kappa \Dtmax \left(1/2 + (c_\mu+c_\sw) \sqrt{\Dtmax}    \right) \\
		&\le c_\sw &&\text{by \eqref{rate_cond_param_unknown}}.
	\end{align*} 
	By similar arguments, it is easy to see that \eqref{init_cond_iter_pi} and \eqref{init_cond_iter_M} still hold
	so that $\wh\theta^{(t)}$ satisfies \eqref{cond_init_unknown} hence   $\wh\omega^{(t)}\in \cB_d(\omega^*)$. By the analogous induction argument, letting $\kappa_n'= \kappa + \kappa \sqrt{\Dtmax} + \kappa \Dtmax^{3/2}$ and using \eqref{def_dist_theta} give
	\begin{align*}
		d(\wh\pi^{(t)}, \pi^*)   &\le  \epsilon_\pi   + {\kappa  \over 1-\kappa_n'} \left(\epsilon_\pi+\epsilon_M +\sqrt{\Dtmax}~ \epsilon_{\sw}\right) + (\kappa_n')^t  ~ d(\wh \theta^{(0)}, \theta^*),\\
		d(\wh M^{(t)}, M^*) 
		&\le \epsilon_M   + {\kappa \sqrt{\Dtmax} \over 1-\kappa_n'} \left(
		\epsilon_\pi + \epsilon_M+ \sqrt{\Dtmax} ~ \epsilon_{\sw}
		\right)+ (\kappa_n')^t  ~ d(\wh \theta^{(0)}, \theta^*)\\
		d(\whsw^{(t)},\sw^*)   
		& \leq \epsilon_{\sw} + {\kappa \Dtmax  \over 1-\kappa_n'}(\epsilon_\pi + \epsilon_M + \sqrt{\Dtmax} ~ \epsilon_{\sw}) + (\kappa_n')^t  ~ d(\wh \theta^{(0)}, \theta^*).
	\end{align*}
	This in conjunction with \eqref{rate_cond_param_unknown} yields 
	\begin{align*}
		d(\wh\pi^{(t)}, \pi^*)  + d(\wh M^{(t)}, M^*) 
		&~ \lesssim ~ \sqrt{d \log n\over n\pmin}  +(\kappa_n')^t ~ 	d(\wh \theta^{(0)},\theta^*) ,\\
		d(\whsw^{(t)},\sw^*)   
		& ~ \lesssim ~  \sqrt{d\log n\over n} +  (\kappa_n')^t  ~ 	d(\wh \theta^{(0)},\theta^*),    \end{align*}
	thereby completing  the proof of \cref{thm_EM_samp}. 
\end{proof}

\subsection{Proof of \cref{rem_sT_samp}}\label{app_sec_proof_rem_sT_samp}

\begin{proof}
    By \eqref{lip_sw}, we have 
    \[
        d(\whsw^{(0)}, \sw^*) \le 3\sqrt{\Dtmax} d(\wh M^{(0)},M^*) + 2\Dtmax d(\wh\pi^{(0)},\pi^*) + d(\whsT, \sT^*).
    \]
    The claim follows by invoking \cref{lem_Sigma_hat_sup} stated and proved below together with $\Dtmax\sqrt{d\log n / n} \le c$ under condition \eqref{rate_cond_param_unknown}.
\end{proof}

Recall $\whsT = n^{-1}\sum_{i=1}^n X_i X_i^\T$. The following bounds from above $d(\whsT,\sT^*)$.
\begin{lemma}\label{lem_Sigma_hat_sup}
    	Under model~\eqref{model}, assume $d\log n\le n$. With probability at least $1-n^{-d}$, one has
        \[
            d(\whsT, \sT^*) ~ \lesssim ~ (\Dtmax + 1) \sqrt{d\log n \over n}.
        \]
    \end{lemma}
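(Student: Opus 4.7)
\textbf{Proof proposal for \cref{lem_Sigma_hat_sup}.} The plan is to conjugate the centered sample covariance by $\sw^{*-1/2}$ and exploit the fact that, after this whitening, each observation becomes a shifted standard Gaussian. Concretely, writing $\tilde\mu_\ell := \sw^{*-1/2}\mu_\ell^*$ and $Z_i := \sw^{*-1/2}X_i$, model \eqref{model} gives the conditional representation $Z_i = \tilde\mu_{Y_i} + N_i$ with $N_i \sim \cN_d(0,\bI_d)$ independent of $Y_i$. By \eqref{eq_Dt_max_Dt_inf}, the shifts satisfy $\|\tilde\mu_\ell\|_2^2 = \mu_\ell^{*\T}\sw^{*-1}\mu_\ell^* \le \Dtmax$ for every $\ell$. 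Then $d(\whsT, \sT^*) = \|n^{-1}\sum_i (Z_iZ_i^\T - \EE[Z_iZ_i^\T])\|_\op$, and expanding the product $Z_iZ_i^\T = \tilde\mu_{Y_i}\tilde\mu_{Y_i}^\T + \tilde\mu_{Y_i}N_i^\T + N_i\tilde\mu_{Y_i}^\T + N_iN_i^\T$ and subtracting the expectation produces the triangle-inequality bound $d(\whsT,\sT^*) \le \rI + \rII + \rIII$ with
\[
\rI := \Bigl\|\tfrac{1}{n}\sum_i \bigl(\tilde\mu_{Y_i}\tilde\mu_{Y_i}^\T - \EE[\tilde\mu_{Y_i}\tilde\mu_{Y_i}^\T]\bigr)\Bigr\|_\op,\quad \rII := \Bigl\|\tfrac{1}{n}\sum_i\bigl(\tilde\mu_{Y_i}N_i^\T + N_i\tilde\mu_{Y_i}^\T\bigr)\Bigr\|_\op,\quad \rIII := \Bigl\|\tfrac{1}{n}\sum_i(N_iN_i^\T - \bI_d)\Bigr\|_\op.
\]

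Each summand can then be bounded in order of decreasing dependence on $\Dtmax$. For $\rI$, the summands are i.i.d.\ positive semi-definite rank-one matrices of operator norm at most $\Dtmax$, and the matrix variance satisfies $\|\sum_i \EE[(\tilde\mu_{Y_i}\tilde\mu_{Y_i}^\T)^2]\|_\op \le \Dtmax \cdot n\|\EE[\tilde\mu_{Y_i}\tilde\mu_{Y_i}^\T]\|_\op \le n\Dtmax^2$; so matrix Bernstein (\cref{lem_bernstein_mat}) with deviation parameter $t \asymp d\log n$ yields $\rI \lesssim \Dtmax\sqrt{d\log n/n}$ under $d\log n \le n$. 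For $\rII$, condition on $Y_1,\ldots,Y_n$: for any fixed $u,v \in \Sp^{d-1}$, $u^\T(\sum_i \tilde\mu_{Y_i}N_i^\T)v = \sum_i (u^\T\tilde\mu_{Y_i})(v^\T N_i)$ is centered Gaussian with variance at most $\sum_i (u^\T\tilde\mu_{Y_i})^2 \le n\Dtmax$. A standard $\epsilon$-net argument over the unit sphere in $\RR^d$ (size $\lesssim 9^d$) together with a Gaussian tail bound then gives $\rII \lesssim \sqrt{\Dtmax\, d\log n/n}$, still with probability at least $1-n^{-d}$. Finally $\rIII$ is the operator-norm deviation of a sample Gaussian covariance and is bounded by $\lesssim \sqrt{d\log n/n}$ with the same probability, either directly from \cref{lem_N_op_norm} or via matrix Bernstein applied to the sub-exponential summands $N_iN_i^\T - \bI_d$.

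Combining the three bounds, the dominant contributions scale as $\Dtmax\sqrt{d\log n/n}$ (from $\rI$), $\sqrt{\Dtmax}\sqrt{d\log n/n}$ (from $\rII$) and $\sqrt{d\log n/n}$ (from $\rIII$), which together give the advertised rate $(\Dtmax+1)\sqrt{d\log n/n}$. I do not expect any step to be a real obstacle: the main point to get right is that the cross term $\rII$ scales with $\sqrt{\Dtmax}$ rather than $\Dtmax$, which is ensured by conditioning on the labels so that $\tilde\mu_{Y_i}$ becomes deterministic and the Gaussianity of $N_i$ controls the randomness linearly. The condition $d\log n \le n$ is exactly what is needed for the Bernstein sub-exponential terms $(d\log n)/n$ to be absorbed into the sub-Gaussian terms.
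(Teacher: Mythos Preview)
Your proposal is correct and is essentially the same argument as the paper's: after whitening, the paper's group-wise pieces $\bar\sw-\sw^*$, $\sum_k(\wt\pi_k-\pi_k^*)\mu_k^*\mu_k^{*\T}$ and the $(\wt\mu_k-\mu_k^*)$ cross terms unwind exactly to your $\rIII$, $\rI$, $\rII$ written sample-by-sample. The only cosmetic difference is that the paper passes to a fixed $v$ in an $\epsilon$-net first and then cites scalar concentration results, whereas you apply matrix Bernstein directly for $\rI$; for $\rIII$, the clean in-paper reference is \cref{lem_op_diff} (neither \cref{lem_N_op_norm} nor the bounded-summand version of matrix Bernstein in \cref{lem_bernstein_mat} gives the centered bound without an extra truncation step).
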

    \begin{proof}
    	  Let $\cN_{1/3}$ be the $(1/3)$-net of $\Sp^d$ and $|\cN_{1/3}|\leq 7^d$. Standard discretization argument gives 
    $$ d(\whsT, \sT^*) = \sup_{v \in \Sp^d} v^\T  \sw^{*-1/2}(\whsT - \sT^*) \sw^{*-1/2}  v \leq 3 \max_{v \in \cN_{1/3}} v^\T \sw^{*-1/2}(\whsT - \sT^*) \sw^{*-1/2}  v.$$
    Fix any $v\in \cN_{1/3}$. By definition, we have  
    	\begin{align*}
    		\whsT &= {1\over n}\bX^\T \bX  = {1\over n}\sum_{k=1}^L \sum_{i=1}^n 1\{ Y_i = k\}  X_i X_i^\T \\
    		&=  {1\over n}\sum_{k=1}^L\left[
    		\sum_{i=1}^n 1\{ Y_i = k\} (X_i - \mu_k)(X_i-\mu_k)^\T + n_k(\wt\mu_k  \mu_k^{*\T} + \mu_k^* \wt\mu_k^\T) - n_k\mu_k\mu_k^\T
    		\right]\\
    		&= \bar \sw +  \sum_{k=1}^L{n_k\over n}\mu_k^*\mu_k^{*\T} + \sum_{k=1}^L {n_k\over n}\left[
    		(\wt\mu_k - \mu_k^*) \mu_k^{*\T} +  \mu_k (\wt\mu_k - \mu_k^*)^\T
    		\right]
    	\end{align*}
    	where $\wt \mu_k$ is given in \eqref{def_theta_td} and  
    	\[
    	\bar\sw := {1\over n}\sum_{k=1}^L 
    	\sum_{i=1}^n 1\{ Y_i = k\} (X_i - \mu_k^*)(X_i-\mu_k^*)^\T,  
    	\]
    	By further using the decomposition of $\sT$ in \eqref{eq_sw_sT} and $\wt\pi_k = n_k/n$ for $k\in [L]$, we find that 
    	\begin{align*}
            & v^\T \sw^{*-1/2}(\whsT - \sT^*) \sw^{*-1/2}  v   \le  v^\T\sw^{*-1/2}(\bar\sw - \sw^*)\sw^{*-1/2} v \\
            & \qquad + \sum_{k=1}^L (\wt\pi_k - \pi_k) v^\T\sw^{*-1/2} \mu_k\mu_k^\T \sw^{*-1/2} v +  2\left|\sum_{k=1}^L \wt\pi_k 
    		v^\T\sw^{*-1/2} (\wt\mu_k - \mu_k^*) \mu_k^{*\T} \sw^{*-1/2} v\right|.
    	\end{align*}
        \citet[Lemmas 16 \& 17]{bing2024lineardiscriminantregularizedregression} give that for any $t\ge 0$, with probability at least $1-4e^{-t}$,
        $$
            v^\T\sw^{*-1/2}(\bar\sw - \sw^*)\sw^{*-1/2} v  \lesssim   \sqrt{t\over n} + {t\over n} 
     $$ 
     as well as  
    	\begin{align}\label{bd_pi_diff}
    		 \sum_{k=1}^L (\wt\pi_k - \pi_k) v^\T\sw^{*-1/2} \mu_k\mu_k^\T \sw^{*-1/2} v  &\lesssim \Dtmax\left(\sqrt{t  \over n} + {t  \over n}\right).
    	\end{align}
    By inspecting the proof of \citet[Lemma 15]{bing2024lineardiscriminantregularizedregression}, one can also deduce that 
    	\begin{align*}
    		\left|\sum_{k=1}^L \wt\pi_k 
    		v^\T\sw^{*-1/2} (\wt\mu_k - \mu_k^*) \mu_k^{*\T} \sw^{*-1/2} v\right| \lesssim  \|M^\T \sw^{*-1/2} v\|_\i \sqrt{t  \over n }  \le \sqrt{\Dtmax}\sqrt{t  \over n } 
    	\end{align*} 
        holds
    	with probability at least $1-2e^{-t}$. 
    The proof is completed by collecting all three bounds, choosing $t= Cd\log(n)$ for some large constant $C>0$ and using $d\log n\le n$.
    \end{proof}

\section{Proofs of the sample level results in \cref{sec_theory_samp_large}}\label{app_sec_proof_sample_large}
\subsection{Proof of \cref{thm_conv_phi}}\label{app_sec_proof_thm_conv_phi}

\begin{proof}
	We first prove \cref{thm_conv_phi} when $\wh \theta^{(t)}$ satisfies \eqref{cond_init_unknown} and
	\begin{equation}\label{cond_init_phi}
		L \left(
		1 + {1\over \pminsq \Dtmin^2} 
		\right){\phi(\wh \theta^{(t)})\over n} \le c.
	\end{equation}  
	Later we will argue that these conditions are met under either of the initialization requirements in \cref{thm_conv_phi}.

	 Pick any $\theta$ satisfying \eqref{cond_init_unknown} and rewrite $\omega = (\pi, M, J(\theta)) \in \cB_d(\omega^*)$.   For any $a\in [L]$ and $i\in \wh G_a$, we have 
	\begin{align*}
		\gamma_{\ell}(X_i; \omega)&\le {1\over 1 + \exp(N_i^\T \sw^{*1/2} J(\be_a - \be_\ell)  + \delta_{a\ell}^a(\omega) + \log(\pi_a/\pi_{\ell}))} &&\text{by \eqref{cond_distr_W}}\\
		&\le  {1\over 1 + \exp(N_i^\T \sw^{*1/2} J(\be_a - \be_\ell) + 2c\Delta_{a\ell}+ \log(\pi_a^*/\pi_{\ell}^*))} &&\text{by \eqref{cond_init_w_J} \& \cref{lem_delta_J_order}}\\
		&\le {1\over 1 + \exp(N_i^\T \sw^{*1/2} J(\be_a - \be_\ell) + c\Delta_{a\ell})} &&\text{by } \Dtmin \ge C\log(1/\pmin)\\
		&\le {1\over 1 + \exp(N_i^\T \sw^{*1/2} J^*(\be_a - \be_\ell) + (c/2)\Delta_{a\ell})}  \\
		&\quad + 1\left\{
		\left| N_i^\T \sw^{*1/2} (\wt J - J^*)(\be_a - \be_\ell)\right| >  (c/4)\Delta_{a\ell}
		\right\} \\
		&\quad + 1\left\{
		\left| N_i^\T  \sw^{*1/2} (J-\wt J)(\be_a - \be_\ell) \right| >  (c/4)\Delta_{a\ell}
		\right\}.
	\end{align*}
	Here we write 
	$
	 \wt J:= \wt \Sigma^{-1} \wt M
	$
	with $\wt M = (\wt \mu_1,\ldots, \wt \mu_L)$ and $\wt \Sigma$ being the estimators that use the true labels $Y_1,\ldots, Y_n$.  Specifically, recalling $\wt \pi_{\ell} = n_\ell / n$ from \eqref{def_pi_tilde}, we have
	\[
	\wt \mu_{\ell} = {1\over n_\ell} \sum_{i \in \wh{G}_\ell} X_i,\qquad \wt \sw = {1\over n}\sum_{\ell=1}^{L}\sum_{i \in \wh G_\ell} (X_i - \wt \mu_{\ell})(X_i - \wt \mu_{\ell})^\T.
	\] 
	By applying the above bound of $\gamma_{\ell}(X_i;\theta)$ to $\wh\theta^{(t)} =   (\wh \pi^{(t)}, \wh M^{(t)}, \whsw^{(t)}) $ with the corresponding $\wh \omega^{(t)} =  (\wh \pi^{(t)}, \wh M^{(t)},  J(\wh \theta^{(t)})) \in \cB_d(\omega^*)$, we obtain
	\begin{align}\label{decomp_phi}
		\phi(\wh\theta^{(t)}) =   \sum_{\ell = 1}^L \sum_{a\in[L]\setminus\{\ell\}}\sum_{i\in \wh G_a} \gamma_{\ell}(X_i;\wh \theta^{(t)}) \|\mu_{\ell}^* - \mu_a^*\|_{\sw^*}^2
		\le \rI +  \rII+  \rIII 
	\end{align}
	with
	\begin{align*}
		\rI &= \sum_{\ell = 1}^L \sum_{a\in[L]\setminus\{\ell\}}\sum_{i\in \wh G_a} {1\over 1 + \exp(N_i^\T \sw^{*1/2} J^*(\be_a - \be_\ell) + (c/2)\Delta_{a\ell})}  \Dt_{a\ell}\\
		\rII &= \sum_{\ell = 1}^L \sum_{a\in[L]\setminus\{\ell\}}\sum_{i\in \wh G_a} 1\left\{
		\left| N_i^\T\sw^{*1/2} (\wt J - J^*)(\be_a - \be_\ell)  \right| >  (c/4)\Delta_{a\ell}
		\right\}\Dt_{a\ell}\\
		\rIII&=\sum_{\ell = 1}^L \sum_{a\in[L]\setminus\{\ell\}}\sum_{i\in \wh G_a} 1\left\{
		\left| N_i^\T \sw^{*1/2} (J(\wh \theta^{(t)}) - \wt J)(\be_a - \be_\ell) \right| >  (c/4)\Delta_{a\ell}
		\right\}\Dt_{a\ell}.
	\end{align*}
	We proceed to bound each term in the sequel on the event $\cE_\pi$, given  in \eqref{def_event_pi}, intersecting with
	\begin{align}\label{def_event_N_G_op}
		\cE_{\op} &:= \bigcap_{\ell \in [L]} \left\{
		\left\|\sum_{i\in \wh G_\ell}   N_i N_i^\T  \right\|_\op \lesssim d + n_\ell
		\right\} \bigcap \left\{
		\left\|\sum_{i=1}^n   N_i N_i^\T  \right\|_\op\lesssim n + d
		\right\},\\\label{def_event_op_diff}
		\cE_{\op}' &:= \left\{	\left\|  {1 \over n} \sum_{i=1}^n    N_i N_i^\T - \bI_d \right\|_\op  \lesssim \sqrt{d+\log n\over n} \right\},\\\label{def_event_N_ell_2}
		\cE_{\ell_2} &:=	\bigcap_{\ell \in [L]} \left\{\left\| {1\over n_\ell}\sum_{i\in \wh G_\ell} N_i  \right\|_2 \lesssim \sqrt{d + \log n\over n_\ell}  \right\}.
	\end{align}  
	According to \cref{lem_N_op_norm,Gau_l2_bd,lem_op_diff}, the intersection event holds with probability $1-e^{-n\pmin}-n^{-2}-n^{-1}\ge 1-2n^{-1}$ by $n\pmin \ge C\log n$.  The event $\cE_{\ell_2}$ ensures that
	\begin{equation}\label{rate_mu_tilde} 
		\left\| \wt \mu_{\ell} - \mu_{\ell}^*\right\|_{\sw^*}= {1\over n_\ell}\left\|\sum_{i \in \wh{G}_\ell}  N_i\right\|_2 \lesssim \sqrt{d + \log n\over n_\ell} ,\qquad \forall ~ \ell \in [L],
	\end{equation}
	which, together with \eqref{eq_Dt_max_Dt_inf} and  $C d \log n \le n \pmin$, further implies 
	\begin{equation}\label{eq_order_mu_tilde}
		\|\wt \mu_{\ell}\|_{\sw^*} \le\left\| \wt \mu_{\ell} - \mu_{\ell}^*\right\|_{\sw^*}+ \|\mu_{\ell}^*\|_{\sw^*} \le 2\sqrt{\Dtmax}.
	\end{equation} 
	Moreover, using the decomposition in \eqref{decomp_sw_hat} and \eqref{rate_mu_tilde},  we find that on the event $\cE_{\ell_2} \cap \cE_{\op}'$, 
	\begin{align}\label{bd_sw_td}\nonumber
		d(\wt\sw,\sw^*) &\le  \left\|  {1 \over n} \sum_{i=1}^n    N_i N_i^\T - \bI_d \right\|_\op  + \left\| \sum_{\ell = 1}^L {n_\ell \over n}\Bigl( {1\over n_\ell}\sum_{i \in \wh{G}_\ell}  N_i\Bigr) \Bigl({1\over n_\ell}\sum_{i \in \wh{G}_\ell}  N_i\Bigr)^\T  \right\|_\op\\
		& \lesssim \sqrt{d + \log n\over n} + {L(d+\log n)\over n},
	\end{align} 
	which, by Weyl's inequality and $C d \log n \le n \pmin \le n/L$, implies that 
	\begin{equation}\label{bd_eig_sw_td}
		1/2 \le 	\lambda_j\bigl(\sw^{*-1/2}\wt\sw \sw^{*-1/2}\bigr)\le 2,\qquad \forall ~ 1\le j\le d.
	\end{equation} 
	
	\paragraph{Bounding $\rI$.} By \cref{def_event_N,bd_tail_prob_N1c},  we have
	\[
	\EE\left[ 
	{1\over 1 + \exp(N_i^\T \sw^{*1/2} J^*(\be_a - \be_\ell) + (c/2)\Delta_{a\ell})} 
	\right] \le \exp(-c'\Dt_{a\ell})
	\]
	which in conjunction with $\Dtmin \ge C\log (L)$  implies
	\begin{align*}
		\EE[\rI] &\le \sum_{\ell = 1}^L \sum_{a\in[L]\setminus\{\ell\}}\sum_{i\in \wh G_a}  \exp(-c'\Dt_{a\ell} + \log(\Dt_{a\ell})) \le  n  \exp(-c'' \Dtmin).
	\end{align*}
	By Markov's inequality, we conclude with probability at least $1-\exp(-(c''/2)\Dtmin)$, 
	\begin{align}\label{bd_I_phi}
		\rI  \le  \EE[\rI] \exp((c''/2)\Dtmin) \le n  \exp(-(c''/2) \Dtmin).
	\end{align}

	\paragraph{Bounding $\rII$.} 
	\cref{lem_sigma_diff_td} states that with probability at least $1- 4n^{-1}$, 
	\[
	\max_{\ell \in [L]}\max_{a\in[L]\setminus\{\ell\}}\max_{i \in \wh G_a}  {1\over \Delta_{a\ell} }\left| N_i^\T\sw^{*1/2} (\wt J - J^*)(\be_a - \be_\ell) \right| \le  c/4,
	\]
	whence $\rII = 0$.

	\paragraph{Bounding $\rIII$.}  
	Note that
	\begin{align*}
		\rIII &\le \sum_{\ell = 1}^L \sum_{a\in[L]\setminus\{\ell\}}\sum_{i\in \wh G_a}  {16 \over   c^2\Delta_{a\ell} }  \left[N_i^\T   \sw^{*1/2} (J(\wh\theta^{(t)})- \wt J)(\be_a - \be_\ell)\right]^2\\
		&\le {16 \over   c^2}\sum_{\ell = 1}^L \sum_{a\in[L]\setminus\{\ell\}}  \left\|\sum_{i\in \wh G_a}   N_i N_i^\T  \right\|_\op {1\over    \Delta_{a\ell}} \left\| \sw^{*1/2} (J(\wh\theta^{(t)})- \wt J)(\be_a - \be_\ell)\right\|_2^2\\
		&\le {16 \over   c^2}\sum_{\ell = 1}^L \sum_{a\in[L]\setminus\{\ell\}} (d+ n_a)  {1\over    \Delta_{a\ell}} \left\| \sw^{*1/2} (J(\wh\theta^{(t)})- \wt J)(\be_a - \be_\ell)\right\|_2^2 &&\text{by }\cE_{\op}.
	\end{align*}	
	Using $d(\whsw^{(t)},\sw^*)\le 1/2$ implied by $\wh \theta^{(t)}$ satisfying \eqref{cond_init_unknown},  reasoning as \eqref{decomp_J_diff} gives
	\begin{align*}
		& \|\sw^{*1/2}(J(\wh \theta^{(t)}) - \wt J)(\be_{a} - \be_{\ell})\|_2\\
		&\le 	\|\sw^{*1/2}\bwhsw^{(t)-1}\sw^{*1/2}\|_\op \left(  \|\wh \mu_{\ell}^{(t)}-\wt \mu_{\ell}\|_{\sw^*}  + 	 \|\wh \mu_{a}^{(t)}-\wt \mu_{a}\|_{\sw^*} \right)\\
		&\quad +  \|\sw^{*1/2}\bwhsw^{(t)-1}\sw^{*1/2}\|_\op 
		\|(\bwhsw^{(t)}-\wt\sw) \wt J(\be_a - \be_{\ell})\|_{\sw^*} 	\\
		&\le 2  \|\wh \mu_{\ell}^{(t)}-\wt \mu_{\ell}\|_{\sw^*}  + 	2 \|\wh \mu_{a}^{(t)}-\wt \mu_{a}\|_{\sw^*} + 2\|(\bwhsw^{(t)}-\wt\sw)\wt J(\be_a - \be_{\ell})\|_{\sw^*}.
	\end{align*}
	Note that  $(\wh \pi^{(t)}, \wh M^{(t)}, \whsw^{(t)})=(\wh \bpi(\wh \theta^{(t-1)}), \wh \bM(\wh \theta^{(t-1)}), \bwhsw(\wh \theta^{(t-1)}))$.
	Invoking \cref{lem_phi_params} with $\theta = \wh \theta^{(t-1)})$    gives  
	\begin{align*}
		\rIII  &\lesssim    \left( 
		{dL^2\log(q)\over n  \pminsq \Dtmin^2} +{L \phi(\wh \theta^{(t-1)}) \over n  \pminsq \Dtmin^2}    +  \exp(-2c'\Dtmin)
		\right)\phi(\wh \theta^{(t-1)})\hspace{-2cm}\\
		&\quad +  
		\left\{ \left({\log n\over \Dtmin}+1\right)\left( { dL^2\log(q) \over n }+ L \exp(-2c\Dtmin)\right)+   {L  \phi(\wh \theta^{(t-1)})\over n}\right\} \phi(\wh \theta^{(t-1)})\\
		&\le  {1\over 2}  \phi(\wh \theta^{(t-1)}) 
	\end{align*}
	where the last step is due to $\Dtmin \ge C \log(1/\pmin + \log n)$, condition \eqref{cond_init_phi} and the initialization requirement in \eqref{cond_n_phi}.
	Collecting the bounds for $\rI$, $\rII$ and $\rIII$ yields that: with probability at least  $1- 4n^{-1}-\exp(-c\Dtmin)$,
	\begin{equation}\label{contract_phi}
		\phi(\wh\theta^{(t)}) ~ \le ~  n  \exp(-c \Dtmin) + {1\over 2}\phi(\wh \theta^{(t-1)}).
	\end{equation}
	Since this together with $\Dtmin \ge C\log (1/\pmin)$ further ensures $\phi(\wh\theta^{(t)})$ satisfying \eqref{cond_init_phi}, by induction and noting that the high probability bound in \eqref{contract_phi} holds uniformly for all $\wh \theta^{(t-1)}$, the claim is proved. 
	
	Finally, we note that when  $\wh \theta^{(t)}$  meets the requirement in (a), the above proof implies that with probability  $1- 4n^{-1}-\exp(-c\Dtmin)$,
	\[
		\phi(\wh \theta^{(t)}) \le n\exp(-c\Dtmin) + { 64 nL\over c^2} \left(2c_\mu^2 +  c_{\sw}^2\right)
	\]
	so that \eqref{cond_init_phi} holds  as 
	\[
		L \left(
		1 + {1\over \pminsq \Dtmin^2} 
		\right){\phi(\wh \theta^{(t)})\over n} \le   \left(
		1 + {1\over \pminsq \Dtmin^2} 
		\right) \left( L\exp(-c\Dtmin) + { 64 L^2\over c^2} \left(2c_\mu^2 +  c_{\sw}^2\right)\right) \le c'
	\]
	by $\Dtmin \ge C\log(1/\pmin)$ and (a).
	On the other hand, when $\wh \theta^{(t)}$ meets the requirement in (b), then \cref{eq_phi_pi,eq_phi_mu,eq_phi_sw}  and (b) ensure  that $\wh \theta^{(t)}$ also satisfies \eqref{cond_init_unknown}. The proof is complete.  
\end{proof}

\subsection{Proof of \cref{thm_conv_phi_known}}\label{app_sec_proof_thm_conv_phi_known}
\begin{proof}
	The proof follows from that of \cref{thm_conv_phi} and the only difference is in bounding $\rIII$ where the term $\|(\bwhsw^{(t)}-\wt\sw)\wt J(\be_a - \be_{\ell})\|_{\sw^*} = 0$ in this case. 
	
	Regarding the initialization requirement, for $\wh \theta^{(0)} = (\wh \pi^{(0)},\wh M^{(0)},\sw^*)$, if $\phi(\wh \theta^{(0)})/n \le c \pmin \Dtmin(1+\pmin\Dtmin)$, then \eqref{eq_phi_pi} and \eqref{eq_phi_mu} ensure that $\wh \theta^{(0)}$ satisfies \eqref{cond_init_unknown} with $c_{\sw} = 0$. 
	On the other hand, if $\wh \theta^{(0)} = (\wh \pi^{(0)},\wh M^{(0)},\sw^*)$ satisfies \eqref{cond_init_unknown}, then the proof reveals 
	\[
	{\phi(\wh \theta^{(0)})\over n} \le {C L \over \Dtmin}d(\wh M^{(0)}, M^*)^2\le C  L c_{\mu}^2
	\] 
	so that   $c_\mu \le  \min  \{(\sqrt{2}-1)/2,   c \pmin \Dtmin/L\}$ 
	suffices.
\end{proof}

\subsection{Technical lemmas used in the proof of \cref{thm_conv_phi}}\label{app_sec_tech_lemmas_thm_conv_phi}

We first state two useful facts. 
\begin{fact}\label{fact_gamma_in_notin}
	The following equality holds for any $\theta$: 
	\begin{align*}
		\sum_{\ell=1}^{L}\sum_{i \in \wh{G}_\ell} \left(
		1-\gamma_{\ell}(X_i; \theta)
		\right) &=\sum_{\ell=1}^L \sum_{i\notin \wh G_\ell} \gamma_\ell(X_i; \theta),\\
		\sum_{\ell=1}^{L}\sum_{i \in \wh{G}_\ell} \left(
		1-\gamma_{\ell}(X_i; \theta) 
		\right) N_i&= \sum_{\ell=1}^L  \sum_{i\notin \wh G_\ell} \gamma_\ell(X_i; \theta) N_i.
	\end{align*}
\end{fact}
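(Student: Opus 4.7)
The proof of Fact~\ref{fact_gamma_in_notin} is an elementary bookkeeping exercise, and the plan is to deduce both identities from two observations: the sets $\wh G_1,\ldots,\wh G_L$ form a partition of $[n]$, and the posterior probabilities at any single data point sum to one, i.e. $\sum_{\ell=1}^L \gamma_\ell(X_i;\theta) = 1$ for every $i\in[n]$ and every $\theta$ (this is immediate from the definition~\eqref{def_gamma}).

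For the first identity, I would rewrite the left-hand side as
\[
\sum_{\ell=1}^{L}\sum_{i \in \wh{G}_\ell} 1 - \sum_{\ell=1}^{L}\sum_{i \in \wh{G}_\ell} \gamma_\ell(X_i;\theta) = n - \sum_{\ell=1}^{L}\sum_{i \in \wh{G}_\ell} \gamma_\ell(X_i;\theta),
\]
using that $\sum_{\ell=1}^L |\wh G_\ell| = n$. On the other hand, swapping the order of summation and using $\sum_{\ell=1}^L \gamma_\ell(X_i;\theta) = 1$ yields
\[
n = \sum_{i=1}^n \sum_{\ell=1}^L \gamma_\ell(X_i;\theta) = \sum_{\ell=1}^L \sum_{i \in \wh G_\ell} \gamma_\ell(X_i;\theta) + \sum_{\ell=1}^L \sum_{i \notin \wh G_\ell} \gamma_\ell(X_i;\theta),
\]
and subtracting the first double sum from both sides of this display gives exactly the right-hand side of the claimed identity.

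For the second identity, I would apply the same two-step manipulation coordinatewise, now with weights $N_i$ attached. Writing $\sum_{\ell=1}^L \sum_{i \in \wh G_\ell} N_i = \sum_{i=1}^n N_i$ (partition) and $\sum_{i=1}^n N_i = \sum_{i=1}^n (\sum_{\ell=1}^L \gamma_\ell(X_i;\theta)) N_i$ (since the inner sum equals~$1$), the left-hand side becomes
\[
\sum_{\ell=1}^L \sum_{i=1}^n \gamma_\ell(X_i;\theta)\, N_i - \sum_{\ell=1}^L \sum_{i \in \wh G_\ell} \gamma_\ell(X_i;\theta)\, N_i = \sum_{\ell=1}^L \sum_{i \notin \wh G_\ell} \gamma_\ell(X_i;\theta)\, N_i,
\]
which is the right-hand side. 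There is no substantive obstacle here; the only thing to be careful about is interchanging the two sums, which is justified by the finiteness of both $n$ and $L$.
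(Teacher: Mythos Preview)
Your proof is correct and essentially the same as the paper's: both arguments use only that $\wh G_1,\ldots,\wh G_L$ partition $[n]$ and that $\sum_{\ell}\gamma_\ell(X_i;\theta)=1$. The paper writes $1-\gamma_\ell=\sum_{k\ne\ell}\gamma_k$ first and then reindexes the resulting triple sum, whereas you split off the constant term and use $n=\sum_i\sum_\ell\gamma_\ell$; these are the same computation in a slightly different order.
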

\begin{proof}
	By definition,
	\begin{align*}
		\sum_{\ell=1}^{L}\sum_{i \in \wh{G}_\ell} \left(
		1-\gamma_{\ell}(X_i; \theta)
		\right)  &= \sum_{\ell=1}^{L}\sum_{i \in \wh{G}_\ell} \sum_{k \in [L]\setminus\{\ell\}}  \gamma_k(X_i; \theta) \\
		&= \sum_{\ell=1}^{L}\left( \sum_{k=1}^L \sum_{i \in \wh{G}_\ell}   \gamma_k(X_i; \theta) - \sum_{i\in \wh G_\ell}  \gamma_\ell(X_i; \theta)\right)\\
		&= \sum_{k=1}^L \sum_{\ell=1}^{L}  \sum_{i \in \wh{G}_\ell}   \gamma_k(X_i; \theta) -\sum_{k=1}^{L}  \sum_{i\in \wh G_k}  \gamma_k(X_i; \theta)\\
		&=  \sum_{k=1}^L\sum_{\ell\in [L]\setminus\{k\}} \sum_{i \in \wh{G}_\ell}   \gamma_k(X_i; \theta)  \\
		&= \sum_{k=1}^L \sum_{i\in \wh G_k} \gamma_k(X_i; \theta).
	\end{align*}
	Similar argument can be used to prove the second statement. 
\end{proof}

There are a few useful inequalities about $\phi(\theta)$ which follow directly by definitions of $\phi(\theta)$  in \eqref{def_phi_theta} and $\Delta_{\ell k}$ in \eqref{def_Delta} as well as  \cref{fact_gamma_in_notin}.
\begin{fact}
	For any $\theta$, we has the following inequalities.
	\begin{align}\label{bd_phi_gamma_ell}
		\sum_{\ell = 1}^L \sum_{i\in \wh G_\ell} (1-\gamma_{\ell}(X_i;\theta)) = \sum_{\ell=1}^L \sum_{i\notin \wh G_a} \gamma_\ell(X_i; \theta)&~\le~ {\phi(\theta)\over \Dtmin},\\\label{bd_phi_gamma_mu_diff}
		\sum_{\ell=1}^L \sum_{a\in[L]\setminus\{\ell\}}\sum_{i\in \wh G_a} \gamma_\ell(X_i; \theta)\sqrt{\Dt_{a\ell}}&~\le~ {\phi(\theta)\over \sqrt{\Dtmin}}.
	\end{align} 
\end{fact}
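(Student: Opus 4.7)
The plan is to prove both inequalities by combining the equality already recorded in Fact~\ref{fact_gamma_in_notin} with the elementary bound $\Delta_{a\ell}\ge \Dtmin$ for every $a\ne \ell$, together with the definition
\[
\phi(\theta) ~=~ \sum_{\ell=1}^L \sum_{a\in[L]\setminus\{\ell\}}\sum_{i\in\wh G_a} \gamma_\ell(X_i;\theta)\,\Delta_{a\ell}.
\]
Neither step requires any probabilistic content; both are purely algebraic.

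For the first inequality, I would first invoke Fact~\ref{fact_gamma_in_notin} to rewrite the left-hand side as the double sum
\[
\sum_{\ell=1}^L \sum_{i\notin \wh G_\ell}\gamma_\ell(X_i;\theta) ~=~ \sum_{\ell=1}^L \sum_{a\in[L]\setminus\{\ell\}} \sum_{i\in \wh G_a}\gamma_\ell(X_i;\theta),
\]
by partitioning $\{i\notin \wh G_\ell\}$ according to the true label $a\ne \ell$. Since $\Delta_{a\ell}\ge \Dtmin$ whenever $a\ne \ell$, the definition of $\phi(\theta)$ gives
\[
\phi(\theta) ~\ge~ \Dtmin \sum_{\ell=1}^L \sum_{a\in[L]\setminus\{\ell\}}\sum_{i\in\wh G_a}\gamma_\ell(X_i;\theta),
\]
and dividing by $\Dtmin$ yields the claim.

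For the second inequality, the bound $\sqrt{\Delta_{a\ell}}\le \Delta_{a\ell}/\sqrt{\Dtmin}$, which is immediate from $\Delta_{a\ell}\ge \Dtmin$, lets me write
\[
\sum_{\ell=1}^L \sum_{a\in[L]\setminus\{\ell\}}\sum_{i\in\wh G_a}\gamma_\ell(X_i;\theta)\sqrt{\Delta_{a\ell}} ~\le~ \frac{1}{\sqrt{\Dtmin}}\sum_{\ell=1}^L \sum_{a\in[L]\setminus\{\ell\}}\sum_{i\in\wh G_a}\gamma_\ell(X_i;\theta)\,\Delta_{a\ell} ~=~ \frac{\phi(\theta)}{\sqrt{\Dtmin}}.
\]
There is no real obstacle here; the statement is essentially a book-keeping lemma whose only content is the reindexing supplied by Fact~\ref{fact_gamma_in_notin} and the fact that $\Delta_{a\ell}$ is bounded below by $\Dtmin$ off the diagonal. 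The proposal is therefore a short two-line argument for each inequality.
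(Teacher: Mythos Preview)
Your proposal is correct and matches the paper's approach exactly: the paper does not give a detailed proof but simply states that the inequalities ``follow directly by definitions of $\phi(\theta)$ in \eqref{def_phi_theta} and $\Delta_{\ell k}$ in \eqref{def_Delta} as well as \cref{fact_gamma_in_notin},'' which is precisely the reindexing plus the lower bound $\Delta_{a\ell}\ge\Dtmin$ that you spell out.
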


\begin{lemma}\label{lem_sigma_diff_td} 
	Grant  $\Dtmin \gtrsim 1$, $n\ge dL^2$ and  $ C d\log n\le n \pmin $  for some large constant $C>0$.
	The following holds with probability at least $1- 4n^{-1}$,
	\[
	\max_{\ell \in [L]}\max_{a\in[L]\setminus\{\ell\}}\max_{i \in \wh G_a}  ~ {1\over \Delta_{a\ell} }\left| N_i^\T \sw^{*1/2}(\wt J-J^*)(\be_a - \be_{\ell})\right| \le  c/4.
	\]
\end{lemma}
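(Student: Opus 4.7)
}
The naive Cauchy--Schwarz bound $|N_i^\T \sw^{*1/2}(\wt J-J^*)(\be_a-\be_\ell)| \le \|N_i\|_2\,\|\sw^{*1/2}(\wt J-J^*)(\be_a-\be_\ell)\|_2$ loses because $\|N_i\|_2 \asymp \sqrt{d+\log n}$ and the $J$-difference already carries another $\sqrt{d+\log n}$ factor, producing a spurious $d^2$ in the sample complexity. The plan is to decouple $N_i$ from $\wt J$ via a leave-one-out construction. For each $i\in[n]$, let $\wt M^{(-i)}$ and $\wt\sw^{(-i)}$ denote the labelled MLEs computed from $\{X_j,Y_j\}_{j\neq i}$, and set $\wt J^{(-i)} := (\wt\sw^{(-i)})^{-1}\wt M^{(-i)}$. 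Write
\[
N_i^\T \sw^{*1/2}(\wt J-J^*)(\be_a-\be_\ell) \;=\; A_{i,a,\ell}+B_{i,a,\ell},
\]
with $A_{i,a,\ell} := N_i^\T \sw^{*1/2}(\wt J^{(-i)}-J^*)(\be_a-\be_\ell)$ and $B_{i,a,\ell}$ the complementary leave-one-out correction.

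For the independence term, $\wt J^{(-i)}$ is a measurable function of $\{(X_j,Y_j)\}_{j\neq i}$, hence is independent of $N_i$. Conditionally on $\wt J^{(-i)}$, the scalar $A_{i,a,\ell}$ is centered Gaussian with variance $\|\sw^{*1/2}(\wt J^{(-i)}-J^*)(\be_a-\be_\ell)\|_2^2$. Mimicking the analysis of $\wt\mu_\ell$ and $\wt\sw$ already carried out in \eqref{rate_mu_tilde} and \eqref{bd_sw_td} for the $(n-1)$-sample estimator, and expanding $\wt J^{(-i)}-J^*$ along the lines of \eqref{decomp_J_diff}, I will show
\[
\big\|\sw^{*1/2}(\wt J^{(-i)}-J^*)(\be_a-\be_\ell)\big\|_2 \;\lesssim\; \sqrt{\Delta_{a\ell}}\sqrt{\tfrac{d+\log n}{n\pmin}}
\]
with probability at least $1-n^{-2}$ uniformly in $a,\ell$. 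A Gaussian tail bound followed by a union bound over the $nL^2$ triples $(i,a,\ell)$ then yields
\[
\max_{i,a,\ell}\frac{|A_{i,a,\ell}|}{\Delta_{a\ell}} \;\lesssim\; \sqrt{\frac{\log(nL)(d+\log n)}{n\pmin\,\Dtmin}} \;\le\; \frac{c}{8}
\]
under the hypotheses $C d\log n \le n\pmin$, $n\ge dL^2$, and $\Dtmin \gtrsim 1$.

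For the perturbation term, I will use the identity
\[
\wt J-\wt J^{(-i)} = \wt\sw^{-1}(\wt M-\wt M^{(-i)}) + \wt\sw^{-1}(\wt\sw^{(-i)}-\wt\sw)\wt J^{(-i)}
\]
together with the explicit expressions $\wt\mu_\ell-\wt\mu_\ell^{(-i)} = (X_i-\wt\mu_\ell^{(-i)})/n_\ell$ (nonzero only for $\ell=Y_i$) and a Sherman--Morrison expansion of $\wt\sw^{-1}-(\wt\sw^{(-i)})^{-1}$, whose leading contribution is rank-one in $X_i-\wt\mu_{Y_i}^{(-i)}$. Because every factor of $X_i$ produced by the correction pairs with $\sw^{*1/2}N_i$ to a quantity of the form $N_i^\T(\cdot)N_i$ or $N_i^\T(\cdot)\wt J^{(-i)}(\be_a-\be_\ell)$, on the event $\cE_\pi\cap\cE_2\cap\cE_{\op}\cap\cE_{\ell_2}\cap\cE_{\op}'$ I will bound
\[
|B_{i,a,\ell}| \;\lesssim\; \frac{\|N_i\|_2^2 + \sqrt{d+\log n}}{n_{Y_i}}\sqrt{\Delta_{a\ell}} \;\lesssim\; \frac{d+\log n}{n\pmin}\sqrt{\Delta_{a\ell}},
\]
so that $\max_{i,a,\ell}|B_{i,a,\ell}|/\Delta_{a\ell}\lesssim (d+\log n)/(n\pmin\sqrt{\Dtmin})\le c/8$, again under $Cd\log n\le n\pmin$ and $\Dtmin\gtrsim 1$.

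The main obstacle will be the bookkeeping in Step~4: one must carefully track that the Sherman--Morrison correction to $\wt\sw^{-1}$ is, after projection onto $(\be_a-\be_\ell)$ and contraction against $N_i^\T\sw^{*1/2}$, genuinely of order $(d+\log n)/n_\ell$ rather than the naive $\sqrt{d}(d+\log n)/n_\ell$ that would again produce a $d^{3/2}$ obstruction. This is possible because each occurrence of $X_i=\sw^{*1/2}N_i+\mu_{Y_i}^*$ collapses either against $N_i^\T$ (giving $\|N_i\|_2^2\lesssim d+\log n$) or against the $J^*(\be_a-\be_\ell)$ column (giving $\sqrt{\Delta_{a\ell}}$), so no loose $\sqrt{d}$ remains. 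Combining Steps~3 and~4 and taking a union bound over the at most $nL^2\le n^2$ triples completes the proof.
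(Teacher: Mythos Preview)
Your proposal is correct and follows exactly the leave-one-out strategy the paper uses: split into an independence term $A_{i,a,\ell}$ handled by a conditional Gaussian tail bound plus union bound, and a rank-one correction term $B_{i,a,\ell}$ whose structure avoids the naive $\sqrt d$ loss. The only executional difference is in the $B$-term: the paper first applies Cauchy--Schwarz, $|B_{i,a,\ell}|\le \|N_i\|_2\,\|\sw^{*1/2}(\wt J-\wt J^{(i)})(\be_a-\be_\ell)\|_2$, and then, inside the norm bound for the covariance correction $(\wt\sw^{(i)}-\wt\sw)\wt J^{(i)}(\be_a-\be_\ell)$, invokes the independence of $N_i$ and $\wt J^{(i)}$ a second time to replace $|N_i^\T\sw^{*1/2}\wt J^{(i)}(\be_a-\be_\ell)|$ by $\cO(\sqrt{\log n\,\Delta_{a\ell}})$; this is slightly cleaner than your direct product expansion but yields the same order.
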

\begin{proof}
	We work on the event $\cE_{\ell_2}\cap \cE_2\cap \cE_\pi\cap \cE_{\op}' $ and   first condition on $Y_1,\ldots, Y_n$.
	Fix any $a \ne \ell$ and $i\in \wh G_a$. 
	We use the leave-one-out technique by decomposing
	\[
	\left| N_i^\T \sw^{*1/2}(\wt J-J^*)(\be_a - \be_{\ell})\right| \le \left| N_i^\T
	\sw^{*1/2}(\wt J-\wt J^{(i)})(\be_a - \be_{\ell}) \right| + \left| N_i^\T \sw^{*1/2}(\wt J^{(i)}-J^*)(\be_a - \be_{\ell})\right|
	\]
	where 
	$ \wt J^{(i)} :=  \wt \sw^{(i)-1} \wt M^{(i)}$ with $\wt \sw^{(i)}$ and $\wt M^{(i)}$ being the same as $\wt\sw$ and $\wt M$ except leaving out the sample $X_i$. Specifically, 
	\begin{equation}\label{def_mu_td_LOO}
		\wt \mu_a^{(i)} = {1\over n_a-1} \sum_{j\in \wh G_a \setminus \{i\}} X_j, \qquad \wt \mu_b^{(i)} = \wt \mu_b, \quad \forall ~ b \in [L]\setminus\{a\},
	\end{equation}
	as well as 
	\begin{equation}\label{def_sw_td_LOO}
		\wt \sw^{(i)} = {1\over n} \left[\sum_{b\in[L]\setminus\{a\}} \sum_{i \in \wh G_b} (X_i -  \wt \mu_b)(X_i -  \wt \mu_b)^\T +  \sum_{j \in \wh G_a\setminus\{i\}} (X_j-  \wt \mu_a^{(i)})(X_j-  \wt \mu_a^{(i)})^\T\right].
	\end{equation}
	Note that  $\cE_{\pi}$ implies $n_a \ge n \pi_a^*/2 \ge C\log n$.
	By construction, the independence between $N_i$ and $\wt J^{(i)}$ yields that for all $t\ge 0$,
	\begin{equation}\label{bd_init_N_sigma_diff_1}
		\PP\left\{
		\left| N_i^\T \sw^{*1/2}(\wt J^{(i)}-J^*)(\be_a - \be_{\ell})\right| \ge  t \left\| \sw^{*1/2}(\wt J^{(i)}-J^*)(\be_a - \be_{\ell})\right\|_2 
		\right\} \le 2\exp \left(
		- t^2/2
		\right).
	\end{equation}
	On the other hand, by $\cE_2$ in \eqref{def_event_N_2}, we have 
	\begin{align}\label{bd_init_N_sigma_diff_2}\nonumber
		\left|N_i^\T \sw^{*1/2}(\wt J-\wt J^{(i)})(\be_a - \be_{\ell})\right| &\le \left\|\sw^{*1/2}(\wt J-\wt J^{(i)})(\be_a - \be_{\ell})\right\|_2 \max_{i\in [n]}\|N_i\|_2\\
		& \le 4\left\|\sw^{*1/2}(\wt J-\wt J^{(i)})(\be_a - \be_{\ell})\right\|_2\sqrt{d + \log n}.
	\end{align}
	We proceed to control $\|\sw^{*1/2}(\wt J-J^*)(\be_a - \be_{\ell})\|_2$ and 
	$\|\sw^{*1/2}(\wt J-\wt J^{(i)})(\be_a - \be_{\ell})\|_2$, which by triangle inequality also gives a bound for $\|\sw^{*1/2}(\wt J^{(i)}-J^*)(\be_a - \be_{\ell})\|_2$. 
	
	Recall $d(\wt \sw, \sw^*)\le 1/2$  from \eqref{bd_eig_sw_td}. Reasoning as \eqref{decomp_J_diff} and using \cref{rate_mu_tilde,bd_sw_td} together with $n\ge dL^2$   give
	\begin{align}\label{rate_sigma_td}\nonumber
		 \|\sw^{*1/2}(\wt J-J^*)(\be_a - \be_{\ell})\|_2  
		&\le  2\|\wt \mu_a - \mu_a^*\|_{\sw^*}+2\|\wt \mu_\ell - \mu_\ell^*\|_{\sw^*} + 2 d(\wt\sw, \sw^*) \sqrt{\Dt_{a\ell}}\\
		&\lesssim \sqrt{d + \log n\over n_a \wedge n_\ell} + \sqrt{ d+ \log n\over n}\sqrt{\Dt_{a\ell}}.
	\end{align}
	In conjunction with $n \pmin \Dtmin \ge Cd\log n$ and $\cE_\pi$, one also has 
	\begin{equation}\label{bd_sigma_td_ell_2}
		\|\sw^{*1/2}\wt J (\be_a - \be_{\ell})\|_2 \le 	\|\sw^{*1/2}(\wt J -J^*)(\be_a - \be_{\ell})\|_2 + \| \sw^{*1/2} J^*(\be_a - \be_{\ell})\|_2 \le 2\sqrt{\Dt_{a\ell}}.
	\end{equation}

	Regarding $\|\sw^{*1/2}(\wt J-\wt J^{(i)})(\be_a - \be_{\ell})\|_2$, by the definition in \eqref{def_sw_td_LOO}, one has 
	\begin{align*}
		d(\wt \sw, \wt\sw^{(i)}) &= {n_a \over n(n_a-1)}\left\|
		\sw^{*-1/2} (X_i - \wt \mu_a)(X_i - \wt \mu_a)^\T \sw^{*-1/2}
		\right\|_\op \hspace{-1cm}\\
		&\le{2n_a \over n(n_a-1)}\left(
		\|N_i\|_2^2 + \|\wt \mu_a-\mu_a^*\|_{\sw^*}^2
		\right)\\
		&\lesssim  {d\log n\over n  }&&\text{by \eqref{rate_mu_tilde} and $\cE_2$}
	\end{align*}
	whence 
	\begin{equation}\label{bd_eig_sw_td_LOO}
		1/2 \le 	\lambda_j\Bigl(\sw^{*-1/2}\wt\sw^{(i)} \sw^{*-1/2}\Bigr)\le 2,\qquad \forall ~ 1\le j\le d.
	\end{equation}
	Meanwhile, by \eqref{def_mu_td_LOO}, we obtain
	\begin{align}\nonumber
		\|\wt \mu_a - \wt \mu_a^{(i)}-\wt \mu_\ell + \wt \mu_\ell^{(i)}\|_{\sw^*} & = {1\over n_a-1}\|X_i - \wt \mu_a \|_{\sw^*}\\\nonumber
		&\le {1\over n_a -1}\left(\left\|
		N_i
		\right\|_2 +\| \wt \mu_a - \mu_a^*\|_{\sw^*}\right)\\
		& \le {2\over n_a-1}\sqrt{d+\log n}\label{bd_diff_mu_LOO} 
	\end{align} 
	where the last step uses $\cE_2$ and \eqref{rate_mu_tilde}.
	We thus have, by \eqref{bd_eig_sw_td_LOO} and  reasoning as \eqref{decomp_J_diff},
	\begin{align}\nonumber
		 \|\sw^{*1/2}(\wt J-\wt J^{(i)})(\be_a - \be_{\ell})\|_2
		 & \le 2\|\wt \mu_a - \wt \mu_a^{(i)}\|_{\sw^*}+2   \left\|
		(\wt \sw^{(i)} - \wt\sw ) \wt J^{(i)}(\be_a - \be_{\ell})
		\right\|_{\sw^*}\\
		&\le {8\over n_a}\sqrt{d+\log n} + 2\left\|
		(\wt \sw^{(i)} - \wt\sw ) \wt J^{(i)}(\be_a - \be_{\ell})
		\right\|_{\sw^*}\label{decomp_sig_diff_LOO}
	\end{align}
	where the last step uses \cref{bd_diff_mu_LOO}. Also observe that 
	\begin{align}\nonumber
		&\left\|
		(\wt \sw^{(i)} - \wt\sw )  \wt J^{(i)}(\be_a - \be_{\ell})
		\right\|_{\sw^*} \\\nonumber
		&= {n_a\over n(n_a-1)}\left\|
		(X_i-\wt \mu_a ) (X_i-\wt \mu_a ) ^\T \wt J^{(i)}(\be_a - \be_{\ell})
		\right\|_{\sw^*}\\\nonumber
		&\le {2\over n}\left\| N_i N_i^\T  \sw^{*1/2} \wt J^{(i)}(\be_a - \be_{\ell})
		\right\|_2 + {2\over n}\left\|
		N_i (\mu_a^*-\wt \mu_a ) ^\T \wt J^{(i)}(\be_a - \be_{\ell})
		\right\|_2 \\\nonumber
		&\quad +{2\over n} \left\|
		(\mu_a^*-\wt \mu_a )  N_i ^\T \sw^{*1/2} \wt J^{(i)}(\be_a - \be_{\ell})
		\right\|_{\sw^*}\\\nonumber
		&\quad  +{2\over n}\left\|
		(\mu_a^*-\wt \mu_a ) (\mu_a^*-\wt \mu_a )^\T\wt J^{(i)}(\be_a - \be_{\ell})
		\right\|_{\sw^*}\hspace{-1cm}\\
		&\lesssim {\sqrt{ d+\log n} \over n}   |N_i^\T  \sw^{*1/2} \wt J^{(i)}(\be_a - \be_{\ell})|  +  {d \log n\over n\sqrt{n_a}}  \|\sw^{*1/2}\wt J^{(i)}(\be_a - \be_{\ell})\|_2.\label{decomp_Sig_sig_diff_LOO}
	\end{align}
	The last step is due to \eqref{rate_mu_tilde} and $\cE_2$.
	To bound the remaining term  $ N_i^\T    \sw^{*1/2} \wt J^{(i)}(\be_a - \be_{\ell})$,  note that  $\wt J^{(i)}$ is independent of $X_i$ hence of $N_i$. We thus have
	\begin{align*}
		\PP\left(
		\left|N_i^\T  \sw^{*1/2} \wt J^{(i)}(\be_a - \be_{\ell})\right| \ge 2\sqrt{2\log n} \left \|\wt \sw^{*1/2} \wt J^{(i)}(\be_a - \be_{\ell})\right\|_2
		\right) \le {2/n^4},
	\end{align*}
	which, by taking the union bounds, holds for all $i\in [N]$ and $a\ne \ell$, with probability $1-n^{-2}$.
	Combining with \eqref{decomp_sig_diff_LOO} and \eqref{decomp_Sig_sig_diff_LOO}
	yields 
	\begin{align*}
		\|\sw^{*1/2} (\wt J -\wt J^{(i)})(\be_a - \be_{\ell})\|_2 & \lesssim {\sqrt{d+\log n}\over n_a} +\left(  {\sqrt{d} \log n  \over n}  +  {d \log n\over n\sqrt{n_a}} \right)   \|\sw^{*1/2} (\wt J -\wt J^{(i)})(\be_a - \be_{\ell})\|_2 \\
		&\quad + \left(  {\sqrt{d} \log n  \over n}  +  {d \log n\over n\sqrt{n_a}} \right)  \|\sw^{*1/2} \wt J(\be_a - \be_{\ell})\|_2.
	\end{align*}
	By $n \ge d \log n$ and \eqref{bd_sigma_td_ell_2}, we conclude 
	\begin{equation}\label{rate_sigma_td_LOO}
		\|\sw^{*1/2} (\wt J -\wt J^{(i)})(\be_a - \be_{\ell})\|_2\lesssim {\sqrt{d+\log n}\over n_a} +\left(  {\sqrt{d} \log n  \over n}  +  {d \log n\over n\sqrt{n_a}} \right)  \sqrt{\Dt_{a\ell}}.
	\end{equation}
	In conjunction with \eqref{rate_sigma_td}, we also have 
	\begin{equation}\label{rate_sigma_td_LOO_true}
 	\|\sw^{*1/2} (\wt J^{(i)} - J^*)(\be_a - \be_{\ell})\|_2 \lesssim  \sqrt{d+\log n\over n_a \wedge n_\ell} + \sqrt{ d+\log n\over n}\sqrt{\Dt_{a\ell}}.
\end{equation}
	
	Finally, plugging \eqref{rate_sigma_td_LOO} and \eqref{rate_sigma_td_LOO_true} into \cref{bd_init_N_sigma_diff_1,bd_init_N_sigma_diff_2}, choosing $t = C\sqrt{\log n}$, taking the union bounds over $i\in [n]$, $a,\ell \in [L]$ and  invoking $\cE_\pi$ give 
	\begin{align}\nonumber
		&{1\over \Dt_{a\ell}}\left| N_i^\T \sw^{*1/2} (\wt J - J^*)(\be_a - \be_{\ell})\right|\\\nonumber  
		&~ \lesssim  ~  
		{d+\log n\over n \pi_a^* \Dt_{a\ell}} +{\sqrt{d(d+\log n)} \log n  \over n \sqrt{\Dt_{a\ell}}}  +  {d \log n\over n}\sqrt{d+\log n\over n\pi_a^*\Dt_{a\ell}}   \\
		&\quad + 
		{\sqrt{\log n}\over \Dt_{a\ell}}\left( {\sqrt{d+\log n}\over n \pi_a^*  } +  
		\sqrt{d+\log n\over n (\pi_a^* \wedge \pi_{\ell}^*) } + \sqrt{ d+\log n\over n }\sqrt{\Dt_{a\ell}} \right)  \label{rate_N_J_td_diff}
	\end{align}
	with probability at least $1-4n^{-1}$.   The proof is complete. 
\end{proof}

\begin{lemma}\label{lem_phi_params}
	Assume    $n\pmin   \ge Cd\log n$, $n\ge dL^2$ and $\Dtmin \ge C\log(1/\pmin)$. 
	On the event $\cE_\pi\cap\cE_{\op}\cap \cE_{\ell_2}\cap \cE_{\op}'$, the following holds for any $\theta$ satisfying  $\phi(\theta)/n \le \Dtmin \pmin (1 + \Dtmin \pmin)$. For all $\ell \in [L]$:
	\begin{align}\label{eq_phi_pi}
		\sum_{\ell=1}^L \left| \wh\bpi_{\ell}(\theta) - \wt \pi_{\ell}\right| &~ \le  ~ {2\over\Dtmin}{ \phi(\theta)\over n},\\\label{eq_phi_mu}
		\|\wh \bmu_{\ell}(\theta)-\wt \mu_{\ell}\|_{\sw^*} &~ \lesssim ~ 
		{1\over \pi_{\ell}^*\sqrt{\Dtmin}} \left(\sqrt{\phi(\theta) \over n} + {\phi(\theta) \over n}\right)
	\end{align}
	and 
	\begin{equation}\label{eq_phi_sw}
		d(\bwhsw(\theta),\wt\Sigma) ~ \lesssim ~\sqrt{\Dtmax \over \Dtmin}\sqrt{\phi(\theta)\over n}+ {\Dtmax \over \Dtmin} {\phi(\theta)\over n}.
	\end{equation}
	Furthermore, if additionally $\theta$ satisfies \eqref{cond_init_unknown} and  $dL\log(q) \le n\pmin \Dtmin$, then  \eqref{eq_phi_mu}  can be improved to: with probability at least $1-n^{-1}$,  for all $\ell \in [L]$, 
	\begin{align}\label{eq_phi_mu_better}
		\|\wh \bmu_{\ell}(\theta)-\wt \mu_{\ell}\|_{\sw^*} &~ \lesssim ~ 	{1\over \pi_{\ell}^*\sqrt{\Dtmin}}\left(  \sqrt{\phi(\theta)\over n  }\sqrt{dL\log(q)\over n} +{\phi(\theta) \over n} \right)  +  \sqrt{\phi(\theta)\over n }\exp(-c'\Dtmin).
	\end{align}
	In this case, with the same probability, we also have 
	\begin{align} \label{eq_phi_sw_better}
		  d(\bwhsw(\theta),\wt\Sigma)  ~ \lesssim  ~  
		   {\phi(\theta)\over n} +  {dL\log(q)\over n} + \exp(-c'\Dtmin) 
	\end{align}
	and 
	\begin{align} \label{eq_phi_sig} \nonumber
	 & {1\over \sqrt{\Dt_{a\ell}}}\|(\bwhsw(\theta)-\wt\sw)\wt J(\be_a - \be_{\ell})\|_{\sw^*}  \\
	 &\qquad\qquad \lesssim ~ \sqrt{{\log n\over \Dtmin}+1}  \left(\sqrt{ dL\log(q)\over n  }+ \exp(-c'\Dtmin)\right)  \sqrt{\phi(\theta)\over n}  +   {\phi(\theta)\over n}. 
	\end{align}
\end{lemma}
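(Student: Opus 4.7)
The plan is to handle the three core bounds \eqref{eq_phi_pi}--\eqref{eq_phi_sw} by a common strategy: write the difference between each sample-level EM update and its oracle counterpart $\wt\pi$, $\wt M$, $\wt\sw$ as an empirical average, partition the sum according to the true labels $Y_i$, and exploit the identity $\sum_{i \in \wh G_\ell}(X_i - \wt\mu_\ell) = 0$ together with the swapping identity in \cref{fact_gamma_in_notin} to convert within-cluster contributions into across-cluster contributions controlled by $\phi(\theta)/\Dtmin$ through \eqref{bd_phi_gamma_ell} and by $\phi(\theta)/\sqrt{\Dtmin}$ through \eqref{bd_phi_gamma_mu_diff}. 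Throughout, I would work on the intersection $\cE_\pi \cap \cE_{\op} \cap \cE_{\ell_2} \cap \cE_{\op}'$ stated in the lemma, and combine the hypothesis $\phi(\theta)/n \le \Dtmin\pmin(1 + \Dtmin\pmin)$ with \eqref{eq_phi_pi} to ensure $\wh\bpi_\ell(\theta) \asymp \pi_\ell^*$ for every $\ell$.

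For \eqref{eq_phi_pi}, I would write $\wh\bpi_\ell(\theta) - \wt\pi_\ell = n^{-1}\sum_{i \notin \wh G_\ell}\gamma_\ell(X_i;\theta) - n^{-1}\sum_{i \in \wh G_\ell}(1-\gamma_\ell(X_i;\theta))$, sum the absolute values over $\ell$, invoke \cref{fact_gamma_in_notin} to collapse the two sums into $2n^{-1}\sum_\ell\sum_{i \in \wh G_\ell}(1-\gamma_\ell(X_i;\theta))$, and apply \eqref{bd_phi_gamma_ell}. For \eqref{eq_phi_mu}, I would start from the identity $\wh\bpi_\ell(\theta)(\wh\bmu_\ell(\theta) - \wt\mu_\ell) = n^{-1}\sum_i \gamma_\ell(X_i;\theta)(X_i - \wt\mu_\ell)$ and use $\sum_{i \in \wh G_\ell}(X_i - \wt\mu_\ell) = 0$ to rewrite the in-cluster sum as $-\sum_{i \in \wh G_\ell}(1-\gamma_\ell(X_i;\theta))(X_i - \wt\mu_\ell)$. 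Decomposing $X_i - \wt\mu_\ell = \sw^{*1/2}N_i + (\mu_{Y_i}^* - \mu_\ell^*) + (\mu_\ell^* - \wt\mu_\ell)$, the mean-separation piece produces the $\phi(\theta)/(n\sqrt{\Dtmin})$ contribution via \eqref{bd_phi_gamma_mu_diff}, the $(\mu_\ell^* - \wt\mu_\ell)$ piece is of lower order by \eqref{rate_mu_tilde} under the hypothesis $n\pmin \ge Cd\log n$, and the Gaussian pieces $\sum_{i \notin \wh G_\ell}\gamma_\ell N_i$ and $\sum_{i \in \wh G_\ell}(1-\gamma_\ell)N_i$ are handled by the weighted Cauchy--Schwarz inequality $\|\sum_i c_iv_i\|_2 \le (\sum_i c_i)^{1/2}\|\sum_i c_iv_iv_i^\T\|_\op^{1/2}$ with $c_i \in [0,1]$ and the operator-norm bound $\|\sum_{i \in \wh G_\ell} N_iN_i^\T\|_\op \lesssim d + n_\ell$ from $\cE_{\op}$. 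Dividing by $n\wh\bpi_\ell(\theta) \asymp n\pi_\ell^*$ delivers \eqref{eq_phi_mu}. For \eqref{eq_phi_sw}, I would use $\bwhsw(\theta) - \wt\sw = \wt M\diag(\wt\pi)\wt M^\T - \wh\bM(\theta)\diag(\wh\bpi(\theta))\wh\bM(\theta)^\T$, obtained from \eqref{iter_sw_hat_alter} and the analogous identity for $\wt\sw$, and expand telescopically into $\Dtmax\sum_\ell|\wh\bpi_\ell - \wt\pi_\ell|$ (bounded by \eqref{eq_phi_pi}) plus $C\sqrt{\Dtmax}\sum_\ell\wh\bpi_\ell\|\wh\bmu_\ell - \wt\mu_\ell\|_{\sw^*}$, where $\|\wt\mu_\ell\|_{\sw^*} \vee \|\wh\bmu_\ell\|_{\sw^*} \lesssim \sqrt{\Dtmax}$ by \eqref{eq_order_mu_tilde}; the second term must be bounded directly (not via a maximum of \eqref{eq_phi_mu}) to avoid an extraneous $1/\pmin$ factor, which is done by rerunning the \eqref{eq_phi_mu} argument with the sum pulled inside the norm and applying a second Cauchy--Schwarz across $\ell$, yielding a Gaussian contribution of order $\sqrt{\phi(\theta)/(n\Dtmin)}\sqrt{(Ld+n)/n} \lesssim \sqrt{\phi(\theta)/(n\Dtmin)}$ under $n \ge dL^2$.

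The main technical obstacle lies in the improved bounds \eqref{eq_phi_mu_better}--\eqref{eq_phi_sig}, which require replacing the crude $\cE_{\op}$ estimate $\|\sum_{i \in \wh G_\ell}N_iN_i^\T\|_\op \lesssim d + n_\ell$ with the much sharper $\wh\cR_I^{(1)}$ bound from \cref{lem_I_fN_samp}. Once $\theta$ satisfies \eqref{cond_init_unknown} so that $\omega(\theta) \in \cB_d(\omega^*)$, one may use the algebraic identity $\gamma_\ell(1-\gamma_\ell) = I_\ell(\cdot,\omega(\theta))$ and, on the favorable noise event \eqref{def_event_N}, observe that for $i \notin \wh G_\ell$ the factor $\gamma_\ell(X_i;\theta)$ is either exponentially small (contributing the $\exp(-c'\Dtmin)$ remainder) or comparable, up to $\pi^*$-ratios, to $I_{Y_i,\ell}(X_i,\omega(\theta))$; this licenses the replacement $\|\sum_{i \notin \wh G_\ell}\gamma_\ell(X_i;\theta)N_iN_i^\T\|_\op \lesssim n\wh\cR_I^{(1)}$ up to a tail correction. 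Feeding this sharpened estimate through the $\mu$- and $\sw$-decompositions yields \eqref{eq_phi_mu_better} and \eqref{eq_phi_sw_better}. For \eqref{eq_phi_sig}, I would additionally track how $(\bwhsw(\theta) - \wt\sw)$ acts on the fixed direction $\wt J(\be_a - \be_\ell)$, exploiting $\|\sw^{*1/2}\wt J(\be_a - \be_\ell)\|_2 \lesssim \sqrt{\Dt_{a\ell}}$ from \eqref{bd_sigma_td_ell_2}; the additional $\sqrt{\log n/\Dtmin + 1}$ factor then arises from Gaussian tail bounds controlling $N_i^\T\sw^{*1/2}\wt J(\be_a - \be_\ell)$ uniformly over $i \in [n]$.
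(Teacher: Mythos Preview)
Your plan for the basic bounds \eqref{eq_phi_pi}--\eqref{eq_phi_sw} is correct and matches the paper's proof closely. The only cosmetic difference is that you center the mean decomposition at $\wt\mu_\ell$ whereas the paper centers at $\mu_\ell^*$ and carries an extra cross term $\frac{\wt\pi_\ell-\wh\bpi_\ell(\theta)}{\wh\bpi_\ell(\theta)}\cdot\frac{1}{n_\ell}\sum_{i\in\wh G_\ell}N_i$; the two decompositions are equivalent and lead to the same three estimates (Cauchy--Schwarz plus $\cE_{\op}$ for the Gaussian parts, \eqref{bd_phi_gamma_mu_diff} for the mean-separation part). Your remark about avoiding an extra $1/\pmin$ in \eqref{eq_phi_sw} by summing before taking norms is a valid refinement; the paper simply cites \eqref{eq_phi_mu} here without spelling this out.

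For the improved bounds \eqref{eq_phi_mu_better}--\eqref{eq_phi_sig}, however, the route through $I_{Y_i,\ell}$ and $\wh\cR_I^{(1)}$ is not quite the mechanism, and your claim that on the favorable event $\gamma_\ell(X_i;\theta)$ is ``comparable to $I_{Y_i,\ell}$'' fails on the bad set: since $\pi_\ell\pi_a I_{a\ell}=\gamma_\ell\gamma_a$, when $i\in S(\omega)$ one can have $\gamma_\ell\approx 1$ while $\gamma_{Y_i}\approx 0$, so $I_{Y_i,\ell}\approx 0$. The paper does not invoke $\wh\cR_I^{(1)}$; it instead \emph{re-runs} the $S(\omega)$-split argument from the proof of \cref{lem_I_fN_samp} directly on the weighted sums: for $i\in[S(\omega)]^c$ one has the uniform bound $(1-\gamma_\ell(X_i;\theta))\le \pmin^{-1}\exp(-c'\Dtmin)$ when $i\in\wh G_\ell$ (and the symmetric bound for $\gamma_\ell$ when $i\notin\wh G_\ell$), while on $S(\omega)$ the weights are $\le 1$ and \cref{lem_N_op_norm} plus \eqref{bd_S_card} give $\|\sum_{i\in S(\omega)}N_iN_i^\T\|_\op\lesssim dL\log(q)\log n+nL^2\exp(-c\Dtmin)$. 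This yields \eqref{bd_gamma_NN_op}, which is then fed back into the same Cauchy--Schwarz steps. So you have identified the right lemma by proxy, but the actual argument bypasses $I_\ell$ entirely.

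For \eqref{eq_phi_sig}, the paper uses a three-term decomposition $\bwhsw(\theta)-\wt\sw=\rI+\rII+\rIII$ coming from expanding $(X_i-\mu_\ell^*)(X_i-\mu_\ell^*)^\T$, and then bounds each $\|\rI\,\wt J(\be_a-\be_\ell)\|_{\sw^*}$, etc.\ separately via Cauchy--Schwarz against the refined operator-norm estimate above. Your intuition about the $\sqrt{\log n/\Dtmin+1}$ factor is right: it enters through $\max_{i}|N_i^\T\sw^{*1/2}\wt J(\be_a-\be_\ell)|\lesssim\sqrt{\Dt_{a\ell}+\log n}$, which in turn requires the leave-one-out bound \eqref{rate_N_J_td_diff} from \cref{lem_sigma_diff_td} (not just a naive Gaussian tail, since $\wt J$ depends on $N_i$).
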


\begin{proof}
	To prove \eqref{eq_phi_pi}, it suffices to note
	\begin{align*}\nonumber
		\sum_{\ell=1}^L \left| \wh\bpi_{\ell}(\theta) - \wt \pi_{\ell}\right| & = \sum_{\ell=1}^L\left|
		{1\over n}\sum_{i=1}^n \gamma_{\ell}(X_i;\theta) - {1\over n}\sum_{i\in \wh G_\ell} 1
		\right|\\\nonumber
		&\le  	{1\over n} \sum_{\ell=1}^L 
		\sum_{i\in \wh G_\ell} (1-\gamma_{\ell}(X_i;\theta)) +   {1\over n}  \sum_{\ell=1}^L\sum_{i\notin G_\ell} \gamma_{\ell}(X_i;\theta)\\
		&\le {2 \phi(\theta)\over n\Dtmin}
	\end{align*} 
	where the last inequality uses  \cref{bd_phi_gamma_ell}.
	By $8\phi(\theta)/n \le   \pmin \Dtmin$ and $\cE_\pi$, we thus have 
	\begin{equation}\label{eq_order_pi_phi}
		1/4 \le \wh \bpi_{\ell}(\theta) / \pi_{\ell}^* \le 5/4
	\end{equation}
	
	To show \eqref{eq_phi_mu}, start with the decomposition
	\begin{align}\label{decomp_mu_hat_tilde}\nonumber
		\wh  \bmu_{\ell}(\theta) - \wt \mu_{\ell} &= {1\over n \wh \bpi_{\ell}(\theta)} \sum_{i=1}^n \gamma_{\ell}(X_i;\theta)(X_i-\mu_{\ell}^*) - {1\over n \wt \pi_{\ell}}\sum_{i \in \wh{G}_\ell} (X_i-\mu_{\ell}^*)\\\nonumber
		&= {\wt \pi_{\ell} - \wh\bpi_{\ell}(\theta) \over   \wh\bpi_{\ell}(\theta)} {1\over n_\ell}\sum_{i\in \wh G_\ell} N_i - {1\over n \wh \bpi_{\ell}(\theta)} \sum_{i\in \wh G_\ell}(1-\gamma_{\ell}(X_i;\theta))N_i\\
		&\qquad  +  {1\over n \wh \bpi_{\ell}(\theta)} \sum_{j \notin \wh G_\ell} \gamma_{\ell}(X_j;\theta)(X_j-\mu_{\ell}^*).
	\end{align}
	First, using \eqref{eq_phi_pi}, $\cE_{\ell_2}\cap \cE_{\pi}$ and \eqref{eq_order_pi_phi} gives 
	\begin{align*}
		{|\wt \pi_{\ell} - \wh\bpi_{\ell}(\theta)| \over   \wh\bpi_{\ell}(\theta)} \left\| {1\over n_\ell}\sum_{i\in \wh G_\ell} N_i  \right\|_{\sw^*} \lesssim   { \phi(\theta)\over n\Dtmin  \wh\bpi_{\ell}(\theta)} \sqrt{d+\log n\over n_\ell}\lesssim   {\phi(\theta)\over n\Dtmin   \pi_{\ell}^*} \sqrt{d+\log n\over n \pi_{\ell}^*}.
	\end{align*}
	Second, an application of the Cauchy-Schwarz inequality yields 
	\begin{align}\nonumber
		&{1\over n \wh \bpi_{\ell}(\theta)} \left\|\sum_{i\in \wh G_\ell}(1-\gamma_{\ell}(X_i;\theta))N_i\right\|_{\sw^*}\\\nonumber
		&\le {1\over n \wh \bpi_{\ell}(\theta)}\sqrt{\sum_{i\in \wh G_\ell}(1-\gamma_{\ell}(X_i;\theta))}\sqrt{\| \sum_{i\in \wh G_\ell}(1-\gamma_{\ell}(X_i;\theta))N_iN_i^\T\|_\op} \hspace{-2.5cm}\\\nonumber
		&\le  {1\over n \wh \bpi_{\ell}(\theta)}\sqrt{\phi(\theta)\over \Dtmin}\sqrt{\| \sum_{i\in \wh G_\ell}N_iN_i^\T\|_\op}&&\text{by \eqref{bd_phi_gamma_ell}}\\
		&\lesssim  \sqrt{\phi(\theta)\over n  \pi_{\ell}^*\Dtmin}\sqrt{{d\over n \pi_{\ell}^*}+1} &&\text{by }\cE_{\op}\cap \cE_\pi.\label{bd_gamma_N_phi}
	\end{align} 
	Regarding the remaining term,  
	\begin{align}\nonumber
		&{1\over n \wh \bpi_{\ell}(\theta)} \left\|\sum_{j \notin \wh G_\ell} \gamma_{\ell}(X_j;\theta)(X_j-\mu_{\ell}^*)\right\|_{\sw^*}\\ \nonumber
		&\le {1\over n \wh \bpi_{\ell}(\theta)} \left\|\sum_{j \notin \wh G_\ell} \gamma_{\ell}(X_j;\theta)N_j^*\right\|_2+{1\over n \wh \bpi_{\ell}(\theta)}  \sum_{a\in [L]\setminus\{\ell\}} \sum_{j\in  \wh G_a} \gamma_{\ell}(X_j;\theta)\left\|\mu_a^*-\mu_{\ell}^*\right\|_{\sw^*}\\\ \nonumber
		&\le 	 {1\over n \wh \bpi_{\ell}(\theta)} \sqrt{\sum_{j \notin \wh G_\ell} \gamma_{\ell}(X_j;\theta)}\sqrt{\|\sum_{j \notin \wh G_\ell}\gamma_{\ell}(X_j;\theta) N_jN_j^\T\|_\op} + {1\over n \wh \bpi_{\ell}(\theta)}  {\phi(\theta)\over \sqrt{\Dtmin} } &&\text{by \eqref{bd_phi_gamma_mu_diff}}\\
		&\lesssim  {1\over   \pi_{\ell}^*} \sqrt{\phi(\theta)\over n\Dtmin}\sqrt{n+d\over n} + {\phi(\theta)\over n  \pi_{\ell}^* \sqrt{\Dtmin} }.\label{bd_gamma_X_phi}
	\end{align}
	Combining the last three displays together with $n\pmin\Dtmin  \ge C d\log n$  gives \eqref{eq_phi_mu}, which, in conjunction with $\phi(\theta)/n \le c \pminsq \Dtmin^2$, \eqref{eq_order_mu_tilde} and \eqref{eq_Dt_max_Dt_inf}, further ensures 
	\begin{equation}\label{eq_order_mu_phi}
		\| \wh \bmu_{\ell}(\theta)\|_{\sw^*}  \le \| \wh \bmu_{\ell}(\theta) -\wt \mu_{\ell}\|_{\sw^*}  + \|\wt \mu_{\ell} - \mu_{\ell}^*\|_{\sw^*} +\|\mu_{\ell}^*\|_{\sw^*} \le  3 \sqrt{\Dtmax}.
	\end{equation} 

	To prove \eqref{eq_phi_sw}, observe from \eqref{iter_sw_hat_alter} that 
	$$\bwhsw(\theta)  - \wt\sw = \wh \bM(\theta)\diag(\wh\bpi(\theta))\wh \bM(\theta)^\T - \wt M \diag(\wt \pi) \wt M^\T.$$
	Adding and subtracting terms gives 
	\begin{align*}
		d(\bwhsw(\theta) ,\wt \sw) &\le \left\|\sum_{\ell = 1}^L \wh \bpi_{\ell}(\theta) \sw^{*-1/2}(\wh \bmu_{\ell}(\theta)-\wt \mu_{\ell}) (\wh \bmu_{\ell}(\theta)+\wt \mu_{\ell})^\T \sw^{*-1/2}
		\right\|_\op\\
		&\quad  + \left\|\sum_{\ell = 1}^L (\wh \bpi_{\ell}(\theta) -\wt \pi_{\ell}) \sw^{*-1/2}\wt \mu_{\ell}  \wt \mu_{\ell}^\T \sw^{*-1/2}
		\right\|_\op.
	\end{align*}
	By $\cE_\pi$ and \cref{eq_phi_mu,eq_order_mu_phi,eq_order_mu_tilde}, the first term on the right-hand-side is bounded by  
	\begin{align}\label{eq_phi_mu_sum}
		\sum_{\ell = 1}^L \wh \bpi_{\ell}(\theta) \left\|\wh \bmu_{\ell}(\theta)-\wt \mu_{\ell}\right\|_{\sw^*} \max_{\ell \in[L]} \|\wh \bmu_{\ell}(\theta)+\wt \mu_{\ell}\|_{\sw^*} 
		&\lesssim  \sqrt{\Dtmax \over \Dtmin}\left(\sqrt{\phi (\theta)\over n}+  {\phi(\theta)\over n}\right).
	\end{align}  
	On the other hand, since \eqref{eq_phi_pi} and \eqref{eq_order_mu_phi} imply
	\[
	\left\|\sum_{\ell = 1}^L (\wh \bpi_{\ell}(\theta) -\wt \pi_{\ell}) \sw^{*-1/2}\wt \mu_{\ell}  \wt \mu_{\ell}^\T \sw^{*-1/2}
	\right\|_\op \lesssim {\Dtmax \over \Dtmin}{\phi(\theta)\over n},
	\]
	the proof of \eqref{eq_phi_sw} is complete.\\

	To prove the improved bounds, we start with \eqref{eq_phi_mu_better} and refine the bounds of 
	$\| \sum_{i\in \wh G_\ell}(1-\gamma_{\ell}(X_i;\theta))N_iN_i^\T\|_\op$ and $\|\sum_{j \notin \wh G_\ell}\gamma_{\ell}(X_j;\theta) N_jN_j^\T\|_\op$ 
	in \eqref{bd_gamma_N_phi} and \eqref{bd_gamma_X_phi}, respectively. We use the arguments similar as the proof of \cref{lem_I_fN_samp} and recall $S(\omega)$ in \eqref{def_set_S} for $\omega = \omega(\theta) \in \cB_d(\omega^*)$. The latter is ensured as $\theta$ satisfies \eqref{cond_init_unknown}. 
	By repeating the similar arguments therein, 
	\begin{align*}
		&\Bigl\|\sum_{\ell = 1}^L \sum_{i\in \wh G_\ell}(1-\gamma_{\ell}(X_i;\theta))N_iN_i^\T\Bigr\|_\op \\
		&\le ~ 
		\Bigl \|\sum_{\ell = 1}^L \sum_{i\in \wh G_\ell \cap S(\omega)} N_iN_i^\T\Bigr\|_\op +  \Bigl\| \sum_{\ell = 1}^L\sum_{i\in \wh G_\ell \cap [S(\omega)]^c}(1-\gamma_{\ell}(X_i;\theta))N_iN_i^\T\Bigr\|_\op\\
		&\le  ~ 
		\Bigl \| \sum_{i\in S(\omega)} N_iN_i^\T\Bigr\|_\op  +  \Bigl\| \sum_{\ell = 1}^L\sum_{i\in \wh G_\ell} N_iN_i^\T\Bigr\|_\op ~  {1\over \pmin} \exp(-c'\Dtmin)   &&\text{by \eqref{bd_I_ell_ell}}.
	\end{align*}
	Invoking \cref{lem_N_op_norm}, $d\le n$, \eqref{bd_S_card} and $\cE_\pi$ yields that: with probability  $1-n^{-dL}-e^{-n}$, 
	\begin{align}\label{bd_gamma_NN_op}
		\left\| \sum_{\ell = 1}^L\sum_{i\in \wh G_\ell}(1-\gamma_{\ell}(X_i;\theta))N_iN_i^\T\right\|_\op 
		&\lesssim   dL\log(q) + {nL \over \pmin}\exp(-c\Dtmin).
	\end{align}
	By \cref{fact_gamma_in_notin}, the same bound also holds for $\|\sum_{\ell = 1}^L\sum_{j \notin \wh G_\ell}\gamma_{\ell}(X_j;\theta) N_jN_j^\T\|_\op$. Plugging the above two bounds into \eqref{bd_gamma_N_phi} and \eqref{bd_gamma_X_phi} and using  $\Dtmin \ge C\log(1/\pmin)$  gives 
	\begin{align*}
		&{1\over n \wh \bpi_{\ell}(\theta)} \left\|\sum_{i\in \wh G_\ell}(1-\gamma_{\ell}(X_i;\theta))N_i\right\|_{\sw^*}  +{1\over n \wh \bpi_{\ell}(\theta)} \left\|\sum_{j \notin \wh G_\ell} \gamma_{\ell}(X_j;\theta)(X_j-\mu_{\ell}^*)\right\|_{\sw^*}\\
		&\lesssim  \sqrt{\phi(\theta)\over n  \pi_{\ell}^*\Dtmin}\sqrt{dL\log(q)\over n\pi_{\ell}^*} +  \sqrt{\phi(\theta)\over n }\exp(-c'\Dtmin) +{\phi(\theta)\over n  \pi_{\ell}^* \sqrt{\Dtmin} },
	\end{align*} 
	which implies the improved rate in \eqref{eq_phi_mu_better}.   
	
	To prove \eqref{eq_phi_sw_better}, we have the decomposition
	\begin{align}\label{decomp_Sig_diff_LOO}
		\bwhsw(\theta) - \wt \sw &= \rI + \rII + \rIII 
	\end{align}
	where 
	\begin{align*}
		\rI &= {1\over n}\sum_{\ell = 1}^L  \sum_{j \notin \wh G_\ell}   \gamma_{\ell}(X_j;\theta)(X_j-\mu_{\ell}^*)(X_j-\mu_{\ell}^*)^\T\\  
		\rII &= -  {1\over n}\sum_{\ell = 1}^L \sum_{i \in \wh G_\ell} \left(1- \gamma_{\ell}(X_i;\theta)\right)\sw^{*1/2}N_iN_i^\T \sw^{*1/2}\\
		\rIII&=  \sum_{\ell = 1}^L \wh \bpi_{\ell}(\theta)(\wh \bmu_{\ell}(\theta)-\mu_{\ell}^*)(\wh \bmu_{\ell}(\theta)-\mu_{\ell}^*)^\T - \sum_{\ell = 1}^L \wt \pi_{\ell} (\wt \mu_{\ell} -\mu_{\ell}^*)(\wt \mu_{\ell} -\mu_{\ell}^*)^\T.
	\end{align*}
	The term $\|\sw^{*-1/2}   \rII  \sw^{*-1/2}\|_\op$ is bounded in  \eqref{bd_gamma_NN_op} while, by Cauchy-Schwarz inequality, 
	\begin{align*}
		\|\sw^{*-1/2}   ~ \rI ~   \sw^{*-1/2}\|_\op &\le \left\| {2\over n}\sum_{\ell = 1}^L  \sum_{j \notin \wh G_\ell}   \gamma_{\ell}(X_j;\theta) N_j N_j^\T \right\|_\op + {2\over n}\sum_{\ell = 1}^L  \sum_{a \in [L]\setminus\{\ell\}} \sum_{j \in \wh G_a}   \gamma_{\ell}(X_j;\theta) \Dt_{a\ell}\\
		&= 2\|\sw^{*-1/2}   \rII  \sw^{*-1/2}\|_\op + {2\phi(\theta)\over n} 
	\end{align*}
	using  \cref{fact_gamma_in_notin} and \eqref{def_phi_theta}.  
	Regarding $\rIII$, by adding and subtracting terms, 
	\begin{align*}
		\left\| \sw^{*-1/2} ~ \rIII ~    \sw^{*-1/2} \right\|_\op &\le  \left\|\sum_{\ell = 1}^L \wh \bpi_{\ell}(\theta)\sw^{*-1/2} (\wh \bmu_{\ell}(\theta)-\wt\mu_{\ell})(\wh \bmu_{\ell}(\theta)-\wt\mu_{\ell})^\T\sw^{*-1/2} \right\|_\op   \\
		&\quad  + 2\left\|\sum_{\ell = 1}^L \wh \bpi_{\ell}(\theta)\sw^{*-1/2}(\wh \bmu_{\ell}(\theta)-\wt\mu_{\ell})(\wt\mu_{\ell}-\mu_{\ell}^*)^\T\sw^{*-1/2}\right\|_\op  \\ 
		&\quad +  \left\|\sum_{\ell = 1}^L (\wh \bpi_{\ell}(\theta)-\wt \pi_{\ell}) \sw^{*-1/2} (\wt \mu_{\ell} -\mu_{\ell}^*)(\wt \mu_{\ell} -\mu_{\ell}^*)^\T\sw^{*-1/2}\right\|_\op\\
		&\le \sum_{\ell = 1}^L \wh \bpi_{\ell}(\theta)  \left\| \wh \bmu_{\ell}(\theta)-\wt\mu_{\ell}\right\|_{\sw^*}\max_{\ell \in[L]}\left(
		\| \wh \bmu_{\ell}(\theta)-\wt\mu_{\ell}\|_{\sw^*} + 2\|\wt \mu_{\ell} -\mu_{\ell}^*\|_{\sw^*}
		\right)\\
		&\quad +  \sum_{\ell = 1}^L\left|  \wh \bpi_{\ell}(\theta)-\wt \pi_{\ell}\right| \max_{\ell \in[L]}  \|\wt \mu_{\ell} -\mu_{\ell}^*\|_{\sw^*}^2.
	\end{align*}
	By invoking the improved bound in \cref{eq_phi_mu_better} and using \cref{rate_mu_tilde,eq_phi_pi} together with the event $\cE_{\ell_2}\cap \cE_\pi$, after some algebra, we obtain that
	\begin{align}\label{bd_sw_diff_III}\nonumber
		&\left\| \sw^{*-1/2} ~ \rIII ~    \sw^{*-1/2} \right\|_\op\\ \nonumber
		&~ \lesssim \left( {dL\log(q)\over n \pmin \Dtmin} + \sqrt{d+\log n\over n\pmin \Dtmin} + \exp(-2c'\Dtmin)  + {\phi(\theta)\over n  \pmin\Dtmin} \right){\phi(\theta)\over n}   \\
		&\quad 	+  \left(\sqrt{dL\log(q)\over n} +\exp(-c'\Dtmin) \right) \sqrt{d+\log n\over n\pmin}  \sqrt{\phi(\theta)\over n}. 
	\end{align} 
	Combining with \eqref{bd_gamma_NN_op} gives 
	\begin{align*}
		d(\bwhsw(\theta),\wt\Sigma) &~ \lesssim ~\left( {dL\log(q)\over n \pmin \Dtmin}   + {\phi(\theta)\over n  \pmin\Dtmin} + 1\right){\phi(\theta)\over n}   \\
		&\qquad 	+   \sqrt{dL\log(q)\over n}  \sqrt{d+\log n\over n\pmin}  \sqrt{\phi(\theta)\over n} +  {dL\log(q)\over n} + \exp(-c'\Dtmin)  
	\end{align*}
	which,  under  $\phi(\theta) + dL\log(q) \le n\pmin \Dtmin$ and $d\log n\le n\pmin$,  simplifies to 
	\eqref{eq_phi_sw_better}.
	
	Finally, we prove \eqref{eq_phi_sig} by using the decomposition \eqref{decomp_Sig_diff_LOO} and \cref{fact_gamma_in_notin}
	\begin{align}\label{decomp_init_Sigma_sig}\nonumber
		\|(\bwhsw(\theta)-\wt\sw)\wt J(\be_a - \be_{\ell})\|_{\sw^*} &\le {1\over n}\| 
		(\rI + \rII + \rIII )\wt J(\be_a - \be_{\ell})\|_{\sw^*} \\\nonumber
		&\le {1\over n}\left\|
		\sum_{k = 1}^L  \sum_{i \notin \wh G_b} \gamma_k(X_i;\theta)(X_i-\mu_{k}^*)(X_i-\mu_{k}^*)^\T\wt J(\be_a - \be_{\ell})\right\|_{\sw^*}\\\nonumber
		&\quad +  {1\over n}\left\|
		\sum_{k = 1}^L \sum_{i  \in \wh G_k}(1- \gamma_k(X_i;\theta))  N_i N_i^\T  \sw^{*1/2}\wt J(\be_a - \be_{\ell})\right\|_2\\
		&\quad +  {1\over n}\left\| \sw^{*-1/2} ~ \rIII ~    \sw^{*-1/2} \right\|_\op \|\sw^{*1/2}\wt J(\be_a - \be_{\ell})\|_2.
	\end{align}
	Note that the first term is bounded by $R_1 + R_2 + R_3 + R_4$ with 
	\begin{align*}
		R_1  &= {1\over n}\left\|
		\sum_{k = 1}^L  \sum_{i \notin \wh G_k} \gamma_k(X_i;\theta) N_i N_i^\T  \sw^{*1/2}\wt J(\be_a - \be_{\ell})\right\|_2,\\
		R_2 &=   {1\over n}\left\|
		\sum_{k = 1}^L\sum_{b \in [L]\setminus\{k\}} \sum_{i \in \wh G_b} \gamma_k(X_i;\theta)N_i (\mu_b^*-\mu_{k}^*)^\T  \wt J(\be_a - \be_{\ell})\right\|_2,\\
		R_3 &=   {1\over n}\left\|
		\sum_{k = 1}^L  \sum_{b \in [L]\setminus\{k\}} \sum_{i \in \wh G_b}  \gamma_k(X_i;\theta)(\mu_b^*-\mu_{k}^*)N_i^\T \sw^{*1/2}\wt J(\be_a - \be_{\ell})\right\|_{\sw^*},\\	
		R_4 &= {1\over n} 
		\sum_{k = 1}^L\sum_{b \in [L]\setminus\{k\}} \sum_{i \in \wh G_b} \gamma_k(X_i;\theta) \Delta_{bk}  \|\sw^{*1/2}\wt J(\be_a - \be_{\ell})\|_2.
	\end{align*}
	By Cauchy-Schwarz inequality,  
	\begin{align*}
		R_1 &\le {1\over n}\left\|
		\sum_{k = 1}^L \sum_{i  \notin \wh G_k} \gamma_k(X_i;\theta)  N_i N_i^\T   \right\|_\op^{1/2}\left(\sum_{k = 1}^L \sum_{i  \notin \wh G_k} \gamma_k(X_i;\theta) (N_i^\T \sw^{*1/2}\wt J(\be_a - \be_{\ell}))^2\right)^{1/2}\\
		&\lesssim  \left(\sqrt{ dL\log(q) \over n}+ \exp(-c\Dtmin)\right)  \sqrt{\phi(\theta)\over n\Dtmin} \max_{i \in [n]} \left|N_i^\T  \sw^{*1/2}\wt J(\be_a - \be_{\ell})\right|   
	\end{align*}
	with probability at least $1-n^{-dL}-e^{-n}-n^{-2}$. The second step is due to \eqref{bd_gamma_NN_op} \& \eqref{bd_phi_gamma_ell}. Furthermore, using $N_i^\T  \sw^{*1/2}J^*(\be_a - \be_{\ell}) \sim \cN(0, \Dt_{a\ell})$ and \eqref{rate_N_J_td_diff} gives 
	\begin{align*}
		\max_{i \in [n]}  \left|N_i^\T \sw^{*1/2}\wt J(\be_a - \be_{\ell})\right|   & \le 	\max_{i \in [n]} \left(  |N_i^\T  \sw^{*1/2}J^*(\be_a - \be_{\ell})| +  |N_i^\T\sw^{*1/2}(\wt J-J^*)(\be_a - \be_{\ell})|\right)\\
		& \lesssim   \sqrt{\Dt_{a\ell} +\log n}
	\end{align*} 
	with probability at least $1-4n^{-1}$.
	By \cref{fact_gamma_in_notin}, the same bound of $R_1$ also holds for the second term in  \eqref{decomp_init_Sigma_sig}. 
	
	For $R_2$, by similar arguments as well as \eqref{bd_sigma_td_ell_2}, one has 
	\begin{align*}
		R_2 &\le     {1\over n}\Bigl\|
		\sum_{k = 1}^L\sum_{i \notin \wh G_k} \gamma_k(X_i;\theta)N_i  N_i^\T \Bigr\|_\op^{1/2} \Bigl( 
		\sum_{k = 1}^L\sum_{b \in [L]\setminus\{k\}} \sum_{i \in \wh G_b} \gamma_k(X_i;\theta)  \Dt_{bk}\Bigr)^{1/2} \| \sw^{*1/2}\wt J(\be_a - \be_{\ell})\|_2\\
		&\lesssim 
		\left(\sqrt{ dL\log(q) \over n}+ \exp(-c\Dtmin)\right)  \sqrt{\phi(\theta)\over n}\sqrt{\Dt_{a\ell}}
	\end{align*}
	with probability at least $1-n^{-dL}-e^{-n}$. It is easy to see that the same bound  also holds for $R_3$. Regarding $R_4$,  observe that 
	\begin{align*}
		R_4   \overset{\eqref{def_phi_theta}}{=} {\phi(\theta)\over n} \| \sw^{*1/2}\wt J(\be_a - \be_{\ell})\|_2 \overset{\eqref{bd_sigma_td_ell_2}}{\le}  2\sqrt{\Dt_{a\ell}}{\phi(\theta)\over n}.
	\end{align*} 
	Finally, invoking \eqref{bd_sw_diff_III} bounds the term 
	$
	 n^{-1}\| \sw^{*-1/2} ~ \rIII ~    \sw^{*-1/2} \|_\op \| \sw^{*1/2}\wt J(\be_a - \be_{\ell})\|_2
	$ so that  collecting the previous bounds yields 
	\begin{align*}
		{1\over \sqrt{\Dt_{a\ell}}} 	\|(\bwhsw(\theta)-\wt\sw)\wt J(\be_a - \be_{\ell})\|_{\sw^*}  &\lesssim 
		\sqrt{ {\log n\over \Dtmin}+1}  \left(\sqrt{ dL\log(q) \over n}+ \exp(-c\Dtmin)\right)  \sqrt{\phi(\theta)\over n}   \\
		&\quad +  \left( {dL\log(q)\over n \pmin \Dtmin}   + {\phi(\theta)\over n  \pmin\Dtmin} + 1\right){\phi(\theta)\over n}  
	\end{align*}
	thereby completing the proof of \eqref{eq_phi_sig}. 
\end{proof}

\subsection{Proof of \cref{thm_param_phi}}\label{app_sec_proof_thm_param_phi}

\begin{proof}
	From \cref{thm_conv_phi}, by induction and \eqref{cond_init_phi}, we know that for all $t\ge 1$,
	\begin{align}\label{bd_phi_induction}\nonumber 
		 \phi(\wh\theta^{(t)}) &\le n \exp(-c\Dtmin) \left(1 + 1/2 + \cdots + 1/2^{t-1}\right) + 2^{-t} \phi(\wh\theta^{(0)})\\
		 &\le 2n \exp(-c\Dtmin) + 2^{-t}.
	\end{align}
	Since $\wh \theta$ satisfies \eqref{cond_init_unknown}, invoking \eqref{eq_phi_mu_better}  gives that with probability $1-n^{-1}-\exp(-c\Dtmin)$, for all $t\ge 1$,
	\begin{align*} 
	\|\wh \mu_{\ell}^{(t+1)}-\wt \mu_{\ell}\|_{\sw^*}  &= 	\|\wh \bmu_{\ell}(\wh\theta^{(t)})-\wt \mu_{\ell}\|_{\sw^*}\\ 
		&  \lesssim 	{1\over \pi_{\ell}^*\sqrt{\Dtmin}}\left(  \sqrt{\phi(\wh\theta^{(t)})\over n  }\sqrt{dL\log(q)\over n} +{\phi(\wh\theta^{(t)}) \over n} \right)  +  \sqrt{\phi(\wh\theta^{(t)})\over n }\exp(-c'\Dtmin)\\ 
		&\lesssim \exp(-c\Dtmin) + {2^{-t}\over \pmin}.
	\end{align*}
	Together with \eqref{rate_mu_tilde}, we conclude that with the same probability,
	\begin{align*}
		\|\wh \mu_{\ell}^{(t+1)} - \mu_{\ell}^*\|_{\sw^*} &\le 	\|\wh \mu_{\ell}^{(t+1)} - \mu_{\ell}^*\|_{\sw^*} + 	\|\wt \mu_{\ell}-\mu_{\ell}^*\|_{\sw^*} \\
		& \lesssim \sqrt{d+\log n\over n\pi_{\ell}^*} + \exp(-c\Dtmin),\qquad \text{ for $t \ge C \log n$.}
	\end{align*}
	 Similarly, using \eqref{eq_phi_pi} and $\cE_\pi$ yields: for $t\ge C\log n$,
	\begin{align*}
			d(\wh \pi^{(t+1)},\pi^*) &\le d(\wh \bpi(\wh \theta^{(t)}),\wt \pi) + d(\wt \pi,  \pi^*) \lesssim {\phi(\wh \theta^{(t)})\over n\Dtmin \pmin} + \sqrt{\log n\over n\pmin} \lesssim \exp(-c'\Dtmin) + \sqrt{\log n\over n\pmin}.
	\end{align*}
	Finally,  regarding $\bwhsw^{(t+1)}$, using \eqref{bd_sw_td} and \eqref{eq_phi_sw_better} as well as $dL^2\log(q)\log n \le n$ yields 
	\begin{align*}  
		d(\bwhsw(\wh \theta^{(t)}),\sw^*)   ~ &\lesssim  ~   \sqrt{ d+\log n\over n} + 
		{\phi(\wh \theta^{(t)})\over n} +  {dL\log(q)\over n} + \exp(-c'\Dtmin) \\
		&\lesssim ~  \sqrt{ d+\log n\over n} + \exp(-c'\Dtmin) 
	\end{align*}
	for all $t\ge C\log n$. This completes the proof. 
\end{proof}

\subsection{Proof of \cref{thm_clustering}}\label{app_sec_proof_thm_clustering}
 \begin{proof}
    We first prove for $\Dtmax \ge \log n$.
 	For any $\theta$ and any $\ell \in [L]$ with $i\in \wh G_\ell$, we know 
 	$$
 	\left\{g(X_i;\theta) \ne \ell\right\} \subseteq \left\{ \gamma_\ell(X_i;\theta) < {1\over 2}\right\} = \left\{ \sum_{k \in [L]\setminus\{\ell\}}\gamma_k(X_i;\theta) \ge {1\over 2}\right\}.
 	$$
 	It follows that 
 	\begin{align}\nonumber
 		\ell(\theta) = {1\over n} \sum_{\ell = 1}^L \sum_{i \in \wh{G}_\ell}  1\{ g(X_i;\theta) \ne \ell\} &\le {2\over n}\sum_{\ell = 1}^L \sum_{i \in \wh{G}_\ell} \sum_{k \in [L]\setminus\{\ell\}}\gamma_k(X_i;\theta) \\
 		&= {2\over n}\sum_{k=1}^L \sum_{i\notin \wh G_k} \gamma_k(X_i;\theta) ~ \overset{\eqref{def_phi_theta}}{\le} {2\phi(\theta)\over n\Dtmin}.\label{eq_ell_phi}
 	\end{align}
 	Using \eqref{bd_phi_induction} gives that with probability $1-n^{-1}-\exp(-c\Dtmin)$,  for all $t\ge 1$,
 	\begin{align*}
 		\ell(\wh\theta^{(t)} )  \le {2\over n \Dtmin} \left(2n \exp(-c\Dtmin) + 2^{-t}\right).
 	\end{align*}
 	Since $\ell(\wh \theta^{(t)})$ only takes values from $\{0,1/n,\ldots, 1\}$, the claim thus follows. 

    For $\Dtmax \le \log n$, by repeating the same arguments in the proof of \cref{thm_conv_phi} except adding and subtracting $J^*$ directly instead of $\wt J^*$, one can obtain that, with probability at least $1-2n^{-1}-\exp(-c\Dtmin)$, for all $t\ge 1$,
    \begin{align*}
        \phi(\wh \theta^{(t)}) \le n\exp(-c\Dtmin) + CnL\left\{
            {1\over \Dtmin} \left[d(\wh M^{(t)},  M^*)\right]^2 + \left[d(\whsw^{(t)},\sw^*)\right]^2
        \right\}.
    \end{align*}
    The claim then follows by invoking \cref{thm_EM_samp} and \eqref{eq_ell_phi}.
 \end{proof}

 \section{Proof of \cref{thm_lower_bound}: the minimax lower bounds}\label{app_sec_proof_lower_bound}
 	
 	We separate the proof into $d \ge L-1$ and $d< L-1$. 
 
 	\begin{proof}[Proof of $d\ge L-1$]
 	It suffices to prove 
 	\begin{align}\label{lb_mu}
 		&\inf_{\wh \theta} \sup_{\theta \in \Theta(\alpha, \delta)} \PP_{\theta}\left\{ 
 		d(\wh M, M) \ge c_1 \sqrt{d \over n \alpha} 
 		\right\} \ge (1+c_0)/2,\\\label{lb_sw}
 		&\inf_{\wh \theta} \sup_{\theta \in \Theta(\alpha, \delta)} \PP_{\theta}\left\{ 
 		d(\wh \sw, \sw) \ge c_1\sqrt{d \over n}
 		\right\} \ge (1+c_0)/2
 	\end{align}
 	which jointly imply the claim.   We first note that when $d< 16$, the lower bounds of $c/\sqrt{n\alpha}$ and $c/\sqrt{n}$ can be easily proved by applying Le Cam's two-point argument; see, \citet[Theorem 2.2 \& Eq (2.9)]{Tsybakov09}. We thus focus on $d\ge 16$ in the following.
 	
 	We first prove \eqref{lb_mu} by constructing a set of  hypotheses. Let $\theta^{(0)}  =  (\pi, \mu_1, \mu_2, \ldots, \mu_L, \bI_d)$ with $\alpha = \pi_1 = \min_{\ell\in [L]} \pi_\ell$  and 
 	$$
 	\|\mu_k - \mu_{\ell}\|_2 = 	\sqrt{2\delta},\quad \forall ~ k\ne \ell.
 	$$ 
 	Note that the existence of such $\mu_\ell$'s is guaranteed by $d\ge L-1$.
 	For all $0\le i\le N$, let 
 	$\theta^{(i)} = (\pi, \mu_1 + \eps \beta^{(i)}, \mu_2, \ldots, \mu_L, \bI_d)$ with  $\beta^{(0)}=0_d$ and $\beta^{(i)} \in \{0,1\}^d$ for $i\ge 1$, which  according to the Varshamov-Gilbert bound in \citet[Lemma 2.9]{Tsybakov09}, satisfy  
 	\begin{equation}\label{hd_beta}
 		\|\beta^{(i)}-\beta^{(j)}\|_1 \ge d/8,\qquad   \forall~  0\le i<j\le N
 	\end{equation}
 	and $\log N \ge (d/8)\log 2$. For some constant $c\in (0,1/4)$ to be determined later, by   
 	choosing 
 	\begin{equation}\label{choice_eps}
 		\eps = \sqrt{ c\log 2 \over 8n \alpha} \le  (\sqrt{2}-1)\sqrt{\delta \over d},
 	\end{equation}
 	it is easy to see that $\theta^{(i)} \in \Theta(\alpha, \delta, A)$ for all $0\le i\le N$. Indeed, we note that  	for any $k, \ell \ge 2$ with $k\ne \ell$,
 	$
 	\|\mu_k - \mu_{\ell}\|_2^2 = 2\delta
 	$
 	by construction,  and 
 	\begin{align}\label{lb_mu_1_diff}
 		&\|\mu_1^{(i)} - \mu_{\ell} \|_2 \ge \sqrt{2\delta} - \eps \|\beta^{(i)}\|_2 \ge\sqrt{2\delta} - \sqrt{c d \log 2\over 8n\alpha} \ge \sqrt{\delta},\\\label{ub_mu_1_diff}
 		&\|\mu_1^{(i)} - \mu_{\ell} \|_2 \le \sqrt{2\delta} + \eps \|\beta^{(i)}\|_2 \le\sqrt{2\delta} +\sqrt{c d \log 2\over 8n\alpha} \le 2 \sqrt{\delta}.
 	\end{align} 
 	
 	We proceed to invoke \citet[Theorem 2.5]{Tsybakov09} to prove  \eqref{lb_mu}, which   requires to verify  
 	\begin{enumerate}
 		\item[(i)] $\KL(\PP_{\theta^{(i)}}, \PP_{\theta^{(0)}}) \le  c' \log N$ for some $0<c'<1/8$;
 		\item[(ii)] $d(M^{(i)}, M^{(j)}) = \|\mu_1^{(i)} - \mu_1^{(j)}\|_2 \ge 2c_1 \sqrt{d/n\alpha}$.
 	\end{enumerate}
 	Here $\PP_{\theta^{(i)}}$ is the distribution of $n$ i.i.d. samples from \eqref{model} under the parametrization $\theta^{(i)}$.
 	For (i), we find that 
 	\begin{align*}
 		\KL(\PP_{\theta^{(i)}}, \PP_{\theta^{(0)}}) &= n ~ 	\KL\left(
 		\sum_{\ell=1}^L \pi_\ell \cN_d(\mu_{\ell}^{(i)}, \bI_d), \sum_{\ell=1}^L \pi_\ell \cN_d(\mu_{\ell}^{(0)}, \bI_d)
 		\right) \\
 		&\le n \pi_1 ~ 	\KL\left(
 		\cN_d(\mu_1^{(i)}, \bI_d),  \cN_d(\mu_1^{(0)}, \bI_d)
 		\right)\\
 		&= {n\alpha \over 2}\eps^2 \|\beta^{(i)}\|_2^2.
 	\end{align*}
 	By $\|\beta^{(i)}\|_2^2\le d$, $\log N \ge (d/8)\log 2$ and  \eqref{choice_eps} with $c'=c/2$,
 	we obtain (i). Since (ii) follows as
 	\[
 	\|\mu_1^{(i)} - \mu_1^{(j)}\|_2 = \eps	\|\beta^{(i)}-\beta^{(j)}\|_2 = \eps	\|\beta^{(i)}-\beta^{(j)}\|_1^{1/2} \overset{\eqref{hd_beta}}{\ge}   \sqrt{ {c\log 2 \over 64}{d\over n\alpha}},
 	\]
 	invoking \citet[Theorem 2.5]{Tsybakov09}  proves \eqref{lb_mu}   for  
 	$
 	c_1 = \sqrt{c\log 2} / 16
 	$ 
 	and
 	$$
 	{\sqrt{N}\over 1+\sqrt{N}}\left(1-c-\sqrt{c \over \log (N)}\right) \ge  (1+c_0)/2
 	$$
 	by using  $d\ge 16$ and  choosing $c$ sufficiently small.\\
 	
 	To establish \eqref{lb_sw}, it suffices to prove
 	\begin{equation}\label{lb_sw_target}
 		\inf_{\wh \theta} \sup_{\theta \in \Theta(\alpha, \delta, A), \sw \preceq {3\over 2}} \PP_{\theta}\left\{ 
 		\|\wh  \sw - \sw\|_\op \ge {3c_1 \over 2}\sqrt{d \over n}
 		\right\} \ge (1+c_0)/2.
 	\end{equation}
 	For simplicity, we assume $d$ is even as the same argument can be proved for odd $d$, with $d/2$ replaced by $(d+1)/2$. Consider the following hypotheses:
 	$\theta^{(i)} = (\pi, \mu_1,\ldots, \mu_L, \sw^{(i)})$ with $\pi_{\min} \ge \alpha$, $\|\mu_k-\mu_{\ell}\|_2^2 = 3\delta/2$ for all $k\ne \ell$,  and 
 	\[
 	\sw^{(i)} =  \bI_d +  {\tau\over \sqrt{nd}} \sum_{m=1}^{d/2} \beta_m^{(i)}  B(m)
 	\]
 	with $\beta_m^{(i)} \in \{0,1\}$ for $i = 0,1,\ldots, N$, $1\leq m \leq d/2$ and 
 	some $\tau>0$ to be chosen later.
 	Here the matrix $B(m) = (B_{ij}(m))_{d\times d}$ is defined as: for $i,j\in [d]$, 
 	\[
 	B_{ij}(m) = 1\{i = m \text{ and } m+1\le j\le d, \text{ or } j =m \text{ and } m+1 \le i\le d\}.
 	\]
 	Then by using the inequality $\|A\|_\op \le \max_{\ell\in [d]} \|A_{\ell\cdot}\|_1$ for any symmetric matrix $A$ and Weyl's inequality, we obtain
 	\begin{equation}\label{bd_op_sw_j}
 		\lambda_1(\sw^{(i)}) \le 1 + {\tau \over \sqrt {nd}} \max_{\ell\in [d]}  \sum_{m=1}^{d/2} \beta_m^{(i)}\|B(m)_{\ell\cdot}\|_1 \le 1 + \tau \sqrt{d\over n} \le 3/2
 	\end{equation}
 	provided that $\tau \le 1/2$, and similarly, we can also deduce that $\lambda_p(\sw^{(i)}) \ge 1/2$. We thus obtain 
 	$\min_{k\ne \ell} \|\mu_k - \mu_{\ell}\|_{\sw^{(i)}}^2 \ge \min_{k\ne \ell} \|\mu_k - \mu_{\ell}\|_2^2 / \lambda_1(\sw^{(i)}) \ge \delta$ as well as 
 	$\max_{k,\ell} \|\mu_k - \mu_{\ell}\|_{\sw^{(i)}}^2 \le \max_{k,\ell}  \|\mu_k - \mu_{\ell}\|_2^2 / \lambda_p(\sw^{(i)}) \le 3\delta$.
 	Therefore, $\theta^{(i)} \in \Theta(\alpha, \delta,A)$ for all $0\le i\le N$. 
 	
 	By using the Varshamov-Gilbert bound again, one has $\|\beta^{(i)}-\beta^{(j)}\|_1 \ge d / 16$ for any $i\ne j$ and $\log N \ge (d/16)\log 2$. This  also gives 
 	\begin{align}\label{lb_sw_ij_diff}\nonumber
 		\|	\sw^{(i)}  - \sw^{(j)}\|_\op^2 &= \sup_{v \ne 0} {\|(\sw^{(i)}  - \sw^{(j)}) v\|_2^2 \over \|v\|_2^2}\\\nonumber
 		&\ge   {\tau^2 \over  nd} \sum_{m=1}^{d/2}|\beta_m^{(i)} -\beta_m^{(j)}|  {  \|B(m) v\|_2^2 \over \|v\|_2^2} &&\text{by taking }v_i = 1\{d/2 \le i\le d\}\\\nonumber
 		&=   {\tau^2 \over  nd} \|\beta^{(i)} -\beta^{(j)}\|_1  {d\over 2}\\
 		&\ge  {\tau^2 d\over 32n}.
 	\end{align} 
 	On the other hand, for any $1\le i\le N$, we have 
 	\begin{align*}
 		\KL(\PP_{\theta^{(i)}}, \PP_{\theta^{(0)}}) &\le n \sum_{\ell=1}^L \pi_\ell  ~ 	\KL\left( \cN_d(\mu_{\ell}, \sw^{(i)}), \cN_d(\mu_{\ell}, \bI_d)
 		\right)\\
 		&  = {n \over 2} \left(
 		\tr(\sw^{(i)}) -d - \log{\det(\sw^{(i)})}
 		\right)\\
 		&= {n\over 2}\left[ 
 		\tr(\sw^{(i)} - \bI_d) - \log \det(\bI_d + \sw^{(i)} - \bI_d)
 		\right].
 	\end{align*}
 	Recall from \eqref{bd_op_sw_j} that the eigenvalues of $ \sw^{(i)} - \bI_d$ are bounded within $[-1/2, 1/2]$. Taylor expansion gives that 
 	\[
 	\log \det(\bI_d + \sw^{(i)} - \bI_d) \ge \tr(
 	\sw^{(i)} - \bI_d)  - 2 \| \sw^{(i)} - \bI_d\|_F^2
 	\]
 	so that, by choosing $\tau^2 \le c \log 2/16$,
 	\[
 	\KL(\PP_{\theta^{(i)}}, \PP_{\theta^{(0)}}) \le n \| \sw^{(i)} - \bI_d\|_F^2  \le n { \tau^2\over  nd} \sum_{m=1}^{d/2} \beta_m^{(i)} 2d  \le   \tau^2 d\le c  \log N.
 	\]
 	In view of \eqref{lb_sw_ij_diff}, choosing $c$ sufficiently small, invoking \citet[Theorem 2.5]{Tsybakov09}  proves \eqref{lb_sw_target}  for  
 	$
 	c_1 =  \tau / (12\sqrt{2})
 	$ 
 	and $d\ge 16$. This completes the proof. 
 \end{proof}

 \begin{proof}[Proof of $d<L-1$]
 	The proof follows the same arguments except that: for proving \eqref{lb_mu}, due to $d < L-1$, one can only construct $\mu_1,\ldots, \mu_L$ such that  
 	\[ 
 	\sqrt{2\delta} \le \min_{k\ne \ell} \|\mu_k - \mu_{\ell}\|_2\le \max_{k,\ell} \|\mu_k - \mu_{\ell}\|_2 \le  L^{1/d}\sqrt{2\delta}\le (\sqrt{A}-1) \sqrt{\delta}.
 	\]  
 	The existence of such $\mu_{\ell}$'s is guaranteed by  the standard bounds of the  packing number of Euclidean balls.\footnote{For any $\epsilon\in (0,1]$, the $\epsilon$-packing number of $\cB_2(\RR^d; 1)$, the $\ell_2$-unit ball of $\RR^d$ lies between $(1/\epsilon)^d$ and $(3/\epsilon)^d$. Recall that a set $S \subseteq \cB_2(\RR^d; 1)$ is an $\epsilon$-packing  of $\cB_2(\RR^d; 1)$ if for any $x\ne x' \in S$, $\|x-x'\|_2 > \epsilon$. The $\epsilon$-packing number of $\cB_2(\RR^d; 1)$ is the largest cardinality of its any $\epsilon$-packing.} 
 	It is easy to see that   \eqref{lb_mu_1_diff} still holds whereas   \eqref{ub_mu_1_diff} is replaced by 
 	\[
 	\|\mu_1^{(i)} - \mu_{\ell} \|_2 \le (\sqrt{A}-1)\sqrt{\delta} + \eps \|\beta^{(i)}\|_2  \le  \sqrt{A\delta},
 	\]
 	so that $\theta^{(i)}\in \Theta_{\alpha,\delta, A}$ for all $0\le i\le N$.
 	For proving \eqref{lb_sw},   with the choice of $\mu_{\ell}$'s such that 
 	\[
 	\sqrt{3\delta/2} \le \min_{k\ne \ell} \|\mu_k - \mu_{\ell}\|_2\le \max_{k,\ell} \|\mu_k - \mu_{\ell}\|_2 \le   \sqrt{A\delta/2},
 	\]
 	one has that for any $k\ne \ell$,
 	\[
 	\|\mu_k - \mu_{\ell}\|_{\sw^{(i)}}^2 \le \|\mu_k - \mu_{\ell}\|^2/\lambda_p(\sw^{(i)}) \le A\delta 
 	\]
 	and 
 	\[
 	\|\mu_k - \mu_{\ell}\|_{\sw^{(i)}}^2 \ge \|\mu_k - \mu_{\ell}\|^2/\lambda_1(\sw^{(i)}) \ge \delta
 	\]
 	so that the constructed hypotheses belong to the specified parameter space. The rest of the proof uses the same argument as proving \cref{thm_lower_bound}.
 \end{proof}

\section{Auxiliary Lemma}\label{app_sec_aux}

    The following lemmas are proved in \cite{bing2023optimal}. 
    \begin{lemma}\label{lem_concen_pis}
        Assume $\pmin \ge 2\log n/n$. Then the following holds with probability at least $1-2n^{-1}$ for all $k\in [L]$,
        	\[ 
        	|\wt\pi_k - \pi_k^*| < \sqrt{16\pi_k (1-\pi_k^*)\log n\over n}.
        	\]
        	Furthermore, if $\pmin \ge C\log n/ n$ for some sufficiently large constant $C$, then 
        	\[
        	\PP\left\{
        	c\pi_k^* \le \wt \pi_k \le c'\pi_k^*
        	\right\} \ge 1-2n^{-1}.
        \]
    \end{lemma}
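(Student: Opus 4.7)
The plan is to treat $n_k = \sum_{i=1}^n 1\{Y_i = k\}$ for each fixed $k \in [L]$ as a sum of i.i.d. Bernoulli($\pi_k^*$) random variables, so that $n\wt\pi_k = n_k \sim \textrm{Binomial}(n,\pi_k^*)$ with mean $n\pi_k^*$ and variance $n\pi_k^*(1-\pi_k^*)$. The two displays are then standard consequences of a Bernstein-type concentration bound combined with a union bound over $k \in [L]$.

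First, applying Bernstein's inequality to the centered bounded summands $1\{Y_i=k\}-\pi_k^*$, I get, for any $t>0$,
\[
    \PP\bigl(|n_k - n\pi_k^*| \ge t\bigr) \le 2\exp\left(-\frac{t^2/2}{n\pi_k^*(1-\pi_k^*) + t/3}\right).
\]
I then set $t = 4\sqrt{n\pi_k^*(1-\pi_k^*)\log n}$, so $t^2/(2n\pi_k^*(1-\pi_k^*))=8\log n$. The key observation is that the hypothesis $\pmin \ge 2\log n/n$ forces $n\pi_k^*(1-\pi_k^*) \gtrsim \log n$ (at least when $\pi_k^* \le 1/2$; the symmetric case $\pi_k^* > 1/2$ is handled by applying the same argument to $1-\pi_k^*$), so the linear term $t/3$ in the denominator is dominated by the variance term. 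This yields $\PP(|n_k - n\pi_k^*|\ge t) \le 2n^{-c}$ for some $c \ge 2$. A union bound over $k \in [L]$ (implicitly $L \le n$, otherwise the conclusion is vacuous after a mild constant adjustment) and dividing through by $n$ produces the first claim.

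For the second claim, I note that the first claim already implies
\[
    \Bigl|\wt\pi_k - \pi_k^*\Bigr| < \sqrt{\frac{16 \pi_k^*(1-\pi_k^*)\log n}{n}} \le 4\pi_k^* \sqrt{\frac{\log n}{n \pi_k^*}},
\]
so once $n\pmin \ge C\log n$ for $C$ large enough (depending on the desired $c, c'$), the right-hand side is at most $\tfrac{1}{2}\pi_k^*$, yielding $\wt\pi_k \in [\tfrac{1}{2}\pi_k^*, \tfrac{3}{2}\pi_k^*]$. This gives the multiplicative two-sided bound with explicit constants $c,c'$ on the same high-probability event.

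There is no substantive obstacle here; the argument is routine and the only care required is in tracking constants so that the Bernstein tail bound exceeds $1-2/n$ after the union bound and so that the variance term genuinely dominates the linear term under the hypothesis $\pmin \ge 2\log n/n$.
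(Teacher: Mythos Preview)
Your argument is correct. The paper does not supply its own proof of this lemma; it simply cites \cite{bing2023optimal}. Your approach---Bernstein's inequality for the Binomial variable $n_k$, a union bound over $k\in[L]$ (which is harmless since $L\le 1/\pmin \le n/(2\log n)$ under the hypothesis), and then the multiplicative sandwich from the first display---is the standard route and almost certainly matches what the cited reference does. One minor clarification: your ``symmetric case $\pi_k^*>1/2$'' is handled more directly by observing that $1-\pi_k^* = \sum_{\ell\ne k}\pi_\ell^* \ge \pmin$ for $L\ge 2$, so $n\pi_k^*(1-\pi_k^*)\ge \log n$ holds in all cases under the stated assumption.
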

The following lemma states upper bounds of the quadratic form of a sub-Gaussian random vector \citep{hsu2012tail}.
\begin{lemma}\label{subG_quad_bd}
    Let  $ \xi \in \mathbb{R}^d$  be a subGaussian random vector with parameter $\gamma_{\xi}$. Then, for all symmetric positive semi-definite matrices $H$, and all $t \geq 0$,
\[
\mathbb{P} \left\{  \xi ^\top H \xi \geq \gamma_{\xi}^2 \left( \sqrt{\mathrm{tr}(H)} + \sqrt{2t \|H\|_{\mathrm{op}}} \right)^2 \right\} \leq e^{-t}.
\]
\end{lemma}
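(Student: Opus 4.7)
The statement is a Hanson--Wright-type bound for sub-Gaussian vectors; the cleanest route is the standard MGF/Gaussian-chaos decoupling argument due to Hsu, Kakade and Zhang. The plan is to bound the moment generating function of $Y := \xi^\T H \xi$ and then apply the Chernoff inequality, optimising the free parameter at the end.

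First, I would introduce an auxiliary Gaussian vector $g \sim \cN_d(0, \bI_d)$ independent of $\xi$ and exploit the elementary identity $\exp(\alpha \|v\|_2^2/2) = \EE_g[\exp(\sqrt{\alpha}\, g^\T v)]$ applied to $v = H^{1/2}\xi$. Swapping the order of integration (Fubini) and using the sub-Gaussian bound $\EE_\xi[\exp(w^\T \xi)] \le \exp(\gamma_\xi^2 \|w\|_2^2 /2)$ with $w = \sqrt{\alpha}\, H^{1/2} g$ yields
\[
  \EE[\exp(\alpha Y/2)] \;\le\; \EE_g[\exp(\alpha \gamma_\xi^2\, g^\T H g / 2)] \;=\; \det(\bI_d - \alpha \gamma_\xi^2 H)^{-1/2},
\]
valid for $\alpha \in (0, 1/(\gamma_\xi^2 \|H\|_{\op}))$ so that the Gaussian integral converges.

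Next, I would diagonalise $H$ and use the scalar inequality $-\log(1-x) \le x + x^2/(2(1-x))$ on each eigenvalue to obtain
\[
  -\tfrac{1}{2}\log\det(\bI_d - \alpha\gamma_\xi^2 H) \;\le\; \tfrac{\alpha\gamma_\xi^2}{2}\tr(H) + \tfrac{\alpha^2 \gamma_\xi^4 \|H\|_{\op}}{4(1-\alpha\gamma_\xi^2\|H\|_{\op})}\tr(H).
\]
Combined with the Chernoff bound $\PP(Y \ge u) \le \exp(-\alpha u/2)\, \EE[\exp(\alpha Y/2)]$, this reduces the problem to minimising an explicit convex function of $\alpha$. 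Plugging in $u = \gamma_\xi^2(\sqrt{\tr(H)} + \sqrt{2t\|H\|_{\op}})^2$ and choosing $\alpha\gamma_\xi^2\|H\|_{\op} = 1 - (1 + \sqrt{2t\|H\|_{\op}/\tr(H)})^{-1}$ makes the right-hand side collapse to $e^{-t}$ after routine simplification.

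The only mildly delicate step is the algebraic verification that this choice of $\alpha$ actually gives the stated tail bound; everything else is bookkeeping. Since the result is exactly Theorem~2.1 of Hsu--Kakade--Zhang (ECP, 2012), I would present the proof in one short paragraph following the sketch above and refer the reader there for the detailed algebra.
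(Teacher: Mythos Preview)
Your proposal is correct and in fact goes beyond what the paper does: the paper offers no proof at all for this lemma and simply cites \cite{hsu2012tail} (Hsu--Kakade--Zhang, 2012), which is exactly the reference you invoke at the end of your sketch. Your outline of the Gaussian decoupling and Chernoff argument is the standard proof from that source, so there is nothing to compare---you have supplied the argument the paper omits.
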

As an application of \cref{subG_quad_bd}, we have the following result.  
\begin{lemma}\label{Gau_l2_bd}
    Let $N_1, \dots, N_n$ be i.i.d. from $\cN_d(0,\bI_d)$. Then  for any $\delta \in (0,1)$,  
    \[
    \PP\left\{ \left\|{1\over n}\sum_{i=1}^n N_i \right\|_2 \ge \sqrt{ d\over n} + \sqrt{2\log(1/ \delta)\over n}\right\} \leq \delta.
    \] 
\end{lemma}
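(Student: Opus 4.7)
The plan is to reduce the claim to a direct application of Lemma \ref{subG_quad_bd} via the rotational invariance (or simply the convolution property) of the Gaussian distribution. Since $N_1,\ldots,N_n$ are i.i.d.\ $\cN_d(0,\bI_d)$, the sum $\sum_{i=1}^n N_i \sim \cN_d(0, n\bI_d)$, and therefore
\[
    Z := \frac{1}{\sqrt{n}}\sum_{i=1}^n N_i \sim \cN_d(0,\bI_d).
\]
Consequently $\bigl\|n^{-1}\sum_{i=1}^n N_i\bigr\|_2 = n^{-1/2}\|Z\|_2$, and the problem reduces to controlling $\|Z\|_2$ for a single standard Gaussian vector in $\RR^d$.

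Next I would apply Lemma \ref{subG_quad_bd} to the quadratic form $Z^\T H Z$ with $H = \bI_d$ and $\xi = Z$, noting that a standard Gaussian is sub-Gaussian with parameter $\gamma_\xi = 1$. Since $\tr(H) = d$ and $\|H\|_\op = 1$, the lemma yields
\[
    \PP\!\left\{\|Z\|_2 \ge \sqrt{d} + \sqrt{2t}\right\} \le e^{-t}, \qquad \forall t\ge 0.
\]
Setting $t = \log(1/\delta)$ and dividing by $\sqrt{n}$ gives
\[
    \PP\!\left\{\left\|\frac{1}{n}\sum_{i=1}^n N_i\right\|_2 \ge \sqrt{\frac{d}{n}} + \sqrt{\frac{2\log(1/\delta)}{n}}\right\} \le \delta,
\]
which is exactly the claim. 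There is no real obstacle here; the only point worth noting is the correct identification of the sub-Gaussian parameter and the reduction from $n$ samples to a single standard Gaussian via the convolution property, after which Lemma \ref{subG_quad_bd} delivers the bound in one line.
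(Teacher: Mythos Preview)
Your proof is correct and essentially the same as the paper's: the paper applies Lemma~\ref{subG_quad_bd} directly to $\xi = n^{-1}\sum_{i=1}^n N_i$ with sub-Gaussian parameter $\gamma_\xi = 1/\sqrt{n}$, whereas you first rescale to $Z = n^{-1/2}\sum_{i=1}^n N_i \sim \cN_d(0,\bI_d)$ and apply the lemma with $\gamma_\xi = 1$. The two are equivalent up to where the factor $1/\sqrt{n}$ enters.
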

\begin{proof}
    The result follows from \cref{subG_quad_bd} by taking $\xi = n^{-1}\sum_{i=1}^n N_i $ with $\gamma_\xi = {1/\sqrt{n}}$.  
\end{proof}


The following lemma provides an upper bound on the operator norm of $\sum_{i\in S} N_iN_i^\T$ for any $S\in \{0,1\}^m$ with any $m\le n$. 
\begin{lemma}\label{lem_N_op_norm}
	For any $m\in \NN$ with $m\le n$, with probability at least $1-e^{-m}$, the following holds for all $S\subseteq [n]$  with $|S|\le m$
	\[
	\left\|\sum_{i\in S} N_iN_i^\T \right\|_\op \le ~ 2 
	d + 12m \log\left(en\over m\right). 
	\]
\end{lemma}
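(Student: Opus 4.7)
The plan is to reduce the supremum over subsets to controlling the spectral norm of a Gaussian matrix, then apply Gaussian concentration pointwise and take a union bound over all $S\subseteq[n]$ with $|S|\le m$.

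First, observe that for any $S\subseteq[n]$ with $|S|=k$, the matrix $A_S:=\sum_{i\in S}N_iN_i^\T$ factors as $X_SX_S^\T$, where $X_S\in\RR^{d\times k}$ is the Gaussian matrix with standard normal columns $\{N_i\}_{i\in S}$. Hence $\|A_S\|_\op = s_{\max}(X_S)^2$, and I will appeal to the sharp (dimension-split) concentration bound
\[
\PP\left\{s_{\max}(X_S)\ge \sqrt{d}+\sqrt{k}+t\right\}\le \exp(-t^2/2), \qquad \forall ~ t\ge 0,
\]
which follows from Gordon's expectation bound $\EE[s_{\max}(X_S)]\le\sqrt{d}+\sqrt{k}$ combined with the Gaussian Lipschitz concentration of $s_{\max}$. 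Squaring and using the elementary inequality $(\sqrt{d}+(\sqrt{k}+t))^2\le 2d+2(\sqrt{k}+t)^2\le 2d+4k+4t^2$ then yields, for each fixed $S$ of size $k$,
\[
\PP\left\{\|A_S\|_\op \ge 2d+4k+4t^2\right\}\le \exp(-t^2/2).
\]

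Second, I take a union bound over all $S\subseteq[n]$ with $|S|\le m$. The total number of such subsets is bounded by $\sum_{k=0}^m\binom{n}{k}\le(en/m)^m$ (up to an absolute multiplicative constant). Choosing $t^2 = 2m\log(en/m)+2m+C_0$ for a suitable absolute $C_0$ makes the aggregate failure probability at most $e^{-m}$. On the complementary event, simultaneously for every $S\subseteq[n]$ with $|S|\le m$,
\[
\|A_S\|_\op \le 2d+4m+4t^2 \le  2d + 12m + 8m\log(en/m) + 4C_0.
\]
Because $m\le n$ forces $\log(en/m)\ge 1$, the linear-in-$m$ terms can be absorbed into a multiple of $m\log(en/m)$; after a small tightening of the constants in $t^2$ and in the counting estimate, this delivers the target bound $2d+12m\log(en/m)$.

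The only delicate point is the bookkeeping needed to secure the precise coefficient $2$ in front of $d$. A $(1/4)$-net argument on the sphere combined with a chi-square tail bound on $\sum_{i\in S}(v^\T N_i)^2$ would also produce a bound of the form $2d + Cm\log(en/m)$, but it inevitably inflates the coefficient of $d$ because of the factor $2$ incurred when passing from a discretization of $\Sp^{d-1}$ back to the supremum. The direct Gaussian matrix bound sidesteps this loss by cleanly separating $\sqrt{d}$ from $\sqrt{k}+t$ via $(a+b)^2\le 2a^2+2b^2$. Beyond this accounting, every ingredient—Gaussian concentration for $s_{\max}$, the volumetric count $\sum_{k\le m}\binom{n}{k}\le(en/m)^m$, and the elementary squaring inequalities—is entirely standard.
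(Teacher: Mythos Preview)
Your argument via Gordon's bound $\EE[s_{\max}]\le\sqrt d+\sqrt k$ plus Gaussian Lipschitz concentration is correct and yields $2d+Cm\log(en/m)$; the exact constant $12$ requires some optimism in the bookkeeping, but the paper's own proof is equally casual there. The paper takes a different route: it writes $\bigl\|\sum_{i\in S}N_iN_i^\T\bigr\|_\op=\|N_SN_S^\T\|_\op$ with $N_S\in\RR^{|S|\times d}$, places an $\epsilon$-net on $\Sp^{|S|-1}$ (size $\le 5^{|S|}\le 5^m$), and applies the chi-square tail bound of \cref{subG_quad_bd} to $\|N_S^\T v\|_2^2$. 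Your last paragraph anticipates a net on $\Sp^{d-1}$ controlling $\sum_{i\in S}(v^\T N_i)^2$, which would indeed inflate the coefficient of $d$; the paper's trick is to net the \emph{other} sphere, so the net cardinality contributes only to the $m$-term and the factor $2$ in front of $d$ survives. Your Gordon-based approach is cleaner in that it avoids discretization altogether and appeals directly to sharp Gaussian matrix theory; the paper's route is more self-contained, needing only a scalar quadratic-form tail rather than the Davidson--Szarek machinery.
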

\begin{proof}
	Fix any $S$ and write $N_S$ as the $|S|\times d$ matrix containing $\{N_i\} _{i\in S}$. Let $\cN_{1/2}$ be the epsilon net of $\Sp^{|S|}$. Then
	\[
		 \left\|\sum_{i\in S} N_iN_i^\T \right\|_\op = \left\|N_SN_S^\T\right\|_\op = \sup_{v \in \Sp^{|S|}}  v^\T N_S N_S^\T v\le 2\max_{v\in \cN_{1/2}}  v^\T N_S N_S^\T v.
	\]
	Since $N_S^\T v \sim \cN_d(0,\bI_d)$, applying \cref{subG_quad_bd} with $\xi = N_S^\T v$ and $H = \bI_d$ gives 
	\[
		\PP\left\{
				v^\T N_S N_S^\T v  \ge \left(
				\sqrt{d} + \sqrt{2 t}
				\right)^2
		\right\} \le e^{-t},\qquad \forall ~ t\ge 0.
	\]
	Note that $|\cN_{1/2}|\le 5^{|S|}$ and there are at most ${n\choose m}\le (en/m)^m$ all possible $S \in \{0,1\}^m$. Taking the union bounds over   $v\in \cN_{1/2}$ and $S$ and choosing $t = 3m \log(en/m)$ gies the result. 
\end{proof}

The following lemmas state the well-known vector-valued and matrix-valued Bernstein inequalities. See, for instance, \citet[Theorem 3.1, Corollary 3.1 and Corollary 4.1]{Minsker2017}. 
\begin{lemma}[Vector-valued Bernstein inequality]\label{lem_bernstein_vector}
	Let $X_1,\ldots, X_n$ be independent random vectors with zero mean and $\max_{i\in [n]}\|X_i\|_2 \le U$ almost surely. Denote $\sigma^2 := \sum_{i=1}^n \EE[\|X_i\|_2^2]$. Then for all $t \ge {1\over 6}(U + \sqrt{U^2 + 36\sigma^2})$, 
	\[
	\PP\left(
	\left\|\sum_{i=1}^n  X_i\right\|_2 > t
	\right) \le 28\exp\left(
	- {t^2 / 2\over \sigma^2 + Ut/3}
	\right).
	\]
\end{lemma}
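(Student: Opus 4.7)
The plan is to reduce the vector-valued statement to the self-adjoint matrix Bernstein inequality (in its effective-dimension form) via a standard Hermitian dilation. For each $X_i\in\RR^d$, I would define the $(d+1)\times(d+1)$ self-adjoint block matrix
\[
D_i := \begin{pmatrix} 0 & X_i \\ X_i^\T & 0 \end{pmatrix},
\]
and use the well-known identity $\|D_i\|_\op = \|X_i\|_2$, so that
\[
\bigl\|\textstyle\sum_{i=1}^n X_i\bigr\|_2 = \bigl\|\textstyle\sum_{i=1}^n D_i\bigr\|_\op.
\]
The $D_i$ are independent, self-adjoint, mean-zero, and $\|D_i\|_\op\le U$ almost surely, so the problem reduces to controlling the operator norm of a sum of bounded independent self-adjoint matrices.

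For the variance parameter, a direct block computation gives
\[
D_i^2 = \begin{pmatrix} X_iX_i^\T & 0 \\ 0 & \|X_i\|_2^2 \end{pmatrix},
\qquad
V:=\sum_{i=1}^n \EE[D_i^2] = \begin{pmatrix} \sum_i \EE[X_iX_i^\T] & 0 \\ 0 & \sigma^2 \end{pmatrix}.
\]
Since the scalar block equals $\sigma^2$ and the matrix block has operator norm at most $\sum_i \EE\|X_i\|_2^2=\sigma^2$, we get $\|V\|_\op\le\sigma^2$. Moreover $\tr(V)=2\sigma^2$, so the \emph{intrinsic dimension} $\mathbf{d}(V):=\tr(V)/\|V\|_\op$ satisfies $\mathbf{d}(V)\le 2$; this is the key observation that will prevent the usual $d$-dependent prefactor from appearing in the final bound.

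Next I would invoke the self-adjoint Bernstein inequality with intrinsic dimension (Minsker 2017, Thm.\ 3.1 and Cor.\ 3.1), which states that for any $t\ge \tfrac{1}{6}(U+\sqrt{U^2+36\|V\|_\op})$,
\[
\PP\bigl(\bigl\|\textstyle\sum_i D_i\bigr\|_\op>t\bigr)\le 14\,\mathbf{d}(V)\exp\!\left(-\frac{t^2/2}{\|V\|_\op + Ut/3}\right).
\]
Substituting $\mathbf{d}(V)\le 2$ and $\|V\|_\op\le\sigma^2$ yields exactly the claimed bound with prefactor $28$, after using the monotonicity $\|V\|_\op\le\sigma^2$ in the denominator of the exponent to pass to the stated form.

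The main obstacle, and the reason the result is not simply a direct corollary of the textbook matrix Bernstein inequality, is the dimension-free constant $28$: a naive application of the ordinary self-adjoint Bernstein inequality would produce a prefactor proportional to $d+1$, which is much weaker than the statement. Replacing the ambient dimension by the intrinsic dimension $\tr(V)/\|V\|_\op$ is precisely the content of Minsker's refinement, and verifying that our dilation gives $\mathbf{d}(V)\le 2$ is what makes this refinement applicable in the present setting. Once this step is in place, the rest of the argument is routine algebraic bookkeeping.
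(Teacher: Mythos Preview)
Your proposal is correct and is exactly the standard derivation: the paper does not give its own proof but simply cites \cite{Minsker2017} (Theorem~3.1, Corollary~3.1 and Corollary~4.1), and your Hermitian-dilation argument together with the intrinsic-dimension bound $\mathbf{d}(V)=2$ is precisely how Corollary~4.1 there is obtained from Theorem~3.1. One small remark: since the scalar block of $V$ equals $\sigma^2$ exactly, you in fact have $\|V\|_\op=\sigma^2$ (not just $\le$), so no monotonicity step is needed at the end.
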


\begin{lemma}[Matrix-valued Bernstein inequality]\label{lem_bernstein_mat}
	Let $X_1,\ldots, X_n \in \RR^{d\times d}$ be independent, symmetric random matrices with zero mean and $\max_{i\in [n]}\|X_i\|_\op \le U$ almost surely. Denote $\sigma^2 := \|\sum_{i=1}^n \EE[X_i^2]\|_\op$. Then for all $t \ge {1\over 6}(U + \sqrt{U^2 + 36\sigma^2})$, 
	\[
	\PP\left(
	\left\|\sum_{i=1}^n  X_i\right\|_\op > t
	\right) \le 14   \exp\left(
	- {t^2 / 2\over \sigma^2 + Ut/3} + \log(d)
	\right).
	\]
\end{lemma}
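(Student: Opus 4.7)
The result is the classical Bernstein inequality for independent sums of self-adjoint random matrices, in the refined form stated in Minsker [2017, Cor.~4.1]. The plan is to follow the Ahlswede--Winter/Tropp matrix Laplace transform approach: first reduce to a one-sided spectral bound via $\PP(\|S\|_\op > t) \le \PP(\lambda_{\max}(S) > t) + \PP(\lambda_{\max}(-S) > t)$, where $S:=\sum_{i=1}^n X_i$, so it suffices to bound each one-sided tail by $7\exp(\cdot)$. For each, apply the trace--exponential Chernoff trick $e^{\lambda \lambda_{\max}(S)} \le \tr\,e^{\lambda S}$ together with Markov, giving $\PP(\lambda_{\max}(S) > t) \le e^{-\lambda t}\,\EE\,\tr \exp(\lambda S)$ for any $\lambda > 0$.

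The core ingredient is then Lieb's concavity theorem, applied inductively over the independent $X_i$'s, to yield
\[
\EE\,\tr\exp\Bigl(\lambda \sum_i X_i\Bigr) \le \tr\exp\Bigl(\sum_i \log \EE\,e^{\lambda X_i}\Bigr).
\]
For each summand I would derive the scalar-like MGF bound $\log \EE\,e^{\lambda X_i} \preceq g(\lambda)\,\EE X_i^2$ with $g(\lambda)=\lambda^2/(2(1-\lambda U/3))$, valid for $0\le \lambda < 3/U$. This uses the mean-zero condition to kill the linear term and the operator bound $X_i^k \preceq U^{k-2} X_i^2$ (in the PSD sense, for $k\ge 2$) to control the higher-order terms of the series expansion of $e^{\lambda X_i}$, followed by operator monotonicity of $\log$ on $\preceq$. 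Summing and using monotonicity of $\tr\exp(\cdot)$ in the PSD order gives $\EE\,\tr\exp(\lambda S) \le \tr \exp(g(\lambda)V) \le d\,\exp(g(\lambda)\sigma^2)$, where $V=\sum_i \EE X_i^2$ and $\sigma^2=\|V\|_\op$.

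Combining the two displays yields $\PP(\lambda_{\max}(S) > t) \le d\,\exp(-\lambda t + g(\lambda)\sigma^2)$; optimizing at $\lambda^* = t/(\sigma^2+Ut/3)$ (which is $<3/U$ precisely under the assumed lower bound $t\ge (U+\sqrt{U^2+36\sigma^2})/6$) produces the exponent $-t^2/(2(\sigma^2+Ut/3))+\log d$. The same argument applied to $-S$ handles the other tail, and the two halves sum to the stated form.

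The one genuinely delicate point is the matrix MGF inequality $\log \EE e^{\lambda X_i}\preceq g(\lambda)\EE X_i^2$: the mean-zero and boundedness hypotheses must be combined in a noncommutative setting, and care is needed because the operator expansion of $e^{\lambda X_i}$ does not factor through scalar inequalities. Once this is in hand, everything else is bookkeeping. Getting the explicit constant $14$ (rather than the naive $2d$ from the two-sided union bound) is Minsker's refinement; it comes from a slightly sharper handling of the Laplace transform as $\lambda\uparrow 3/U$ and absorbs the leftover multiplicative slack into the additive $\log d$ already present in the exponent.
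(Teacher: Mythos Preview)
The paper does not actually prove this lemma; it is stated as an auxiliary result with a citation to Minsker~[2017, Corollary~4.1]. Your outline of the Tropp/Lieb matrix Laplace transform argument is the standard route and would correctly produce the classical bound $2d\exp\bigl(-t^2/(2(\sigma^2+Ut/3))\bigr)$ valid for \emph{all} $t>0$.

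Where your account goes astray is in the last paragraph. Your claimed mechanism for the restriction $t\ge (U+\sqrt{U^2+36\sigma^2})/6$ is incorrect: the choice $\lambda^*=t/(\sigma^2+Ut/3)$ satisfies $\lambda^*<3/U$ automatically, since this inequality reduces to $0<3\sigma^2$. So the lower bound on $t$ does not come from feasibility of $\lambda^*$. Likewise, Minsker's refinement is not ``slightly sharper handling of the Laplace transform as $\lambda\uparrow 3/U$''; it is a genuinely different device. He replaces the crude step $\tr e^{A}\le d\,e^{\lambda_{\max}(A)}$ by working with $\tr\,\psi(A)$ for a carefully chosen nonnegative function (so that $\psi(A)$ is PSD and its trace controls $\lambda_{\max}$), which yields a leading factor equal to the effective rank $\tr(V)/\|V\|_\op$ rather than $d$. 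The absolute constant $14$ and the lower-bound restriction on $t$ are both artifacts of this $\psi$-based argument; the form quoted in the paper then follows by bounding the effective rank by $d$. If you want to reproduce the exact statement, you would need to go through Minsker's modified functional, not the plain trace-exponential.
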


Another useful concentration inequality of the operator norm of the random matrices with i.i.d. sub-Gaussian rows is stated in the following lemma \cite[Lemma 16]{bing2020prediction}. This is an immediate result of \citet[Remark 5.40]{vershynin_2012}.

\begin{lemma}\label{lem_op_diff} 
	Let $G$ be $n$ by $d$ matrix whose rows are i.i.d. $\gamma$ sub-Gaussian  random vectors with covariance matrix $\Sigma_G$. Then, for every $t\ge 0$, with probability at least  $1-2e^{-ct^2}$,
	\[
	\left\|	{1\over n}G^\T G - \Sigma_G\right\|_{\op}\le \max\left\{\delta, \delta^2\right\} \left\|\Sigma_G\right\|_{\op},
	\]
	with $\delta = C\sqrt{d/n}+ t/\sqrt n$ where $c,C$ are positive constants depending on $\gamma$.
\end{lemma}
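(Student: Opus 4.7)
The plan is to reduce the anisotropic case to an isotropic sub-Gaussian matrix concentration bound, then translate the resulting operator-norm estimate back by a whitening argument. The key identity driving everything is
\[
\frac{1}{n} G^\T G - \Sigma_G ~ = ~ \Sigma_G^{1/2}\Bigl(\frac{1}{n}\wt G^\T \wt G - \bI_d\Bigr)\Sigma_G^{1/2},
\]
where $\wt G := G\,\Sigma_G^{-1/2}$ (treating the singular case by restricting to the range of $\Sigma_G$, or by a limiting argument $\Sigma_G + \eps \bI_d \to \Sigma_G$ if needed). The rows of $\wt G$ are i.i.d., have identity covariance, and remain $\gamma'$-sub-Gaussian for a constant $\gamma'$ depending only on $\gamma$, since left-multiplication by the fixed matrix $\Sigma_G^{-1/2}$ preserves the sub-Gaussian class with a controlled change of constant.

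Next I would invoke Vershynin's non-asymptotic singular-value bound for sub-Gaussian matrices with isotropic rows (Theorem 5.39 / Remark 5.40 of \cite{vershynin_2012}): there exist constants $c,C>0$ depending only on $\gamma'$ (hence only on $\gamma$) such that for every $t\ge 0$, with probability at least $1-2e^{-ct^2}$,
\[
\sqrt{n} - C\sqrt{d} - t ~ \le ~ s_{\min}(\wt G) ~ \le ~ s_{\max}(\wt G) ~ \le ~ \sqrt{n} + C\sqrt{d} + t.
\]
Dividing by $\sqrt{n}$ and squaring, the eigenvalues of $n^{-1}\wt G^\T \wt G$ are trapped in $[(1-\delta)^2,(1+\delta)^2]$ with $\delta = C\sqrt{d/n} + t/\sqrt{n}$. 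Since $\max\{(1+\delta)^2-1,1-(1-\delta)^2_+\} \le 2\delta + \delta^2 \le 3\max\{\delta,\delta^2\}$, this yields (after adjusting $C$)
\[
\Bigl\| \frac{1}{n}\wt G^\T \wt G - \bI_d \Bigr\|_\op ~ \le ~ \max\{\delta,\delta^2\}
\]
with the stated probability.

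Finally, submultiplicativity applied to the whitening identity gives
\[
\Bigl\| \frac{1}{n} G^\T G - \Sigma_G \Bigr\|_\op ~ \le ~ \|\Sigma_G^{1/2}\|_\op^2 \, \Bigl\|\frac{1}{n}\wt G^\T \wt G - \bI_d\Bigr\|_\op ~ \le ~ \max\{\delta,\delta^2\}\,\|\Sigma_G\|_\op,
\]
which is exactly the desired bound. The only non-routine point is ensuring the sub-Gaussian parameter survives whitening with an effective constant that can be absorbed into $c,C$; everything else is bookkeeping around Vershynin's isotropic estimate, so there is no real analytic obstacle here.
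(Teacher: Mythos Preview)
Your approach is exactly what the paper relies on: the paper does not give its own proof but simply cites \cite[Remark~5.40]{vershynin_2012} (and \cite[Lemma~16]{bing2020prediction}), and Remark~5.40 is precisely the whitening reduction $\wt G = G\Sigma_G^{-1/2}$ to the isotropic case of Theorem~5.39 that you wrote out. One small caution: your claim that the whitened rows are $\gamma'$-sub-Gaussian with $\gamma'$ depending \emph{only} on $\gamma$ is not automatic under the bare definition $\sup_{\|u\|_2=1}\|\langle G_i,u\rangle\|_{\psi_2}\le\gamma$, since $\|\langle \Sigma_G^{-1/2}G_i,u\rangle\|_{\psi_2}\le\gamma\|\Sigma_G^{-1/2}\|_\op$ picks up the smallest eigenvalue of $\Sigma_G$; Vershynin's Remark~5.40 in fact states the constants in terms of the sub-Gaussian norm of the \emph{whitened} rows, and the paper's phrasing ``depending on $\gamma$'' should be read the same way (in every application here the rows are standard Gaussian, so the point is moot).
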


\end{document}